\let\emptyset \undefined
\theoremstyle{plain}
\newtheorem{theorem}{Theorem}[section]
\newtheorem{corollary}[theorem]{Corollary}
\newtheorem{lemma}[theorem]{Lemma}
\newtheorem{proposition}[theorem]{Proposition}
\newtheorem{definition}[theorem]{Definition}
\newtheorem{assumption}[theorem]{Assumption}
\theoremstyle{remark}
\newtheorem{remark}[theorem]{Remark}
\newtheorem{example}[theorem]{Example}
\numberwithin{equation}{section}
\numberwithin{table}{section}
\newcommand{\R}{\mathbb{R}}
\newcommand{\X}{\mathbf{X}}
\newcommand{\Y}{\mathbb{X}}
\newcommand{\x}{\mathbf{X}}
\newcommand{\Z}{\mathbb{Z}}
\newcommand{\Ff}{\mathcal{F}}
\newcommand{\Ii}{\mathcal{I}}
\newcommand{\Tt}{\mathcal{T}}
\newcommand{\Ww}{\mathcal{W}}
\newcommand{\Nn}{\mathcal{N}}
\newcommand{\prodtree}{\mathcal{Q}}
\newcommand{\tNn}{\widetilde{\Nn}}
\newcommand{\Pp}{\mathrm{Poly}}
\newcommand{\numone}{m_{\one}}
\newcommand{\numpolyi}{m_{\mathbf{x}_{i}}}
\newcommand{\numpoly}{m_{\mathbf{x}}}
\newcommand{\numnoise}{m_{\scriptscriptstyle \Xi}}
\newcommand{\num}{m}
\newcommand{\numedge}{m_{e}}
\newcommand{\Tr}{\Tt_{r}}
\newcommand{\Tl}{\Tt_{l}}
\newcommand{\Tlm}{\Tt_{l,-}}
\newcommand{\Ttp}{\Tt_{+}}
\newcommand{\Tp}{\Tt_{+}}
\newcommand{\Trec}{\mathcal{T}^{\rec}}
\newcommand{\wTl}{\widehat{\Tl}}
\newcommand{\wTr}{\widehat{\Tr}}
\def\Vec{\mathrm{Vec}}
\def\Alg{\mathrm{Alg}}
\def\id{\mathrm{Id}}
\def\rec{\mathrm{cen}}
\def\locprod{\mathbb{X}}
\def\path{\mathbb{X}}
\renewcommand{\leq}{\leqslant}
\renewcommand{\geq}{\geqslant}
\newcommand{\les}{\lesssim}
\newcommand{\one}{\mathbf{1}}
\newcommand{\hol}{H\"older }
\newcommand{\mNn}{\mathring{\Nn}}
\newcommand{\mWw}{\mathring{\Ww}}
\newcommand{\heat}{(\partial_t-\Delta)}
\begin{document}

\title[Bounds for $\Phi^4$]
{A priori bounds for the $\Phi^4$ equation in the full sub-critical regime}

\author{Ajay Chandra}
\address{Ajay Chandra, Imperial College London
}
\email{a.chandra@imperial.ac.uk}

\author{Augustin Moinat}
\address{Augustin Moinat, Imperial College London
}
\email{a.moinat@imperial.ac.uk}

\author{Hendrik Weber}
\address{Hendrik Weber, University of Bath
}
\email{h.weber@bath.ac.uk}

\thanks{AC
gratefully acknowledges financial support from the Leverhulme Trust via an Early Career
Fellowship, ECF-2017-226.
HW is supported by the Royal Society through the University Research Fellowship UF140187.
We also thank Andris Gerasimovics for feedback on an earlier draft of this article.
}



 \begin{abstract}
We derive a priori bounds for the $\Phi^4$ equation in the full sub-critical regime using Hairer's theory of regularity structures.
The equation is formally given by
 \begin{equation}\label{phi4abstract}
\heat\phi=-\phi^3 + \infty \phi +\xi, \tag{$\star$}
\end{equation}
where the term $+\infty \phi$ represents infinite terms that have to be removed in a 
renormalisation 
procedure. We emulate fractional dimensions $d<4$ by adjusting the regularity of the noise term $\xi$,  
choosing $\xi \in C^{-3+\delta}$.  Our main result states that if $\phi$ satisfies this equation on a space-time 
cylinder $D= (0,1) \times \{ |x| \leq 1 \}$, then away from the boundary $\partial D$ the solution  $\phi$ can be bounded 
in terms of a finite number of explicit polynomial expressions in $\xi$.  The bound holds uniformly over all 
possible choices of boundary data for $\phi$ and thus relies crucially  on  the super-linear damping effect 
of the non-linear term $-\phi^3$. 

A key part of our analysis consists of an appropriate re-formulation of the theory of regularity structures 
in the specific context of \eqref{phi4abstract}, which allows to couple the small scale control one obtains from this 
theory  with a suitable large scale  argument. 

Along the way we make several new observations and simplifications: we reduce the number of objects required 
with respect to Hairer's work. Instead of a model $(\Pi_x)_x$ and the family of translation operators $(\Gamma_{x,y})_{x,y}$ 
we work with just a single object $(\path_{x, y})$ which acts on itself for translations, very much in the spirit 
of Gubinelli's theory of branched rough paths.  
Furthermore, we show that in the specific context of \eqref{phi4abstract}  the hierarchy of continuity conditions 
which constitute Hairer's definition of a  \emph{modelled distribution} can be reduced to the single continuity condition
on the ``coefficient on the constant level''. 
 \end{abstract}


\maketitle

\date\today
\setcounter{tocdepth}{1}

\tableofcontents

\section{Introduction}
\label{s:Intro}

The theory of regularity structures was introduced in Hairer's groundbreaking work \cite{hairer2014theory} 
and has since been developed into an impressive machinery  \cite{MR3935036,2016arXiv161208138C,2017arXiv171110239B} that systematically yields existence 
and uniqueness results for a whole range of singular stochastic partial differential equations from mathematical physics.
Examples include the KPZ equation \cite{MartinKPZ,MartinPeter}, the multiplicative stochastic heat equation \cite{WongZakai}, as well as
  reversible Markovian dynamics for the Euclidean $\Phi^4$ theory in three dimensions \cite{hairer2014theory}, in ``fractional dimension $d<4$'' \cite{2017arXiv171110239B},
 for the Sine-Gordon model \cite{SineGordon,2018arXiv180802594C}, for 
 the Brownian loop measure measure on a manifold \cite{String} and for the $d = 3$ Yang-Mills theory 
 \cite{CCHSPrep}.

 A serious limitation of this theory so far is that these existence and uniqueness results only hold for a short time, 
 and this existence time typically  depends on the specific realisation of the random noise term in the equation. 
 Most applications are 
 furthermore limited to a compact spatial domain such as a torus.  
 The reason for this limitation is that the whole machinery is set up as the solution theory for a mild formulation in terms of a  fixed-point problem, 
 and that specific features of the non-linearity, such as damping 
 effects or conserved quantities,  are not taken into account.  
 With this method, global-in-time solutions can only  be obtained in special situations, e.g.  if all non-linear terms are globally Lipschitz  \cite{Cyril} 
 or if  extra information on an invariant measure is available \cite{DPD,MR3785597}.  

This article is part of a programme to derive a priori bounds within the regularity structures framework 
in order to go beyond short time existence and compact spatial domains. 
We focus on the $\Phi^4$ dynamics which are formally given by the stochastic reaction diffusion equation
\begin{equation}\label{phi4}
\heat\phi=-\phi^3+\xi,
\end{equation}
where $\xi$ is a Gaussian space-time white noise over $\R \times \R^{d}$. 
A priori bounds for this equation have recently been derived by several groups for the two dimensional case $d=2$  \cite{mourrat2017global,tsatsoulis2018}
and the more difficult case $d=3$ \cite{Mourrat3D,AlbeverioKusuoka,GH,GubinelliHofmanova2,2018arXiv181105764M}.
In this article we obtain bounds throughout the entire sub-critical regime, formally dealing with all ``fractional dimensions'' up to (but excluding) the critical 
dimension $d=4$. 
Here we follow the convention of \cite{2017arXiv171110239B}
to emulate fractional dimensions $d<4$  by adjusting the regularity assumption on $\xi$, and assuming that it can only be controlled in a distributional 
parabolic Besov-H\"older space of regularity $-3+\delta$ for an arbitrarily small $\delta >0$.
Connecting back to the $\Phi^4$ dynamics driven by space-time white noise, $\delta = 0-$ mimics the scaling of the equation with $d=4$ and $\delta = 1/2-$ gives us back equation with $d=3$.

Our analysis is based the method developed in the $d=3$ context in  \cite{2018arXiv181105764M} where it was shown that if $\phi$ solves 
 \eqref{phi4}, on a parabolic cylinder, say on   

 \begin{equation}\label{parabolic_cylinder}
 D=(0,1)\times \{|x|<1\},
 \end{equation}
where $|x| = \max\{ |x_1| , \ldots, |x_d| \} $ denotes the supremum norm on $\R^d$, then it can be bounded on any smaller cylinder $D_R = (R^2,1) \times \{|x|<1-R\}$ only in terms of the distance $R$ and the realisation of 
$\xi$ when restricted to a small neighbourhood of $D$. This bound holds uniformly over all possible choices for  $\phi$
  on the parabolic boundary of $D$, thus leveraging on the full strength of the non-linear damping term $-\phi^3$.
This makes the  estimate  extremely useful when studying the large scale behaviour of solutions, because given a realisation of the noise, any local function of the solution (e.g. a localised norm or testing against a compactly supported test-function)
  can be controlled in a\textit{ completely deterministic way }by objects that depend on the noise realisation on a compact set, without taking the behaviour of solution elsewhere into account.
 
Our main result is the exact analogue valid throughout the entire sub-critical regime.
\begin{theorem}[Theorem~\ref{th:main theorem} below]
\label{thm:main-simplified}
Let $\delta>0$ and let  $\xi$ be of regularity $-3+\delta$.
 Let $\{ \locprod_{\bullet} \tau  \colon \tau \in \Ww, \Nn \} $ be a local product lift of $\xi$. 
Let $\phi$ solve 
\begin{equation}\label{eq: renormalised equation in intro}
\heat\phi=-\phi^{\circ_{\locprod}3}+\xi, \qquad \text{ on } D
\end{equation}
where $\phi^{\circ_{\locprod}3}$ refers to the renormalised cube sub-ordinate to $\locprod$. 

Then $v := \phi - \sum_{\tau \in \Ww} \locprod_{\bullet}  \Ii(\tau)$ satisfies 
\[
 \|v\|_{D_R}\leq C\max\Big\{\frac1R,[\locprod;\tau]^\frac1{\delta \numnoise(\tau)}, \tau \in \Nn\cup\Ww\Big\},
\]
uniform in the choice of the local product, where $\| \bullet \|_{D_R}$ denotes  the supremum norm on $D_R$
\end{theorem}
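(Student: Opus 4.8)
The plan is to follow the strategy introduced in the $d=3$ case in \cite{2018arXiv181105764M}: split the analysis into a \emph{small-scale} part, carried out with the reformulated regularity-structures machinery set up in the body of the paper, and a \emph{large-scale} part, which exploits the super-linear damping of $-\phi^3$, and then couple the two. Write $X:=\sum_{\tau\in\Ww}\locprod_\bullet\Ii(\tau)$, so that the object to be controlled is $v=\phi-X$. Since $\Ii(\Xi)\in\Ww$ and $\heat\big(\locprod_\bullet\Ii(\Xi)\big)=\locprod_\bullet\Xi=\xi$ up to a smooth term, subtracting $\heat X$ from \eqref{eq: renormalised equation in intro} cancels the noise; expanding the renormalised cube $(X+v)^{\circ_{\locprod}3}$ then leaves an equation of the schematic form
\[
\heat v \;=\; -\,v^3 \;+\; \sum_{j} c_j(\,\cdot\,;\locprod)\,v^{k_j} \;+\; r(\,\cdot\,;\locprod)\qquad\text{on }D ,
\]
with $k_j\in\{1,2\}$, each $c_j$ a reconstructed distribution built from the trees of strictly negative homogeneity, $r$ a reconstructed remainder of positive homogeneity, and the products $c_j v^{k_j}$ understood in the sense of the regularity structure. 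It is exactly the renormalisation contained in $\phi^{\circ_{\locprod}3}$ that makes this right-hand side a well-defined distribution. The first, essentially bookkeeping, step is to record this identity precisely and to bound the relevant (negative-Hölder) norms of the $c_j$ and of $r$ by $P\big([\locprod;\tau]:\tau\in\Nn\cup\Ww\big)$ directly from the definition of a local product; here and below $P$ denotes a universal polynomial depending only on $\delta$.

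For the small-scale step I would invoke the simplification emphasised in the introduction --- that for \eqref{phi4} a modelled distribution is pinned down by the single continuity condition on the coefficient on the constant level --- to show that $v$, together with the jet obtained by reconstructing the higher trees, is locally described by a modelled distribution whose norm on a parabolic ball $Q_\lambda(z)\subset D$ is controlled by $\|v\|_{L^\infty(Q_{2\lambda}(z))}$ and the stochastic norms $[\locprod;\tau]$. Feeding this into the Schauder estimate for $\heat$ and the reconstruction theorem produces, for $\alpha$ up to the regularity threshold fixed by the noise, an interior Hölder bound for $v$ and --- what is actually used afterwards --- a bound, tested against rescaled bumps, of the reconstructed right-hand side of the form $P\big([\locprod;\tau]:\tau\in\Nn\cup\Ww\big)\,\big(1+\|v\|_{L^\infty(Q_\lambda(z))}\big)^{2}$ on each small ball. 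In effect the stochastic input is converted into \emph{data} for a classical-looking reaction--diffusion equation for $v$, at the cost of at most quadratic powers of the local supremum of $v$.

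The heart of the argument is the large-scale estimate: a quantitative ``coming down from infinity'' for $v$ that is \emph{uniform in the boundary data of $\phi$}. I would run an iteration over a geometric sequence of shrinking parabolic cylinders interpolating between $D$ and $D_R$; on each step one combines a maximum-principle / supersolution comparison for the forced equation $\heat v=-v^3+h$, with a barrier that blows up on the parabolic boundary (this is how all reference to $\phi$ on $\partial D$ is removed), and Young's inequality to absorb the sub-cubic terms $c_j v^{k_j}$ with $k_j\le 2$ together with the remainder $r$ --- each controlled, by the small-scale step, by $P\big([\locprod;\tau]\big)$ times $\big(1+\|v\|_{L^\infty}\big)^{2}$ on the relevant scale --- into a fraction of the coercive term $-v^3$; it is precisely here that super-linearity of the damping is essential. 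Tracking the scales through the iteration yields the stated bound
\[
\|v\|_{D_R}\;\les\;C\max\Big\{\tfrac1R,\ [\locprod;\tau]^{1/(\delta\,\numnoise(\tau))},\ \tau\in\Nn\cup\Ww\Big\}:
\]
the exponent $1/(\delta\,\numnoise(\tau))$ is precisely the one for which a forcing term built from a tree $\tau$, carried by the norm $[\locprod;\tau]$, balances the cubic term at amplitude $\|v\|$ on the parabolic scale $\|v\|^{-1}$, the $\delta$-independent part of the homogeneity of $\tau$ being absorbed by the combinatorial structure of the renormalised cube.

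The main obstacle --- and what makes the full sub-critical regime genuinely harder than $d=3$ --- is uniformity. As $\delta\to 0$ the number of trees in $\Ww\cup\Nn$ is unbounded, so the derivation of the equation for $v$, the modelled-distribution bound, and above all the absorption step in the large-scale iteration have to be organised so that the combinatorics are handled at once, with $P$ and $C$ depending only on $\delta$. A secondary difficulty is the apparent circularity in the small-scale estimate --- the norm of $v$ on scale $\lambda$ being controlled by $\|v\|_{L^\infty}$ on scale $2\lambda$ --- which, as usual in this circle of ideas, is resolved by absorbing the dangerous contributions into a fraction of the coercive term together with a covering / interpolation argument, but it must be carried out without spoiling the explicit $R$-dependence.
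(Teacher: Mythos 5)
Your proposal follows essentially the same route as the paper: subtract the $\Ww$-trees to pass to the remainder equation, use the reformulated regularity-structures machinery (order bounds, reconstruction, Schauder, and the single continuity condition on the coefficient of the constant) to bound the renormalised products, feed these bounds into a large-scale maximum-principle argument, and iterate over shrinking cylinders to cover $D_R$. The one device worth highlighting relative to your sketch is the bootstrap assumption $[\path;\tau]\le c\,\|v\|_D^{\delta\numnoise(\tau)}$: either it fails on some $D_{R_n}$, in which case the theorem is immediate there, or it holds and, with the mollification scale fixed at $L\sim\|v\|_D^{-1}/k$, every forcing term scales as $c\,K(k)\,\|v\|_D^3$ with $K(k)$ made small by taking $k$ large --- cubic in $\|v\|_D$ with a small prefactor, not quadratic as in your sketch --- so the maximum-principle lemma inherited from the $d=3$ case directly yields the factor-two decrement $\|v\|_{D_{\lambda\|v\|_D^{-1}}}\le\|v\|_D/2$ and the iteration is elementary; your identification of the scale $\|v\|^{-1}$ and the balance producing the exponent $1/(\delta\numnoise(\tau))$ is exactly the paper's scaling heuristic.
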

Here the ``local product'' denotes a finite number of functions/distributions $\locprod_{\bullet} \tau$, each of which is constructed as a polynomial of degree $\numnoise(\tau)$,
see Section~\ref{s:PE}. 
Local products  correspond to \emph{models} \cite[Definition 2.17]{hairer2014theory} in the theory of regularity structures, but we use them slightly 
differently and hence prefer a different name and notation. 
The functions / distributions  $\locprod_{\bullet} \tau$ are indexed by \emph{two} sets $\Ww$ and $\Nn$. 
Here $\Ww$ contains the most irregular terms so that after their subtraction the remainder $v$ can be bounded in a positive regularity norm.
The semi-norms $[\locprod;\tau]$ are defined in \eqref{equ: order of order seminorm} and they  correspond to the order bounds on models  \cite[Equation (2.15)]{hairer2014theory}. 
The renormalised cube sub-ordinate to a local product is defined in Definition \ref{def: renormalised product}. 
This notion corresponds exactly to the reconstruction with respect to a model / local product $\locprod_{\bullet}$ 
of the abstract cube in \cite{hairer2014theory}. 
When analysing an equation within the theory of regularity structures,  one proceeds in two steps: in a \emph{probabilistic step} a finite number 
of terms in a perturbative approximation of the solution are constructed
- these terms are referred to as the \emph{model} already mentioned above.
 The terms in this expansion are just as irregular as $\phi$ itself, and their construction a priori poses the same problem to define 
 non-linear operations. However, they are given by an explicit polynomial expression of the Gaussian noise $\xi$ and they can thus be analysed 
 using stochastic moment calculations. It turns out that in many situations
 the necessary non-linear operations on the model can be defined despite the low regularity due to stochastic cancellations.
However, this construction does require renormalisation with infinite counterterms.

  In the second \emph{analytic step}  the remainder of the perturbative expansion is bounded.
 The key criterion for this procedure to work is a scaling condition,  which is called \emph{sub-criticality} 
 in \cite{hairer2014theory},  and which corresponds  to \emph{super-renormalisability} in Quantum Field Theory. 
This condition states, roughly speaking, that on small scales the 
 non-linearity is dominated by the interplay of noise and linear operator. As mentioned above,  in the context of \eqref{phi4} 
 this condition is satisfied precisely for $\xi \in C^{-3+\delta}$ if $\delta>0$.
Sub-criticality ensures that only finitely many terms in the expansion are needed to yield a remainder that is small 
enough to close the argument.

It is important to note that while subcriticality ensures that the number of terms needed in the model is finite, 
this number can still be extremely large and typically diverges as one approaches the threshold of criticality. A substantial part of 
  \cite{MR3935036,2016arXiv161208138C,2017arXiv171110239B} is thus dedicated to a systematic treatment of the 
algebraic relations between all of these terms and their interaction, as well as the effect of renormalising the model on the original equation. 
The local-in-time well posedness theory for \eqref{phi4} for all sub-critical  $\xi \in C^{-3+\delta}$, which was developed in  \cite{2017arXiv171110239B},
was one of the first applications of the complete  algebraic machinery.

The three dimensional analysis  in \cite{2018arXiv181105764M}  was  the first work that used  regularity structures to derive a priori bounds.
All  of the previous works mentioned above  \cite{Mourrat3D,AlbeverioKusuoka,GH,GubinelliHofmanova2} were set in an alternative technical framework, the theory of paracontrolled distributions  developed in \cite{Gubi}.  
These two theories are closely related: both  theories were developed to understand the small scale behaviour of solutions to singular SPDEs,
  and both separate the probabilistic  construction of finitely many terms in a perturbative expansion from the deterministic 
analysis of a remainder. 
Furthermore, many technical arguments in the theory of regularity structures have a close correspondent in the paracontrolled distribution framework.
However,  up to now paracontrolled distributions have only been used to deal with equations 
 with a moderate number of terms in the expansion (e.g. \eqref{phi4} for $d \leq 3$ \cite{CatellierChouk} or the KPZ equation \cite{KPZrelaoded}).
 Despite efforts by several groups  (see e.g. \cite{BailleulBernicot,BailleulHoshino}) this method has not yet been extended to allow for expansions 
 of arbitrary order.  Thus  for some of the most interesting models  mentioned above, e.g. the Sine-Gordon model for $\beta^{2}$ just below $8\pi$,
 the reversible dynamics for the Brownian loop measure on a manifold, the three-dimensional Yang-Mills theory, or the $\Phi^4$  model close to 
 critical dimension considered here, even a short time existence and uniqueness theory is currently out of reach of 
 the theory of paracontrolled distributions.
The analysis in \cite{2018arXiv181105764M} was based on the idea that the large and small scale behaviour of 
 solutions to singular SPDEs should be controlled by completely different arguments: for large scales the irregularity of $\xi$ is essentially irrelevant and 
 bounds follow from the strong damping effect of the non-linearity $-\phi^3$.   The small scale behaviour is controlled using  the smoothing properties of the heat operator.
 This philosophy was implemented by working with a suitably regularised equation which could be treated with a maximum principle and by bounding the error 
 due to the regularisation using regularity structures. 

However, this analysis did not make use of the full strength of the regularity structure machinery. 
In fact, the  three-dimensional $\Phi^4$ equation is by now
considered as  one of the easiest examples  of a singular SPDE, because the model only contains a moderate  number of terms, 
only  five different non-trivial products  need to be defined using stochastic arguments and only  two different divergencies must  be renormalised. 
The interplay of these procedures is not too complex and no advanced algebraic machinery is needed to deal with it. 
Instead, in \cite{2018arXiv181105764M}
the few algebraic relations were simply treated explicitly ``by hand".  
The main contribution of  the present article is thus to implement a similar argument when the number of terms in the model is unbounded, thus combining the analytic ideas from 
\cite{2018arXiv181105764M} with the algebraic techniques  \cite{MR3935036,2017arXiv171110239B}. 
For this it turns out to be most convenient to 
re-develop the necessary elements of the theory of regularity structures in the specific context of \eqref{phi4}, leading to bounds that are tailor-made as input for the large-scale analysis.

Along the way, we encounter various serious simplifications and new observations which are interesting in their own right:

\begin{itemize}
\item

 As already hinted at in Theorem~\ref{thm:main-simplified} we make systematic use of the
 ``generalised Da Prato-Debussche trick" \cite{DPD,2017arXiv171110239B}. 
 This means that instead of working with $\phi$ directly we remove the most irregular terms of the expansion
 leading to a function valued remainder. This was already done in   \cite{2017arXiv171110239B} but only in order to avoid a technical problem concerning the initial conditions. 
 For us the remainder $v$ is the more natural object, observing that for all values of $\delta>0$ it solves an equation of the form
  \begin{align}
\label{e:intro-remainder} 
\heat v&=-v^3 + \ldots
\end{align}
where $\ldots$ represents a large number of terms (the number diverges as $\delta \downarrow 0$) which involve renormalised products of either $1$, $v$ or $v^2$ with various irregular ``stochastic 
terms''.
For each $\delta>0$,  $v$ takes values in a positive regularity H\"older norm (i.e. it is a function) and so an un-renormalised damping term $-v^3$ appears on the right hand side. 
Of course,  the H\"older regularity of $v$ is not enough to control many of the products appearing in  $\ldots$, and a local expansion of $v$ is required to control 
these terms.
However, we are able to show that for each fixed value of $\delta$ all of these terms are ultimately of lower order relative to $\heat v$ and $v^3$.
 \item
 One of the key ideas in the theory of regularity structures is \emph{positive renormalisation} and the notion of \emph{order}. 
 Most of the analysis works with a re-centered version of the functions / distributions from the model, which depends on a
 \emph{base-point} as well as the running argument - these objects are denoted by the $\Pi_x$.  
 A good description of their behaviour under a change of base-point is 
 key to the analysis, and in Hairer's framework this is accomplished by working with a family of translation operators $\Gamma_{x,y}$.

There is a close relationship between these $\Pi_{x}$ and $\Gamma_{x,y}$ maps and some generic identities relating them were found in \cite{Bruned2018}.
Our observation is that - at least in the context of Equation~\eqref{e:intro-remainder} - most of the matrix entries for $\Gamma_{x,y}$ 
coincide with entries for $\Pi_x$ evaluated at $y$. 
Therefore we can work with just a single object $\locprod_{\bullet}$ (corresponding to $\mathbf{\Pi}$ in \cite{hairer2014theory})
 and its re-centered version $\path_{\bullet,\bullet}$ that acts on itself for translation. 
 
With this choice our framework is highly reminiscent of Gubinelli's work on
branched rough paths \cite{gubinelli2010ramification}, the only real difference being the introduction of some (linear) polynomials, first order derivatives, and the flexibility to allow for non-canonical products. 
\item Inspired by  \cite{Otto2018,otto2018parabolic} and just as in \cite{2018arXiv181105764M} we derive Schauder estimates
using Safonov's kernel free method (popularised in \cite{krylov1996lectures}), thus working 
directly with the PDE rather than transforming into an integral equation.
This is more convenient for our analysis, because these bounds give more flexibility e.g. when localising functions by restricting 
them to certain sets.
\item 
As in \cite{hairer2014theory} we use the model / local product to build a local approximation of  $v$ around any base-point $x$. 
This takes  the form
\[
v(y) \approx \sum_{\tau \in \Nn} \Upsilon_x(\tau) \path_{y,x} \Ii(\tau),
\]
with a well-controlled error as $y$ approaches $x$.
In order to use this local expansion to control non-linearities two key analytic ingredients are needed: the first is the order bound discussed above, and 
the second is a suitable continuity condition on the coefficients $\Upsilon_x(\tau)$. In \cite{hairer2014theory} these conditions are encoded in a family of 
model-dependent semi-norms, which make up the core of the definition of a \emph{modelled distribution} \cite[Definition 3.1]{hairer2014theory}. 
It turns out however,  that the coefficients $\Upsilon_x(\tau)$ that appear in the expansion of the solution $v$ are far from generic: up to signs and 
combinatoric factors they can only be either $1$, $v(x)$, $v(x)^2$, or $v_{\X}(x)$ (a generalised derivative of $v$). 
Furthermore, there is a simple criterion 
(Lemma~\ref{Upsilon-Lemma}) to see which of these is associated to a given tree $\tau$. 
This fact was already observed in \cite{2017arXiv171110239B} and was called \emph{coherence} there. 
Here we observe that the various semi-norms in the definition of a modelled distribution are in fact all truncations of the single continuity 
condition on the first coefficient $\Upsilon(\one) = v$. This observation is key for our analysis, as this particular semi-norm is precisely the output of 
our Schauder Lemma.
\item Our deterministic theory more cleanly separates the issues of positive and negative renormalisation in the context of \eqref{phi4}. Indeed, we can derive a priori bounds under extremely general assumptions on the specific 
choice of the local product $\locprod$ which seems quite a bit larger and simpler than the space of models given in  \cite{MR3935036}.
The key information contained in $\locprod$ is how certain a priori unbounded products should 
be interpreted. 
Our definition of a local product allows for these interpretations to be completely arbitrary! 
We can then always define 
the re-centered version of $\locprod$ (or path) and the only assumption where the various functions interact is in the assumption that these re-centered 
products satisfy the correct order bound. 

We do however include a Section~\ref{sec:useful class of local products} in which we introduce a specific class of local products for which the renormalised product $\phi^{\circ_{\locprod} 3}$ appearing in \eqref{eq: renormalised equation in intro} is still a local polynomial in $\phi$ and its spatial derivatives.  
Our approach in this section is to apply a recursive negative renormalisation that commutes with positive renormalisation, similar to \cite{Bruned2018}. 
Finally, the class of local products described in Section~\ref{sec:useful class of local products} also contains local products that correspond to the BPHZ renormalised model \cite{MR3935036,2016arXiv161208138C}.
\end{itemize}

\subsection{Conventions}\label{ss:conventions}

Throughout we will work with functions / distributions defined on (subsets of) $\R \times \R^{d}$ for an arbitrary $d \geq 1$.
We measure regularity  in H\"older-type norms that reflect the parabolic scaling of the heat operator. For example, we set
\begin{equation}\label{eq:parab_metric}
d((t,x),(\bar{t}, \bar{x}))=\max\Big\{\sqrt{|t-\bar{t}|},|x-\bar{x}|\Big\},
\end{equation}
and for  $\alpha \in (0,1)$, we define the (local) \hol semi-norm $[\bullet]_\alpha$  accordingly as
\begin{equation}\label{e:def-hol1}
[u]_\alpha:=\sup_{d(z,\bar{z})<1}\frac{|u(z)-u(\bar{z})|}{d(z,\bar{z})^\alpha}.
\end{equation}
Distributional norms, i.e. H\"older type norms for negative regularity $\alpha<0$ play an important role throughout. These norms are defined in terms of 
the behaviour under convolution with rescaled versions of a  suitable compactly supported kernel $\Psi$. For example, for $\alpha<0$ we set
\begin{equation}\label{shauder ou1}
[\xi]_{\alpha}=\sup_{L\leq 1} \Big\| (\xi)_L \Big\| L^{-\alpha},
\end{equation}
where $\|  \bullet \| $ refers to the supremum norm on $\R \times \R^{d}$ and the operator $(\bullet)_L$ denotes convolution with a compactly supported smooth kernel $\Psi_L(x)=L^{-d-2}\Psi  \Big( \frac{x_0}{L^2}, \frac{\bar{x}}{L} \Big) $, where $x=(x_0,\bar{x})$. Just as in \cite{2018arXiv181105764M}
we work with a specific choice of $\Psi$, but this is only relevant in the proof of the Reconstruction 
Theorem, Lemma~\ref{Reconstruction}. These topics are discussed in detail in Appendix~\ref{ss:RL}.

In the case of space-time white noise, the quantity in \eqref{shauder ou1} is almost surely not finite, but our analysis only depends on the noise locally: a space-time cut-off can be introduced.
Throughout the paper we also make the \emph{qualitative} assumption that $\xi$ and all other  
functions are smooth. This corresponds to introducing a  regularisation of the noise term $\xi$ (e.g. by convolution 
with a regularising kernel at some small scale - in field theory this is called an ultra-violet cut-off). 
This is very convenient,  because it allows  to avoid unnecessary discussions about how certain objects have to be interpreted and 
in which sense partial differential equations hold. We stress however that our main result, Theorem~\ref{th:main theorem},  is a bound only in terms of those low-regularity norms 
(Definition~\ref{def:seminorm}) which can be controlled when the regularisation is removed in the renormalization procedure.
 Even though all functions involved are smooth, we will freely use the term "distribution" to refer to a smooth function that can only be bounded 
in a negative regularity norm.

\section{Overview}
\label{s:ov}

As stated in the introduction a large part of our analysis consists of a suitable re-formulation of elements of the theory of regularity structures. 
The key notions we require are \emph{local products}, the \emph{renormalized product} sub-ordinate to a local product, as well as the relevant norms 
that permit us to bound these renormalized products. We start our exposition with an overview over these notions and how they are interconnected. 
The exposition in this section is meant to be intuitive and rather ``bottom up''. The actual analysis begins in the subsequent Section \ref{s:PE}.

\subsection{Subcriticality:}
\label{ss:2-1}
 The starting point of our analysis is a simple scaling consideration: assume $\phi$ solves 
\begin{equation}\label{outl-1}
\heat \phi = - \phi^3 + \xi,
\end{equation}
for  $\xi \in C^{-3+\delta}$.
Schauder theory suggests that the solution $\phi$ is not better than $C^{-1 + \delta}$.
In this low regularity class no  bounds on $\phi^3$ are available, but as we will see below the notion of product we will work with
has the property that  negative regularities add under multiplication. Therefore
we will obtain 
a control on (a renormalised version of) $\phi^3$  as a distribution  in $C^{-3+3\delta}$.
Despite this very low regularity, for $\delta>0$, the term $\phi^3$ is still more regular than the noise $\xi$. This observation 
 is the core of Hairer's notion of sub-criticality (see \cite[Assumption 8.3]{hairer2014theory}) and suggests that the small-scale behaviour of $\phi$ and $\phi^3$
 can ultimately be well understood by building a perturbative expansion based on the linearised equation.

\subsection{Trees:}
We follow Hairer's convention to index the terms in this expansion by a set of trees.  
This is not only a convenient notation that allows to organise which term corresponds 
to which operation, but also allows for an efficient  organisation of the relations between these terms. 
We furthermore follow the convention to view trees as abstract symbols which form the basis of a  finite-dimensional vector space. 
The trees are built from a generator symbol $\Xi$ (which represents the noise $\xi$ and graphically are the leaves of the tree) followed by applying the operator $\Ii(\cdot)$ (which represents to solving the heat equation and graphically corresponds to the edges of the tree) and taking products of trees (which represents to some choice of point-wise product and graphically corresponds to joining two trees at their root). 
To carry out the localisation procedure, discussed  in Section~\ref{ss:2-4} below, along with $\Xi$, additional generators $\{\one,\X_{1},\dots,\X_{d}\}$ are used in our construction of trees. 

We associate concrete 
meaning to trees via an operator $\locprod_{\bullet}$ which we call a ``local product'', see Definition \ref{def:locprod}.
Even though this may seem somewhat bulky initially, 
it  turns out to be extremely convenient as the concrete definition of  $\locprod_{\bullet}$ on the same tree may change during the renormalisation procedure and because, 
 the local product also appears in a \emph{centered form} denoted by $\path_{\bullet, \bullet}$, see Section~\ref{ss:Overview_PositiveRen} below.

\subsection{Subtracting the most irregular terms:}\label{subsec:subtracting_most_irreg}

 The first step of our analysis consists of subtracting a finite number of terms from $\phi$ to obtain a remainder $v$ which is regular enough to be bounded  
 in a positive H\"older norm.  The regularity analysis in Section~\ref{ss:2-1} suggests that the 
 regularity of $\phi$ can be improved by  removing $\xi$ from the right hand side of  \eqref{outl-1}. We introduce the first graph,  $\Ii(\Xi)$ 
 or graphically  $\<1_black>$, and impose that $\locprod_{\bullet}$ acts on this symbol yielding a function that satisfies  
\begin{equation}\label{outl-2}
\heat  \locprod_{\bullet}  \<1_black>=  \xi.
\end{equation}
We  set $\tilde{v}:= \phi-  \locprod_{\bullet} \<1_black> $ so that $\tilde{v}$ solves 
\begin{equation}\label{outl-3}
\heat \tilde{v}=-\phi^3  = - \big(  \tilde{v}^3 + 3  \tilde{v}^2  \locprod_{\bullet}  \<1_black>+ 3  \tilde{v}  \big( \locprod_{\bullet}  \<1_black> \big)^2 +  \big( \locprod_{\bullet}  \<1_black> \big)^3  \big).
\end{equation}

Of course the problem of controlling the cube of a distribution of regularity $-1+\delta$ has not disappeared, but instead of $\phi^3$ one now has to 
control $( \locprod_{\bullet}  \<1_black> )^3 $ and $( \locprod_{\bullet}  \<1_black> )^2 $.
At this point one has to make use of the fact that $\locprod_{\bullet}  \<1_black>$ is known much more explicitly than the solution $\phi$, and can thus be 
analysed using explicit covariance calculations. We do not discuss these calculations here, but rather view these products as part of the given data: we introduce 
two additional symbols $ \Ii(\Xi) \Ii(\Xi) \Ii(\Xi)$ or graphically  $ \<3_black>$, and  similarly $ \Ii(\Xi) \Ii(\Xi) $ or $ \<2_black>$ and assume that 
$ \locprod $ acts on these additional symbols yielding distributions which are controlled in $C^{-3+3\delta}$ and $C^{-2+2\delta}$. We stress that 
only the control on these norms enters the proof of our a priori bound, and no relation to $\locprod_{\bullet}  \<1_black>$ needs to be imposed (see however Section~\ref{sec:useful class of local products} below).
Instead of \eqref{outl-3} we thus consider
\begin{equation}\label{outl-4}
\heat \tilde{v}=- \big(  \tilde{v}^3 + 3  \tilde{v}^2  \locprod_{\bullet}  \<1_black>+ 3  \tilde{v}   \locprod_{\bullet}  \<2_black> +  \locprod_{\bullet}  \<3_black> \big)  .
\end{equation}
Note that the most irregular term on the right hand side is  $ \locprod_{\bullet}  \<3_black> \in C^{-3+3\delta} $ so that we can expect $\tilde{v} \in C^{-1+3\delta}$
i.e. we have gained   $2 \delta$ differentiability with respect to  $\phi$.
For $\delta >\frac13$ (which corresponds to dimensions ``$d< 3 \frac13$'')  $\tilde{v}$ is thus controlled in a positive order H\"older norm. For smaller $\delta$ we proceed to subtract an additional term to again remove the 
most irregular term from the right hand side as above. We define a new symbol $\Ii ( \Ii(\Xi) \Ii(\Xi) \Ii(\Xi))$ or graphically $\<K*3_black>$, postulate that $\locprod_{\bullet} $ acts on this symbol 
yielding a distribution which solves 
\begin{equation}\label{outl-5}
\heat  \locprod_{\bullet}  \<K*3_black>= \locprod_{\bullet} \<3_black>,
\end{equation}
and define a new remainder $\tilde{\tilde{v}} :=  \tilde{v} + \locprod_{\bullet}  \<K*3_black> = \phi  -  \locprod_{\bullet} \<1_black>   + \locprod_{\bullet}  \<K*3_black> $ which takes values in $C^{-1+5 \delta}$. 
In general, for any $\delta>0$  we denote by $\Ww$ the set of trees of \emph{order}  $< -2$ (for these trees, order is the same as the regularity of the local product on this tree. Below, in Section~\ref{ss:Overview_PositiveRen} we will encounter additional trees for which these notions differ)
 and define 
\[
v := \phi - \sum_{w \in \Ww} (-1)^{\frac{\num(w)-1}{2}} \locprod_{\bullet} \Ii (\tau),
\]
where $\num(w)$ denotes the number of ``leaves'' of the tree $w$. Then $v$ takes values in a H\"older space of positive regularity. The remainder equation then turns into  
\begin{align}\label{outl-6}
\heat v &=-v^3\\
&-3\sum_{w\in\Ww} (-1)^{\frac{\num(w)-1}{2}}  v^2  \path_\bullet \Ii(w)\notag\\
&-3\sum_{w_1,w_2\in\Ww} (-1)^{\frac{\num(w_1) + \num(w_2)-2}{2}} v   \path_\bullet ( \Ii(w_1)  \Ii(w_2)) \notag\\
&-\sum_{\substack{w_1,w_2,w_3\in\Ww  \\ \Ii(w_1) \Ii (w_2) \Ii(w_3) \notin \Ww }}(-1)^{\frac{\num(w_1) + \num(w_2) + \num(w_3)-3}{2}} \path_\bullet (\Ii(w_1)  \Ii(w_2) \Ii(w_3)) .\notag
\end{align}
%
We stress that the structure of this equation is always the same in the sense that $ \heat v =-v^3$ is perturbed by a large number of irregular terms (the number actually diverges as $\delta \to 0$). 
Bounding these irregular terms forces us to introduce additional trees as we will see below, 
 but ultimately we will  show that all of these terms are of lower order with respect to $ \heat v =-v^3$.
\subsection{Iterated freezing of coefficients}
\label{ss:2-4}
We now discuss the remainder equation \eqref{outl-6} in more detail, writing it as  
\begin{align}
\heat v &=-v^3 - 3 v^2  \path_\bullet \<1_black> - 3 v \path_\bullet \<2_black>  -
\Upsilon(\tau_{0})     \path_\bullet \tau_{0} - \ldots,
\label{outl-7}
\end{align}
where we are isolating the most irregular terms in each of the three sums appearing on the right hand side of \eqref{outl-6}.
The most irregular term in the sum on the second line of \eqref{outl-6} is $-3v^2  \path_\bullet \<1_black>$ and the most irregular 
term in the third line is $-3 v \path_\bullet \<2_black> $. 
For the last line, the precise form of the most irregular term depends on $\delta$ and there could be multiple terms of the 
same low regularity. Here we just keep track of one of them, simply denote it by $ \path_\bullet \tau_{0}$ and also leave the combinatorial prefactor $\Upsilon (\tau_{0})$ 
implicit.  We remark that $ \path_\bullet \tau_{0}$ is always a distribution of regularity $C^{-2+\kappa}$ for some $\kappa \in (0, 2 \delta)$.  
To simplify the 
exposition we disregard all of the (many) additional terms hidden in the ellipses $\ldots$ for the moment.  

We recall the standard multiplicative inequality 
\[
\| f g \|_{C^{-\beta}} \lesssim \| f \|_{C^\alpha}  \|g   \|_{C^{-\beta}} 
\]
for $\alpha, \beta >0$ which holds if and only if $\alpha - \beta >0$. In view of the regularity 
  $\path_\bullet \<2_black>  \in C^{-2 +\delta}$ we would thus require $v \in C^\gamma$ for $\gamma > 2-2\delta$
  in order to have a classical interpretation of the product $v \path_\bullet \<2_black>$ on the right hand side of \eqref{outl-7}. 
   Unfortunately, $v$ is much more irregular: by Schauder theory   we can only expect $v$ to be of class $C^\kappa$.

The solution to overcome this difficulty presented in  \cite{hairer2014theory} amounts to an ``iterated freezing of coefficient'' procedure to obtain a good local description of $v$ around a fixed 
base-point: we  fix a space-time point $x$ and rewrite the third  (and most important) term on the right hand side of \eqref{outl-7} as 
\begin{equation}\label{outl-7A}
v \locprod_{\bullet}\<2_black> = v(x)\locprod_{\bullet} \<2_black> + (v- v(x))\locprod_{\bullet} \<2_black>
\end{equation}
and use this to rewrite the equation \eqref{outl-7} as
\begin{align}
\notag
&\heat  (v   + 3 v(x)  \path_{\bullet} \<K*2_black>     +  \Upsilon(\tau_{0})     \path_\bullet \Ii( \tau_{0})    ) \\
& \qquad \qquad =-v^3 - 3 v^2  \path_\bullet \<1_black> - 3 (v- v(x))\locprod_{\bullet} \<2_black>  - \ldots
\label{outl-8}
\end{align}
where we have introduced new symbols $\<K*2_black> $ and $ \Ii( \tau_{0}) $ and postulated that $\locprod$ acts on these symbols 
to yield a solution of the inhomogeneous heat equation with right hand sides $\locprod_{\bullet} \<2_black>$ and $\locprod_{\bullet} \tau_{0}$.
The worst term on the right hand side is now $\locprod_{\bullet} \<2_black>  $ so that the left hand side can at best be of regularity $2\delta$. 
However, near the base-point we can use the smallness of the pre-factor $|v(\bullet) - v(x)| \lesssim [v]_\kappa d(\bullet, x)^\kappa$ to get the better estimate
\begin{align}
\notag
| U(y,x)| &:=
\Big|  v(y)  -\big(  v(x)  - 3 v(x) \path_{y,x} \<K*2_black>       -  \Upsilon(\tau_{0})     \path_{y,x} \Ii( \tau_{0})  \big) \Big| \\
\label{outl-8b}
&   \lesssim d(y,x)^{2\delta + \kappa},
\end{align}
 where have used the  short-hand notation
\begin{align}
\notag
\path_{y,x} \<K*2_black> &:=   \path_{y} \<K*2_black> - \path_{x} \<K*2_black> \\
\label{outl-8A}
\path_{y,x} \Ii( \tau_{0}) & := \path_y \Ii( \tau_{0}) -  \path_{x} \Ii( \tau_{0}) .
\end{align}

This bound in turn can now be used to get yet a better approximation in \eqref{outl-7A}: we write
\begin{align}
\notag
&(v(y)- v(x))\locprod_{y} \<2_black> \\
&=  \big(U(y,x)   - 3 v(x)     \path_{y,x} \<K*2_black>    - \Upsilon(\tau_{0})     \path_{y,x} \Ii( \tau_{0})  \big)  \locprod_{y} \<2_black> .
\label{outl-9}
\end{align}
At this point two additional non-classical products appear in the second and third term on the right hand side,  and as before they are treated as part of the assumed data:
we introduce two additional symbols $  \<2K*2_black>  $ and $\Ii(\tau_{0})\Ii(\Xi) \Ii(\Xi) $ and assume that $\locprod$ acts on these symbols yielding  distributions 
which we interpret as playing the roles of the products  $ \path_{y} \<K*2_black>    \locprod_{y} \<2_black> $ and $ \path_y \Ii( \tau_{0})\locprod_{y} \<2_black>$.
Similarly, we introduce the base-point dependent versions as 
\begin{align}
\notag
 \path_{y,x} \<2K*2_black> &:= \locprod_{y} \<2K*2_black>  - \locprod_{x} \<K*2_black>  \locprod_{y} \<2_black> \\
 \label{outl-9Z}
 \path_{y,x} \Ii(\tau_{0})\Ii(\Xi) \Ii(\Xi)  & :=   \path_{y} \Ii(\tau_{0}) \Ii(\Xi) \Ii(\Xi)    -  \locprod_{x} \Ii(\tau_{0})   \locprod_{y} \<2_black> ,
\end{align}
so that \eqref{outl-9} becomes re-interpreted as
\begin{align}
\notag
&(v(y)- v(x))\locprod_{y} \<2_black> \\
&= U(y,x)  \locprod_{y} \<2_black> 
 - 3 v(x)  \path_{y,x} \<2K*2_black>   
  - \Upsilon(\tau_{0})\path_{y,x} \Ii(\tau_{0})\Ii(\Xi) \Ii(\Xi) .
\label{outl-9A}
\end{align}
The last two terms on the right hand side can now again be moved to the left hand side of the equation suggesting that near $x$ we can improve the approximation \eqref{outl-8} of  $v(y)$ by considering  
\begin{align}
\tilde{U}(y, x) &:= U(y,x) +  3 v(x)     \path_{y,x} \<K*2K*2_black>   
+  \Upsilon(\tau_{0}) \path_{y,x} \Ii( \Ii( \tau_{0}) \Ii(\Xi) \Ii(\Xi))
	  \label{outl-10}
\end{align}
where
\begin{align}
\notag
  \path_{y,x} \<K*2K*2_black> & :=   \path_{y} \<K*2K*2_black>  - \path_{x} \<K*2K*2_black>  - \path_{x} \<K*2_black>  \big( \locprod_{y} \<K*2_black>  -    \locprod_{x} \<K*2_black> \big)  \\
  \notag
  \path_{y,x} 	\path_y\Ii( \Ii( \tau_{0}) \Ii(\Xi) \Ii(\Xi))  &:= 
    	\path_y\Ii( \Ii( \tau_{0}) \Ii(\Xi) \Ii(\Xi))  -  \path_{x}\Ii( \Ii( \tau_{0}) \Ii(\Xi) \Ii(\Xi))   \\  
	\label{outl-10A}
	&	\qquad \qquad -  \path_{x} \Ii( \tau_{0})  \big(  \locprod_{y} \<K*2_black>  -  \locprod_{x} \<K*2_black>  \big) .
\end{align}
with the improved estimate $|\tilde{U}(y,x)| \lesssim d(y,x)^{4\delta +\kappa}$, thus gaining another  $2\delta$ with respect to $U(y,x)$. 

The whole procedure  can now be iterated:  in each step an improved approximation of $v$ is plugged into the product $v \locprod_{\bullet}\<2_black>$ which in turn yields an 
even better local approximation of $v$ near $x$. 
At some point additional terms have to be added:
\begin{itemize}
\item In order to get a local description of order $>1$,  ``generalized derivatives'' $v_{\X_i}$  of $v$ appears, i.e. a term $\sum_{i=1}^d v_{\X_i}(x)(y_i - x_i)$ has to be included.
\item The term $-3 v^2 \locprod \<1_black>$ on the right hand side of the remainder equation \eqref{outl-7} has regularity $-1+\delta$, so once one wishes to push the expansion 
of $v$ to a level $>1+\delta$, one also has to ``freeze the coefficient'' $v^2$, i.e. write 
\begin{align*}
v^2 \locprod  \<1_black> &= v^2(x) \locprod  \<1_black> + (v^2- v^2(x)) \locprod  \<1_black> \\
&= v^2(x) \locprod  \<1_black> +  2 v \big(      - 3 v(x) \path_{\bullet,x} \<K*2_black>       -  \Upsilon(\tau_{0})     \path_{\bullet,x} \Ii( \tau_{0})   \big)  \locprod  \<1_black> + \ldots
\end{align*}
leading to additional terms on the left hand side.

\item Of course, the various terms which were hidden in   $\ldots$ in \eqref{outl-7} above have to be treated in a similar way leading to (many) additional terms in the local description of $v$.
\end{itemize} 
Ultimately, we iterate this scheme until we have a local description of order $\gamma> 2-2\delta$, corresponding to the regularity required classically to define $v \locprod_{\bullet} \<2_black>$. 
\subsection{Renormalised products}
\label{ss:2-8}
The previous discussion thus suggests that we have a Taylor-like approximation of $v$ near the base-point $x$ 
\begin{align}\label{outl-11}
v(y) \approx v(x) + \sum_{i=1}^d v_{\X_i}(x) \cdot (y_i-x_i) +       \sum_{\tau \in\mNn  } \Upsilon_{x}(\tau)  \path_{y,x}\Ii(\tau)
\end{align}
for coefficients $\Upsilon_{x}$ and with an error that is controlled by $\lesssim d(x,y)^{\gamma}$. 
Here $\mNn$ denotes the set of trees appearing in the recursive construction described above.
We unify our notation by also writing the first two terms with ``trees''
and set 
\begin{align*}
\path_{y,x} \Ii(\one) &= 1  &    \path_{y,x} \Ii(\X_i) &= y_i  - x_i\\
\Upsilon_{x}(\one) &= v(x)    &  \Upsilon_{x}(\X_i ) &= v_{\X_i} (x) ,
\end{align*}
thus permitting to rewrite \eqref{outl-11} as
\begin{align}\label{outl-12}
v(y) \approx       \sum_{\tau \in  \Nn} \Upsilon_{x}(\tau)  \path_{y,x}\Ii(\tau),
\end{align}
where $\Nn =\mNn\cup \{\one,\X_{1},\dots,\X_{d}\}$.
 
Of course, up to now our reasoning was purely formal, because it relied on all of the ad hoc products of singular distributions that were simply postulated along the way. We now turn this 
formal reasoning into a \emph{definition} of the  products sub-ordinate to the choices in the local product $\locprod$. More precisely, we \emph{define} renormalized products such as
\begin{align}
\notag
v  \circ_{\path} \path_\bullet \<2_black> (x)  &:=   \sum_{\tau \in \Nn } \Upsilon_{x}(\tau)  \path_{x,x}  (\Ii(\tau)\<2_black> ), \\
\label{outl-13}
v \circ_{\locprod} v   \circ_{\locprod} \path_\bullet \<1_black> (x)  &:=   \sum_{\tau_1, \tau_2 \in \Nn }    \Upsilon_{x}(\tau_1)  \Upsilon_{x}(\tau_2)   \path_{x,x}  (\Ii(\tau_1)\Ii(\tau_1) \<1_black> ).
\end{align}
Our main a priori bound in Theorem~\ref{th:main theorem} holds for the remainder equation interpreted in this sense, under very general assumptions on the local product $\locprod$. However, under 
these very general assumptions it is not clear (and in general not true) that the renormalized products are in any simple relationship to the \emph{usual} products. In Section \ref{sec:useful class of local products}
we discuss a class of local products for which the renormalized products can be re-expressed as explicit local functionals of \emph{usual} products. 
In particular, for those local products we always have
\[
\phi^{\circ_{\locprod} 3}(y)    = \phi^3(y) - a \phi^2(y) - b \phi(y) - c - \sum_{i=1}^d d_i \partial_i  \phi(y),
\]
for real parameters $a,b,c, d_i$.  This class of local products contains the examples that can actually be treated using probabilistic arguments.

\subsection{Positive renormalisation and order}
\label{ss:Overview_PositiveRen}

One of the key insights of the theory of regularity structures is that the renormalized products defined above can be controlled quantitatively in a process called renormalization,
and the most important ingredient for that process are the definitions of  suitable notions of regularity / continuity for the local products $\locprod$ and the coefficients $\Upsilon$.
We start with the local products.

The base-point dependant or centered versions of the local product, $\path_{y,x}$ that appear naturally in the expansions above (e.g. in \eqref{outl-8A}, \eqref{outl-9Z}, \eqref{outl-10A})
are in fact much more than a notational convenience.  The key observation is that their behaviour as the running argument $y$ approaches the base-point $x$ is well controlled in  the 
 so-called \emph{order bound}.  For $ \path_{y,x} \<K*2_black> $  defined in \eqref{outl-8A} we have
\begin{equation}
\label{outl-11A}
 |\path_{y,x} \<K*2_black>| =  | \path_{y} \<K*2_black> - \path_{x} \<K*2_black> | \lesssim d(y,x)^{2\delta},
\end{equation}
which amounts to the H\"older regularity of $\locprod_{\bullet}  \<K*2_black>$. The order bounds become more interesting in more complex examples: for  $\path_{y,x} \<K*2K*2_black>$
defined in  \eqref{outl-10A} we have
\begin{align}
\label{outl-21}
 |  \path_{y,x} \<K*2K*2_black> | & :=  
  \big| \path_{y} \<K*2K*2_black>  - \path_{x} \<K*2K*2_black>  - \path_{x} \<K*2_black>  \big( \locprod_{y} \<K*2_black>  -    \locprod_{x} \<K*2_black> \big) \big| \lesssim d(y,x)^{4\delta}.
\end{align}
The remarkable observation here is that the function $\path_{y} \<K*2K*2_black>  $ is itself only of regularity $2\delta$, so that this estimate expresses that the second term 
$ - \path_{x} \<K*2_black>  \big( \locprod_{y} \<K*2_black>  -    \locprod_{x} \<K*2_black> \big)$ exactly compensates the roughest small scale fluctuations. The exponent $4\delta$ is 
defined as the \emph{order} of the tree $ \<K*2K*2_black>$ simply denoted by $| \<K*2K*2_black>|$. Analogously, for the tree  $\path_{y,x} \<2K*2_black> $ defined in \eqref{outl-9Z} we have 
the order $| \<2K*2_black>| = -2+ 4\delta$ exceding the \emph{regularity} of the distribution $\path_{y} \<2K*2_black>$ which is only $-2+2\delta$, the same as the regularity of $\path_{y} \<2_black>$. As these quantities are distributions the \emph{order} bound now 
has to be interpreted by testing against the rescaled kernel $\Psi_T$  
\begin{align}
\label{outl-101}
\Big| \int \Psi_T(y-x) \path_{y,x}\<2K*2_black> \enskip  dy\Big| \lesssim T^{-2+4\delta}.
\end{align}
This notion of order of trees has the crucial property that it behaves additive under multiplication - just like the regularity of distributions discussed above. 
This property is what guarantees that for sub-critical equations the number of trees with order below any fixed threshold is always finite.

\subsection{Change of base-point}
As sketched in the discussion above, the base-point dependent centered local products  $\path_{\bullet, \bullet}$ are defined recursively from the un-centered ones. 
For what follows, a good algebraic framework to describe the centering operation and the behaviour under the change of base-point is required. 
It turns out that both operations can be formulated conveniently using a combinatorial  operation called the \emph{coproduct} $\Delta$ (note that this $\Delta$ has nothing to do with the Laplace operator, it will always be clear from the context which object we refer to). This coproduct associates to each tree a finite sum of couples $(\tau^{(1)}, \tau^{(2)})$
where $\tau^{(1)}$ is a tree and $\tau^{(2)}$ is a finite list of trees. Equivalently, the coproduct can be seen as a linear map 
\[
\Delta:\Ttp \rightarrow \Vec(\Ttp) \otimes \Alg(\Trec),
\]
where $\Ttp$ and $\Trec$ are sets of tree that we will define later (see Section~\ref{ss:order} for the former and Section~\ref{subsec:coproduct} for the latter) and $\Vec$ and $\Alg$ denote the vector space and the free non-commutative unital algebra generated by a set, respectively.
This coproduct is defined recursively reflecting exactly the recursive positive renormalization described above in Section~\ref{ss:2-4}. For example
\begin{align*}
\Delta  \<K*2_black> &:= \<K*2_black>  \otimes \Ii(\one)  -  \Ii(\one) \otimes  \<K*2_black> \\
\Delta  \<K*2K*2_black> & :=   \<K*2K*2_black> \otimes  \Ii(\one) + \Ii(\one) \otimes  \<K*2K*2_black>  +  \<K*2_black> \otimes \<K*2_black>   
\end{align*}
so that for example the first definitions of \eqref{outl-8A} and \eqref{outl-10} turn into
\begin{align*}
\path_{y,x} \<K*2_black> &:=   (\locprod_{y}  \otimes \path_{x}^\rec) \Delta  \<K*2_black>  \\
\path_{y,x}   \<K*2K*2_black> &:=   (\locprod_{y}  \otimes \path_{x}^\rec) \Delta    \<K*2K*2_black>, 
\end{align*} 
i.e. the different terms in the coproduct correspond to the different terms appearing in the positive renormalization, and for each pair $\tau^{1} \otimes \tau^{2}$, 
the first tree $\tau^{1}$ corresponds to the ``running variable $y$'' and $\tau^{2}$ to the value of the base-point. The coefficients $\path_{x}^\rec$ are also defined 
recursively to match this definition e.g 
\begin{align*}
 \path_{x}^\rec  \Ii(\one) &= 1 &   \path_{x}^\rec  \<K*2_black> &= - \path_{x} \<K*2_black> &    \path_{x}^\rec   \<K*2K*2_black> &=-  \path_{x}  \<K*2K*2_black> +  \path_{x}  \<K*2_black>   \path_{x} \<K*2_black>.
\end{align*} 
This way of codifying the relation between the centered and un-centered local products is useful, e.g. when analysing the effect of the renormalization procedure (Section~\ref{sec:useful class of local products}) but even more importantly 
they give an efficient way to describe how $\path_{y,x} $ behave under change of base-point. It turns out that we obtain the remarkable formula for all $\tau\in\Tt$
\[
\path_{y,z}(\tau)  = (\path_{y,\bar{z}}   \otimes \path_{\bar{z},z})  \Delta \tau ,
\]
i.e. the centered object $\path_{y,z}$ acts on itself as a translation operator!

\subsection{Continuity of coefficients}
\label{ss:2-8b}
With this algebraic formalism in hand, we are now ready to describe the correct continuity condition on the coefficients. This continuity condition is formulated in terms of the concrete realisation of the local product, in that 
an ``adjoint'' of the translation operator appears. In order to formulate it, we introduce another combinatorial notation  $C_+(\bar{\tau},\tau)$, which is defined recursively to ensure that
\[
\Delta \Ii(\tau)=\sum_{\bar{\tau}\in \Nn\cup \Ww }\Ii(\bar{\tau})\otimes C_+(\bar{\tau},\tau).
\]
We argue below that the correct family of semi-norms for the various coefficients $\Upsilon(\tau)$ is given by  
\begin{equation}\label{outl-18}
\sup_{x,y}  \frac{1}{d(x, y)^{\gamma-|\tau|} }  \Big|  \Upsilon_{x}(\tau)  -\sum_{ \substack{\bar{\tau} \in \Nn \\ |\bar{\tau} | < \gamma }}  \Upsilon_{y}(\bar{\tau}) \path_{y,x} C_+(\tau, \bar{\tau})  \Big|.
\end{equation}
The Reconstruction Theorem (see Lemma~\ref{Reconstruction} for our formulation)  implies that the renormalized products~\eqref{outl-13} can be controlled in terms of the semi-norms~\eqref{outl-18} 
and the order bounds (e.g. \eqref{outl-101}).
Reconstruction takes as input the whole family of semi-norms~\eqref{outl-18}, but it turns out that in our case, it suffices to deal with a single semi-norm on the coefficients: the coefficients $\Upsilon_{x}(\tau)$
that appear in the recursive freezing of coefficients described in Section~\ref{ss:2-4} are far from arbitrary. 
 It is very easy to see that (up to combinatorial 
coefficients and signs) the only possible coefficients we encounter are $v$, $v^2$, $v_\X$,  and $1$. It then turns out that all of the semi-norms~\eqref{outl-18} are in fact truncations of the single continuity condition on the 
coefficient $v$ itself. This semi-norm can then be easily seen to be  
\begin{equation}\label{outl-19}
\sup_{x,y}  \frac{1}{d(x, y)^{\gamma  }}  \Big|  v(x)  - \sum_{ \substack{\tau \in \Nn \\ | \tau | < \gamma }}  \Upsilon_{y}(\tau) \path_{y,x} \Ii(\tau)  \Big|,
\end{equation}
which measures precisely the quality of the approximation~\eqref{outl-12} at the starting point of this discussion.

\subsection{Outline of paper}
\label{ss:2-9}

A large part of this article is concerned with providing the details of the arguments sketched above in a streamlined ''top-down'' way: The set 
of trees, their order and local products are defined in Section~\ref{s:PE}, while Section~\ref{sec:coproduct} provides a systematic treatment of combinatorial 
properties of the coproduct. Positive renormalization of local products is discussed in Section~\ref{s:positive_renorm}, while Section~\ref{s:CP} contains 
the detailed discussion of the coefficients $\Upsilon$ sketched above in Section~\ref{ss:2-8}. The renormalized products in the spirit of \eqref{outl-13} are 
defined in  Section \ref{sec: actual products}. As already announced above Section~\ref{sec:useful class of local products} contains the discussion of a special class 
of local products, for which the renormalized product can be expressed in a simple form. 
The actual large-scale analysis only starts in Section~\ref{s:MR}, where the main result is announced. This section also contains a detailed outline of the strategy 
of proof.  The various technical Lemmas that constitute this proof can then be found in Section~\ref{s:PMT}. 
Finally, we provide two appendices in which some known results are collected: Appendix~\ref{ss:RL} discusses norms on spaces of distributions in the context of the 
reconstruction theorem. Appendix~\ref{ss:LSL} collects different variants of classical Schauder estimates.
%
%
%
\section{Tree expansion and local products}
\label{s:PE}
The objects we refer to as trees will be built from
\begin{itemize} 
\item a set of generators $\{\mathbf{1},\X_{1},\dots, \X_{d},\Xi\}$, which can be thought of as the set of possible types of leaf nodes of the tree
\item applications of an operator $\Ii$, which can be thought of as edges 
\item A tree product, which joins edges $\Ii$ at a common new node. 
\end{itemize}
As an example, we have  
\[\Xi = \bullet,\ 
\Ii[\Xi]^{2} = \<2b_black>,\ \Ii(\Xi)\Ii(\Ii(\Xi))\Ii(\Xi)=\<2K*1b_black>,\ 
\]
\[
\Ii(\Xi)\Ii(\Ii(\Xi)^2\Ii(\x_i))\Ii(\Xi)=\<2K*2Xi_black>,\text{ and } \Ii(\one)\Ii(\Ii(\Xi)^3)\Ii(\one)=\<K*3b11_black>.
\]
In particular, when drawing our trees pictorially we decorate the leaf nodes with a $\bullet$ for an instance of $\Xi$, $0$ for an instance of $\one$, and $j \in \{1,\dots,d\}$ for an instance of $\X_{j}$. 
Notice that we do not decorate internal (non-leaf) nodes and have the root node at the bottom. 

Our tree product is non-commutative which in terms of our pictures means that we distinguish between the ways a tree can be embedded in the plane. 
For example, the following trees are treated as distinct from the trees above:
\[
\Ii(\Xi)\Ii(\Ii(\Xi)\Ii(\x_i)\Ii(\Xi))\Ii(\Xi)=\<2K*2XiAS_black> \text{ and } \Ii(\Ii(\Xi)^3)\Ii(\one)\Ii(\one)=\<K*3bAS_black>\;.
\] 
\begin{remark}
We work with a non-commutative tree product only to simplify combinatorial arguments. Whenever we map trees over to concrete functions and or distributions this mapping will treat identically any two trees that coincide when one imposes commutativity of the tree product.
\end{remark}
We say a tree $\tau$ is planted if it is of the form $\tau = \Ii(\tilde{\tau})$ for some other tree $\tilde{\tau}$, some examples would be: 
\[
\<K*3b_black>,\; \<K*(1K*3K*z)b_black>,\text{ and }\<K*(1K*3K*3)b_black>.
\]
We take a moment to describe the intuition behind these trees.
The symbol $\Xi$ will represent the driving noise, we will often call nodes of type $\Xi$ noise leaves/nodes. 
Regarding the operator $\Ii$, when applied to trees different from  $\{\one,\X_{1},\dots,\X_{d}\}$, $\Ii$ will represent solving the heat equation, that is 
\[
\textnormal{``}(\partial_{t} - \Delta) \Ii(\tau) = \tau \textnormal{''}\;.
\]
However, we think of the trees as algebraic objects so such an equation is only given here as a mnemonic and will be made concrete when we associate functions to trees in Section~\ref{sec: local products}. 

The symbols $\{\one,\X_{1},\dots,\X_{d}\}$ themselves will not correspond to any analytic object, but the trees $\{\Ii(\one),\Ii(\X_{1}),\dots,\Ii(\X_{d})\}$ will play the role of the classical monomials, that is $\Ii(\one)$ corresponds to $1$ and $\Ii(\X_{j})$ corresponds to the monomial $z_{j}$. 

We define $\wTr$ to be the smallest set of trees containing  $\{\mathbf{1},\X_{1},\dots, \X_{d},\Xi\} \subset \wTr$ and such that for every $\tau_{1}, \tau_{2}, \tau_{3} \in \wTr$ one also has $\Ii(\tau_{1})\Ii(\tau_{2})\Ii(\tau_{3}) \in \wTr$. 
The trees in $\wTr \setminus \{\one, \X_{1},\dots,\X_{d}\}$ will be used to write expansions for the right hand side of \eqref{phi4}. 
We remark that non-leaf nodes in $\wTr$ have three offspring, for instance $\Ii(\Xi)^{2}\Ii(\one) \in \wTr$ but $\<2_black> = \Ii(\Xi)^{2} \not \in \wTr$. However, the three different permutations of  $\Ii(\Xi)^{2}\Ii(\one)$ will play the role of that $\<2_black>$ did in expressions like \eqref{outl-4}, and as an example of how this simplifies our combinatorics we remark that this allows us to forget about the  ``$3$'' that appears in \eqref{outl-4}. 

We also define a corresponding set of planted trees $\wTl
=
\{
\Ii(\tau): \tau \in \wTr\} $. 
The planted trees in $\wTl$ will be used to describe an expansion of the solution $\phi$ to \eqref{phi4}. 

At certain points of our argument the roles of the planted trees of $\wTl$ and the unplanted trees of $\wTr$ will be quite different. 
For this reason we will reserve the use of the greek letter $\tau$ (and $\bar{\tau}$, $\tilde{\tau}$, etc.) for elements of $\wTr$. 
If we want to refer to a tree that could belong to either $\wTl$ or $\wTr$ we will use the greek letter $\sigma$. 

\subsection{The order of a tree and truncation}
\label{ss:order}

We give a recursive definition of the order $|\cdot|$ on $\wTr \cup \wTl$ as follows. 
Given $\Ii(\tau) \in \wTl$ we set $|\Ii(\tau)| = |\tau| +2 $. 
Given $\tau \in \wTr$ we set
\[
|\tau|
:=
\begin{cases}
-2, & \tau=\one\;, \\
-1, & \tau=\X_{i}\;, i \in \{1,\dots,d\}\\
-3+\delta, & \tau=\Xi\;,\\
\sum_{i=1}^{3} |\Ii(\tau_{i})| = 
6+\sum_{i=1}^3|\tau_i|, & \tau=\Ii(\tau_1)\Ii(\tau_2)\Ii(\tau_3)\;.
\end{cases}
\]
The values of $-2$ and $-1$ for homogeneities of the trees $\one$ and $\X_{i}$ may seem a bit odd but this is just due to the convention that it is $\Ii(\one)$ and $\Ii(\X_{i})$ that actually play the role of the classical monomials and we want $|\Ii(\one)| = 0$ and $|\Ii(\X_{i})| = 1$.
We find that treating the classical monomials as planted trees makes our combinatorial arguments and various inductive proofs cleaner. 

We now restrict the set of trees we work with and organise them into various sets.   
We define the following subsets of $\wTr$:
\begin{equation*}\label{sets_of_trees_introduced}
\begin{split}
\Pp :=& \{\X_{1},\dots,\X_{d},\one\}\;,\\
\Ww :=& \{\tau\in\wTr:\ |\tau| < -2\}\;,\ \mWw := \Ww \setminus \{\Xi\},\\
\Nn :=& \{\tau\in\wTr \;,\; -2 \leqslant |\tau| \leqslant 0\},\ \mNn := \Nn \setminus \Pp\;.
\end{split}
\end{equation*}
As a mnemonic, 
$\mWw$ (resp $\mNn$), is the set of those trees in $\Ww$ (resp $\mNn$) which are themselves the tree product of three planted trees. 

\begin{assumption}\label{assump: orders away from integers}
For the rest of the paper, we treat $\delta > 0$ as fixed, and assume, without loss of generality for the purposes of our main theorem, that $\delta$ has been chosen so that $\{|\tau| : \tau \in \Ww \cup \mNn\}$ does not contain any integers.
\end{assumption}
 
It will be helpful throughout this article to have notation for counting the number of occurrences of a certain leaf type in a tree. 
We define the functions $\numone,\ \numpolyi,\  \numnoise: \wTr \rightarrow \Z_{\ge 0}$ which count, on any given tree, the number of occurrences of $\one$, $\X_{i}$ and $\Xi$ as leaves in the tree. 

We also set $\numpoly = \sum_{i=1}^{d} \numpolyi$ for the function that returns the total number of $\{\X_{1},\cdots,\X_{d}\}$ leaves and $\num =\numone+\numpoly+\numnoise$ which returns the total number of leaves of the given tree.
 
One can easily check that, for $\tau\in\wTr$,
\begin{equation}\label{formula_hom}
|\tau|=-3+\numnoise(\tau)\delta+\numone(\tau)+2\numpoly(\tau)\;.
\end{equation}

We now describe the roles of the various sets defined earlier. 
As mentioned earlier, the trees of $\Pp$ will not, by themselves, play a role in our expansions.  
The set $\Ww$ consists of those trees that appear in our expansion that have the lowest orders. 
When ``subtracting the most irregular terms'' as described in Section~\ref{subsec:subtracting_most_irreg} we will be subtracting the trees of $\Ii(\Ww)$ which are all of negative order themselves. 
In particular, the trees of $\Ww$ will appear in tree expansions for the right hand side of \eqref{phi4} but will \emph{not} appear by themselves on the right hand side of the remainder equation.   
On the other hand, the trees of $\mNn$ will appear on the right hand side of expansions of both \eqref{phi4} and the remainder equation. 
We do not include $|\tau| > 0$ in $\tau \in \mNn$ since we only need to expand the right hand side of the remainder equation up to order $0$.
 
Our remainder will then be described by an expansion in terms of trees of $\Nn$ where the trees in $\Ii(\Pp)$ will come with ``generalised derivatives''. 

We have the following straightforward lemma.

\begin{lemma}\label{lemma: set of trees is finite}
The sets $\Ww$ and $\Nn$ are both finite.	
\end{lemma}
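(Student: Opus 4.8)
The plan is to show finiteness by bounding the number of leaves of any tree in $\Ww \cup \Nn$, and then arguing that only finitely many trees with a bounded number of leaves exist. The key formula is \eqref{formula_hom}, namely $|\tau| = -3 + \numnoise(\tau)\delta + \numone(\tau) + 2\numpoly(\tau)$ for $\tau \in \wTr$. First I would observe that membership in either $\Ww$ or $\Nn$ forces $|\tau| \leq 0$, so from \eqref{formula_hom} we get $\numnoise(\tau)\delta + \numone(\tau) + 2\numpoly(\tau) \leq 3$. Since $\delta > 0$ is fixed, this immediately bounds $\numnoise(\tau) \leq 3/\delta$, $\numone(\tau) \leq 3$, and $\numpoly(\tau) \leq 3/2$, hence $\num(\tau) = \numone(\tau) + \numpoly(\tau) + \numnoise(\tau)$ is bounded by a constant depending only on $\delta$.

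Next I would argue that the total number of leaves controls the entire tree. Every tree in $\wTr$ is built recursively: either it is a single generator leaf, or it has the form $\Ii(\tau_1)\Ii(\tau_2)\Ii(\tau_3)$ with each $\tau_i \in \wTr$. In the latter case each $\tau_i$ has strictly fewer leaves than $\tau$ (each $\tau_i$ has at least one leaf, so $\num(\tau) = \num(\tau_1) + \num(\tau_2) + \num(\tau_3) \geq 3$ and $\num(\tau_i) < \num(\tau)$). Thus the number of internal nodes of $\tau$ is bounded in terms of $\num(\tau)$ — in fact a ternary tree with $\num(\tau)$ leaves has exactly $(\num(\tau)-1)/2$ internal nodes when $\num(\tau)$ is odd (and no such tree exists for even leaf count). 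Since each leaf carries a label from the finite set $\{\one, \X_1, \dots, \X_d, \Xi\}$ and the shape of the tree is a finite combinatorial datum once the number of nodes is bounded, there are only finitely many trees with at most a given number of leaves. I would make this precise by a straightforward induction on $\num(\tau)$: let $N(k)$ denote the number of trees in $\wTr$ with exactly $k$ leaves; then $N(1) = d+2$ and $N(k) \leq \sum_{k_1+k_2+k_3 = k} N(k_1)N(k_2)N(k_3)$, which is finite by induction, so $\sum_{k \leq K} N(k) < \infty$ for every $K$.

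Combining the two steps: $\Ww \cup \Nn \subseteq \{\tau \in \wTr : \num(\tau) \leq K_\delta\}$ for the explicit constant $K_\delta := 3 + 3/\delta$ (or any valid upper bound derived from \eqref{formula_hom}), and the right-hand side is finite by the counting argument. Hence both $\Ww$ and $\Nn$ are finite. I do not expect any serious obstacle here; the only mild subtlety is being careful that the bound on individual leaf-type counts genuinely forces a bound on the total, which is transparent once one notes all counts are nonnegative integers and $\delta$ is a fixed positive number. One could alternatively phrase the whole argument via the order $|\tau|$ directly: the recursive definition shows $|\Ii(\tau_i)| = |\tau_i| + 2 \geq -3+\delta+2 = -1+\delta > -1$, so $|\tau| = \sum_i |\Ii(\tau_i)| \geq -3 + 3\delta$ always, while each $|\Ii(\tau_i)| < |\tau| + (\text{something positive})$... — but the leaf-counting route via \eqref{formula_hom} is cleaner and I would present that.
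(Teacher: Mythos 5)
Your proof is correct and follows essentially the same route as the paper: both arguments reduce the problem to bounding the leaf counts $\numone,\numpoly,\numnoise$ via the order formula \eqref{formula_hom}. The only difference is that you spell out the (routine) counting argument that a bound on the total number of leaves yields only finitely many trees, which the paper leaves implicit; the paper also records the slightly sharper observation that $\tau\in\Ww$ forces $\numone(\tau)=\numpoly(\tau)=0$, but this refinement is not needed for the finiteness conclusion.
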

\begin{proof}
From the formula \eqref{formula_hom}, one can see that $\tau \in \Ww$ if and only if $\numone(\tau)=\numpoly(\tau)=0$ and $\numnoise(\tau)<\delta^{-1}$. 
Similarly for $\tau\in \mNn$, one has 
\[
(\numone(\tau),\numpoly(\tau))\in \{(0,0),(1,0),(2,0),(0,1)\}\;,
\] 
and $\numnoise(\tau)<\delta^{-1}(3-\numone(\tau)-2\numpoly(\tau))$.
\end{proof}
\begin{remark}
Clearly Lemma~\ref{lemma: set of trees is finite} would be false for $\delta = 0$, that is when the equation is critical.	
\end{remark}
We also have the following lemma describing the trees in $\mWw$. 
\begin{lemma}\label{lem:w-trees_all_w}
For any $\tau \in \wTr \setminus \{\Xi\}$,  $|\tau| \ge -3 + 3\delta > |\Xi| = -3 + \delta$.
Moreover, for any $w \in \mWw$, one has
\begin{equation}\label{eq:w-tree_all_w}
w = \Ii(w_{1})\Ii(w_{2})\Ii(w_{3})
\end{equation}	
where $w_{1},w_{2},w_{3} \in \Ww$. 
\end{lemma}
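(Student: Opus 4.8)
The plan is to prove both assertions directly from the explicit formula \eqref{formula_hom} for the order, $|\tau| = -3 + \numnoise(\tau)\delta + \numone(\tau) + 2\numpoly(\tau)$, together with the recursive structure of $\wTr$.

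\textbf{First assertion.} Let $\tau \in \wTr \setminus \{\Xi\}$. Since $\tau \neq \Xi$ and $\tau$ is not one of $\one, \X_i$ either (those are excluded because we want to compare against $|\Xi|$, and in any case $|\one| = -2$, $|\X_i| = -1$ already exceed $-3+3\delta$ for small $\delta$ — but more cleanly one observes $\one, \X_i \notin \wTr \setminus\{\Xi\}$ is false, so I should handle them). Actually the cleanest route: if $\tau \in \Pp$ then $|\tau| \geq -2 > -3+3\delta$ (for $\delta < 1/3$, which we may assume). If $\tau = \Ii(\tau_1)\Ii(\tau_2)\Ii(\tau_3)$, then $\numnoise(\tau) = \numnoise(\tau_1) + \numnoise(\tau_2) + \numnoise(\tau_3)$ and similarly for $\numone, \numpoly$, and since each $\tau_i$ is a nonempty tree it contributes at least one leaf, so $\num(\tau) = \numone(\tau) + \numpoly(\tau) + \numnoise(\tau) \geq 3$. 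Plugging into \eqref{formula_hom}: the minimum of $|\tau|$ over such $\tau$ is attained by making all three leaves of type $\Xi$ (since $\delta$ is the smallest "cost" per leaf when $\delta < 1$), giving $|\tau| = -3 + 3\delta$. Hence $|\tau| \geq -3+3\delta > -3+\delta = |\Xi|$. This covers all of $\wTr \setminus \{\Xi\}$ once we note $\Pp \subset \wTr$ and every other element of $\wTr$ is of the product form.

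\textbf{Second assertion.} Let $w \in \mWw = \Ww \setminus \{\Xi\}$. By definition of $\Ww \subset \wTr$ and since $w \neq \Xi$, and since $w \notin \Pp$ (as $|\tau| < -2$ fails for $\one, \X_i$), $w$ must be of the product form $w = \Ii(w_1)\Ii(w_2)\Ii(w_3)$ with $w_1, w_2, w_3 \in \wTr$. It remains to show each $w_i \in \Ww$, i.e.\ $|w_i| < -2$. From the proof of Lemma~\ref{lemma: set of trees is finite} (or directly from \eqref{formula_hom}), $w \in \Ww$ forces $\numone(w) = \numpoly(w) = 0$; since these leaf-counts are additive over the three branches, also $\numone(w_i) = \numpoly(w_i) = 0$ for each $i$. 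Then \eqref{formula_hom} gives $|w_i| = -3 + \numnoise(w_i)\delta$. Now $\numnoise(w) = \numnoise(w_1)+\numnoise(w_2)+\numnoise(w_3) < \delta^{-1}$ and each $\numnoise(w_j) \geq 1$ (every branch has at least one leaf, which must be a noise leaf since the other leaf types are absent), so $\numnoise(w_i) \leq \numnoise(w) - 2 < \delta^{-1} - 2$, whence $|w_i| = -3 + \numnoise(w_i)\delta < -3 + 1 - 2\delta = -2 - 2\delta < -2$. Therefore $w_i \in \Ww$.

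\textbf{Main obstacle.} There is no real obstacle here; this is a bookkeeping lemma. The only points requiring a little care are: (i) checking that no element of $\Pp$ sneaks in — i.e.\ justifying that any $w \in \mWw$ genuinely has the three-fold product form, which follows because $\mathrm{Pp}$-trees have order $\geq -2$ and so lie outside $\Ww$; and (ii) making sure the strict inequality $\numnoise(w_i) \geq 1$ is valid, which holds because $\one$ and $\X_i$ leaves are forbidden in $w$ and hence in each branch, so every branch's (at least one) leaf is a noise leaf. I would also invoke Assumption~\ref{assump: orders away from integers} only implicitly (it is not needed for the inequalities, which are all strict by the $\delta$-arithmetic) and recall that we may freely assume $\delta$ small, e.g.\ $\delta < 1/3$, when comparing against the integer $-2$.
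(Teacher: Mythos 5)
Your proof is correct and follows essentially the same route as the paper. For the second assertion, the paper bounds the sibling orders directly (each $|\tau_j| \ge -3+\delta$ since $\Xi$ has minimal order in $\wTr$) and deduces $|\tau_1| \le |w| - 2\delta < -2$, whereas you first argue $\numone(w_i) = \numpoly(w_i) = 0$ and then bound noise counts; the two bookkeepings are equivalent via \eqref{formula_hom}, with the paper's variant being a touch more direct as it skips the leaf-count step. You also correctly flag that the first inequality $|\tau| \ge -3 + 3\delta$ fails for $\tau \in \Pp$ unless $\delta \le 1/3$ (e.g.\ $|\one| = -2 < -3+3\delta$ for $\delta > 1/3$), a wrinkle the paper's own proof (which invokes only ``$\numnoise(\tau)\ge 3$'') does not address; this edge case is harmless since only the second assertion is used downstream and $\mWw$ is in fact empty for $\delta \ge 1/3$, but your explicit handling of $\Pp$ is more careful than the paper's.
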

\begin{proof}
The first statement about $|\tau|$ is a simple consequence of \eqref{formula_hom} and the constraint that $\numnoise(\tau) \ge 3$. 
For the second statement, we write $w = \Ii(\tau_{1})\Ii(\tau_{2})\Ii(\tau_{3})$ with $\tau_{1},\tau_{2},\tau_{3} \in \wTr$ .
Then we have, by bounding the orders of $\tau_{2}$ and $\tau_{3}$ from below, 
\[
|w| = 6 + |\tau_{1}| + |\tau_{2}| + |\tau_{3}| \ge |\tau_{1}| + 2 \delta 
\]
Then, the condition that $|w| \le -2$ forces $|\tau_{1}| < -2$ so we have $\tau_{1} \in \Ww$ and clearly the same argument applies for $\tau_{2},\tau_{3}$. 
\end{proof}
We also define \label{other_sets_of_trees}
\begin{equation} 
\Tr := \Ww \cup \mNn,\ 
\Tl :=  \Ii(\Tr) \cup \Ii(\Pp), \textnormal{ and }
\Tt := \Tr \cup \Tl\;.
\end{equation}
Above, and in what follows, given $A \subset \wTr$ we write $\Ii(A) = \{\Ii(\tau):\ \tau \in A\}$.  

Our various tree expansions will be linear combinations of trees in $\Tt$ and we define tree products of such linear combinations by using linearity.
However, here we implement a truncation convention that will be in place for the rest of the paper. 
Namely, given $\sigma_{1},\sigma_{2},\sigma_{3} \in \Tl$ we enforce that if $|\sigma_{1}| + |\sigma_{2}| + |\sigma_{3}| > 0$, we enforce that
\[
\sigma_{1}  \sigma_{2} \sigma_{3} := 0\;.
\] 
In particular, with these conventions and important identity for us will be
\begin{equation}\label{eq:cube}
\Big(
\sum_{ \tau \in \Nn \cup \Ww}
\Ii(\tau)
\Big)^{3}
=
\sum_{\tau \in \mNn \cup \mWw}
\tau\;.
\end{equation}

Above, on the left, the $(\bullet)^{3}$ indicates a three-fold tree product.
\subsection{Local products}\label{sec: local products}
In this section we begin to specify how trees are mapped into analytic expressions. 
Our starting point for this will be what we call a \emph{local product} and will be denoted by $\locprod$. 
Each local product $\locprod$  should be thought of as a (minimal) description of how products of planted trees should be interpreted at a concrete level.
 
We will view local products as being defined on a relatively small set of trees and then canonically extended to all of $\Tt$ (and in the sequel, to larger sets of trees that will appear).
\begin{definition}\label{def: small class of trees}
We define $\prodtree \subset \mNn \cup \mWw$ to consist of all trees $\tau = \Ii(\tau_{1}) \Ii(\tau_{2}) \Ii(\tau_{3}) \in \mNn \cup \mWw$ satisfying the following properties:
\begin{itemize}
\item $\tau_{1},\tau_{2},\tau_{3} \not \in \{\X_{1},\dots,\X_{d}\}$. 
\item At most one of the trees $\tau_{1},\tau_{2},\tau_{3}$ is equal to $\one$.  
\end{itemize}
\end{definition}
Note that $\mWw \subset \prodtree$. 
The set $\prodtree$ includes all ``non-trivial'' products of trees, namely those corresponding to classically ill-defined products of distributions.  
Our philosophy is that once a local product $\locprod$ is specified on the noise $\Xi$ and all these non-trivial products then we are able to define all other products that appear in our analysis. 

We impose the first of the two constraints stated above because multiplication by the tree $ \Ii(\X_{i})$ corresponds to multiplication of a distribution/function by  $z_{i}$ which always well-defined - this makes it natural to enforce that this product is not deformed. 
We impose the second of the two constraints above since a tree of the form $\Ii(\one)\Ii(\tau)\Ii(\one)$ (or some permutation thereof) doesn't really represent a new non-trivial product because the factors $\Ii(\one)$ corresponds to the the classical monomial $1$. 
\begin{definition}\label{def:locprod}
A \emph{local product} is a map $\locprod: \prodtree \cup \{\Xi\} \rightarrow C^{\infty}(\R \times \R^{d})$, which we write $\tau \mapsto \locprod_{\bullet}\tau$. 

We further enforce that if $\tau,\bar{\tau} \in \prodtree$ differ from each other only due to the non-commutativity of the tree-product then $\locprod_{\bullet} \tau = \locprod_{\bullet} \bar{\tau}$, that is $\locprod$ must be insensitive to the non-commutativity of the tree product.   
\end{definition} 

\subsection{Extension of local products}\label{subsec: ext of loc prods}
We now describe how any local product $\locprod$ is extended to $\Tt$, this procedure will involve induction in $\numedge(\sigma) + \numpoly(\sigma)$ where $\numedge(\sigma)$ is the number of edges of $\sigma$. 

We start by defining, for any function $f: \R \times \R^{d} \rightarrow \R$, $(\mathcal{L}^{-1}f)$ to be the unique bounded solution $u$ of 
\begin{equation}\label{eq: cut-off heat equation}
\heat u =\rho f.
\end{equation}
where $\rho$ is a smooth cutoff function with value $1$ in a neighbourhood of $D$ and vanishes outside of $\{z;\ d(z,0)<2\}$. 

We now describe how we extend $\locprod$ to $\mNn \setminus \prodtree$.
If $\tau = \Ii(\tau_{1}) \Ii(\tau_{2}) \Ii(\tau_{3}) \in \mNn \setminus \prodtree$ then precisely one of the following conditions holds
\begin{enumerate}
\item Exactly one of the $\tau_{1}$, $\tau_{2}$, $\tau_{3}$ belong to the set $\{\X_{i}\}_{i=1}^{d}$. 
\item Two of the factors $\tau_{1},\tau_{2},\tau_{3}$ are equal to $\one$.
\end{enumerate} 
In the first case above we can assume without loss of generality that $\tau_{1} = \X_{i}$, then we set
\[
\locprod_{z} \tau = 
z_{i} 
\locprod_{z}(\Ii(\one) \Ii(\tau_{2}) \Ii(\tau_{3}))\;.
\]
In the second case above we can assume without loss of generality that $\tau_{1} = \tau_{2} = \one$, then we set
\[
\locprod_{z} \tau = (\mathcal{L}^{-1} \locprod_{\bullet} \tau_{3})(z)\;.
\]
Next we extend any local product $\locprod$ to $\Tl$ by setting, for any $\Ii(\tau) \in \Tl$, 
\begin{equation}\label{eq: admissibility extension}
\locprod_{z}\Ii(\tau) 
=
\begin{cases}
z_{i} & \textnormal{ if }\tau = \X_{i}\\
1 & \textnormal{ if }\tau = \one\\
(\mathcal{L}^{-1} \locprod_{\bullet}\tau)(z) & \textnormal{ otherwise}\;.	
\end{cases}
\end{equation}

Finally, we extend by linearity to allow $\locprod$ to act on linear combinations of elements of $\Tt$. 
Adopting the language of rough path theory and regularity structures, given smooth noise $\xi:\R \times \R^{d} \rightarrow \R$ we say a local product $\locprod$ is a \emph{lift} of $\xi$ if $\locprod_{z}\Xi = \xi(z)$. 
Without additional constraints lifts are not unique.
\begin{definition}\label{def: multiplicative local product}
We say a local product $\locprod$ is \emph{multiplicative} if, for every 
\[
\Ii(\tau_{1})\Ii(\tau_{2})\Ii(\tau_{3}) \in \prodtree\;,\]
one has
\begin{equation}\label{eq: multiplicative local product}
\locprod_{z} \Ii(\tau_{1})\Ii(\tau_{2})\Ii(\tau_{3})
=
\locprod_{z}
\Ii(\tau_{1})
\locprod_{z}
\Ii(\tau_{2})
\locprod_{z}
\Ii(\tau_{3})\;,
\end{equation}	
where on the right hand side we are using the extension of $\locprod$ to planted trees.
\end{definition}
The following lemma is then straightforward to prove. 
\begin{lemma}
Given any smooth $\xi:\R \times \R^{d} \rightarrow \R$ there is a unique multiplicative lift of $\xi$ into a local product, up to the choice of cut-off function $\rho$.
\end{lemma}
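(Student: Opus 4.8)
The plan is to establish existence and uniqueness separately, with the entire argument running by induction on the quantity $\numedge(\sigma) + \numpoly(\sigma)$ that governs the extension procedure described in Section~\ref{subsec: ext of loc prods}. For existence, I would start from the noise: since a lift must satisfy $\locprod_z \Xi = \xi(z)$, the value on $\Xi$ is forced. Then I proceed up the tree: given $\Ii(\tau_1)\Ii(\tau_2)\Ii(\tau_3) \in \prodtree$, all three factors $\tau_i$ are strictly smaller in the induction parameter, and the multiplicativity requirement \eqref{eq: multiplicative local product} combined with the admissibility extension \eqref{eq: admissibility extension} expresses $\locprod_z(\Ii(\tau_1)\Ii(\tau_2)\Ii(\tau_3))$ entirely in terms of $\mathcal{L}^{-1}$ applied to already-defined quantities. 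This \emph{defines} the map on $\prodtree \cup \{\Xi\}$; one then checks it is a valid local product in the sense of Definition~\ref{def:locprod}, the only non-trivial point being insensitivity to the non-commutativity of the tree product, which follows because $\mathcal{L}^{-1}$ and ordinary pointwise multiplication are both symmetric in their inputs, so the right-hand side of \eqref{eq: multiplicative local product} is manifestly symmetric under permuting $\tau_1,\tau_2,\tau_3$.

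For uniqueness, suppose $\locprod$ and $\locprod'$ are two multiplicative lifts built with the same cut-off $\rho$. I would argue by induction on $\numedge(\sigma)+\numpoly(\sigma)$ that $\locprod_z \sigma = \locprod'_z\sigma$ for all $\sigma \in \Tt$. The base case is $\locprod_z\Xi = \xi(z) = \locprod'_z\Xi$ and $\locprod_z\Ii(\one) = 1 = \locprod'_z\Ii(\one)$, $\locprod_z\Ii(\X_i) = z_i = \locprod'_z\Ii(\X_i)$. For the inductive step on an unplanted tree $\tau = \Ii(\tau_1)\Ii(\tau_2)\Ii(\tau_3)$: if $\tau \in \prodtree$, multiplicativity forces the value to be the product of the three planted factors, each of which is either a classical monomial or $(\mathcal{L}^{-1}\locprod_\bullet \tau_i)$; since $\mathcal{L}^{-1}$ is a genuine (cut-off-dependent) operator and $\locprod_\bullet\tau_i = \locprod'_\bullet\tau_i$ by the inductive hypothesis, the two agree. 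If $\tau \in \mNn \setminus \prodtree$, the extension rules in Section~\ref{subsec: ext of loc prods} (either factoring out a $z_i$ from an $\X_i$-leaf, or applying $\mathcal{L}^{-1}$ when two factors equal $\one$) again express $\locprod_z\tau$ deterministically in terms of strictly smaller trees, so the inductive hypothesis closes the step. Finally, planted trees $\Ii(\tau)$ are handled by \eqref{eq: admissibility extension}, which is again a deterministic function of $\locprod_\bullet\tau$. Propagating linearity extends the equality to all linear combinations.

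The main obstacle, and the place where care is genuinely needed, is verifying that the inductively-constructed object is \emph{well-defined} — that the various extension clauses do not conflict. Concretely, a tree in $\mNn \setminus \prodtree$ of the form $\Ii(\X_i)\Ii(\one)\Ii(\tau_3)$ could in principle be reduced either via the ``factor out $z_i$'' rule or, after rewriting, via a rule touching the $\one$; and a tree like $\Ii(\one)\Ii(\one)\Ii(\Ii(\X_j))$ sits at the intersection of the ``two factors equal $\one$'' clause and a potential $\X_j$-extraction deeper in the tree. One must check that the order in which these reductions are applied does not matter, i.e. that the recursion is confluent. This is a finite bookkeeping check rather than a deep issue — it amounts to observing that $z_i$-multiplication commutes with $\mathcal{L}^{-1}$ in the trivial sense that both operations are applied to disjoint ``slots'' of the tree — but it is the step most likely to be glossed over and the one where the non-commutative-but-symmetrized convention from the Remark after Definition~\ref{def:locprod} does real work. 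Once confluence is in hand, the uniqueness-up-to-$\rho$ clause is immediate since every clause in the construction is forced, and the only free choice entered is the cut-off $\rho$ defining $\mathcal{L}^{-1}$.
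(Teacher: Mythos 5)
Your proof is correct and is exactly the ``straightforward'' induction the paper has in mind (the paper gives no proof at all): the value on $\Xi$ is forced, multiplicativity together with the admissibility extension \eqref{eq: admissibility extension} then forces the value on each $\tau \in \prodtree$ in terms of strictly smaller trees, and once the cut-off $\rho$ (hence $\mathcal{L}^{-1}$) is fixed, every step of the recursion is a deterministic function of already-constructed data, giving both existence and uniqueness. Insensitivity to permutations of the tree product is indeed automatic since pointwise multiplication and $\mathcal{L}^{-1}$ are both permutation-symmetric in the relevant slots.

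The one place where I would push back is the ``main obstacle'' paragraph. The confluence concern you raise is vacuous: the paper's extension procedure in Section~\ref{subsec: ext of loc prods} asserts, and it is easy to check from the order formula \eqref{formula_hom}, that for $\tau = \Ii(\tau_1)\Ii(\tau_2)\Ii(\tau_3) \in \mNn\setminus\prodtree$ \emph{exactly one} of the two clauses applies, so there is no choice of reduction path and nothing to reconcile. Concretely, the examples you propose are not actually trees of $\mNn$. A tree $\Ii(\X_i)\Ii(\one)\Ii(\tau_3)$ has order $6 + |\X_i| + |\one| + |\tau_3| = 3 + |\tau_3|$, and by Lemma~\ref{lem:w-trees_all_w} every $\tau_3$ satisfies $|\tau_3| \ge -3 + \delta > -3$, so the order is strictly positive and the tree lies outside $\mNn$. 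More generally, any tree in $\mNn$ with one factor in $\{\X_i\}$ and one factor $\one$, or with two factors in $\{\X_i\}$, or with one factor in $\{\X_i\}$ and two $\one$'s, is forced to have order $>0$ by the same counting. (Your second example $\Ii(\one)\Ii(\one)\Ii(\Ii(\X_j))$ does not parse as a tree: $\Ii(\X_j)$ is a planted tree, not an element of $\wTr$, so one cannot apply $\Ii$ to it.) Since the two extension clauses never overlap, no ``commutes with $\mathcal{L}^{-1}$'' argument is needed; the recursion is well-defined simply because it is case-disjoint and well-founded in the induction parameter. This does not affect the correctness of your existence and uniqueness arguments, which do not actually rely on the confluence claim.
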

Multiplicative local products will not play a special role in our analysis but we will use them at several points to compare our solution theory for \eqref{phi4} to the classical solution theory. 
%
%
%
%
%
%
%
%
%
\section{The coproduct}
%
%
\label{sec:coproduct}
As discussed in Section~\ref{ss:2-4},  local products $\locprod_{\bullet}$ enter in our analysis in a centered form which depends on the choice of a basepoint $x$.
The construction of these centered objects is given in Section~\ref{s:positive_renorm}. 
As a preliminary step, in this section we define a combinatorial  operation on trees, called the coproduct, which  plays a central role in  this construction.
\subsection{Derivative edges}\label{ss:DE}
For trees $\sigma$ with $|\sigma| \in (1,2)$ we will need the centering procedure to generate first order Taylor expansions in spatial directions.
In order to encode these derivatives at the level of our algebraic symbols we introduce a new set of edges $\Ii^{+}_{i}, i=1...d$ and also define the sets of trees\label{more_trees_defined} 
\begin{align*}
\tNn
=
\{ \tau \in \mNn: 
-1 < |\tau| < 0\}\;, \\
\Tp 
=
\Tr \cup \Tl \cup \big\{ \Ii^{+}_{i}(\tau): 1 \le i \le d,\ \tau \in \tNn  \cup \{\X_{i}\}
\big\}
\;.
\end{align*}
Given $1 \le i \le d$ and $\tau \in \tNn \cup \{\X_{i}\}$, we also call $\Ii_{i}^{+}(\tau)$ a planted trees. 
The order of these new planted trees introduced here is given by $|\Ii_i^+(\tau)|=|\tau|+1$. 
We also adopt the shorthand that, for $1 \le i \le d$, 
\begin{equation*}
\Ii^{+}_{i}(\X_{j}) = 0 \textnormal{ for }j\not = i
\textnormal{ and }
\Ii^{+}_{i}(\tau) = 0 \textnormal{ for all }
\tau \in \mNn \setminus \tNn\;.
\end{equation*}
We emphasise that these new edges will only ever appear as the bottom edge of a planted tree. Graphically we distinguish these edges by writing an index by them.
For example,
\[
\Ii^{+}_{i}(\X_{j})=\<Ki*Xj_black>=0\text{ if }i\neq j,\quad \Ii^+_j(\Ii(\Xi)\Ii(\Xi)\Ii(\X_i))=\<Kj*3bi_black>.
\]
At an analytic level, the role of $\Ii^{+}_{i}(\X_{j})$ is the same as that of $\Ii(\one)$ but distinguishing these symbols will be important - see Remark~\ref{rem:three_ways_to_write_one}. 
\subsection{Algebras and vector spaces of trees}\label{ss:AVST}
We now give some notation for describing the codomain of our coproduct $\Delta$. 
Given a set of trees $T$ we write $\Vec(T)$ for the  vector space (over $\R$) generated by $T$.

Given a set of planted trees $T$ we write $\Alg(T)$ for the unital non-commutative algebra (again over $\R$) generated by $T$. 
We will distinguish between the tree product introduced in Section~\ref{s:PE} and the product that makes $\Alg(T)$ an algebra, calling the the latter product the ``forest product''.
While both the tree product and forest product are non-commutative, the roles they play are quite different - see Remark~\ref{rem: two products} 
 
 We will write $\cdot$ to denote the forest product when using algebraic variables for trees, that is given $\sigma, \tilde{\sigma} \in T$, we write $\sigma \cdot \tilde{\sigma}$ for the forest product of $\sigma$ and $\tilde{\sigma}$. 
 As a real vector space $\Alg(T)$ is spanned by products $\sigma_{1} \cdot \sigma_{2} \cdots \sigma_{n}$ with $\sigma_{1},\dots,\sigma_{n} \in T$. 
 We call such a product $\sigma_{1} \cdot \sigma_{2} \cdots \sigma_{n}$ a ``forest''.  
 The unit for the forest product is given by the ``empty'' forest and is denoted by $1$. 

Graphically, we will represent forest products just by drawing the corresponding planted trees side by side, for instance writing
\[
\<K*3bi_black>
\<Kj*3bi_black>
\<K*(1K*3K*3)b_black>\<K*3bi_black>\;.
\]
%
%
\subsection{Coproduct}
\label{subsec:coproduct}
%
%
%
We define another set of trees
\begin{equation}\label{eq:trec}
\Trec
:=
\Ii(\Nn) \cup \{ \Ii_{i}^{+}(\tau): \tau \in \tNn \cup \{\X_{i}\}, 1 \le i \le d\}\;.
\end{equation}
Our coproduct will be a map
\[
\Delta: \Ttp \rightarrow \Vec(\Ttp) \otimes \Alg(\Trec)\;.
\]
Our definition of $\Delta$ will be recursive.
The base cases of this recursive definition are the trees in $w\in \Ww$ and planted trees $\Ii(w),\; w\in\Ww$, and the elementary trees $\Ii(\one)$, $\Ii(\X_i)$ and $\Ii^+_i(\x_i)$: 
\begin{align}\label{eq: bascases deltaplus}
&\Delta \Ii(\one) 
=
\Ii(\one) \otimes \Ii(\one),\\
&\Delta \Ii(\X_{i})
=
\Ii(\one) \otimes \Ii(\X_{i}) 
+
\Ii(\X_{i}) \otimes \Ii_{i}^{+}(\X_{i}),\notag\\
&\Delta \Ii^{+}_{i}(\X_{i}) = \Ii^{+}_{i}(\X_{i}) \otimes \Ii^{+}_{i}(\X_{i}),\notag\\
&\Delta w= w \otimes 1,\ 
\Delta \Ii(w) = \Ii(w) \otimes 1,\ w\in \Ww.\notag
\end{align}
Note that in the last line, the $1$ appearing is the unit element in the algebra and should not be mistaken for $\one \in \Pp$.

The recursive part of our definition is then given by%
\begin{align}\label{eqs: recursive deltaplus} 
\Delta&\Ii(\tau)=\Ii(\one)\otimes\Ii(\tau)+
 \Ii(\X_{i})\otimes \Ii^{+}_{i}(\tau)\notag
+(\Ii \otimes \id)\Delta \tau,\ \tau\in  \mNn,\\
\Delta& \Ii^{+}_{i}(\tau)
=
\Ii^{+}_{i}(\X_{i}) \otimes \Ii^{+}_{i}(\tau) + (\Ii^{+}_{i} \otimes \id) \Delta \tau, \tau\in \tNn,\\
\Delta&(\Ii(\tau_1)\Ii(\tau_2)\Ii(\tau_3)) =\Delta(\Ii(\tau_1))\Delta(\Ii(\tau_2))\Delta(\Ii(\tau_2)),\ \Ii(\tau_1)\Ii(\tau_2)\Ii(\tau_3) \in \mNn,\notag
\end{align}
where, on the right hand side of the last line above we are referring to the natural product $[\Vec(\Tl) \otimes \Alg(\Trec)]^{\otimes 3} \rightarrow \Vec(\Tr) \otimes \Alg(\Trec)$. 
This product is just given by setting
\begin{equation}\label{eq:product_on_tensors}
 \otimes_{i=1}^{3} (\Ii(\tau_{i}) \otimes a_{i})
\mapsto  \Ii(\tau_{1})\Ii(\tau_{2})\Ii(\tau_{3}) \otimes (a_{1}\cdot a_{2}\cdot a_{3})  \;,
\end{equation}
and then extending by linearity.
We also make note of the fact that, in the first and second lines of \eqref{eqs: recursive deltaplus}, we are using our convention of Einstein summation - since $i$ does not appear on the left hand side then the $i$ on the right hand side is summed from $1$ to $d$.  

One can verify that it is indeed the case that $\Delta$ maps $\Tp$ into $\Vec(\Tp) \otimes \Alg(\Trec)$ by checking inductively, using  \eqref{eq: bascases deltaplus} for the bases cases
and  \eqref{eqs: recursive deltaplus} for the inductive step.
We also have the following lemma on how $\Delta$ acts on subsets of $\Tp$. 
\begin{lemma}\label{lemma:delta preserves Trec}
We have that
\begin{itemize}
\item $\Delta$ maps $\Tl$ into $\Vec(\Tl) \otimes \Alg(\Trec)$, 
\item $\Delta$ maps $\Tr$ into $\Vec(\Tr) \otimes \Alg(\Trec)$,
\item $\Delta$ maps $\mNn$ into $\Vec(\mNn) \otimes \Alg(\Trec)$, and 
\item $\Delta$ maps $\Trec$ into $\Vec(\Trec) \otimes \Alg(\Trec)$. 
\end{itemize}
\end{lemma}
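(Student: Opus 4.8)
The plan is to prove all four statements simultaneously by a single induction on the quantity $\numedge(\sigma) + \numpoly(\sigma)$ that was already used to define the extension of local products and, implicitly, the recursion for $\Delta$. The point is that each recursive formula in \eqref{eqs: recursive deltaplus} expresses $\Delta$ on a tree in terms of $\Delta$ applied to strictly smaller trees (fewer edges, or the same number of edges but fewer $\X_i$-leaves once an $\Ii_i^+$ strips a polynomial), together with explicit ``elementary'' terms whose membership in the relevant subspaces can be read off directly. So the induction hypothesis will be: for all trees $\sigma'$ with $\numedge(\sigma')+\numpoly(\sigma') < n$, the appropriate one of the four containments holds; then one checks the four cases for trees with $\numedge+\numpoly = n$.

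First I would dispose of the base cases listed in \eqref{eq: bascases deltaplus}: $\Delta\Ii(\one) = \Ii(\one)\otimes\Ii(\one)$ lies in $\Vec(\Tl)\otimes\Alg(\Trec)$ and indeed in $\Vec(\Trec)\otimes\Alg(\Trec)$ since $\Ii(\one)\in\Trec$; $\Delta\Ii(\X_i)$ has both summands with left factors $\Ii(\one),\Ii(\X_i)\in\Tl\cap\Trec$ and right factors $\Ii(\X_i),\Ii_i^+(\X_i)\in\Trec$; $\Delta\Ii_i^+(\X_i)$ is similar; and $\Delta w = w\otimes 1$, $\Delta\Ii(w)=\Ii(w)\otimes 1$ for $w\in\Ww$ are immediate since $w\in\Tr\subset\Tt$, $\Ii(w)\in\Tl$, and $1\in\Alg(\Trec)$. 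For the inductive step I would treat the three recursive formulas in turn. For $\Delta\Ii(\tau)$ with $\tau\in\mNn$: the term $\Ii(\one)\otimes\Ii(\tau)$ has left factor $\Ii(\one)\in\Tl$ and right factor $\Ii(\tau)$, which lies in $\Ii(\Nn)\subset\Trec$ because $\tau\in\mNn\subset\Nn$; the term $\Ii(\X_i)\otimes\Ii_i^+(\tau)$ has $\Ii(\X_i)\in\Tl$ and $\Ii_i^+(\tau)$ which is either $0$ or, when $\tau\in\tNn$, lies in $\Trec$ by definition \eqref{eq:trec}; finally $(\Ii\otimes\id)\Delta\tau$ is handled by the induction hypothesis applied to $\tau\in\mNn$ (which has strictly fewer edges than $\Ii(\tau)$), giving $\Delta\tau\in\Vec(\mNn)\otimes\Alg(\Trec)$, and then applying $\Ii$ to the left factor lands in $\Vec(\Ii(\mNn))\otimes\Alg(\Trec) \subset \Vec(\Tl)\otimes\Alg(\Trec)$, and since $\Ii(\mNn)\subset\Ii(\Nn)\subset\Trec$ this also gives the $\Trec$-containment. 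An entirely analogous argument handles $\Delta\Ii_i^+(\tau)$ for $\tau\in\tNn$, using that stripping to $\Ii_i^+$ preserves $\Trec$ and that $\Delta\tau\in\Vec(\mNn)\otimes\Alg(\Trec)$ with $\Ii_i^+$ applied to the first factor; one should note here that $\Ii_i^+$ applied to a tree of $\mNn\setminus\tNn$ gives $0$, which is harmless.

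The genuinely combinatorial case is the product formula, the third line of \eqref{eqs: recursive deltaplus}: for $\Ii(\tau_1)\Ii(\tau_2)\Ii(\tau_3)\in\mNn$ we have $\Delta$ of it equal to the product \eqref{eq:product_on_tensors} of the three $\Delta\Ii(\tau_j)$. By the already-established $\Tl$-containment for planted trees, each $\Delta\Ii(\tau_j)\in\Vec(\Tl)\otimes\Alg(\Trec)$, so the product \eqref{eq:product_on_tensors} takes $\otimes_j(\Ii(\tau_j')\otimes a_j)$ to $\Ii(\tau_1')\Ii(\tau_2')\Ii(\tau_3')\otimes(a_1\cdot a_2\cdot a_3)$; the forest product $a_1\cdot a_2\cdot a_3$ stays in $\Alg(\Trec)$, and the key observation is that the tree product $\Ii(\tau_1')\Ii(\tau_2')\Ii(\tau_3')$, under the truncation convention $\sigma_1\sigma_2\sigma_3 := 0$ whenever $|\sigma_1|+|\sigma_2|+|\sigma_3|>0$, is either zero or again an element of $\mNn$ (it is the tree product of three planted trees and, since it is not truncated to zero, has order $\le 0$; and its order is $\ge -2$ because each $\Ii(\tau_j')$ has order $\ge 0$ except when it equals $\Ii(\one)$, and one checks the minimal nonzero contributions). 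Hence the product lies in $\Vec(\mNn)\otimes\Alg(\Trec)$, which gives simultaneously the $\mNn$-, $\Tr$-, and (vacuously, since these trees are not in $\Trec$) the relevant containments; for the $\Tr$-statement one also needs the $\Ww$ base cases, already covered. I expect this last bookkeeping — verifying that the truncated triple tree product of planted trees appearing in $\Delta$ of an $\mNn$-tree is again in $\mNn$ (or zero), i.e. that orders cannot stray out of $[-2,0]$ — to be the main obstacle, though it is really just a careful application of the order formula \eqref{formula_hom} together with the truncation convention; everything else is a routine propagation of membership through the recursion. I would close by remarking that the $\Tp\to\Vec(\Tp)\otimes\Alg(\Trec)$ statement mentioned just before the lemma follows from the same induction (it is in fact the weakest of the assertions), so no separate argument is needed.
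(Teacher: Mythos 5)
Your overall strategy—a single simultaneous induction on $\numedge(\sigma)+\numpoly(\sigma)$, propagating all four containments through the three recursive formulas in \eqref{eqs: recursive deltaplus}—is sound and matches the paper's proof in spirit; the paper dispatches the first two assertions as ``immediate'', does an edge-count induction for the third, and derives the fourth from the first and third as you do. The first two cases of the recursion (\,$\Ii(\tau)$ and $\Ii_i^+(\tau)$\,) are handled correctly in your proposal.

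However, there is a genuine gap in your treatment of the product case, precisely where the paper invokes Lemma~\ref{lem:w-trees_all_w}. You assert that the left-factor product $\Ii(\tau_1')\Ii(\tau_2')\Ii(\tau_3')$ has order $\geq -2$ ``because each $\Ii(\tau_j')$ has order $\geq 0$ except when it equals $\Ii(\one)$''. This is false: when $\tau_j \in \Ww$, the coproduct $\Delta\Ii(\tau_j) = \Ii(\tau_j)\otimes 1$ forces $\tau_j' = \tau_j$, and then $|\Ii(\tau_j')| = |\tau_j|+2 < 0$, which can be arbitrarily close to $-1+\delta$. (Also, $|\Ii(\one)| = 0$, so it is not an exception to an order-$\geq 0$ claim; you probably meant $\Ii(\Xi)$, but that is not the only problematic factor.) The correct argument is the dichotomy the paper uses: either all three $\tau_j' \in \Ww$, in which case $\tau_j' = \tau_j$ for each $j$ and the left factor is exactly $\tau \in \mNn$; or at least one $\tau_j' \in \Nn$, in which case Lemma~\ref{lem:w-trees_all_w} (equivalently, the lower bound $|\sigma|\geq -3+\delta$ for all $\sigma\in\wTr$, so that $6+\sum_i|\tau_i'| \geq |\tau_{j}'| + 2\delta \geq -2+2\delta$) rules out the product landing in $\mWw$. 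Without this input, your claim that the truncated product lies in $\mNn$ rather than $\mNn\cup\mWw$ is not justified, and this is exactly the nontrivial content of the third bullet.
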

\begin{proof}
The first two statements are immediate consequences of our definitions. 
We turn to proving the third statement, where we proceed by induction in the number of edges of $\tau \in \Nn$. 
The base case(s) where $\tau$ has three edges are easily verified by hand. For the inductive step, we write $\tau = \Ii(\tau_{1})\Ii(\tau_{2})\Ii(\tau_{3})$.
Now, if $\tau_{1},\tau_{2},\tau_{3} \in \Ww$ one can check that the last line of \eqref{eqs: recursive deltaplus} gives us that $\Delta \tau = \tau \otimes 1$ and we are done. 
On the other hand, if we have $\tau_{i} \in \Nn$ for some $i$ then $\Delta \Ii(\tau) \in \Vec(\Ii(\Nn)) \otimes \Alg(\Trec)$ and so we are done by combining the last line of \eqref{eqs: recursive deltaplus} with \eqref{eq:w-tree_all_w}. 

Finally, the fourth statement is immediate by inspection for planted trees $\Trec \setminus \Ii(\mNn)$ while for planted trees in $\Ii(\mNn)$ it follows from using the third statement for $\tau$ in the first line of \eqref{eqs: recursive deltaplus}. 
\end{proof}
We extend $\Delta$ to sums of trees of linearity, so that $\Delta: \Vec(\Tp) \rightarrow \Vec(\Tp) \otimes \Alg(\Trec)$. 
\begin{remark}\label{rem: two products}
The two products we have introduced on trees, the tree product and the forest product, play different roles in our framework:
the tree product represents a point-wise product of functions/distributions which may not be defined canonically. 
 Therefore we do \emph{not} enforce that local products act multiplicatively with respect to the tree product.
On the other hand, any map on trees that is applied to a forest is extended multiplicatively - we do not allow for any flexibility in how forest products are interpreted at a concrete level.
In particular, the trees in the forests that $\Delta$ produces in the right factor of its codomain are all of non-negative order and should be thought of as being associated to products of base-point dependent constants rather than a point-wise product of space-time functions/distributions. 
\end{remark}
\begin{remark}\label{rem:three_ways_to_write_one}
The coproduct   plays a central role in algebraically encoding the terms that appear in our centering procedure,
but the precise choices \eqref{eq: bascases deltaplus} and \eqref{eqs: recursive deltaplus} are also 
%
motivated by additional algebraic/combinatorial properties we will need from the coproduct, namely \eqref{eq:coassoc} and \eqref{prop_C.5} . 

The key content of \eqref{prop_C.5} is that  for any $\tau \in \Nn$ and any $\sigma \otimes \sigma_{1} \cdots \sigma_{n}$ appearing in the expansion of $\Delta \Ii(\tau)$, the precise number of $\X_{i}$ and $\one$ generators appearing in the forest $\sigma_{1} \cdots \sigma_{n}$ and in $\tau$ coincide. 
This in turn is needed for the crucial relation \eqref{eq: coherence relation for upsilon +}. 
For this reason we have to work with the different algebraic objects $\Ii(\one)$, $\Ii_{i}^{+}(\X_{i})$, and the empty forest $1$ - even though all these symbols are treated identically at an analytic level. 

For instance, one might be tempted to set $\Delta \Ii(\one) = \Ii(\one) \otimes 1$ but this would break \eqref{prop_C.5} since the number of $\one$'s in the empty forest $1$ is zero while the number in $\one$ is one. 
Similarly, one cannot include $\Ii(\X_{i}) \otimes \one$ or $\Ii(\X_{i}) \otimes \Ii(\one)$ in the expansion of $\Delta \Ii(\X_{i})$. 
Finally, we have to write the term $\Ii_{i}^{+}(\X_{i}) \otimes \Ii_{i}^{+}(\tau)$ in the second line of \eqref{eqs: recursive deltaplus} instead of say, $\Ii(\one) \otimes \Ii_{i}^{+}(\tau)$, because of our earlier choices and \eqref{eq:coassoc}. 
\end{remark}
\begin{example}
We show one pictorial example. 
The value of our parameter $\delta$ influences the definition of the set $\mNn$ and whether $\Ii^{+}_{i}(\tau)$ vanishes or not for $\tau \in \mNn$, therefore most non-trivial computations of $\Delta$ we would present are valid only for a certain range of the parameter $\delta$.

For the example we present below, we restrict to $\frac37>\delta>\frac13$ and therefore $1>|\<K*3b_black>|>0$ and $2>\Big|\<K*(1K*3K*3)b_black>\Big|=-1+7\delta>1$.
We then have, using Einstein's convention for the index $i\in \{1,...,d\}$ (when an index $i$ appears twice on one side of an equation, it means a summation over $i=1...d$)
\begin{align*}
\Delta\<K*(1K*3K*3)b_black>=\<K*one_black>\otimes\<K*(1K*3K*3)b_black>+\<K*Xi_black>\otimes \<Ki*(1K*3K*3)b_black>+\<K*(1K*zK*z)b_black>\otimes \<K*3b_black>\; \<K*3b_black>\\
+\<K*(1K*zK*3)b_black>\otimes \<K*3b_black>+\<K*(1K*3K*z)b_black>\otimes \<K*3b_black>
+\<K*(1K*3K*3)b_black>\otimes\<K*one_black>.
\end{align*}
We show now the example of an unplanted tree in the case $\delta<\frac13$ and therefore $|\<K*3b_black>|<0$. On the other hand, we always have $|\<K*3bi_black>|=1+2\delta>1$. With Einstein's convention for the index $j\in \{1,...,d\}$
\[
\Delta \<K*31b_black>=\<K*2Xb1z_black>\otimes\<K*3bi_black>+\<K*2Xb1j_black>\otimes\<Kj*3bi_black>+\<K*31b0_black>\otimes \<K*Xi_black>+\<K*31b_black>\otimes \<Ki*Xi_black>.
\]
\end{example} 

\begin{remark}
The last formula in \eqref{eqs: recursive deltaplus} for $\tau \in \mNn$ is also valid for $\tau\in\mWw$ where it is trivial. 
The first formula of \eqref{eqs: recursive deltaplus} can also be extended to $\tau \in \Nn$ if one adopts the convention that $\Delta \X_{i} = \Delta \one = 0$. 
We chose not to do this since the trees of $\Pp$ do not, by themselves,  play a role in our algebraic expansions and analysis except when they appear in a larger tree.
\end{remark}
We extend $\Delta$ to forests of planted trees by setting $\Delta 1 = 1 \otimes 1$ and, for any forest $\sigma_{1} \cdots \sigma_{n}$, $n \ge 1$, 
\[
\Delta ( \sigma_{1} \cdots \sigma_{n})
=
(\Delta \sigma_{1}) \cdots (\Delta \sigma_{n})\;,
\]
where on the right hand side we use the forest product to multiply all the factors components-wise. 
We extend to sums of forests of planted trees by additivity so that $\Delta: \Alg(\Tl) \rightarrow \Alg(\Tl) \otimes \Alg(\Trec)$ and, by Lemma~\ref{lemma:delta preserves Trec}, we also have that $\Delta$ maps  $\Alg(\Trec)$ into $\Alg(\Trec) \otimes \Alg(\Trec)$.  

While a single application of $\Delta$ will be  used for centering objects around a basepoint, we will see in Section~\ref{s:positive_renorm} that a double application of $\Delta$ 
will be used for describing the behaviour when changing this basepoint.  
This is our reason for also defining $\Delta$ on the planted trees of $\Trec \setminus \Tl$. 
It will be important below to know that the two ways of ``applying $\Delta$ twice'' agree, this is encoded in the following lemma. 
\begin{lemma}\label{lem:coassoc}
$\Delta$ satisfies a co-associativity property: for any $\sigma \in \Tt$ one has 
\begin{equation}\label{eq:coassoc}
(\Delta \otimes \id) \Delta \sigma 
=
(\id \otimes \Delta) \Delta \sigma \;,
\end{equation}	 
where both sides are seen as elements of $\Vec(\Tt) \otimes \Alg(\Trec) \otimes \Alg(\Trec)$. 
\end{lemma}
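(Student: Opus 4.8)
The plan is to prove the co-associativity identity \eqref{eq:coassoc} by induction following exactly the recursive structure used to define $\Delta$ in \eqref{eq: bascases deltaplus}--\eqref{eqs: recursive deltaplus}. The natural induction parameter is $\numedge(\sigma) + \numpoly(\sigma)$, the same quantity used in the extension of local products, since each recursive step of $\Delta$ strips one bottom edge (decreasing $\numedge$) and the monomial trees are handled in the base case. I would first record the two elementary algebraic facts that the induction will lean on: (i) both $\id \otimes \Delta$ and $\Delta \otimes \id$ are algebra morphisms for the forest product on the relevant tensor factors, so the identity is stable under taking forest products of trees for which it already holds --- this immediately reduces the last line of \eqref{eqs: recursive deltaplus} (the tree-product case $\tau = \Ii(\tau_1)\Ii(\tau_2)\Ii(\tau_3)$) to the planted-tree cases; and (ii) the operators $\Ii$, $\Ii^+_i$, and multiplication-by-$\Ii(\X_i)$ that appear on the right-hand sides of \eqref{eqs: recursive deltaplus} intertwine appropriately with $\Delta$, which is what lets the inductive hypothesis on $\tau$ propagate to $\Ii(\tau)$ and $\Ii^+_i(\tau)$.

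Concretely, the base cases are the trees $w \in \Ww$ and $\Ii(w)$ (where $\Delta$ just appends the empty forest, so $(\Delta\otimes\id)\Delta w = w\otimes 1\otimes 1 = (\id\otimes\Delta)\Delta w$ trivially since $\Delta 1 = 1\otimes 1$), together with the three elementary planted trees $\Ii(\one)$, $\Ii(\X_i)$, $\Ii^+_i(\X_i)$. For these last three I would simply expand both sides by hand: for $\Ii(\one)$ it is immediate; for $\Ii^+_i(\X_i)$ it is immediate since $\Delta\Ii^+_i(\X_i) = \Ii^+_i(\X_i)\otimes\Ii^+_i(\X_i)$ is grouplike; and for $\Ii(\X_i)$ one checks that both $(\Delta\otimes\id)$ and $(\id\otimes\Delta)$ applied to $\Ii(\one)\otimes\Ii(\X_i) + \Ii(\X_i)\otimes\Ii^+_i(\X_i)$ produce $\Ii(\one)\otimes\Ii(\one)\otimes\Ii(\X_i) + \Ii(\one)\otimes\Ii(\X_i)\otimes\Ii^+_i(\X_i) + \Ii(\X_i)\otimes\Ii^+_i(\X_i)\otimes\Ii^+_i(\X_i)$, using grouplikeness of $\Ii^+_i(\X_i)$ and the formula for $\Delta\Ii(\X_i)$ itself. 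For the inductive step on $\Ii(\tau)$ with $\tau\in\mNn$, I would apply $\Delta\otimes\id$ and $\id\otimes\Delta$ to the right-hand side of the first line of \eqref{eqs: recursive deltaplus}, namely $\Ii(\one)\otimes\Ii(\tau) + \Ii(\X_i)\otimes\Ii^+_i(\tau) + (\Ii\otimes\id)\Delta\tau$, handling the three summands separately: the first two reduce to the already-established identities for $\Ii(\one)$, $\Ii(\X_i)$, $\Ii^+_i(\X_i)$ combined with the definition of $\Delta$ on $\Ii(\tau)$ and $\Ii^+_i(\tau)$, while the last summand uses the inductive hypothesis for $\tau$ after commuting $\Ii\otimes\id$ past the coproduct — here one needs the compatibility $(\Delta\otimes\id)(\Ii\otimes\id)\Delta\tau$ versus $(\id\otimes\Delta)(\Ii\otimes\id)\Delta\tau$, which unwinds to the defining recursion of $\Delta$ on planted trees applied within the left tensor slot. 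The case $\Ii^+_i(\tau)$ with $\tau\in\tNn$ is analogous but simpler, with $\Ii^+_i$ replacing $\Ii$ and only one correction term $\Ii^+_i(\X_i)\otimes\Ii^+_i(\tau)$.

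The main obstacle, and the step that requires genuine care rather than routine bookkeeping, is the bookkeeping of the Einstein-summed index $i$ in the recursive formulas together with the interaction between the three distinct ``copies of $1$'' ($\Ii(\one)$, $\Ii^+_i(\X_i)$, and the empty forest $1$). One has to check that when $(\Ii\otimes\id)\Delta\tau$ is expanded and then hit with $\Delta$ in the left slot, the cross terms $\Ii(\X_j)\otimes\Ii^+_j(\cdot)$ generated by the $\Ii(\X_j)\otimes\Ii^+_j(\tau)$ part of $\Delta\Ii(\cdot)$ line up exactly — on both sides — with the terms produced by expanding $\Delta$ of the forests appearing in $\Delta\tau$; this is precisely the algebraic reason the second line of \eqref{eqs: recursive deltaplus} is forced to use $\Ii^+_i(\X_i)\otimes\Ii^+_i(\tau)$ rather than $\Ii(\one)\otimes\Ii^+_i(\tau)$, as flagged in Remark~\ref{rem:three_ways_to_write_one}. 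I would organize this by proving a small auxiliary claim first: for every $\sigma\in\Trec$ (i.e.\ for the trees that populate the right tensor factor), $\Delta$ restricted to $\Trec$ already satisfies co-associativity and the relevant intertwining with $\Ii$, $\Ii^+_i$; this is the content needed to handle the forest product in the third line of \eqref{eqs: recursive deltaplus} and it is itself a sub-induction with the same base/step structure but on the smaller set $\Trec$ (using the fourth bullet of Lemma~\ref{lemma:delta preserves Trec} to stay inside $\Trec$). Once that auxiliary claim is in place the rest is a careful but mechanical matching of terms.
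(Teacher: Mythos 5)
Your proposal follows essentially the same strategy as the paper: induction on tree size, reduction of the tree-product case to planted trees via multiplicativity of $\Delta$ and of $\Delta\otimes\id$, $\id\otimes\Delta$, and then an explicit term-by-term expansion of the three summands in the recursive formula for $\Delta\Ii(\tau)$, with the only non-trivial matching coming from the last summand which closes by the inductive hypothesis. Two remarks on where your proposal deviates or overreaches.

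First, the auxiliary sub-induction on $\Trec$ that you propose is unnecessary. The worry driving it seems to be that $(\id\otimes\Delta)(\Ii\otimes\id)\Delta\tau$ applies $\Delta$ to forests that contain $\Ii_i^+(\bar\tau)$ trees, and hence that one needs co-associativity on $\Trec$. But this step does not use co-associativity on $\Trec$ at all: the operator $\Ii$ acts on the leftmost tensor slot while $\Delta$ acts on the rightmost, so they commute trivially, giving
\begin{equation*}
(\id\otimes\Delta)(\Ii\otimes\id)\Delta\tau = (\Ii\otimes\id\otimes\id)(\id\otimes\Delta)\Delta\tau,
\qquad
(\Delta\otimes\id)(\Ii\otimes\id)\Delta\tau = (\Ii\otimes\id\otimes\id)(\Delta\otimes\id)\Delta\tau,
\end{equation*}
and matching these requires only the inductive hypothesis for $\tau\in\mNn\subset\Tt$, which is already supplied by the mutual induction planted--unplanted. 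All that is genuinely required from $\Delta$ on $\Alg(\Trec)$ is that it is defined and multiplicative over forests, which is a definition rather than something to prove. The paper's statement is accordingly restricted to $\sigma\in\Tt$; proving the identity on $\Ii_i^+(\tau)$ for $\tau\in\tNn$ is a harmless but superfluous extension.

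Second, the phrase "handling the three summands separately: the first two reduce to the already-established identities for $\Ii(\one)$, $\Ii(\X_i)$, $\Ii_i^+(\X_i)$" is slightly misleading: the individual summands of $\Delta\Ii(\tau)$ are not each co-associative on their own --- e.g.\ $(\Delta\otimes\id)(\Ii(\one)\otimes\Ii(\tau))$ is a single term while $(\id\otimes\Delta)(\Ii(\one)\otimes\Ii(\tau))$ has three --- so the matching necessarily happens across the three summands, not within them. Your later acknowledgment that the conclusion is "a careful but mechanical matching of terms" shows you are aware of this; just be careful not to structure the write-up as if the summands match independently. Other than these two points the proposal is sound, and your identification (via Remark~\ref{rem:three_ways_to_write_one}) of the $\Ii_i^+(\X_i)\otimes\Ii_i^+(\tau)$ term in the second line of the recursion as the algebraic reason the cross terms line up is exactly the right thing to flag.
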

\begin{proof}
We argue by induction in the size of $\sigma$.
 The cases where $\sigma =  \Ii(\one)$, or $\Ii(\X_{i})$ are straightforward to check. 
Note that by multiplicativity of $\Delta$ with respect to the tree product it suffices to establish the inductive step for $\sigma = \Ii(\tau)$ for some $\tau \in \mNn \cup \mWw$. 
The case where $\tau \in \mWw$ is trivial so we walk through the verification of the identity when $\tau \in \mNn$.  
 On the left we have 

\begin{align*}
(\Delta \otimes \id) \Delta \Ii(\tau)
=&
(\Delta \otimes \id)
\Big[ 
\Ii(\one)\otimes\Ii(\tau)+
 \Ii(\X_{i})\otimes \Ii^{+}_{i}(\tau)
+(\Ii \otimes \id)\Delta \tau
\Big]\\
=&
\Ii(\one) \otimes \Ii(\one)
\otimes 
\Ii(\tau)
+
\Ii(\X_{i}) \otimes \Ii_{i}^{+}(\X_{i}) \otimes \Ii^{+}_{i}(\tau) \\
&+ 
\Ii(\one) \otimes \Ii(\X_{i}) \otimes  \Ii^{+}_{i}(\tau)
+
\Ii(\one) \otimes 
\big(
(\Ii \otimes  \id)  \Delta \tau
\big)\\
&+
\Ii(\X_{j})
\otimes
\big(
(\Ii_{j}^{+} \otimes \id )  \Delta \tau
\big)
+
(\Ii \otimes \id \otimes \id)
(\Delta \otimes \id) \Delta \tau\;.
\end{align*}
On the right we have 
\begin{align*}
( \id \otimes \Delta) \Delta \Ii(\tau)
=&
(\id \otimes \Delta )
\Big[ 
\Ii(\one)\otimes\Ii(\tau)+
 \Ii(\X_{i})\otimes \Ii^{+}_{i}(\tau)
+(\Ii \otimes \id)\Delta \tau
\Big]\\
=&
\Ii(\one) \otimes \Ii(\one)
\otimes 
\Ii(\tau)
+
\Ii(\one)
\otimes
\Ii(\X_{j})
\otimes
\Ii^{+}_{j}(\tau)
\\
&+
\Ii(\one) \otimes 
\big(
(\Ii \otimes \id)  \Delta \tau
\big)
+
\Ii(\X_{i}) \otimes \Ii_{i}^{+}(\X_{i}) \otimes 
\Ii^{+}_{i}(\tau)
\\ 
&+
\Ii(\X_{i}) \otimes
\big(
(\Ii_{i}^{+} \otimes \id) \Delta \tau
\big)
+
(\Ii \otimes \id \otimes \id)
(\id \otimes \Delta) \Delta \tau\;.
\end{align*}
All the terms in the expression for the left and right hand sides can be immediately matched except for the very last terms, but these are seen to be identical by using our induction hypothesis which is 
\[
(\id \otimes \Delta) \Delta \tau = 
(\Delta \otimes \id) \Delta \tau\;.
\]
\end{proof}

\subsection{Useful relations on trees}
\label{ss:relations}
We introduce two (reflexive and antisymmetric) relations on unplanted trees $\Tr$, which we denote $\leq$ and $\subset$.

\begin{definition}
Given $\bar{\tau}, \tau \in \Tr$ we have $\bar{\tau} \leq \tau$ if and only if one can obtain $\tau$ from $\bar{\tau}$ by replacing occurrences of the generators $\one$ in $\bar{\tau}$ with appropriately chosen trees $\tau_1,...\tau_{\numone(\bar{\tau})}\in \Tr$ and, for every $1 \le i \le d$, occurrences of $\X_{i}$ with trees $\tau_{1},\dots,\tau_{m_{\X_{i}(\bar{\tau})}} \in \Tr\setminus\{\one, \x_j,j\neq i\}$. 

\end{definition}
\begin{example}We give two pictorial examples
\[
\<K*3b11_black>\leqslant\<K*31b_black>,\qquad \<K*2Xb_black>\leqslant\<K*2Xb1_black>.
\] \end{example}
\begin{definition}
Given $\bar{\tau}, \tau \in \Tr$ we have $\bar{\tau} \subset \tau$ if and only if $\bar{\tau} = \tau$ or $\bar{\tau}$ appears in the inductive definition of $\tau$, that is the expression $\Ii(\bar{\tau})$ should appear at some point when one writes out the full algebraic expression for $\tau$.
\end{definition}
\begin{example}We give an example below. 
\[
\<3b_black>,\<3bi_black>\subset \<K*31b_black>
\]\end{example}
We also use the notation $<$ and $\subsetneq$ to refer to the non-reflexive (strict) relations corresponding to $\le$ and $\subset$.

One can get an intuition of how the coproduct works with the idea of cutting branches: on the left-hand side of $\Delta \tau$ we have trees $\bar{\tau}\leqslant\tau$,
and on the right-hand side, we have the trees $\Ii(\tilde{\tau})$ or $\Ii^+_i(\tilde{\tau})$ where $\tilde{\tau}\subset\tau$ has been cut from $\tau$ to obtain $\bar{\tau}$. We formalise this in in the following section.

\subsection{Another formula for $\Delta$} 
\label{ss: explicit coprod}
We write $\Ff^{\rec}$ for the collection of all finite, non-commutative words in $\Trec$, including the empty word. 
In particular, $\Ff^{\rec}$ is a vector space basis for $\Alg(\Ttp)$. 
We  define a map $C_+ : \Tr \times \Tr \rightarrow \Ff^{\rec}$ recursively. The recursion is given in the following table: 
%
%
%
%
\begin{table}[h!]
\centering
\begin{tabular}{|c|c c c c|}
\hline
$\tau\setminus\bar{\tau}$ & $\one$ & $\x_{i}$ & $\Xi$ & $\Ii(\bar{\tau}_1)\Ii(\bar{\tau}_2)\Ii(\bar{\tau}_3)$ \\ 
\hline  
$\one$ & $\Ii(\one)$ & $0$ & $0$ & $0$ \\ 
$\X_j$ & $\Ii(\X_j)$ & $\Ii^+_i(\X_j)$ & $0$ & $0$ \\ 
$\Xi$ & 0 & 0 & $1$ & $0$ \\ 
$\Ii(\tau_1)\Ii(\tau_2)\Ii(\tau_3)$ & $p_+\Ii(\tau)$ & $\Ii^{+}_{i}(\tau)$ & 0 & $C_+(\bar{\tau}_1,\tau_1) \cdot C_+(\bar{\tau}_2,\tau_2) \cdot C_+(\bar{\tau}_3,\tau_3)$ \\ 
\hline 
\end{tabular}
\vskip1ex
\caption{
This table gives a recursive definition of $C_+(\bar\tau, \tau)$. Possible values of $\tau$ are displayed in the first column, while possible values of $\bar{\tau}$ are shown in 
the first row. The corresponding values of $C_+(\bar\tau, \tau)$ are shown in the remaining fields. 
%
 }
\label{table:co-prod1} 
\end{table}

Here $p_+$ is the projection on trees of positive order. In particular, for $\delta<1$, one has $p_+\Ii(\Xi),\ \Ii^{+}_{i}(\Xi) =0$. 
Note also that $C_+(\Xi,\Xi)=1$, which is the unit element in the algebra of trees, not to be mistaken for $\one$.

\begin{example}
We give two pictorial examples 
\[
 C_+(\<K*3b11_black>,\<K*31b_black>)=\begin{cases}
 \<K*3b_black>\ \<K*3bi_black>& \text{ if }\delta>\frac13\\
 0&\text{ else.} 
 \end{cases}
\]
and
\[
C_+(\<K*2Xb1j_black>,\<K*31b_black>)=\<Kj*3bi_black>.
\]
We explain how this can be understood in the language of ``cuts''. There are three types of cutting procedures that can be applied to a tree $\sigma$. 
\begin{enumerate}
\item One cuts an $\Ii$-branch and takes the attached planted tree, leaving behind an $\Ii(\one)$.
\item One cuts an $\Ii$ branch and takes the attached planted tree, with its ``trunk'' becoming a derivative $\Ii^{+}_{k}$, and leaving behind a $\Ii(\X_{k})$. Note that this only occurs when one the tree $\tau \subset \sigma$ attached to this $\Ii$ branch belongs to $\tNn$. 
\item One cuts an $\Ii_{i}^{+}$ branch (which must be the trunk of $\sigma$) and takes the all of $\sigma$, leaving behind an $\Ii_{i}^{+}(\X_{i})$.
\end{enumerate}
If $\delta > \frac{1}{3}$, the tree $\<K*3b11_black>$ is obtained from $\<K*31b_black>$ by performing the first type of cut on the leftmost and rightmost $\Ii$ branches of $\<K*31b_black>$ connected to the root, leaving behind $\<K*3b11_black>$. 

In the second example, one performs the second type of cut on the rightmost $\Ii$ branch connected to the root of $\<K*31b_black>$, generating an $\Ii_{j}^{+}$ trunk on the planted tree taken and leaving behind an $\Ii(\X_{j})$ on $\<K*2Xb1j_black>$. 
\end{example}
Some immediate properties of these forests $C_{+}(\tau,\bar{\tau})$ are given in the following lemma. 
\begin{lemma}\label{lem:lemC}
Let $\tau, \bar{\tau} \in \Tr$. 
Then we have
\begin{align}
C_+(\bar{\tau},\tau)\neq 0\Rightarrow \bar{\tau} \leq \tau,\label{prop_C}\\
\Ii(\tau')\in C_+(\bar{\tau},\tau)\text{ or }\Ii^+_i(\tau')\in C_+(\bar{\tau},\tau)\Rightarrow \tau'\subset \tau\notag.
\end{align}
Under the assumption that $C_+(\bar{\tau},\tau)\neq 0$ we have
\begin{align}
\numnoise(\tau)=\numnoise(\bar{\tau})+\numnoise(C_+(\bar{\tau},{\tau}))\quad
\numone(\bar{\tau})+\numpoly(\bar{\tau})=\sharp(C_+(\bar{\tau},\tau)),\label{prop_C.5}\\ 
\numone(\tau)=\numone(C_+(\bar{\tau},\tau)),\quad \numpoly(\tau)=\numpoly(C_+(\bar{\tau},\tau))\notag,
\end{align} 
where $\sharp(C_+(\bar{\tau},\tau))$ is the number of trees in $C_+(\bar{\tau},\tau)$ (including multiplicity) and we extend the functions $\numone,\; \numpoly$ and $\numnoise$ to forests of the planted trees by summing over the individual planted trees in the forest.

\end{lemma}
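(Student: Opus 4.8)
The plan is to prove Lemma~\ref{lem:lemC} by induction on the number of edges of $\tau$, since $C_+$ is defined recursively through the last row of Table~\ref{table:co-prod1}, and all the claimed properties are either immediate for the base cases (when $\tau \in \Pp \cup \{\Xi\}$, or more generally when $\tau$ has few edges) or are preserved by the recursive step. First I would record the base cases: for $\tau = \one$ we have $C_+(\one,\one) = \Ii(\one)$ and $C_+(\bar{\tau},\one) = 0$ otherwise, so $\bar{\tau} \le \one$ forces $\bar{\tau} = \one$; the noise counts are all zero, $\numone(\one) = 1 = \sharp(\Ii(\one)) = \numone(\Ii(\one))$, and $\numpoly = 0$ on both sides. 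The cases $\tau = \X_j$ and $\tau = \Xi$ are checked the same way directly from the table, noting $C_+(\Xi,\Xi) = 1$ is the empty forest so $\numnoise(1) = 0 = \numnoise(\Xi) - \numnoise(\Xi)$ (wait --- here $\bar\tau = \tau = \Xi$, so $\numnoise(C_+) = 0$ and indeed $\numnoise(\tau) = 1 = 1 + 0$ is false) --- more carefully, $\numnoise(\Xi) = 1$, $\numnoise(C_+(\Xi,\Xi)) = \numnoise(1) = 0$, and the identity reads $1 = 1 + 0$ only if one interprets $\numnoise(\bar\tau) = \numnoise(\Xi) = 1$, which is correct; similarly the second identity of \eqref{prop_C.5} reads $\numone(\Xi) + \numpoly(\Xi) = 0 = \sharp(1)$, the empty forest having zero trees, which checks out.

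For the inductive step, suppose $\tau = \Ii(\tau_1)\Ii(\tau_2)\Ii(\tau_3) \in \Tr$. If $\bar{\tau} \in \{\one, \X_i\}$ then $C_+(\bar\tau,\tau)$ is $p_+\Ii(\tau)$ or $\Ii^+_i(\tau)$ respectively; in these cases $\bar\tau \le \tau$ holds trivially (one obtains $\tau$ from $\one$ or from $\X_i$ by substituting the single leaf by $\tau$ --- legal since $\tau \notin \{\one, \X_j : j \ne i\}$ as $\tau$ has edges), the second line of \eqref{prop_C} holds since $\tau \subset \tau$, and the counting identities are direct: $\numnoise(\tau) = \numnoise(\one) + \numnoise(\Ii(\tau)) = 0 + \numnoise(\tau)$, $\numone(\one) + \numpoly(\one) = 1 = \sharp(\Ii(\tau))$, and $\numone(\tau) = \numone(\Ii(\tau))$, $\numpoly(\tau) = \numpoly(\Ii(\tau))$ (all using that $\numone, \numpoly, \numnoise$ on a planted tree equal their values on the tree inside). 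If $\bar\tau = \Xi$ then $C_+ = 0$ and there is nothing to prove. The remaining case is $\bar\tau = \Ii(\bar\tau_1)\Ii(\bar\tau_2)\Ii(\bar\tau_3)$, where $C_+(\bar\tau,\tau) = C_+(\bar\tau_1,\tau_1)\cdot C_+(\bar\tau_2,\tau_2)\cdot C_+(\bar\tau_3,\tau_3)$; here I would apply the induction hypothesis to each factor $C_+(\bar\tau_k,\tau_k)$ and assemble. Non-vanishing of the product forces each $C_+(\bar\tau_k,\tau_k) \ne 0$, hence $\bar\tau_k \le \tau_k$ for each $k$, and replacing leaves inside each subtree independently yields $\bar\tau \le \tau$. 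For the second line of \eqref{prop_C}: any planted tree $\Ii(\tau')$ or $\Ii^+_i(\tau')$ occurring in the forest product occurs in one of the three factors, so by induction $\tau' \subset \tau_k \subset \tau$. The counting identities follow by summing over $k$, using the induction hypothesis for each factor together with additivity of $\numone, \numpoly, \numnoise, \sharp$ over forest products and the fact that $\numone(\tau) = \sum_k \numone(\tau_k) = \sum_k \numone(\Ii(\tau_k))$ etc.

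The only genuinely delicate point --- and I expect it to be the main, though still minor, obstacle --- is bookkeeping the $p_+$ projection in the entry $C_+(\one, \Ii(\tau_1)\Ii(\tau_2)\Ii(\tau_3)) = p_+ \Ii(\tau)$. When $|\Ii(\tau)| \le 0$ this forest is zero, and one must check that all claimed implications remain consistent: $C_+(\one,\tau) = 0$ is allowed, and there is simply nothing to verify. But when $\bar\tau$ is a non-trivial product and one of its offspring is $\one$, the $p_+$ could kill a factor and hence the whole product, so the implications in \eqref{prop_C} and \eqref{prop_C.5} are only asserted under the standing hypothesis $C_+(\bar\tau,\tau) \ne 0$; this means in the inductive assembly I must be careful to invoke the induction hypothesis only for those factors that are themselves nonzero, which is automatic because a product of forests is nonzero iff each factor is. Likewise one should double-check that $\numnoise(p_+\Ii(\tau)) = \numnoise(\Ii(\tau))$ when $p_+\Ii(\tau) \ne 0$, which holds since $p_+$ either keeps $\Ii(\tau)$ or sends it to $0$. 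With these observations the induction closes and all four displayed identities follow.
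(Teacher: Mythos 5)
The paper states Lemma~\ref{lem:lemC} without giving a proof, evidently regarding it as an immediate consequence of the recursive definition of $C_+$ in Table~\ref{table:co-prod1}; your induction on the number of edges of $\tau$, assembling the three factors $C_+(\bar\tau_k,\tau_k)$ via additivity of $\numnoise$, $\numone$, $\numpoly$, and $\sharp$ over forest products and over the three subtrees of the tree product, is exactly the natural argument and is correct. One small addendum: alongside the $p_+$ truncation you flag in $C_+(\one,\tau)=p_+\Ii(\tau)$, the entry $C_+(\X_i,\tau)=\Ii^+_i(\tau)$ is also implicitly projected (it vanishes unless $\tau\in\tNn$), but your general remark that the standing hypothesis $C_+(\bar\tau,\tau)\neq 0$ forces every factor to be nonzero handles this case as well, so nothing further is needed.
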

The following lemma finally gives the expression of the coproduct $\Delta$ in terms of $C_+$. This expression is used throughout the paper without explicit reference to this lemma.
\begin{lemma}\label{lem:coproduct} 
For any $\tau\in \Nn \cup \Ww$, 
\begin{equation}\label{eq:eqeq}
\Delta \Ii(\tau)
=
\sum_{\bar{\tau}\in \Nn\cup \Ww }\Ii(\bar{\tau})
\otimes 
C_+(\bar{\tau},\tau).
\end{equation}

In particular, one has the formulae
\begin{equation}\label{eq:explicit_formula_for_I_tree}
\Delta \Ii(\tau)
=
\begin{cases}
\Ii(\tau) \otimes 1 & \textnormal{if }\tau \in \Ww\;,\\
\sum_{\bar{\tau}\in \Nn}\Ii(\bar{\tau})\otimes C_+(\bar{\tau},\tau)
& \textnormal{if }\tau \in \Nn\;.
\end{cases}
\end{equation}
Moreover, for any $\tau\in \Tr$,
\begin{equation}\label{eq:explicit_coproduct_of_product}
\Delta \tau=\sum_{\bar{\tau}\in \Nn\cup\Ww }\bar{\tau}\otimes C_+(\bar{\tau},\tau)
=
\tau \otimes 1 
+
\sum_{
\bar{\tau}\in \Nn
\atop
\bar{\tau} \not = \tau}
\bar{\tau}\otimes C_+(\bar{\tau},\tau)\;.
\end{equation}
\end{lemma}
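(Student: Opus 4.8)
The plan is to prove \eqref{eq:eqeq} by induction on the number of edges of $\tau$, running the induction simultaneously for $\tau \in \Ww$ and $\tau \in \Nn$, and then deduce \eqref{eq:explicit_formula_for_I_tree} and \eqref{eq:explicit_coproduct_of_product} as essentially immediate consequences. The base cases are the trees with three edges together with the elementary trees $\one, \X_i, \Xi$: for $\tau \in \Ww$ (including $\tau = \Xi$) the last line of \eqref{eq: bascases deltaplus} gives $\Delta\Ii(\tau) = \Ii(\tau)\otimes 1$, which matches the right-hand side of \eqref{eq:eqeq} since Table~\ref{table:co-prod1} yields $C_+(\bar\tau,\tau) \ne 0$ only for $\bar\tau = \tau$, where it equals $1$ (using that $\tau$ has no $\one$ or $\X_i$ leaves, so the relation $\bar\tau \le \tau$ forces $\bar\tau = \tau$). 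For $\tau = \one$ or $\tau = \X_i$ the identities \eqref{eq: bascases deltaplus} are matched against the first two rows of Table~\ref{table:co-prod1} by direct inspection; note here the role of the $\Ii_i^+(\X_i)\otimes\Ii_i^+(\X_i)$ term, which corresponds precisely to the entry $C_+(\X_j,\X_j) = \Ii_i^+(\X_j)$.

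For the inductive step I would write $\tau = \Ii(\tau_1)\Ii(\tau_2)\Ii(\tau_3) \in \mNn \cup \mWw$. If all $\tau_k \in \Ww$, then $\tau \in \mWw$ by Lemma~\ref{lem:w-trees_all_w}, and again both sides of \eqref{eq:eqeq} reduce to $\Ii(\tau)\otimes 1$, using the bottom-left/middle entries $p_+\Ii(\tau) = 0$ and $\Ii_i^+(\tau) = 0$ (valid since $|\tau| < -2$ is not in $\tNn \cup \Pp$), and the multiplicativity $C_+(\bar\tau_1,\tau_1)\cdot C_+(\bar\tau_2,\tau_2)\cdot C_+(\bar\tau_3,\tau_3)$ combined with the fact that each $C_+(\bar\tau_k,\tau_k)$ is forced to be the singleton $\Ii(\tau_k)$. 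Otherwise $\tau \in \mNn$, and I would start from the first line of \eqref{eqs: recursive deltaplus}, namely $\Delta\Ii(\tau) = \Ii(\one)\otimes\Ii(\tau) + \Ii(\X_i)\otimes\Ii_i^+(\tau) + (\Ii\otimes\id)\Delta\tau$. The first two terms match the rows $\bar\tau = \one$ and $\bar\tau = \X_i$ of the bottom row of Table~\ref{table:co-prod1} (the entries $p_+\Ii(\tau)$ and $\Ii_i^+(\tau)$) — here one uses $|\tau| > 0$ (from $\tau \in \mNn$, though one must be slightly careful: for $\tau$ with $-2 \le |\tau| \le 0$ the projection $p_+\Ii(\tau) = \Ii(\tau)$ since $|\Ii(\tau)| = |\tau| + 2 > 0$, and $\Ii_i^+(\tau)$ is $\Ii_i^+(\tau)$ or $0$ exactly according to whether $\tau \in \tNn$, matching the shorthand conventions of Section~\ref{ss:DE}). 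For the remaining term $(\Ii\otimes\id)\Delta\tau$ I would apply the induction hypothesis in the form \eqref{eq:explicit_coproduct_of_product} for the unplanted tree $\tau \in \Tr$ (which itself follows from \eqref{eq:eqeq} applied to the subtrees, via the last line of \eqref{eqs: recursive deltaplus} and the tensor product \eqref{eq:product_on_tensors}), to get $\sum_{\bar\tau}\Ii(\bar\tau)\otimes C_+(\bar\tau,\tau)$, which is exactly the contribution of the row $\bar\tau = \Ii(\bar\tau_1)\Ii(\bar\tau_2)\Ii(\bar\tau_3)$ of Table~\ref{table:co-prod1}.

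The deduction of \eqref{eq:explicit_formula_for_I_tree} is then immediate: the $\tau \in \Ww$ case is the base/trivial case noted above, and the $\tau \in \Nn$ case follows from \eqref{eq:eqeq} together with \eqref{prop_C} of Lemma~\ref{lem:lemC}, which guarantees $C_+(\bar\tau,\tau) \ne 0 \Rightarrow \bar\tau \le \tau$; one then checks that $\bar\tau \le \tau$ with $\tau \in \Nn$ forces $\bar\tau \in \Nn$ (the number of noise leaves can only decrease and $|\bar\tau| \ge |\tau| - (\text{stuff removed})$, combined with $\bar\tau \in \Tr$), so the sum over $\bar\tau \in \Nn \cup \Ww$ collapses to a sum over $\bar\tau \in \Nn$. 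For \eqref{eq:explicit_coproduct_of_product}, the first equality is the multiplicative definition of $\Delta$ on tree products (last line of \eqref{eqs: recursive deltaplus}) unwound through $C_+$'s multiplicativity on the bottom-right of Table~\ref{table:co-prod1}; the second equality just isolates the diagonal term $\bar\tau = \tau$, for which $C_+(\tau,\tau) = 1$, and uses again that the off-diagonal terms have strictly fewer noise leaves hence lie in $\Nn$ (not $\Ww$, since dropping a leaf from a tree in $\Nn\cup\Ww$ and landing back in $\Tr$ must land in $\Nn$). I expect the only genuinely fiddly point to be the careful bookkeeping of which conventions (the vanishing shorthands $\Ii_i^+(\tau) = 0$ for $\tau \notin \tNn$, and $p_+$) make the recursive clauses of \eqref{eqs: recursive deltaplus} line up term-by-term with the four columns of Table~\ref{table:co-prod1} across all sub-ranges of $\delta$; this is routine but must be done attentively, and it is where Assumption~\ref{assump: orders away from integers} is implicitly used to avoid boundary ambiguities in the definitions of $\tNn$ and the order thresholds.
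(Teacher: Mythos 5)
Your proposal is correct and follows essentially the same inductive strategy as the paper: both prove \eqref{eq:eqeq} by induction in the size of the tree, matching the base cases and the recursive clauses of $\Delta$ against Table~\ref{table:co-prod1} term by term, and then deduce \eqref{eq:explicit_formula_for_I_tree} and \eqref{eq:explicit_coproduct_of_product} from the multiplicativity of $\Delta$ and $C_+$. Two small slips are worth correcting in your write-up: the assertion that $\bar\tau\le\tau$ forces $\bar\tau=\tau$ when $\tau$ has no $\one$ or $\X_i$ leaves is not literally true (e.g.\ $\Ii(\one)\Ii(\Xi)\Ii(\Xi)\le\Ii(\Xi)\Ii(\Xi)\Ii(\Xi)$) --- the vanishing $C_+(\bar\tau,\tau)=0$ for $\bar\tau\neq\tau\in\Ww$ comes instead from the recursion in Table~\ref{table:co-prod1} terminating in $p_+\Ii(w')=0$ or $\Ii^+_i(w')=0$ for $w'\in\Ww$ --- and in the $\mWw$ sub-case of your inductive step the only surviving value of $C_+(\bar\tau_k,\tau_k)$ with $\tau_k\in\Ww$ is $C_+(\tau_k,\tau_k)=1$, the unit of $\Alg(\Trec)$, not the forest $\Ii(\tau_k)$.
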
 

\begin{proof}
We prove \eqref{eq:explicit_formula_for_I_tree} by induction, with the base cases given by $\tau \in \Pp \cup \Ww$ which we check now. 
\[
\Delta\Ii(\one)=\Ii(\one)\otimes\Ii(\one)\text{ and }C_+(\bar{\tau},\one)=\Ii(\one)\delta_{\{\bar{\tau}=\one\}}.\]
Since $\Ii^+_j(\X_i)=0$ for $j\neq i$, we also have
$\Delta\Ii(\x_i)=\Ii(\one)\otimes\Ii(\x_i)+\Ii(\x_i)\otimes\Ii^+_i(\x_i)$ and $C_+(\bar{\tau},\x_i)=\Ii(\x_i)\delta_{\{\bar{\tau}=\one\}}+\Ii^+_i(\X_i)\delta_{\{\bar{\tau}=\x_i\}}.$

Finally, for
$\tau=w\in \Ww$, we have to show that the sum in the right-hand side of \eqref{eq:eqeq} contains only one term. Indeed, for any $w'\leqslant w$, we also have $w'\in \Ww$, hence $|\Ii(w')|<0$ and $C_+(\bar{\tau},w)=\delta_{\{\bar{\tau}=w\}}$.

We now prove the inductive step for $\tau=\Ii(\tau_1)\Ii(\tau_2)\Ii(\tau_3)\in \mNn$, with the induction hypothesis $\Delta\Ii(\tau_k)=\sum_{\bar{\tau}_k}\Ii(\bar{\tau}_k)\otimes C_+(\bar{\tau}_k,\tau_k),$ for $ k=1,2,3.$ 
We have that  $C_+(\one,\tau)=p_+\Ii(\tau)=\Ii(\tau)$ since $\tau\in\mNn$ and $C_+(\x_i,\tau)=\Ii^+_i(\tau)=\Ii^+_i(\tau)\delta_{\{\tau\in\tNn\}}$, and from the definition of $\Delta$, we have 
\begin{align*}
\Delta\Ii(\tau)=&\Ii(\one)\otimes\Ii(\tau)+\Ii(\x_i)\otimes \Ii^+_i(\tau)\\
&+\sum_{\bar{\tau}_1,\bar{\tau}_2,\bar{\tau}_3}\Ii(\Ii(\bar{\tau}_1)\Ii(\bar{\tau}_2)\Ii(\bar{\tau}_3))\otimes C_+(\bar{\tau}_1,\tau_1)C_+(\bar{\tau}_2,\tau_2)C_+(\bar{\tau}_3,\tau_3).
\end{align*}
Furthermore, for any $\bar{\tau}\leq\tau\in\mNn$, we have that either $\bar{\tau}=\tau$ or $\numone(\bar{\tau})+\numpoly(\bar{\tau})\geq 1$ therefore $|\bar{\tau}|\geq-2$. Hence the sum can be restricted to trees $\bar{\tau}=\Ii(\Ii(\bar{\tau}_1)\Ii(\bar{\tau}_2)\Ii(\bar{\tau}_3))\in\mNn$. This concludes the proof of \eqref{eq:eqeq}

Finally, we prove \eqref{eq:explicit_coproduct_of_product}. 
This is immediate if $\tau = \Ww$, otherwise one has $\tau = \Ii(\tau_{1})\Ii(\tau_{2})\Ii(\tau_{3}) \in \mNn$ and one obtains the desired result by combining \eqref{eq:explicit_formula_for_I_tree} and the multiplicativity of $\Delta$ with  respect to the tree product as described in the last line of \eqref{eqs: recursive deltaplus}.
\end{proof}

%
\section{From local products to paths}
\label{s:positive_renorm}
\subsection{Definition of paths and centerings}
For any choice of local product $\locprod$,
we will define two corresponding families of maps, a path $(\path_{z,x}:\Ttp \to\R;\ z,x\in\R \times \R^{d})$ and a centering  $(\path^\rec_z: \Trec \to \R;\ z\in\R \times \R^{d})$ where $\Trec$ is defined in \eqref{eq:trec}. 
 
Both the path and the centering are defined through an inductive procedure that intertwines these two families of maps.

One particular aim of our definitions will be to allow us to obtain the formula
\begin{equation}\label{eq: path on planted trees in terms of coproduct}
\path_{z,x} \Ii(\tau) 
=
(\locprod_{z} \otimes \locprod^{\rec}_{x})
\Delta \Ii(\tau) 
\textnormal{ for any } \tau \in \mNn\;,
\end{equation}
where we are extending $\locprod^{\rec}_{x}$ to act on forests of planted trees by multiplicativity. 

As we discussed in Section~\ref{s:ov}, we define
\begin{equation}\label{eq: path on I(Poly)}
\path_{z,x}\Ii(\one):=1 \textnormal{ and }\path_{z,x}\Ii(\x_i):=z_i-x_i,
\end{equation}
and 
\begin{equation}\label{eq: path on wild trees}
\path_{z,x} \tau := \locprod_{z} \tau,\quad 
\path_{z,x} \Ii(\tau) := \locprod_{z} \Ii(\tau) 
\textnormal{ for any }
\tau \in \Ww\;.
\end{equation}
With our definition \eqref{eq: path on wild trees} it is immediate that \eqref{eq: path on planted trees in terms of coproduct} holds for $\tau \in \Ww$.
For $\tau \in \mNn$ we define
\begin{equation}\label{eq: path on planted N trees}
\path_{z,x}\Ii(\tau)
:=
\mathcal{L}^{-1}(\path_{\bullet, x}\tau)(z)
-
\mathcal{L}^{-1}(\path_{\bullet, x}\tau)(x)
- \mathbbm{1}_{\tau \in \tNn} (z_{i} - x_{i}) \nu^{(i)}_{\tau}(x)\;,
\end{equation}
for which one must take as input the definition of $\path_{\bullet,x}\tau$ and 
\begin{equation}\label{eq: derivative term in path}
\nu_{\tau}(x) = 
(\nu^{(i)}_{\tau}(x))_{i=1}^{d} := 
\nabla( \mathcal{L}^{-1}(\locprod_{\bullet,x}\tau))(z)
|_{z=x}\;,
\end{equation} 
where $\nabla$ denotes the spatial gradient.
For the centering we will define 
\begin{equation}\label{eq: centering definitions}
\begin{split}
\locprod^{\rec}_{x} \Ii(\one)
:=& 
1,
\quad 
\locprod^{\rec}_{x} \Ii(\x_i):=-x_i,\quad 
\locprod^{\rec}_{x}\Ii^{+}_{i}(\x_{i}):= 1,\\
\locprod^{\rec}_{x}
\Ii(\tau)
:=&
-\mathcal{L}^{-1}(\path_{\bullet, x}\tau)(x)
+
{\mathbbm{1}}_{\tau\in\tNn}x_i \nu^{(i)}_{\tau}(x)
\textnormal{ for any }
\tau \in \mNn,\\
\locprod^{\rec}_{x}
\Ii_{i}^{+}(\tau)
:=&
-\nu_{\tau}^{(i)}(x)
\textnormal{ for any }
\tau \in \tNn\;.
\end{split}
\end{equation}
The formulae above are inductive, we remark that for $\tau \in \mNn$ one needs to be given $\locprod_{\bullet,y}\tau$ in order to define $\locprod^{\rec}_{y}\Ii(\tau)$ and, if $\tau \in \tNn$, that same input is needed to define $\locprod^{\rec}_{y}\Ii^{+}_{i}(\tau)$. 

Finally, to handle the tree products that appear in the remainder equation we define 
\begin{equation}\label{eq: path -  coproduct formula for product trees}
\locprod_{z,x} \tau
:=
(\locprod_{z} \otimes \locprod_{x}^{\rec})
\Delta \tau\;
\textnormal{ for all }\tau \in \mNn\;.
\end{equation}
Again, the formula above is an inductive definition - a sufficient condition for specifying the right hand side above is that we already know $\locprod_{\bullet,x}$ for every $\bar{\tau} \in \mNn$ for $\bar{\tau} \subsetneq \tau$.
\begin{remark}
Note that if \eqref{eq: path -  coproduct formula for product trees} is extended to $\tau \in \Ww$ it agrees with the definition given in \eqref{eq: path on wild trees}.
\end{remark}
\begin{lemma}\label{lem:pathCopro}
If one adopts the inductive set of definitions \eqref{eq: path on I(Poly)}, \eqref{eq: path on planted N trees}, \eqref{eq: path on wild trees}, and \eqref{eq: centering definitions}, to determine the path on $\Tt$ and the centering on $\Tt^{\rec}$ then 
\eqref{eq: path on planted trees in terms of coproduct} holds for every $\tau \in \Nn \cup \Ww $. 
\end{lemma}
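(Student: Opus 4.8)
\emph{Approach.} The plan is to establish the closed form \eqref{eq: path on planted trees in terms of coproduct} by induction on the number of edges of $\tau$, splitting $\Nn\cup\Ww$ into the three families $\Ww$, $\Pp=\{\one,\X_{1},\dots,\X_{d}\}$ and $\mNn$. The two ``trivial'' families require no induction. If $\tau\in\Ww$, then $\Delta\Ii(\tau)=\Ii(\tau)\otimes 1$ by the last line of \eqref{eq: bascases deltaplus} (equivalently by \eqref{eq:explicit_formula_for_I_tree}), so $(\locprod_{z}\otimes\locprod^{\rec}_{x})\Delta\Ii(\tau)=\locprod_{z}\Ii(\tau)=\path_{z,x}\Ii(\tau)$ by \eqref{eq: path on wild trees}. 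If $\tau\in\Pp$, one inserts the base cases $\Delta\Ii(\one)=\Ii(\one)\otimes\Ii(\one)$ and $\Delta\Ii(\X_{i})=\Ii(\one)\otimes\Ii(\X_{i})+\Ii(\X_{i})\otimes\Ii^{+}_{i}(\X_{i})$ of \eqref{eq: bascases deltaplus} and evaluates the tensor legs using $\locprod_{z}\Ii(\one)=1$, $\locprod_{z}\Ii(\X_{i})=z_{i}$ from \eqref{eq: admissibility extension} together with $\locprod^{\rec}_{x}\Ii(\one)=1$, $\locprod^{\rec}_{x}\Ii(\X_{i})=-x_{i}$, $\locprod^{\rec}_{x}\Ii^{+}_{i}(\X_{i})=1$ from \eqref{eq: centering definitions}; this returns $1$ and $z_{i}-x_{i}$, matching \eqref{eq: path on I(Poly)}.

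\emph{The main step ($\tau\in\mNn$).} Write $\tau=\Ii(\tau_{1})\Ii(\tau_{2})\Ii(\tau_{3})$. I would start from the recursive formula $\Delta\Ii(\tau)=\Ii(\one)\otimes\Ii(\tau)+\Ii(\X_{i})\otimes\Ii^{+}_{i}(\tau)+(\Ii\otimes\id)\Delta\tau$ in \eqref{eqs: recursive deltaplus} (Einstein convention on $i$), apply $\locprod_{z}\otimes\locprod^{\rec}_{x}$, and identify the three resulting contributions. The first two give $\locprod^{\rec}_{x}\Ii(\tau)+z_{i}\,\locprod^{\rec}_{x}\Ii^{+}_{i}(\tau)$, which by \eqref{eq: centering definitions} (and $\Ii^{+}_{i}(\tau)=0$ unless $\tau\in\tNn$, so the relevant centering is $-\nu^{(i)}_{\tau}(x)$, cf.~\eqref{eq: derivative term in path}) equals $-\mathcal{L}^{-1}(\path_{\bullet,x}\tau)(x)+\mathbbm{1}_{\tau\in\tNn}(x_{i}-z_{i})\nu^{(i)}_{\tau}(x)$, i.e. precisely the last two terms of \eqref{eq: path on planted N trees}. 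For the third contribution I would invoke Lemma~\ref{lemma:delta preserves Trec} (third and fourth bullets): it gives $\Delta\tau=\sum_{\bar\tau\in\mNn}\bar\tau\otimes C_{+}(\bar\tau,\tau)$ with each $C_{+}(\bar\tau,\tau)$ a forest of planted trees lying in $\Trec$, hence $(\locprod_{z}\otimes\locprod^{\rec}_{x})(\Ii\otimes\id)\Delta\tau=\sum_{\bar\tau\in\mNn}\locprod_{z}\Ii(\bar\tau)\cdot\locprod^{\rec}_{x}\!\big(C_{+}(\bar\tau,\tau)\big)$. Since every $\bar\tau\in\mNn$ lies in $\wTr\setminus\Pp$, \eqref{eq: admissibility extension} gives $\locprod_{z}\Ii(\bar\tau)=(\mathcal{L}^{-1}\locprod_{\bullet}\bar\tau)(z)$; pulling the (finite) sum through the linear operator $\mathcal{L}^{-1}$ and recognising $\sum_{\bar\tau\in\mNn}\locprod_{\bullet}\bar\tau\cdot\locprod^{\rec}_{x}(C_{+}(\bar\tau,\tau))=(\locprod_{\bullet}\otimes\locprod^{\rec}_{x})\Delta\tau=\path_{\bullet,x}\tau$ via the definition \eqref{eq: path -  coproduct formula for product trees}, the third contribution becomes $\mathcal{L}^{-1}(\path_{\bullet,x}\tau)(z)$. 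Adding the three contributions reproduces exactly the right-hand side of \eqref{eq: path on planted N trees}, namely $\mathcal{L}^{-1}(\path_{\bullet,x}\tau)(z)-\mathcal{L}^{-1}(\path_{\bullet,x}\tau)(x)-\mathbbm{1}_{\tau\in\tNn}(z_{i}-x_{i})\nu^{(i)}_{\tau}(x)=\path_{z,x}\Ii(\tau)$, closing the induction (each proper subtree $\bar\tau\subsetneq\tau$ entering the centering has strictly fewer edges, so the recursive definitions \eqref{eq: path on planted N trees}, \eqref{eq: centering definitions}, \eqref{eq: path -  coproduct formula for product trees} are already available for it).

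\emph{Expected obstacle.} The computation is essentially bookkeeping; the two delicate points are: (i) making sure the sum in the third contribution genuinely ranges over $\mNn$, so that \eqref{eq: admissibility extension} applies verbatim with no stray $\one$ or $\X_{i}$ in the left tensor leg — this is exactly what Lemma~\ref{lemma:delta preserves Trec} provides, the $p_{+}$-truncation in Table~\ref{table:co-prod1} being what removes the would-be $\one\otimes\Ii(\tau')$ terms with $\tau'\in\Ww$ (of negative order); and (ii) tracking the first-order Taylor correction, i.e. keeping the indicator $\mathbbm{1}_{\tau\in\tNn}$, the summed spatial index $i$, and the definition \eqref{eq: derivative term in path} of $\nu_{\tau}$ aligned so that the $\Ii(\X_{i})\otimes\Ii^{+}_{i}(\tau)$ coproduct term matches the $-\mathbbm{1}_{\tau\in\tNn}(z_{i}-x_{i})\nu^{(i)}_{\tau}(x)$ term of \eqref{eq: path on planted N trees}. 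One should also record at the outset that all sums involved are finite — by Lemma~\ref{lemma: set of trees is finite} and the implication $C_{+}(\bar\tau,\tau)\neq0\Rightarrow\bar\tau\leq\tau$ from Lemma~\ref{lem:lemC} — which is what licenses interchanging $\mathcal{L}^{-1}$ with the summation.
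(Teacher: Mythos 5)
Your proposal is correct and takes essentially the same route as the paper: you apply $\locprod_z\otimes\locprod^{\rec}_x$ to the recursive decomposition $\Delta\Ii(\tau)=\Ii(\one)\otimes\Ii(\tau)+\Ii(\X_i)\otimes\Ii^+_i(\tau)+(\Ii\otimes\id)\Delta\tau$, show that the first two terms produce $\locprod^\rec_x\Ii(\tau)-\mathbbm{1}_{\tau\in\tNn}(z_i-x_i)\nu^{(i)}_\tau(x)$ after the $x_i$ terms cancel, and use the key identity $(\locprod_z\otimes\path^\rec_x)(\Ii\otimes\id)\Delta\tau=\mathcal{L}^{-1}(\path_{\bullet,x}\tau)(z)$ (via \eqref{eq: path -  coproduct formula for product trees} and Lemma~\ref{lemma:delta preserves Trec}) to recognise the third; the paper runs the same chain of equalities starting from a rearrangement of \eqref{eq: path on planted N trees} rather than from the coproduct, but the content is identical. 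Your additional remarks about Lemma~\ref{lemma:delta preserves Trec} ensuring the left tensor leg stays in $\mNn$, and about finiteness of the sums justifying the exchange with $\mathcal{L}^{-1}$, are good hygiene but not a difference in substance.
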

\begin{proof}
The fact that \eqref{eq: path on planted trees in terms of coproduct} holds for every $\tau \in \Ww \cup \Ii(\Ww) \cup \Ii(\Pp)$ is immediate.  
Now suppose that $\tau\in\mNn$, we can then rewrite \eqref{eq: path on planted N trees} as 
\[
\path_{z,x}\Ii(\tau)=
\mathcal{L}^{-1}(\path_{\bullet, x}\tau)(z)-{\mathbbm{1}}_{\tau\in\tNn} z_{i}\nu^{(i)}_{\tau}(x)+\path^\rec_x\Ii(\tau).
\]
We also have
\begin{equation*}
\begin{split}
\mathcal{L}^{-1}(\path_{\bullet, x}\tau)(z)
=&
\mathcal{L}^{-1}[(\locprod_{\bullet}\otimes\path_x^\rec)
\Delta\tau](z)\\
=&
[(\mathcal{L}^{-1}\locprod_{\bullet}\cdot)(z)\otimes\path_x^\rec]
\Delta \tau\\
=&
(\locprod_{z}
\otimes\path_x^\rec)
(\Ii \otimes \id) 
\Delta \tau\;.
\end{split}
\end{equation*}
The desired claim follows upon observing that 
\begin{equation*}
\begin{split}
-{\mathbbm{1}}_{\tau\in\tNn}z_{i}\nu_{\tau}^{(i)}(x)
=&
(\locprod_z \otimes \locprod^\rec_x)(\Ii(\x_{i}) \otimes \Ii_{i}^{+}(\tau))\;,\\
\textnormal{and }
\path^\rec_x\Ii(\tau)
=&
(\locprod_z \otimes \locprod^{\rec}_{x})( \Ii(\one) \otimes \Ii(\tau))\;.
\end{split}
\end{equation*}
\end{proof}
At this point we have finished the inductive definition of the path on the trees of $\Tt$ and of the centering on the trees of $\Tt^{\rec}$. 
What is left is to define the path on the trees of $\{ \Ii^{+}_{i}(\tau): 1 \le i \le d,\ \tau \in \tNn  \cup \{\X_{i}\}\}$. 

In keeping with our convention of thinking of $\Ii^{+}_{i}(\X_{i})$ as acting like $\Ii(\one)$ for all analysis, we set
\[
\locprod_{z,x}\Ii^{+}_{i}(\X_i) := 1\;.
\] 
Our definition for the action of a path $\locprod_{\bullet,\bullet}$ on a tree $\Ii_{i}^{+}(\tau)$ for $\tau \in \tNn $ is  motivated by the fact that such trees are not really part of our tree expansions but instead only appear in order to encode change of base-point operations.

In particular, $\path_{u,x}\Ii_{i}^{+}(\tau)$ will play a role in how we relate centering at $u$ versus centering at $x$ and the identity we will be aiming for is Chen's relation \eqref{equ: def of Chen relation}. 

The key identity we would like to hold is that, for any $z,x \in \R \times \R^{d}$, 
\begin{equation}\label{eq: alt-alt-eq for gamma}
\locprod^{\rec}_{x} \Ii^{+}_{i}(\tau)
=
\big(
\locprod^{\rec}_{z} 
\otimes
\path_{z,x}
\big)
\Delta
\Ii^{+}_{i}(\tau)\;.
\end{equation}	

Note that in the above equation we are using our convention of extending $\path_{z,x}$ to forests of planted trees by multiplicativity. 

Expanding the action of $\Delta$ in \eqref{eq: alt-alt-eq for gamma} gives us an inductive procedure for defining $\path_{\bullet,\bullet}\Ii^{+}_{i}(\tau)$ for $\tau \in \tNn$.
Namely, we will define, for any $\tau \in \tNn$, 
\begin{equation}\label{eq: alt eq for gamma}
\begin{split}
\path_{z,x}\Ii^{+}_{i} (\tau)
:=&
\locprod^{\rec}_{x}\Ii^{+}_{i}(\tau)
-
(\locprod^{\rec}_{z}\circ \Ii^{+}_{i} \otimes \path_{z,x}) \Delta \tau\\
=&
-\nu_{\tau}^{(i)}(x)
+
\sum_{
\bar{\tau} \in \tNn }
\nu_{\bar{\tau}}^{(i)}(z)
\locprod_{z,x}
C_{+}(\bar{\tau},\tau)\;.
\end{split}
\end{equation}	
We then see that, in order to define $\path_{z,x} \Ii_{i}^{+}(\tau)$ it suffices to have defined $\path_{\bullet,x}(\tau)$, $\path_{\bullet,x}(\bar{\tau})$ for every $\bar{\tau} \in \mNn$ with $\bar{\tau} < \tau$, along with $\locprod_{z,x}\Ii(\tilde{\tau})$ and $\locprod_{z,x}\Ii_{i}^{+}(\tilde{\tau})$ for every $\tilde{\tau} \subsetneq \tau$. 
\begin{remark}
We take a moment to draw parallels between our definitions and those found in the theory of regularity structures. 
Those unfamiliar with the theory of regularity structures can skip this remark. 

In our context, the local product $\locprod_{z}$ plays the role of the ``un-recentered'' $\boldsymbol{\Pi}(\bullet)(z)$ map in the theory of regularity structures. 

The corresponding path $\path_{z,x}$ sometimes plays the role of the map $(\Pi_{x}\bullet)(z)$ and sometimes plays a role more analogous to $\gamma_{z,x}(\bullet)$ where $\gamma_{z,x}$ is as in \cite[Section~8.2]{hairer2014theory}, that is it is the character that defines $\Gamma_{z,x}$. 
\begin{itemize}
\item For $\sigma \in \Ww \cup \Ii(\Ww)$ the path $\path_{z,x}\sigma$ plays the role of $(\Pi_{x}\sigma)(z)$ or equivalently $(\boldsymbol{\Pi}\sigma)(z)$.
\item For $\tau \in \mNn$, 
\begin{itemize}
\item $\path_{z,x}\tau$ plays the role of $(\Pi_{x}\tau)(z)$. 
\item $\path_{z,x}\Ii(\tau)$ plays the role of $(\Pi_{x}\Ii(\tau))(z)$ and $\gamma_{z,x}(\Ii(\tau))$. In particular these two quantities are the same and in our context this means that the definition \eqref{eq: path on planted N trees} is actually compatible with the formula \eqref{eq: alt-alt-eq for gamma} - see \eqref{eq:strong_chen}. 
\end{itemize}
\item For $\tau \in \tNn$ and $1 \le i \le d$, $\path_{z,x}\Ii_{i}^{+}(\tau)$ plays the role of $\gamma_{z,x}(\Ii_{i}(\tau))$ which in general has a different value than $(\Pi_{x}\Ii_{i}(\tau))(z)$. This is why we cannot define $\path_{z,x}\Ii_{i}^{+}(\tau)$ with some formula that is analogous to \eqref{eq: path on planted N trees}. 
\end{itemize}
\end{remark}
\subsection{Properties of paths and centerings}
The first property we will investigate is Chen's relation. 
\begin{definition}
We say a local product satisfies Chen's relation on $\sigma \in \Tt$ if, for every $z,u,x \in \R \times \R^{d}$,
\begin{equation}\label{equ: def of Chen relation}
(\locprod_{z,u} \otimes \locprod_{u,x})
\Delta 
\sigma
=
\locprod_{z,x} \sigma \;.
\end{equation}	
\end{definition}
\begin{remark}
We use Chen's relation to study the change of base-point operation for tree expansions, and the sole role of $\Ii_{i}^{+}(\tau)$ for $\tau \in \tNn$ is to describe this procedure.  

Therefore we are not interested in Chen's relation \eqref{equ: def of Chen relation} for the case where $\sigma = \Ii_{i}^{+}(\tau)$ and instead $\Ii_{i}^{+}(\tau)$ plays the role of an intermediate object in the expansion of \eqref{equ: def of Chen relation}.    
\end{remark}
The following lemma is straightforward because of the trivial structure of the coproduct in those cases. 
\begin{lemma}\label{lem: Chen on wild is trivial}
Any local product automatically satisfies Chen's relation on every $\sigma \in \Ww \cup \Ii(\Ww) \cup \Ii(\Pp) \cup \{ \Ii_{i}^{+}(\X_{i}) \}_{i=1}^{d}$.
\end{lemma}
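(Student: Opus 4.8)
The plan is to verify \eqref{equ: def of Chen relation} directly on each of the listed trees, exploiting the fact that $\Delta$ acts trivially on all of them. First I would organize the cases according to the coproduct formulas \eqref{eq: bascases deltaplus}. For $\sigma = w \in \Ww$ and for $\sigma = \Ii(w)$ with $w \in \Ww$, the last line of \eqref{eq: bascases deltaplus} gives $\Delta \sigma = \sigma \otimes 1$, so the left-hand side of \eqref{equ: def of Chen relation} is $(\locprod_{z,u} \otimes \locprod_{u,x})(\sigma \otimes 1) = \locprod_{z,u}\sigma \cdot \locprod^{\rec}_{u,x}(1) = \locprod_{z,u}\sigma$; here I would note that by \eqref{eq: path on wild trees} (together with the remark that \eqref{eq: path - coproduct formula for product trees} extends consistently to $\Ww$), $\locprod_{z,u}\sigma = \locprod_{z}\sigma = \locprod_{z,x}\sigma$, since for wild trees the path does not depend on the base-point. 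This gives exactly the right-hand side.

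Next I would handle the elementary planted trees. For $\sigma = \Ii(\one)$, $\Delta\Ii(\one) = \Ii(\one)\otimes\Ii(\one)$, so the left-hand side is $\locprod_{z,u}\Ii(\one)\cdot \locprod_{u,x}\Ii(\one) = 1 \cdot 1 = 1 = \locprod_{z,x}\Ii(\one)$ by \eqref{eq: path on I(Poly)}. For $\sigma = \Ii(\X_i)$, using $\Delta\Ii(\X_i) = \Ii(\one)\otimes\Ii(\X_i) + \Ii(\X_i)\otimes\Ii^{+}_{i}(\X_i)$ and the values $\locprod_{z,u}\Ii(\one)=1$, $\locprod_{z,u}\Ii(\X_i)=z_i-u_i$, $\locprod_{u,x}\Ii(\X_i)=u_i-x_i$, $\locprod_{u,x}\Ii^{+}_{i}(\X_i)=1$, the left-hand side becomes $(z_i - u_i) + (u_i - x_i) = z_i - x_i = \locprod_{z,x}\Ii(\X_i)$. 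For $\sigma = \Ii^{+}_{i}(\X_i)$, $\Delta\Ii^{+}_{i}(\X_i) = \Ii^{+}_{i}(\X_i)\otimes\Ii^{+}_{i}(\X_i)$ gives $1\cdot 1 = 1 = \locprod_{z,x}\Ii^{+}_{i}(\X_i)$. In each case the verification is a one-line computation from the definitions, so no genuine obstacle arises.

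The only point requiring a small amount of care is the bookkeeping around the multiplicative extension of $\locprod_{\bullet,\bullet}$ and $\locprod^{\rec}_{\bullet}$ to forests, and making sure that $\locprod^{\rec}_{u,x}$ appearing in the right factor of \eqref{equ: def of Chen relation} is interpreted as $\locprod_{u,x}$ restricted to $\Alg(\Trec)$ in the sense already fixed; since all the forests appearing here are either the empty forest $1$ or a single elementary planted tree $\Ii(\one)$, $\Ii^{+}_{i}(\X_i)$, this is immediate. I would also remark that the case $\sigma \in \Ii(\Pp)$ is covered by the computations for $\Ii(\one)$ and $\Ii(\X_i)$ above, and that $\Ii^{+}_{i}(\X_i)$ is the unique member of $\{\Ii^{+}_{i}(\X_i)\}_{i=1}^d$ up to the index. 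Thus the lemma follows by exhausting this short finite list of cases, and there is no substantive hard part — the content is entirely that the coproduct is (co-)group-like on exactly these generators.
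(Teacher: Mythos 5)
Your proposal is correct and matches what the paper intends: the authors omit the proof outright, calling it ``straightforward because of the trivial structure of the coproduct in those cases,'' and your case-by-case verification is precisely that straightforward check made explicit. The only blemish is a notational slip in your last paragraph where you write $\locprod^{\rec}_{u,x}$ — the object appearing in the right factor of \eqref{equ: def of Chen relation} is simply $\locprod_{u,x}$ extended multiplicatively to forests (with $\locprod_{u,x}(1)=1$ on the empty forest), and there is no hybrid map $\locprod^{\rec}_{u,x}$ in the paper's notation; your actual computations nevertheless use the correct object throughout.
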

We also define semi-norms to capture our notion of order bounds, using the convolution with a approximation of unity denoted by $(\cdot)_L$ as introduced in equation \eqref{shauder ou1}.
\begin{definition}\label{def:seminorm}
Given a local product $\locprod$ and $\sigma \in \Tp$, we define 
\begin{equation}\label{equ: order of order seminorm}
[\locprod ; \sigma]
:=
\begin{dcases}
\sup_{
x \in \R \times \R^{d}
}
\sup_{L \in (0,1]}
\Big|
\big(
\path_{\bullet,x}\sigma
\big)_{L}(x)
\Big|
L^{-|\sigma|}&
\textnormal{ for }\sigma \in \Tr \cup \Ii(\Ww)\;, \\
\sup_{
z,x \in \R \times \R^{d}
\atop
d(z,x) \in (0,1)
}
|\path_{z,x}\sigma|
d(z,x)^{-|\sigma|}&
\textnormal{ for }\sigma \in \Tt^{\rec}  \;.
\end{dcases}
\end{equation}
We say $\locprod$ satisfies an order bound on $\sigma$ if $[\locprod ; \sigma] < \infty$. 
\end{definition}
\begin{remark}\label{rem: order bound automatic on wild trees}
We note that for any $\sigma \in \Ii(\Pp) \cup \{\Ii_{i}^{+}(\x_{i})\}_{i=1}^{d}$ we have the bound $[\locprod ; \sigma] \lesssim 1$ uniformly over local products $\locprod$. 

Since we are working in the smooth setting, it is also true that any local product $\locprod$ satisfies an order bound on $\tau \in \Tr$ (and $\Ii(\tau) \in \Ii(\Ww)$). 
However, it is not obvious and in general not true,  that these bounds remain finite, when one passes to the rough limit, where $\xi$ is genuinely only a $C^{-3+\delta}$ 
distribution. In the application to stochastic PDE, these bounds can be controlled in the limit, but this requires additional probabilistic arguments as well as a renormalization 
procedure.

\end{remark}
Since we have Lemma~\ref{lem: Chen on wild is trivial} and Remark~\ref{rem: order bound automatic on wild trees} our goal for this section is to verify that our definitions automatically guarantee that any local product satisfies 
\begin{itemize}
\item Chen's relation on any $\tau \in \Ii(\Nn) \cup \mNn$, and 
\item a quantitative order bound on any 
\[
\tau \in \Ii(\mNn)  \cup \{ \Ii_{i}^{+}(\tau): 1 \le i \le d,\ \tau \in \tNn\} \cup \Ii(\Ww)
\] 
in terms of order bounds on $\tau \in \mNn \cup \Tr$. 
\end{itemize}
We now turn to showing the desired statements about Chen's relation.
\subsubsection{Proving Chen's relation}
It is useful to introduce a stronger, partially factorized version of Chen's relation.
\begin{definition} 
Given $\Ii(\tau) \in \Ii(\Nn)$ we say a local product $\locprod$ satisfies the \textit{strong Chen's relation} on $\Ii(\tau)$ if, for every $x,y \in \R^{d}$, one has the identity
\begin{equation}\label{eq:strong_chen}
\begin{split}
\locprod^{\rec}_{y} \Ii(\tau)
=&
\big(
\locprod^{\rec}_{x} 
\otimes
\path_{x,y}
\big)\Delta
\Ii(\tau).
\end{split}
\end{equation}
\end{definition}
We remark that it is trivial to check that any local product satisfies strong Chen's relation on $\Ii(\tau) \in \Ii(\Pp)$.
The following lemma is half of our inductive step for proving Chen's relation.
\begin{lemma}\label{lem: strong chen induction}
Suppose a local product $\locprod$ satisfies Chen's relation on $\tau \in \mNn$, then $\locprod$ satisfies strong Chen's relation on $\Ii(\tau)$. 
\end{lemma}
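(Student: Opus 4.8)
The plan is to unfold both sides of the desired identity \eqref{eq:strong_chen} using the recursive definition \eqref{eqs: recursive deltaplus} of $\Delta$ on $\Ii(\tau)$, and reduce everything to the ordinary Chen's relation on $\tau$, which we are assuming holds. Recall that for $\tau \in \mNn$,
\[
\Delta \Ii(\tau) = \Ii(\one) \otimes \Ii(\tau) + \Ii(\X_i) \otimes \Ii_i^+(\tau) + (\Ii \otimes \id)\Delta\tau.
\]
So the right-hand side of \eqref{eq:strong_chen} becomes
\[
\big(\locprod_x^\rec \otimes \path_{x,y}\big)\Delta\Ii(\tau)
= \locprod_x^\rec\Ii(\one)\,\path_{x,y}\Ii(\tau)
+ \locprod_x^\rec\Ii(\X_i)\,\path_{x,y}\Ii_i^+(\tau)
+ \big(\locprod_x^\rec\circ\Ii \otimes \path_{x,y}\big)\Delta\tau.
\]
First I would plug in the explicit values from \eqref{eq: centering definitions}: $\locprod_x^\rec\Ii(\one) = 1$ and $\locprod_x^\rec\Ii(\X_i) = -x_i$. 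For the first term, $\path_{x,y}\Ii(\tau)$ is given by \eqref{eq: path on planted N trees}, and for the second term $\path_{x,y}\Ii_i^+(\tau)$ is given by the defining identity \eqref{eq: alt-alt-eq for gamma} / \eqref{eq: alt eq for gamma}, which by construction satisfies
\[
\locprod_y^\rec\Ii_i^+(\tau) = \big(\locprod_x^\rec \otimes \path_{x,y}\big)\Delta\Ii_i^+(\tau)
= \locprod_x^\rec\Ii_i^+(\X_i)\,\path_{x,y}\Ii_i^+(\tau) + \big(\locprod_x^\rec\circ\Ii_i^+ \otimes \path_{x,y}\big)\Delta\tau;
\]
since $\locprod_x^\rec\Ii_i^+(\X_i) = 1$, this rearranges to express $\path_{x,y}\Ii_i^+(\tau)$ in terms of $\locprod_y^\rec\Ii_i^+(\tau) = -\nu_\tau^{(i)}(y)$ and the lower-order data.

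Next I would compute the left-hand side $\locprod_y^\rec\Ii(\tau)$ from \eqref{eq: centering definitions}: it equals $-\mathcal{L}^{-1}(\path_{\bullet,y}\tau)(y) + \mathbbm{1}_{\tau\in\tNn}\,y_i\,\nu_\tau^{(i)}(y)$. The key input now is the ordinary Chen relation on $\tau$, $\path_{z,y}\tau = (\locprod_{z,w}\otimes\locprod_{w,y})\Delta\tau$ for all $w$; taking $w = x$ and using $\locprod_{z,x}\bar\tau = (\locprod_z\otimes\locprod_x^\rec)\Delta\bar\tau$, one can re-express $\path_{\bullet,y}\tau$ (hence also $\nu_\tau(y)$, which is a spatial gradient of $\mathcal{L}^{-1}$ applied to it) in terms of $\path_{\bullet,x}\bar\tau$ and the centering $\locprod_y^\rec$ applied to forests $C_+(\bar\tau,\tau)$. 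Applying $\mathcal{L}^{-1}$ and evaluating commutes past the $\locprod_x^\rec$ factor exactly as in the proof of Lemma~\ref{lem:pathCopro}. Matching term by term: the ``$\mathcal{L}^{-1}(\cdots)(x)$''-type pieces reconstruct $\path_{x,y}\Ii(\tau)$ and the recentering constant $\locprod_x^\rec\Ii(\tau)$; the gradient pieces, together with the $-x_i\,\path_{x,y}\Ii_i^+(\tau)$ term, reconstruct exactly the $\mathbbm{1}_{\tau\in\tNn}$ derivative corrections; and the remaining sum over $C_+(\bar\tau,\tau)$ with $\bar\tau\in\mNn$ is precisely $(\locprod_x^\rec\circ\Ii\otimes\path_{x,y})\Delta\tau$ via the formula \eqref{eq:eqeq}. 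Here co-associativity \eqref{eq:coassoc} of $\Delta$ is what guarantees that the two ways of ``nesting'' these substitutions agree, which is essential for the bookkeeping to close.

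I expect the main obstacle to be the careful bookkeeping of the derivative/polynomial terms — namely the interplay between the $\mathbbm{1}_{\tau\in\tNn}(z_i - x_i)\nu_\tau^{(i)}$ correction in \eqref{eq: path on planted N trees}, the term $\locprod_x^\rec\Ii(\X_i)\,\path_{x,y}\Ii_i^+(\tau) = -x_i\,\path_{x,y}\Ii_i^+(\tau)$ coming from the middle term of $\Delta\Ii(\tau)$, and the $x_i\nu_\tau^{(i)}$ term hidden inside $\locprod_y^\rec\Ii(\tau)$ versus $\locprod_x^\rec\Ii(\tau)$. One must check that the ``$-x_i$'' in the recentering of $\Ii(\X_i)$ and the ``$z_i$'' versus ``$z_i - x_i$'' shifts all conspire correctly; this is exactly the reason the paper introduced the separate symbols $\Ii_i^+(\X_i)$ and was careful about Remark~\ref{rem:three_ways_to_write_one}. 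Everything else — the $\mathcal{L}^{-1}$ manipulations, the multiplicativity of $\locprod_x^\rec$ and $\path_{x,y}$ over forests, and the rewriting via $C_+$ — is routine once one has the previous lemmas (Lemma~\ref{lem:pathCopro}, Lemma~\ref{lem:coproduct}, Lemma~\ref{lem:coassoc}) in hand. A clean way to organize the argument is to induct on $\numedge(\tau) + \numpoly(\tau)$, so that Chen's relation (ordinary and strong) for all $\bar\tau \subsetneq \tau$ may be invoked when substituting into the recursive formulae.
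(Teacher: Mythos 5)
Your proposal is correct and follows essentially the same route as the paper's proof: expand $\Delta\Ii(\tau)$ via the recursion, substitute the centering values for $\Ii(\one)$ and $\Ii(\X_i)$, unfold $\path_{x,y}\Ii_i^+(\tau)$ via \eqref{eq: alt eq for gamma}, and reduce the matching to the ordinary Chen relation on $\tau$ applied to $\mathcal{L}^{-1}(\path_{\bullet,y}\tau)(x)$. One minor correction: co-associativity \eqref{eq:coassoc} is not actually invoked in the paper's proof of this lemma (it enters in the companion Lemma~\ref{lem: alt proof of chen}), and no fresh induction is needed here since the lemma's hypothesis already supplies exactly the Chen relation on $\tau$ that the computation consumes.
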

\begin{proof} 
Expanding both sides of \eqref{eq:strong_chen} gives 
\begin{equation}\label{eq:strong_chen_work_1}
\begin{split}
{}
&
\mathcal{L}^{-1}
\big(
\path_{\bullet,y}
\tau
\big)(y)
+
y_{j} \locprod^{\rec}_{y} \Ii_{j}^{+}(\tau)\\
=&\mathcal{L}^{-1}(\path_{\bullet,y} \tau)|^{y}_{x}
+
(y_{i} - x_{i}) \locprod_{y}^{\rec} \Ii^{+}_{i}(\tau)\\  
{}
&
+
x_{k}
\big(
\locprod_{y}^{\rec} \Ii^{+}_{k}(\tau)
-
\sum_{\tilde{\tau} \in \mNn}
(\locprod_{x}^{\rec} \Ii^{+}_{k}(\tilde{\tau}))
\path_{x,y}C_{+}(\tilde{\tau},\tau)
\big)\\
{}&
-
\sum_{\tilde{\tau} \in \mNn}
\big(
\locprod_{x}^{\rec}\Ii(\tilde{\tau})
\big)
\big(
\path_{x,y}
C_{+}(\tilde{\tau},\tau)
\big)
\end{split}
\end{equation}
Each of the three lines on the right hand side above come from one of the three terms on the right hand side of the first line of \eqref{eqs: recursive deltaplus}.  

Doing the explicit cancellations lets us simplify \eqref{eq:strong_chen_work_1} to 
\begin{equation}\label{eq:strong_chen_work_2}
\begin{split} 
0=&\mathcal{L}^{-1}(\path_{\bullet,y}\tau)(x)
+
x_{k}
\sum_{\tilde{\tau} \in \mNn}
(\locprod_{x}^{\rec} \Ii^{+}_{k}(\tilde{\tau}))
\path_{x,y}C_{+}(\tilde{\tau},\tau)\\
{}&
+
\sum_{\tilde{\tau} \in \mNn}
\big(
\locprod_{x}^{\rec}\Ii(\tilde{\tau})
\big)
\big(
\path_{x,y}
C_{+}(\tilde{\tau},\tau)
\big)
\end{split}
\end{equation}
We then obtain \eqref{eq:strong_chen_work_2} by using our assumption on Chen's relation for $\tau$ to write
\begin{equation*}
\begin{split}
{}&\mathcal{L}^{-1}(\path_{\bullet,y}\tau)(x)
=
\mathcal{L}^{-1}\Big(
(\path_{\bullet,x} \otimes \path_{x,y})
\Delta 
\tau\Big)(x)\\
=&
\sum_{\tilde{\tau} \in \mNn}
\big(
\mathcal{L}^{-1}\path_{\bullet,x}(\tilde{\tau})\big)(x)
\big(
\path_{x,y}C_{+}(\tilde{\tau},\tau)
\big)\;,
\end{split}
\end{equation*}
and then recalling that for any $\tilde{\tau}$ in the above sum one has
\[
\mathcal{L}^{-1}(\path_{\bullet,x}\tilde{\tau}\big)(x)
=
-
\locprod_{x}^{\rec}\Ii(\tilde{\tau})
-
x_{k}
\locprod_{x}^{\rec}\Ii_{k}^{+}(\tilde{\tau})\;.
\]
\end{proof}
The following lemma is the second half of our inductive step.
\begin{lemma}\label{lem: alt proof of chen} 
Fix $\tau \in \mNn$ and suppose $\locprod$ is a local product that satisfies the strong Chen property on $\Ii(\bar{\tau})$ for every $\bar{\tau} \subsetneq \tau$. 
Then $\locprod$ satisfies Chen's relation on $\tau$. 
\end{lemma}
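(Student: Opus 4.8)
The plan is to prove Chen's relation $(\locprod_{z,u}\otimes\locprod_{u,x})\Delta\tau=\locprod_{z,x}\tau$ for $\tau\in\mNn$ by combining multiplicativity of $\Delta$ with respect to the tree product, the co-associativity identity \eqref{eq:coassoc}, and the strong Chen property on the planted sub-trees $\Ii(\bar\tau)$ with $\bar\tau\subsetneq\tau$. Since any $\tau\in\mNn$ factors as a tree product $\tau=\Ii(\tau_1)\Ii(\tau_2)\Ii(\tau_3)$ with each $\tau_k\in\Ww\cup\mNn$ (Lemma~\ref{lem:w-trees_all_w} handles the $\mWw$ case and in general each $\Ii(\tau_k)\subsetneq\tau$), and since both $\Delta$ and the maps $\locprod_{\bullet,\bullet}$ are multiplicative with respect to the tree product by \eqref{eq: path -  coproduct formula for product trees} and the last line of \eqref{eqs: recursive deltaplus}, it suffices to establish the identity in the factorized form on each planted factor, i.e. to show
\[
(\locprod_{z,u}\otimes\locprod_{u,x})\Delta\Ii(\tau_k)=\locprod_{z,x}\Ii(\tau_k)
\]
for each $k$. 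For $\tau_k\in\Ww$ this is Lemma~\ref{lem: Chen on wild is trivial}, so the real content is the case $\tau_k\in\mNn$, where $\Ii(\tau_k)\subsetneq\tau$.

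First I would use the definition \eqref{eq: path on planted N trees} together with Lemma~\ref{lem:pathCopro} to write $\locprod_{z,x}\Ii(\tau_k)=(\locprod_z\otimes\locprod_x^\rec)\Delta\Ii(\tau_k)$, and similarly express $\locprod_{z,u}\Ii(\tau_k)$ and the centering $\locprod_{u,x}$ acting on the forest factors. Then I would expand $\Delta\Ii(\tau_k)$ via \eqref{eq:eqeq} as $\sum_{\bar\tau}\Ii(\bar\tau)\otimes C_+(\bar\tau,\tau_k)$ and apply $(\locprod_{z,u}\otimes\locprod_{u,x})$, producing $\sum_{\bar\tau}\locprod_{z,u}\Ii(\bar\tau)\,\locprod_{u,x}C_+(\bar\tau,\tau_k)$. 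The goal is to recognize this sum as $(\locprod_z\otimes\locprod_x^\rec)\Delta\Ii(\tau_k)$. The bridge is co-associativity: applying $(\id\otimes\Delta)\Delta=(\Delta\otimes\id)\Delta$ to $\Ii(\tau_k)$ lets me re-group the double coproduct so that the "inner" $\Delta$ on the forest factors $C_+(\bar\tau,\tau_k)$ matches the recursive unfolding of $\locprod_{u,x}$, while the strong Chen property \eqref{eq:strong_chen} for the sub-trees $\Ii(\bar\tau)$ (all of which satisfy $\bar\tau\subsetneq\tau$, hence are covered by hypothesis — noting $\bar\tau\le\tau_k\subsetneq\tau$ via Lemma~\ref{lem:lemC}) converts $\locprod_{z,u}\Ii(\bar\tau)$ into $(\locprod_z^\rec\otimes\path_{z,u})\Delta\Ii(\bar\tau)$ — wait, more precisely, strong Chen lets one rewrite $\locprod_u^\rec\Ii(\bar\tau)$ in terms of $\locprod_z^\rec$ and $\path_{z,u}$, which is exactly what is needed to collapse the $u$-dependence.

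Concretely the computation I envisage: expand $\locprod_{z,u}\Ii(\tau_k)$ using \eqref{eq: path on planted N trees} (its $\mathcal L^{-1}$ form), substitute for $\path_{\bullet,u}\tau_k$ the already-established Chen relation for $\tau_k$ itself if $\tau_k\in\mNn$ — but careful, that would be circular unless $\tau_k\subsetneq\tau$ strictly, which it is. Actually the cleaner route mirrors the proof of Lemma~\ref{lem: strong chen induction} in reverse: write out both sides of the desired identity using the recursive formula \eqref{eqs: recursive deltaplus} for $\Delta\Ii(\tau_k)$, expand the three terms $\Ii(\one)\otimes\Ii(\tau_k)$, $\Ii(\X_i)\otimes\Ii_i^+(\tau_k)$, and $(\Ii\otimes\id)\Delta\tau_k$, apply the definitions of $\locprod_{z,u}$, $\locprod_{u,x}$ on each, and check term-by-term cancellation of the $u$-dependent pieces, invoking the inductive hypothesis (strong Chen on proper planted sub-trees, plus Chen on proper sub-trees $\tau'\subsetneq\tau_k$ which follows by a sub-induction or is subsumed). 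The $\mathcal L^{-1}$ terms match because $\mathcal L^{-1}$ commutes with the algebraic operations as in the displayed computation inside the proof of Lemma~\ref{lem:pathCopro}; the derivative/$\nu$ terms match because of the way $\Ii_i^+$ and the centering $\locprod^\rec_\bullet\Ii_i^+$ were defined in \eqref{eq: centering definitions} and \eqref{eq: alt eq for gamma}.

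The main obstacle I anticipate is bookkeeping the derivative edges $\Ii_i^+(\tau)$ correctly: these appear in $\Delta\Ii(\tau_k)$ only through the middle term $\Ii(\X_i)\otimes\Ii_i^+(\tau_k)$ and through whatever $\Delta\tau_k$ produces, and their transformation law under change of base-point is governed by \eqref{eq: alt-alt-eq for gamma}/\eqref{eq: alt eq for gamma} rather than by a formula analogous to \eqref{eq: path on planted N trees}, so matching the $\nu^{(i)}_{\bar\tau}$-terms between the $z,u$-split and the $z,x$-expression requires exactly the identity \eqref{eq: alt-alt-eq for gamma} which itself was designed so that this works. One must also be vigilant that all sub-trees invoked really are $\subsetneq\tau$ (never $=\tau$), so that the induction in the size/edge-count of $\tau$ is well-founded; this is where Lemma~\ref{lem:lemC} (specifically $\Ii(\tau')\in C_+(\bar\tau,\tau)\Rightarrow\tau'\subset\tau$ and $C_+(\bar\tau,\tau)\neq0\Rightarrow\bar\tau\le\tau$) and the remark after \eqref{eq: alt eq for gamma} about which data suffices to define each object are used to close the loop. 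Modulo this careful term-tracking, the identity is a formal consequence of co-associativity and the inductive hypotheses, so I expect no genuinely new idea is needed beyond organizing the cancellations.
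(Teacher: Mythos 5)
Your reduction step does not work. You assert that ``both $\Delta$ and the maps $\locprod_{\bullet,\bullet}$ are multiplicative with respect to the tree product by \eqref{eq: path -  coproduct formula for product trees} and the last line of \eqref{eqs: recursive deltaplus}'' and conclude that Chen's relation for $\tau = \Ii(\tau_1)\Ii(\tau_2)\Ii(\tau_3)$ reduces to Chen's relation for each planted factor $\Ii(\tau_k)$. But $\locprod_{\bullet,\bullet}$ is \emph{not} multiplicative with respect to the tree product --- this is precisely the non-multiplicativity that Remark~\ref{rem: two products} emphasises, and it is the whole point of allowing non-canonical local products. Equation~\eqref{eq: path -  coproduct formula for product trees} is a definitional coproduct formula, $\locprod_{z,x}\tau = (\locprod_z\otimes\locprod^{\rec}_x)\Delta\tau$, and does not imply multiplicativity: unwinding it, the left tensor factor of $\Delta\tau$ is a sum of tree products $\Ii(\bar\tau^{(1)})\Ii(\bar\tau^{(2)})\Ii(\bar\tau^{(3)})$ and one must evaluate the raw local product $\locprod_z$ on these, which does not factor into $\prod_k \locprod_z\Ii(\bar\tau^{(k)})$ in general. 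Consequently $(\locprod_{z,u}\otimes\locprod_{u,x})\Delta\tau$ does not split as a product over $k$, and proving the factorized identity on each $\Ii(\tau_k)$ would not yield Chen's relation on $\tau$.

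The good news is that the factorization step was never needed. The mechanism you describe later in your proposal --- apply the coproduct formula \eqref{eq: path -  coproduct formula for product trees} to expand $\locprod_{z,u}$ on the first tensor leg, invoke co-associativity \eqref{eq:coassoc} to re-bracket the double coproduct, then collapse $\locprod^{\rec}_u \otimes \locprod_{u,x}$ acting on $\Delta$ of the forest factors via strong Chen on the sub-trees (all $\bar\tau\subsetneq\tau$ by Lemma~\ref{lem:lemC}) and via \eqref{eq: alt-alt-eq for gamma} for the $\Ii^{+}_i$ edges --- is exactly the paper's argument, and it goes through directly on the \emph{unplanted} tree $\tau$ with no need to pass to the planted factors. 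The forest product in $\Alg(\Trec)$ \emph{is} handled multiplicatively (that is by convention), so the collapse on the right tensor leg is a legitimate term-by-term application of strong Chen, unlike the tree product on the left. Drop the factorization claim and run the three-line computation starting from $(\locprod_{z,u}\otimes\locprod_{u,x})\Delta\tau$ and the proof closes.
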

\begin{proof}
We have
\begin{equation*}
\begin{split}
(
\path_{x,y} \otimes \path_{y,z})
\Delta \tau
=& 
(
\locprod_{x} \otimes \locprod_{y}^{\rec} \otimes \path_{y,z})
(\Delta \otimes \id) \Delta \tau\\
=& 
(
\locprod_{x} \otimes \locprod_{y}^{\rec} \otimes \path_{y,z})
(\id \otimes \Delta) \Delta \tau\\
=& 
(
\locprod_{x} \otimes \locprod_{z}^{\rec})
 \Delta \tau\\
=& 
\path_{x,z} \tau\;.
\end{split}
\end{equation*}
In the first equality we used our identity \eqref{eq: path -  coproduct formula for product trees} for $\path_{x,y}$ and in the second we used the co-associativity property of Lemma~\ref{lem:coassoc}. 

For the third equality we used the fact that $\Delta$ is multiplicative over forests of planted trees so we can use either Lemma~\ref{lem: strong chen induction} or \eqref{eq: alt-alt-eq for gamma} for the planted trees that appear in the forests that appear on the right factor of $\Delta \tau$. 
Fix  $\tilde{\tau} \not \in \{\one, \X_{1},\dots,\X_{d}\}$. 
Then for any $\Ii(\bar{\tau}) \in C_{+}(\tilde{\tau},\tau)$ one has $\bar{\tau} \subsetneq \tau$ so one can use Lemma~\ref{lem: strong chen induction} for these factors. 
For the factors $\Ii^{+}_{i}(\bar{\tau}) \in C_{+}(\tilde{\tau},\tau)$ one can just use \eqref{eq: alt-alt-eq for gamma}.    
\end{proof}
Putting together these two lemmas for our inductive step, combined with Lemma~\ref{lem: Chen on wild is trivial} which gives us the bases cases for our induction, we arrive at the following proposition.
\begin{proposition}\label{prop:Chen}
Any local product $\locprod$ satisfies Chen's relation on $\Tt$. 	
\end{proposition}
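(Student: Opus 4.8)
The plan is to prove Proposition~\ref{prop:Chen} by structural induction on trees, using the two lemmas just established (Lemma~\ref{lem: strong chen induction} and Lemma~\ref{lem: alt proof of chen}) as the inductive step, and Lemma~\ref{lem: Chen on wild is trivial} as the base of the induction. The key point is that Chen's relation \eqref{equ: def of Chen relation} need only be verified on $\Ii(\Nn) \cup \mNn$, since on $\Ww \cup \Ii(\Ww) \cup \Ii(\Pp) \cup \{\Ii^+_i(\X_i)\}$ it is automatic by Lemma~\ref{lem: Chen on wild is trivial} (and, as noted in the remark before the proposition, the case $\sigma = \Ii^+_i(\tau)$ with $\tau \in \tNn$ is not part of the claim). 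Moreover, by multiplicativity of $\Delta$ with respect to the tree product and the fact that both $\locprod_{z,x}$ and $\locprod^\rec_x$ act multiplicatively on forests/products, it suffices to establish Chen's relation on the planted trees $\Ii(\tau)$ with $\tau \in \Nn$ and on the three-fold tree products $\tau = \Ii(\tau_1)\Ii(\tau_2)\Ii(\tau_3) \in \mNn$; the general case then follows by taking products.

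First I would set up the induction on a suitable notion of size of a tree --- for concreteness, the quantity $\numedge(\sigma) + \numpoly(\sigma)$ used throughout Section~\ref{subsec: ext of loc prods}, or simply the number of edges. The base cases are exactly the trees handled by Lemma~\ref{lem: Chen on wild is trivial} together with the elementary planted trees $\Ii(\one)$, $\Ii(\X_i)$ (for which strong Chen is also trivial, as remarked). For the inductive step, given $\tau \in \mNn$, the induction hypothesis provides Chen's relation on every $\bar\tau \subsetneq \tau$ lying in $\mNn$. By Lemma~\ref{lem: strong chen induction} this upgrades, for each such $\bar\tau$, to the strong Chen relation on $\Ii(\bar\tau)$; note that any $\bar\tau$ with $\Ii(\bar\tau)$ appearing in a forest $C_+(\tilde\tau,\tau)$ satisfies $\bar\tau \subsetneq \tau$ by Lemma~\ref{lem:lemC}. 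Then Lemma~\ref{lem: alt proof of chen} yields Chen's relation on $\tau$ itself. This simultaneously propagates both the ``plain'' Chen property on $\mNn$ and (via Lemma~\ref{lem: strong chen induction} applied once more) the strong Chen property on $\Ii(\Nn)$, closing the induction.

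The one point requiring a little care --- and the closest thing to an obstacle --- is the bookkeeping of which trees the induction hypothesis actually covers: one must check that when expanding $\Delta\tau$ for $\tau = \Ii(\tau_1)\Ii(\tau_2)\Ii(\tau_3)$, every planted tree $\Ii(\bar\tau)$ or $\Ii^+_i(\bar\tau)$ appearing in the right-hand factor has $\bar\tau \subsetneq \tau$ (so strong Chen, resp.\ the defining identity \eqref{eq: alt-alt-eq for gamma}, is available for it), and that every $\locprod_{\bullet,x}$ or $\locprod^\rec_x$ evaluation needed to form $\locprod_{z,x}\tau$ via \eqref{eq: path -  coproduct formula for product trees} involves only strictly smaller trees. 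Both facts follow from Lemma~\ref{lem:lemC} (specifically the implication that $\Ii(\tau') \in C_+(\bar\tau,\tau)$ or $\Ii^+_i(\tau') \in C_+(\bar\tau,\tau)$ forces $\tau' \subset \tau$, with strictness when $\bar\tau \ne \tau$ or the relevant factor is a proper subtree) together with the observation in Lemma~\ref{lemma:delta preserves Trec} that $\Delta$ maps $\mNn$ into $\Vec(\mNn) \otimes \Alg(\Trec)$. Once this is in place the proof is purely a matter of assembling Lemmas~\ref{lem: Chen on wild is trivial}, \ref{lem: strong chen induction}, and \ref{lem: alt proof of chen} in the correct order, so the statement follows.
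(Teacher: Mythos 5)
Your proof is correct and follows exactly the same route as the paper: Lemma~\ref{lem: Chen on wild is trivial} provides the base cases, and the pair Lemma~\ref{lem: strong chen induction}, Lemma~\ref{lem: alt proof of chen} constitutes the inductive step, with the bookkeeping about subtrees resolved via Lemma~\ref{lem:lemC}. The paper's own argument is just this assembly stated in one sentence, so nothing further is needed.
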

\subsubsection{Order bound}
Below, for any local product $\locprod$ and forest of planted trees $\sigma_{1} \cdots \sigma_{n}$, $n \in \Z_{\ge 0}$, we write 
\[
[\locprod ; \sigma_{1} \cdots \sigma_{n}]
:= 
\prod_{j=1}^{n}
[\locprod ; \sigma_{j}]\;.
\]
We also write $[\locprod ; 0] := 0$.
With this notation we can state the following lemma. 
\begin{lemma}\label{lem:OB}
For any $\tau \in \Tr$ and uniform over local products $\locprod$ one has the estimate
\begin{equation}\label{eq: integration order bound}
[\locprod ; \Ii(\tau)]
\lesssim
\begin{dcases}
[\locprod ; \tau]& 
\textnormal{ for } \tau \in \Ww\;,\\
[\locprod ; \tau]
+
\max_{\bar{\tau} < \tau}
[\locprod ; \bar{\tau}]
[\locprod ; C_{+}(\bar{\tau},\tau)]& 
\textnormal{ for } \tau \in \mNn\;.
\end{dcases}
\end{equation}
Suppose $\tau \in \tNn$, then, for any $1 \le i \le d$, and uniform over local products, one has
\begin{equation}\label{eq: iplus order bound}
[ \locprod ; \Ii_{i}^{+}(\tau)]
\lesssim
[\locprod ; \tau]
+
\max_{\bar{\tau} < \tau}
[\locprod ; \bar{\tau}]
[\locprod ; C_{+}(\bar{\tau},\tau)]
\end{equation}
It follows that any $\locprod$ satisfies an order bound for any $\sigma \in \Tp$. 
\end{lemma}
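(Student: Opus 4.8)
The plan is to reduce \eqref{eq: integration order bound} and \eqref{eq: iplus order bound} to a recentered Schauder estimate (of the type collected in Appendix~\ref{ss:LSL}) fed by Chen's relation (Proposition~\ref{prop:Chen}), and then to deduce the last assertion by induction along the recursive construction of the path. Recall first that by Remark~\ref{rem: order bound automatic on wild trees} the order bounds on $\Tr$ and on $\Ii(\Ww)$ are available in the smooth setting, that $[\locprod;\Ii(\tau)]\lesssim 1$ for $\tau\in\Pp$, and that $[\locprod;\Ii_i^+(\X_i)]\lesssim1$; so the content is to bound $[\locprod;\Ii(\tau)]$ for $\tau\in\mNn$ and $[\locprod;\Ii_i^+(\tau)]$ for $\tau\in\tNn$. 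Throughout I will use the grading property $|C_+(\bar\tau,\tau)|=|\tau|-|\bar\tau|$ (valid whenever $C_+(\bar\tau,\tau)\neq 0$), which follows by combining \eqref{formula_hom} with Lemma~\ref{lem:lemC}.

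For \eqref{eq: integration order bound}, the case $\tau\in\Ww$ is immediate: here $\path_{\bullet,x}\Ii(\tau)=\locprod_\bullet\Ii(\tau)=\mathcal L^{-1}(\locprod_\bullet\tau)$ is independent of $x$ and $[\locprod;\tau]$ is just the $C^{|\tau|}$-norm of $\locprod_\bullet\tau$, so the classical Schauder estimate (gaining two parabolic degrees, the cut-off $\rho$ being harmless near $D$) gives $[\locprod;\Ii(\tau)]\lesssim[\locprod;\tau]$. For $\tau\in\mNn$ the germ $g_x:=\path_{\bullet,x}\tau$ depends on the base-point, and $[\locprod;\tau]$ controls $(g_x)_L$ only at the point $x$; to apply the Schauder estimate one needs control of $(g_x)_L(y)$ for $y$ in an $L$-neighbourhood of $x$, and this is where Chen's relation enters. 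By Proposition~\ref{prop:Chen} and \eqref{eq:explicit_coproduct_of_product} one has $g_x=g_y+\sum_{\bar\tau<\tau}\path_{\bullet,y}\bar\tau\,\path_{y,x}C_+(\bar\tau,\tau)$ (the leading term being $g_y$ since $\path_{y,x}\Ii(\one)=1$); convolving against $\Psi_L$ and evaluating at $y$, the order bounds on $\tau$ and on the (lower, compound) trees $\bar\tau$, the order bounds on the forest $C_+(\bar\tau,\tau)$, and the grading give, for $d(y,x)\le L\le 1$,
\[
\big|(g_x)_L(y)\big|\lesssim\Big([\locprod;\tau]+\max_{\bar\tau<\tau}[\locprod;\bar\tau]\,[\locprod;C_+(\bar\tau,\tau)]\Big)L^{|\tau|}=:M_\tau L^{|\tau|}.
\]
This is exactly the hypothesis of the recentered Schauder estimate of Appendix~\ref{ss:LSL}, whose conclusion, applied to $u:=\mathcal L^{-1}g_x$ and read off against \eqref{eq: path on planted N trees} and \eqref{eq: derivative term in path} (for $\tau\in\tNn$ it also controls the first-order Taylor remainder after subtracting $(z_i-x_i)\nu^{(i)}_\tau(x)$), is precisely $|\path_{z,x}\Ii(\tau)|\lesssim M_\tau\,d(z,x)^{|\tau|+2}$.

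For \eqref{eq: iplus order bound}, fix $\tau\in\tNn$. The same recentered Schauder estimate yields $|\nabla u(z)-\nabla u(x)|\lesssim M_\tau\,d(z,x)^{|\tau|+1}$ for $u=\mathcal L^{-1}(\path_{\bullet,x}\tau)$, and $\nabla u(x)=\nu_\tau(x)$ by \eqref{eq: derivative term in path}. Expanding the defining identity \eqref{eq: alt eq for gamma} and using co-associativity (Lemma~\ref{lem:coassoc}) to organise the change of base-point, $\path_{z,x}\Ii_i^+(\tau)$ becomes a finite combination of gradient increments $\nu^{(i)}_{\bar\tau}(z)-\nu^{(i)}_{\bar\tau}(x)$ with $\bar\tau\subsetneq\tau$ (each controlled by the previous step) multiplied by the quantities $\path_{z,x}C_+(\bar\tau,\tau)$; since the grading makes all exponents add up to $|\tau|+1$, this gives $|\path_{z,x}\Ii_i^+(\tau)|\lesssim M_\tau\,d(z,x)^{|\tau|+1}$, which is \eqref{eq: iplus order bound}.

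The final assertion then follows by induction along the relation $\subsetneq$ (equivalently, along the recursive construction of $\path$ in Section~\ref{s:positive_renorm}, which proceeds by induction in $\numedge+\numpoly$): the base cases $\Ii(\Pp)$, $\{\Ii_i^+(\X_i)\}$, $\Ww\cup\Ii(\Ww)$ and (in the smooth setting) $\Tr$ have finite order bounds by Remark~\ref{rem: order bound automatic on wild trees}, while for $\Ii(\tau)$ with $\tau\in\mNn$ and for $\Ii_i^+(\tau)$ with $\tau\in\tNn$ the right-hand sides of \eqref{eq: integration order bound}--\eqref{eq: iplus order bound} involve, by Lemma~\ref{lem:lemC}, only order bounds on $\Tr$ and on planted trees $\Ii(\tilde\tau)$, $\Ii_i^+(\tilde\tau)$ with $\tilde\tau\subsetneq\tau$, hence are finite by the inductive hypothesis. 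The main obstacle is the middle step: upgrading, via Chen's relation and the homogeneity grading, the single-base-point bound $[\locprod;\tau]$ to the neighbourhood bound on $(g_x)_L(y)$ required by the Schauder estimate, and then matching its output exactly with $\path_{z,x}\Ii(\tau)$ as given in \eqref{eq: path on planted N trees} (in particular the delicate identification of the subtracted gradient term with $\nu_\tau$). The genuinely hard analytic input — the recentered Schauder/reconstruction estimate itself — is imported from Appendix~\ref{ss:LSL}.
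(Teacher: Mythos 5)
Your overall plan is aligned with the paper's (Schauder estimates from Appendix~\ref{ss:LSL} fed by Chen's relation, then induction along the recursive construction), and the case $\tau\in\Ww$ and the final inductive argument are fine. However, there are two genuine gaps in the core of the argument.

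First, for $\tau\in\mNn$ you verify, via Chen's relation, a neighbourhood bound on $(g_x)_L(y)$ and assert that this is ``exactly the hypothesis of the recentered Schauder estimate.'' But Lemma~\ref{lemschauder-Pos} has \emph{two} hypotheses: the neighbourhood bound \eqref{lemschauder1-Pos} and the pointwise three-point continuity \eqref{lemschauder2-Pos}. You have verified (a version of) the first, but you never address the second. The paper's proof verifies \eqref{lemschauder2-Pos} by the pointwise algebraic computation
\[
U(z,x)-U(y,x)-U(z,y)-(z_{i}-y_{i})\lambda^{(i)}_{\tau}(y,x)
=
\sum_{\bar{\tau}\in\mNn,\;\bar{\tau}\neq\tau}
\path_{z,y}\Ii(\bar{\tau})\,\path_{y,x}C_+(\bar{\tau},\tau),
\]
which is a statement about $U$ itself, not about its heat operator convolved against a test kernel. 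This is what supplies $M^{(2)}$ and also what pins down the correct $\lambda_\tau$ from \eqref{eq: def of gamma}, needed in the next step.

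Second, your argument for \eqref{eq: iplus order bound} contains a misidentification: you write $\nabla u(z)$ for $u=\mathcal L^{-1}(\path_{\bullet,x}\tau)$ and then treat it as if it were $\nu_\tau(z)$, claiming that $\path_{z,x}\Ii_i^+(\tau)$ becomes a combination of ``gradient increments $\nu_{\bar\tau}^{(i)}(z)-\nu_{\bar\tau}^{(i)}(x)$.'' But by \eqref{eq: derivative term in path}, $\nu_\tau(z)=\nabla\big(\mathcal L^{-1}(\path_{\bullet,z}\tau)\big)(z)$ — the germ is re-centered at $z$, not at the fixed base-point $x$ — so $\nu_\tau(z)\neq\nabla u_x(z)$ in general, and the Schauder bound $|\nabla u_x(z)-\nabla u_x(x)|\lesssim M_\tau d(z,x)^{|\tau|+1}$ does not control $\nu_\tau^{(i)}(z)-\nu_\tau^{(i)}(x)$. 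The paper avoids this pitfall by invoking Lemma~\ref{corschauderFS}, whose conclusion \eqref{corschauder2FS} bounds exactly the combined quantity $f^{(i)}(z,x)=\nu_\tau^{(i)}(z)-\nu_\tau^{(i)}(x)+\lambda_\tau^{(i)}(z,x)$ — which equals $\path_{z,x}\Ii_i^+(\tau)$ — rather than either piece separately, and its hypothesis is again the three-point continuity \eqref{corschauder 3ptFS} that you never establish. In short: the algebraic expansion via Chen's relation is the right ingredient, but you apply it to the wrong object (to the convolved germs instead of to $U$ pointwise), and this prevents you from both verifying the missing hypothesis of Lemma~\ref{lemschauder-Pos} and correctly handling the $\Ii_i^+$ case.
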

\begin{proof} 
We start with proving \eqref{eq: integration order bound} for any $\tau \in \Tr$. 
Clearly the bound is trivial when the corresponding right hand side of \eqref{eq: integration order bound} is infinite so we assume that they are finite. 

When $\tau \in \Ww$ the desired estimate follows from Lemma~\ref{lemschauder-Neg} where we set $U(\bullet,x) = \locprod_{\bullet,x}\Ii(\tau) = \locprod_{\bullet}\Ii(\tau)$ and we can take $M \lesssim [\locprod ; \tau]$ by Lemma~\ref{lemma: multiplication by smooth function} (we need this lemma because of the presence of the cut-off function $\rho$ in \eqref{eq: cut-off heat equation} ) in \eqref{lemschauder-Neg}. 

For $\tau \in \mNn$ we will instead appeal to Lemma~\ref{lemschauder-Pos}.  
We set
\begin{equation}\label{eq: def of U for order bound}
U(y,x)
=
\mathcal{L}^{-1}(\path_{\bullet, x}\tau)(y)
-
\mathcal{L}^{-1}(\path_{\bullet, x}\tau)
(x)\;.
\end{equation}
It is clear that we can take $M^{(1)} \lesssim [\locprod ; \tau]$ for the assumption \eqref{lemschauder2-Pos}.

We now verify the three point continuity condition \eqref{lemschauder2}.
The role of $\lambda(\bullet,\bullet)$ in \eqref{lemschauder2-Pos} will be played by the quantity $\lambda_{\tau}(y,x) = (\lambda^{(i)}_{\tau}(y,x))_{i=1}^{d}  \in \R^{d}$ where 
\begin{equation}\label{eq: def of gamma}
\lambda^{(i)}_{\tau}(y,x)
=
-
\sum_{
\bar{\tau} \in \mNn
\atop
\bar{\tau} \not = \tau}
(\locprod^\rec_y \Ii^+_i(\bar{\tau}))\path_{y,x}C_+(\bar{\tau},\tau)\;.
\end{equation}
We then write
\begin{equation}\label{eq: three point continuity verification}
\begin{split}	
{}&
U(z,x)-U(y,x)-U(z,y)
-
(z_{i}-y_{i})
\lambda^{(i)}_{\tau}(y,x)\\
=&(\mathcal{L}^{-1}\path_{\bullet, x}\tau)|^z_y-(\mathcal{L}^{-1}\path_{\bullet, y}\tau)|^z_y
-
(z_{i}-y_{i})
\lambda^{(i)}_{\tau}(y,x)\\
=&(\mathcal{L}^{-1}(\path_{\bullet, y}\otimes \path_{y,x})\Delta\tau)|^z_y-(\mathcal{L}^{-1}\path_{\bullet, y}\tau)|^z_y
-
(z_{i}-y_{i})
\lambda^{(i)}_{\tau}(y,x)\\
=&
\sum_{
\bar{\tau} \in \mNn
\atop
\bar{\tau} \not = \tau}
(\mathcal{L}^{-1}(\path_{\bullet, y}\bar{\tau}))|^z_y\path_{y,x}C_+(\bar{\tau},\tau)
-
(z_{i}-y_{i})
\lambda^{(i)}_{\tau}(y,x)\\
=&
\sum_{
\bar{\tau} \in \mNn
\atop
\bar{\tau} \not = \tau}
\path_{z,y}
\Ii(\bar{\tau})\path_{y,x}C_+(\bar{\tau},\tau).
\end{split}
\end{equation}
For the second equality above we appealed to  Proposition~\ref{prop:Chen} to use  Chen's relation for $\tau$. 
Then by inserting the order bound for every term in the last line of \eqref{eq: three point continuity verification} we see that in  \eqref{lemschauder2-Pos} we can take $M^{(2)} \lesssim \max_{\bar{\tau} < \tau}
[\locprod ; \bar{\tau}]
[\locprod ; C_{+}(\bar{\tau},\tau)]$.

We turn to proving \eqref{eq: iplus order bound} and so we fix $\tau \in \tNn$. 
We obtain the desired estimate by applying Lemma~\ref{corschauderFS}.  
Here we again define $U(\bullet,\bullet)$ as in \eqref{eq: def of U for order bound} and $\lambda_{\tau}(\bullet,\bullet)$ by \eqref{eq: def of gamma}. 
Thanks to the computation \eqref{eq: three point continuity verification} we see we can take $M = \max_{\bar{\tau} < \tau}
[\locprod ; \bar{\tau}]
[\locprod ; C_{+}(\bar{\tau},\tau)]$ in \eqref{corschauder 3ptFS}. 
We also note that $\nu_{\tau}$ is then the optimal $\nu$ referenced in  Lemma~\ref{corschauderFS} and that we have
\[
\path_{y,x}
\Ii_{i}^{+}(\tau)
=
\nu_{\tau}^{(i)}(y) - \nu_{\tau}^{(i)}(x) + \lambda_{\tau}^{(i)}(y,x)
\]
and so the desired estimate is given by \eqref{corschauder2FS}. 
\end{proof}
%
%
\section{Modelled distribution}\label{sec: Modelled distribution}
%
\label{s:CP}
%
%
With the definition of local products and their associated paths in place, we now show how  to use them to give 
 a good local approximation to the solution $v$ of the remainder equation.
 As explained above in \eqref{outl-12}, we seek a local approximation to $v$ of the form
 \begin{align}\label{e:MD1}
v(y) \approx &    \;
\path_{y,x} \Theta(x)
=    
 \sum_{\tau \in \mathcal{N}  }     \Theta_{x}(\tau) \path_{y,x} \mathcal{I}(\tau), 
 \end{align}
for suitable coefficients $\Theta \in C^{\infty}(\R \times \R^{d}; \Vec(\Nn) )$  
(which we interchangeably view as a map $\Theta: \Nn \rightarrow C^{\infty}$ taking $\tau \mapsto \Theta_{\bullet}(\tau)$) and 
with an error  of order $\lesssim d(x,y)^\gamma$. 
In this section we first introduce a family of seminorms that measure the regularity of the coefficient map $\Theta$ and that ultimately permit to bound renormalized products. 
Subsequently, we turn to a specific choice of coefficients $\Theta$ (denoted by $\Upsilon$, see  Definition~\ref{def-Upsilon-rec}) which arise in ``freezing of coefficient procedure" described in Section~\ref{ss:2-4}.
The main result of this section, Theorem~\ref{theorem one zero}, shows a close connection between the various seminorms for this specific choice of coefficient.

 In order to  motivate the regularity condition  we rewrite equation~\ref{e:MD1} for another base-point $\bar{x}$ (but for the same argument $y$)
 \begin{align}\label{e:MD2}
v(y) \approx &   
 \sum_{\bar{\tau} \in \Nn  }        \Theta_{\bar{x}}(\bar{\tau}) \path_{y,\bar{x}} \mathcal{I}(\bar{\tau}) , 
 \end{align}
then use Chen's relation \eqref{equ: def of Chen relation}  and Lemma~\ref{lem:coproduct} in the form 
\[
\path_{y,\bar{x}} \mathcal{I}(\bar{\tau}) =  \sum_{\tau \in \Nn }
 \path_{y,x} \mathcal{I}(\tau)  
\path_{x,\bar{x}} C_+(\tau, \bar{\tau})
\]
to rewrite the right hand side of \eqref{e:MD2} and compare the resulting expression to \eqref{e:MD1},  arriving at
 \begin{align}\label{e:MD3}
\Big|  \sum_{\tau \in \Nn }  \Big(       \Theta_{x}(\tau)  -\sum_{\bar{\tau} \in \Nn }  \Theta_{\bar{x}}(\bar{\tau}) \path_{x,\bar{x}} C_+(\tau, \bar{\tau})  \Big)
  \path_{y,x} \mathcal{I}(\tau)  \Big|
 \lesssim d(x,y)^\gamma + d(\bar{x},y)^\gamma .
 \end{align}
Specialising this inequality to those $y$ for which $d(\bar{x},y) \approx d (x,y) \approx d(x,\bar{x}) \approx d $ yields the estimate
 \begin{align}\label{e:MD4}
\Big|  \sum_{\tau \in \Nn}  \Big(       \Theta_{x}(\tau)  -\sum_{\bar{\tau} \in \Nn  }  \Theta_{\bar{x}}(\bar{\tau}) \path_{x,\bar{x}} C_+(\tau, \bar{\tau})  \Big)
  \path_{y,x} \mathcal{I}(\tau)   \Big| \lesssim d^\gamma .
 \end{align}
In view of the the order bound \eqref{equ: order of order seminorm} 
\[
|  \path_{y,x} \mathcal{I}(\tau) | \lesssim d^{|\tau|+2},
\]
the following definition is natural.
\begin{definition}\label{def:Uandseminorm}
Let $\locprod_{\bullet}$ be a local product and $\path_{\bullet, \bullet}$ be the path constructed from $\locprod_{\bullet}$.
Then for  $\Theta \in C^{\infty}(\R \times \R^{d}; \Vec(\Nn) )$  for $\tau \in \Nn $ and $0<\gamma<2$ we define 
\begin{equation}\label{def:U}
U_{\gamma-2}^\tau(y,x) :=  \Theta_{y}(\tau)  -\sum_{\underset{|\bar{\tau}|<\gamma-2}{\bar{\tau} \in \Nn} }  \Theta_{x}(\bar{\tau}) \path_{y,x} C_+(\tau, \bar{\tau}),
\end{equation} 
and the seminorm
\begin{equation}\label{def:Mod-dist}
[U^\tau]_{ \gamma-|\tau|-2 } := \sup_{x,y}  \frac{1}{d(x, y)^{\gamma-|\tau|-2} }  \big| U_{\gamma-2}^\tau(y,x)  \big|.
\end{equation}
\end{definition}
 It is important to observe that the semi-norm $[U^\tau]_{ \gamma-|\tau|-2 } $ involves the coefficients, $\Theta_{\bar{\tau}}$ as well as the paths $\path_{\bullet, \bullet}$ on all symbol $\bar{\tau}$ for which $C_+(\tau, \bar{\tau})$
 does not vanish, and that all of these trees $\bar{\tau}$ satisfy $\tau\leq \bar{\tau}$ .  Also, for $\tau = \one$, in view of the identity $C_+(\one, \bar{\tau}) = \bar{\tau}$ and $| \one|=-2$ the quantity 
$[U^\one]_{ \gamma } $ 
 measures exactly the size of the error in the expression \eqref{e:MD1} at the beginning of this discussion.
 
 \begin{remark}
 The definition of the semi-norm corresponds exactly to Hairer's definition of a \emph{modelled distribution}, \cite[Definition 3.1]{hairer2014theory}.
 In Hairer's notation the expression
$| U_{\gamma-2}^\tau(y,x) |$ becomes 
\[
\| \Theta(x) - \Gamma_{xy}\Theta(y) \|_{|\Ii(\tau)|}\;.
\]
\end{remark}
The following lemma relates the notion of classical derivative with the generalised derivatives that appear in the modeled distribution. 
\begin{lemma}\label{lemma: derivative of modeled distribution}
Let $1 < \gamma < 2$. 
Fix a local product $\locprod$ and $\Theta \in C^{\infty}(\R \times \R^{d}; \Vec(\Nn) )$ with the property that, with $U^{\one}_{\gamma-2}(y,x)$ defined as in \eqref{def:U}, we have $[U^\one]_{\gamma } < \infty$.
Then, for $1 \le i \le d$, 
\begin{equation}\label{eq: formula for derivative}
\Theta_{x}(\X_{i})
=
\partial_{i}   \Big( \Theta_{y}(\one)
-
\sum_{\bar{\tau} \in \mNn
\atop
|\bar{\tau}| < -1}
\Theta_{x}(\bar{\tau})
\path_{y,x}C_{+}(\one,\bar{\tau})
\Big)
\Big|_{y=x}\;,
\end{equation}
where the partial derivative $\partial_i $ acts in the  variable $y$. 
 \end{lemma}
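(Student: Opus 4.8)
The identity to prove is that the generalized derivative coefficient $\Theta_x(\X_i)$ is recovered as an honest spatial derivative of the ``recentered constant coefficient.'' The natural strategy is to isolate what the hypothesis $[U^{\one}]_{\gamma}<\infty$ says about the expansion of $v$ near $x$, differentiate that expansion, and read off the coefficient of the linear term $(y_i-x_i)$.

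\textbf{Step 1: unpack the modelled distribution bound at the constant level.} By definition \eqref{def:U} with $\tau=\one$ and $|\one|=-2$, and using $C_+(\one,\bar\tau)=\bar\tau$ (from Table~\ref{table:co-prod1}, since $C_+(\one,\bar\tau)$ for $\tau=\Ii(\tau_1)\Ii(\tau_2)\Ii(\tau_3)$ gives $p_+\Ii(\tau)$ but actually for the first argument $\one$ one reads the first column), the quantity $U^{\one}_{\gamma-2}(y,x)$ is precisely
\[
\Theta_y(\one)-\sum_{\substack{\bar\tau\in\Nn\\ |\bar\tau|<\gamma-2}}\Theta_x(\bar\tau)\,\path_{y,x}\Ii(\bar\tau).
\]
Since $1<\gamma<2$, the trees $\bar\tau\in\Nn$ with $|\bar\tau|<\gamma-2$ are exactly $\one$, the $\X_i$, and those $\bar\tau\in\mNn$ with $|\bar\tau|<\gamma-2$, i.e. $|\Ii(\bar\tau)|<\gamma$. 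Separating the $\one$ and $\X_i$ terms — recall $\path_{y,x}\Ii(\one)=1$ and $\path_{y,x}\Ii(\X_i)=y_i-x_i$ from \eqref{eq: path on I(Poly)} — the bound $[U^\one]_\gamma<\infty$ says
\[
\Big|\Theta_y(\one)-\Theta_x(\one)-\sum_{i=1}^d\Theta_x(\X_i)(y_i-x_i)-\sum_{\substack{\bar\tau\in\mNn\\ |\bar\tau|<\gamma-2}}\Theta_x(\bar\tau)\,\path_{y,x}\Ii(\bar\tau)\Big|\lesssim d(y,x)^\gamma.
\]

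\textbf{Step 2: extract the derivative.} Define $g(y):=\Theta_y(\one)-\sum_{\bar\tau\in\mNn,\,|\bar\tau|<-1}\Theta_x(\bar\tau)\,\path_{y,x}C_+(\one,\bar\tau)$ (note $|\bar\tau|<-1$ is the same as $|\Ii(\bar\tau)|<1<\gamma$, so these are among the trees in Step~1; in fact for $1<\gamma<2$ the conditions $|\bar\tau|<\gamma-2$ and $|\bar\tau|<-1$ on $\mNn$ coincide because Assumption~\ref{assump: orders away from integers} rules out $|\bar\tau|\in[-1,\gamma-2]$... — more carefully, one must check $\tNn$ contributes the right way; the orders in $(\gamma-2,-1)$ would be excluded by the no-integers assumption only if none lies there, so I should instead keep the sum over $|\bar\tau|<\gamma-2$ and note that the extra trees with $-1<|\bar\tau|<\gamma-2$ would be in $\tNn$, which is where the subtlety sits). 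The bound in Step~1 then reads $|g(y)-g(x)-\sum_i\Theta_x(\X_i)(y_i-x_i)|\lesssim d(y,x)^\gamma$ with $\gamma>1$, which is exactly the statement that $g$ is differentiable at $y=x$ with $\partial_i g(x)=\Theta_x(\X_i)$ — a standard fact, since a function whose increment is approximated by a linear form up to $o(|y-x|)$ is differentiable with that linear form as its differential, and here $d(y,x)^\gamma=o(|y-x|)$ as $y\to x$ in the spatial directions.

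\textbf{Step 3: match the two sums of $\mNn$-trees.} The only gap is that \eqref{eq: formula for derivative} sums over $\bar\tau\in\mNn$ with $|\bar\tau|<-1$, whereas Step~1 naturally produces the sum over $|\bar\tau|<\gamma-2$. These differ only by trees $\bar\tau\in\mNn$ with $-1<|\bar\tau|<\gamma-2$; but such trees would have $|\Ii(\bar\tau)|\in(1,\gamma)$, hence lie in $\tNn$, and then $\path_{y,x}\Ii(\bar\tau)$ vanishes to order $|\Ii(\bar\tau)|>1$ at $y=x$ — in particular its spatial gradient at $y=x$ contributes nothing to $\partial_i g(x)$ beyond what is already absorbed. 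Actually the cleanest route is: prove $\partial_i$ of the full $\gamma$-truncated expression equals $\Theta_x(\X_i)$ (Step~2), then observe that replacing the $|\bar\tau|<\gamma-2$ sum by the $|\bar\tau|<-1$ sum changes $g$ by $\sum_{-1\le|\bar\tau|<\gamma-2}\Theta_x(\bar\tau)\path_{y,x}\Ii(\bar\tau)$, each term of which has vanishing spatial gradient at $y=x$ because $|\Ii(\bar\tau)|\ge 1$ and in fact $>1$ after Assumption~\ref{assump: orders away from integers} (the order bound on $\path_{\bullet,x}\Ii(\bar\tau)$ forces $|\path_{y,x}\Ii(\bar\tau)|\lesssim d(y,x)^{|\bar\tau|+2}$ with exponent $>1$), so differentiability at $x$ is unaffected and $\partial_i g(x)$ is unchanged.

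\textbf{Main obstacle.} The substantive point is Step~2 — turning a scalar Hölder-type increment bound with exponent $\gamma>1$ into a genuine differentiability statement and identifying $\partial_i g(x)$ with $\Theta_x(\X_i)$ — together with the bookkeeping in Step~3 of why the $\tNn$ trees in the window $(-1,\gamma-2)$ are harmless. Neither is deep, but the index-set juggling between ``$|\bar\tau|<\gamma-2$'' and ``$|\bar\tau|<-1$'' must be done carefully using \eqref{formula_hom}, the definition of $\tNn$, and Assumption~\ref{assump: orders away from integers}; this is where I expect to spend the most care, while the differentiation itself is a one-line $\varepsilon$--$\delta$ argument.
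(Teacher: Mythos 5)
Your proposal is correct and follows essentially the same route as the paper's own proof: both start from $|U^{\one}_{\gamma-2}(y,x)|\lesssim d(y,x)^\gamma$ with $\gamma>1$ to conclude that the spatial derivative of $U^{\one}_{\gamma-2}(\cdot,x)$ vanishes at $y=x$, then expand the definition of $U^{\one}_{\gamma-2}$, read off that $\partial_i \path_{y,x}\Ii(\X_j)|_{y=x}=\delta_{ij}$ produces the coefficient $\Theta_x(\X_i)$, and use the order bound $|\path_{y,x}\Ii(\bar\tau)|\lesssim d(y,x)^{|\bar\tau|+2}$ to discard the trees in $\tNn$ with $-1<|\bar\tau|<\gamma-2$. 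The only stylistic difference is that your Steps 2--3 unpack the $o(|y-x|)$ argument and the index-set bookkeeping that the paper compresses into two sentences.
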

 \begin{proof}
 Note that by assumption we have that that $|U^{\one}_{\gamma - 2}(y,x)|\lesssim d(y,x)^{\gamma}$ and since $\gamma > 1$ it follows that 
 \[
 \big(
 \partial_i
 U^{\one}_{\gamma - 2}(y,x) 
 \big)\big|_{y=x} = 0 \;.
 \]
We obtain the desired result by plugging in the definition of $U^{\one}_{\gamma-2}(y,x)$ and recalling that 
\begin{equation*}
\begin{split}
\big(
\partial_{i} \path_{y,x}C_{+}(\one,\one)
\big)
\big|_{y=x}
=&
\big(
\partial_{i} \path_{y,x}\Ii(\one)
\big)
\big|_{y=x} = 0\;,\\ 	
\big(
\partial_{i} \path_{y,x}C_{+}(\one,\X_{j})
\big)
\big|_{y=x}
=&
\big(
\partial_{i} \path_{y,x}\Ii(\X_{j})
\big)
\big|_{y=x}
= \delta_{\{j = i\}}\;,\\
\textnormal{and }
\big(
\partial_{i} \path_{y,x}C_{+}(\one,\bar{\tau})
\big)
\big|_{y=x} =& 0\textnormal{ for }\bar{\tau} \in \mNn \textnormal{ with }|\bar{\tau}| > -1\;.
\end{split}
\end{equation*}
In the last statement we are using that $|\path_{y,x}C_{+}(\one,\bar{\tau})| \lesssim d(y,x)^{|\bar{\tau}| + 2}$. 
 \end{proof}

 We now introduce some short-hand notation that will be very useful in the following calculations. First, for a given local product $\path$, for  $\Theta \in C^{\infty}(\R \times \R^{d}; \Vec(\Nn) )$
  and $\gamma \in (0,2)$ we denote by

 \begin{equation}
\label{def:V}
V_\gamma(y,x):=\sum_{\underset{|\tau|<\gamma-2}{\tau\in \Nn,}}\Theta_x(\tau)\Y_{y,x}\Ii(\tau) \;.
\end{equation}

We also introduce a truncated ``square'' and a ``spatial derivative'':
\begin{align}
\label{def:V2}
V_\gamma^2(y,x)&:=\sum_{\underset{|\tau_1|+|\tau_2|<\gamma-4}{\tau_1,\tau_2\in\Nn,}}\Theta_x(\tau_1)\Theta_x(\tau_2)\Y_{y,x}\Ii(\tau_1)\Y_{y,x}\Ii(\tau_2)\;,\\
\label{def:V'}
V^{(i)}_\gamma(y,x)&:=\sum_{\underset{|\tau|<\gamma-1}{\tau\in\tNn,}}\Theta_x(\tau)\Y_{y,x}\Ii^+_i(\tau)\;.
\end{align}
Note that due to the choice of index set $V_\gamma^2(y,x)$ does not coincide with the point-wise square $(V_\gamma(y,x))^2$. Note furthermore, 
that recalling the definition \eqref{def:U} specialised to $\tau = \X_i$ as well as the identity $C_+(\X_i, \tau) = \Ii^+_i(\tau)$ for $\tau \in \tNn$ and $=0$ otherwise 
(see Table~\ref{table:co-prod1})  we have the identity
\begin{equation}\label{def:V'bis}
U_\gamma^{\X_i}(y,x) = \Theta_y(\X_i) - V_{\gamma-1}^{(i)}(y,x).
\end{equation}

 A first nice observation is a control for the ``three point continuity operator'' for $V_\gamma$ (the left-hand side of \eqref{e:lem-3-point} below) in terms of the $U_{\gamma-2}^\tau$ and  $\path_{\bullet, \bullet}$.
This ``three point continuity operator'' 
 corresponds exactly to Gubinelli's $\delta$ operator \cite{gubinelli2004controlling,gubinelli2010ramification}.
 In our calculations this quantity is needed to bound derivatives (see \eqref{eq:relV'}  below) and as input to the  Schauder lemmas presented in Section~\ref{ss:LSL}.

\begin{lemma}\label{lemma 3pt}
Let $\locprod_{\bullet}$ be a local product.
Let   $\Theta \in C^{\infty}(\R \times \R^{d}; \Vec(\Nn) )$ and let $V$ be defined as in \eqref{def:V}. Then for any space-time points $x,y,z \in \R \times \R^{d}$ we have
\begin{align}\notag
&V_\gamma(z,x)   - V_\gamma(z,y)+ V_\gamma(y,y)-V_\gamma(y,x) \\
\label{e:lem-3-point}
& \qquad= - \sum_{\underset{|\tau|<\gamma-2}{\tau\in\Nn\setminus\{ \one\},}} 
 U^{\tau}_{\gamma-2} (y,x) \,    \path_{z,y} \Ii(\tau) \;.
\end{align}
\end{lemma}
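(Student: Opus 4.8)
The plan is to expand everything in sight using the definitions and then let Chen's relation do the cancellation. First I would write out the left-hand side of \eqref{e:lem-3-point} by substituting the definition \eqref{def:V} of $V_\gamma$, thinking of the four terms as follows: the pair $V_\gamma(z,x) - V_\gamma(z,y)$ involves paths based at different points $x$ and $y$ but the same running point $z$, whereas $V_\gamma(y,y) - V_\gamma(y,x)$ involves running point $y$. The key move is to re-expand $\path_{z,x}\Ii(\tau)$ in terms of $\path_{z,y}\Ii(\cdot)$ using Chen's relation (Proposition~\ref{prop:Chen}) together with the explicit coproduct formula from Lemma~\ref{lem:coproduct}, in the form
\[
\path_{z,x}\Ii(\tau) = \sum_{\bar{\tau} \in \Nn} \path_{z,y}\Ii(\bar{\tau})\, \path_{y,x} C_+(\bar{\tau},\tau)\;,
\]
so that both $V_\gamma(z,x)$ and $V_\gamma(z,y)$ become sums over $\path_{z,y}\Ii(\bar{\tau})$ with various coefficients. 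Since $C_+(\bar{\tau},\tau) \neq 0$ forces $\bar{\tau} \leq \tau$, and $|\bar\tau| \le |\tau| < \gamma-2$, the index truncation is preserved under this re-expansion, so no boundary terms escape the truncation.

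Next I would collect the coefficient of each $\path_{z,y}\Ii(\bar{\tau})$ appearing after this re-expansion. The coefficient coming from $V_\gamma(z,x)$ is $\sum_{\tau} \Theta_x(\tau) \path_{y,x} C_+(\bar{\tau},\tau)$ (sum over $\tau \in \Nn$ with $|\tau| < \gamma - 2$), while the coefficient from $-V_\gamma(z,y)$ is simply $-\Theta_y(\bar{\tau})$. Recognising the definition \eqref{def:U} of $U^{\bar\tau}_{\gamma-2}(y,x)$, the combined coefficient of $\path_{z,y}\Ii(\bar{\tau})$ in $V_\gamma(z,x) - V_\gamma(z,y)$ is exactly $-U^{\bar\tau}_{\gamma-2}(y,x)$. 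This already produces the right-hand side of \eqref{e:lem-3-point}, except that the sum still runs over all $\bar\tau \in \Nn$ including $\bar\tau = \one$, and we have not yet accounted for the remaining two terms $V_\gamma(y,y) - V_\gamma(y,x)$.

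The final step is to handle the $\bar\tau = \one$ term and the two remaining terms together. Specialising the Chen re-expansion to the running point $z = y$, one gets $\path_{y,x}\Ii(\tau) = \sum_{\bar\tau} \path_{y,y}\Ii(\bar\tau)\,\path_{y,x}C_+(\bar\tau,\tau)$, and since $\path_{y,y}\Ii(\bar\tau) = 0$ for $\bar\tau \neq \one$ while $\path_{y,y}\Ii(\one) = 1$, this collapses to $\path_{y,x}\Ii(\tau) = \path_{y,x}C_+(\one,\tau)$ (using $C_+(\one,\tau) = p_+\Ii(\tau) = \Ii(\tau)$ for $\tau \in \mNn$, and checking the $\tau \in \Pp$ cases directly). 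Hence $V_\gamma(y,x) = \sum_\tau \Theta_x(\tau)\path_{y,x}C_+(\one,\tau)$, and $V_\gamma(y,y) = \Theta_y(\one)$, so $V_\gamma(y,y) - V_\gamma(y,x) = U^{\one}_{\gamma-2}(y,x)$. On the other hand the $\bar\tau = \one$ contribution to the first pair was $-U^{\one}_{\gamma-2}(y,x)\,\path_{z,y}\Ii(\one) = -U^{\one}_{\gamma-2}(y,x)$. These two cancel exactly, which is precisely why the sum on the right of \eqref{e:lem-3-point} runs over $\Nn \setminus \{\one\}$. Assembling the pieces gives the claimed identity.

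The main obstacle I anticipate is bookkeeping rather than conceptual: one must be careful that the truncation $|\tau| < \gamma - 2$ in the definition of $V_\gamma$ interacts correctly with the truncation $|\bar\tau| < \gamma - 2$ implicit in $U^{\bar\tau}_{\gamma-2}$ from \eqref{def:U}, and that re-summing the double sum $\sum_\tau \Theta_x(\tau)\sum_{\bar\tau}\path_{z,y}\Ii(\bar\tau)\path_{y,x}C_+(\bar\tau,\tau)$ by swapping the order of summation does not create terms with $|\bar\tau| \ge \gamma - 2$; this is guaranteed by $\bar\tau \le \tau \Rightarrow |\bar\tau| \le |\tau|$, which follows from \eqref{formula_hom} since $\bar\tau \le \tau$ means $\tau$ is obtained from $\bar\tau$ by grafting, never decreasing any of $\numnoise, \numone, \numpoly$. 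I would also need to double-check the slightly degenerate cases $\tau \in \Ii(\Pp)$, i.e. $\tau \in \{\one, \X_i\}$, in the collapse $\path_{y,x}\Ii(\tau) = \path_{y,x}C_+(\one,\tau)$, using the base cases of $C_+$ from Table~\ref{table:co-prod1} and \eqref{eq: path on I(Poly)}.
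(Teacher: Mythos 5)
Your proof is correct and uses the same essential ingredients as the paper: Chen's relation (via Proposition~\ref{prop:Chen} and Lemma~\ref{lem:coproduct}) to re-express paths based at $x$ in terms of paths based at $y$, an exchange of summation order, and the observation that $\path_{y,y}\Ii(\tau)$ vanishes for $\tau \neq \one$. The only difference is cosmetic: you pair the four terms by common running point, $(V_\gamma(z,x)-V_\gamma(z,y))$ and $(V_\gamma(y,y)-V_\gamma(y,x))$, and then observe that the $\bar\tau=\one$ contribution from the first pair cancels against the second pair; the paper instead pairs by common base point, $(V_\gamma(z,x)-V_\gamma(y,x))$ and $(V_\gamma(z,y)-V_\gamma(y,y))$, which makes the $\tau=\one$ term drop out immediately because $\path_{z,y}\Ii(\one)-\path_{y,y}\Ii(\one)=0$. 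Your handling of the truncation consistency (via $\bar\tau\le\tau\Rightarrow|\bar\tau|\le|\tau|$) is the right concern and is correctly resolved.
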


\begin{proof}
We re-organise the terms on the left hand side of \eqref{e:lem-3-point} to write
\begin{align}
\notag
&V_\gamma(z,x) -V_\gamma(z,y) + V_\gamma(y,y) - V_\gamma(y,x)\\
\notag
	& \qquad = \sum_{\underset{|\bar{\tau}|<\gamma-2}{\bar{\tau}\in\Nn  ,}}
		\Theta_x(\bar{\tau})
			\big(\path_{z,x}\Ii(\bar{\tau})- \path_{y,x}\Ii( \bar{\tau} ) \big) \\
\label{e:MD5}
	& \qquad\qquad  -\sum_{\underset{|\tau|<\gamma-2}{\tau \in\Nn ,}}
		 \Theta_y(\tau)
		 	\big( \path_{z,y}\Ii(\tau) - \path_{y,y}\Ii(\tau) \big).
\end{align}
We use Chen's relation \eqref{equ: def of Chen relation}
for the terms in the first sum on the right hand 
\begin{align*} 	
\path_{z,x}\Ii(\bar{\tau})- \path_{y,x}\Ii( \bar{\tau}) 
&=\sum_{\underset{|\tau|<\gamma-2}{\tau \in\Nn }}
	\big(\path_{z,y}\Ii(\tau) -\path_{y,y}\Ii(\tau) \big)
		\path_{y,x}C_+(\tau,\bar{\tau}) \;.
\end{align*}
Plugging this into the first term on the right hand side of \eqref{e:MD5}, exchanging the summation in $\tau$ and $\bar{\tau}$ gives
\begin{align*}
V_\gamma(z,x) -&V_\gamma(z,y) + V_\gamma(y,y) - V_\gamma(y,x)=\\
\sum_{\underset{|\tau|<\gamma-2}{\tau \in\Nn ,}}&\Big(\big( \path_{z,y}\Ii(\tau) - \path_{y,y}\Ii(\tau) \big)
	\big(\sum_{\underset{|\bar{\tau}|<\gamma-2}{\bar{\tau}\in\Nn  ,}}
		\Theta_x(\bar{\tau})
		\path_{y,x}C_+(\tau,\bar{\tau} ) -
		 \Theta_y(\tau)\big)\Big).
\end{align*}
Finally,
noting that $\path_{z,y}\Ii(\one) -\path_{y,y}\Ii(\one)= 0$  and $\path_{y,y}\Ii(\tau) = 0$ for $\tau \in \Nn \setminus\{ \one\}$  leads to the desired 
expression \eqref{e:lem-3-point}.
\end{proof}

The following lemma gives relations between $V$, $V^2$ and $V^{(i)}$. They will be used heavily in Section~\ref{s:PMT}. 
\begin{lemma}
Truncation: for $0<\beta<\gamma<2$, $V_{\gamma-\beta}$ is a truncation of $V_\gamma$
\label{lem:relations V V2}
 \begin{align}
\notag
\Theta_y(\one)-V_\gamma(y,x)=&\Theta_y(\one)-V_{\gamma-\beta}(y,x) \\
\label{eq:relV}
&-\sum_{\gamma-\beta-2\leqslant|\tau|<\gamma-2}\Theta_x(\tau)\Y_{y,x}\Ii(\tau)\;.
\end{align}
Multiplication: 
for $0<\gamma<1$, $V^2_\gamma$ is a truncation of $(V_\gamma)^2$:
 \begin{align}
\label{eq:relV2}
\Theta_y(\one)^2&-V^2_{\gamma}(y,x)
=\Theta_y(\one)(\Theta_y(\one)-V_{\gamma}(y,x))\\
&+\sum_{-2\leqslant|\tau|<\gamma-2}\Theta_x(\tau)\Y_{y,x}\Ii(\tau)(\Theta_y(\one)-V_{\gamma-|\tau|-2}(y,x)).\notag
\end{align}
Derivative: To control derivatives we use the following reorganisation of Lemma~\ref{lemma 3pt}: for $1<\gamma<2$  we have
\begin{align}\label{eq:relV'}
&V_\gamma(z,x) - V_\gamma(z,y) + V_\gamma(y,y)  -V_\gamma(y,x)  \\&+\sum_{i=1}^d \big(\Theta_y(\X_i) - V_{\gamma-1}^{(i)}(y,x)\big)(y-z)_i\notag	   
 = -\sum_{\underset{|\tau|<\gamma-2}{\tau\in\mNn ,}} 
	 U^{\tau}_{\gamma-2} (y,x)   \,    \path_{z,y} \Ii(\tau).
\end{align}
\end{lemma}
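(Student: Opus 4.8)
The plan is to prove the three identities in Lemma~\ref{lem:relations V V2} by direct algebraic manipulation, exploiting the definitions~\eqref{def:V}, \eqref{def:V2}, \eqref{def:V'} together with Lemma~\ref{lemma 3pt} and the various explicit formulae for $C_+$ recorded in Table~\ref{table:co-prod1}. In each case the strategy is the same: start from the right-hand side, expand the truncated objects and the $U$-terms according to their definitions, and collect terms so that the sums reorganise into the left-hand side. Since everything here is a finite linear (or bilinear) combination of the quantities $\Theta_x(\tau)$ and $\path_{y,x}\Ii(\tau)$, the proof is purely bookkeeping; no analytic input (order bounds, Chen's relation) is needed except in the third identity.

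For the truncation identity \eqref{eq:relV}, I would simply observe that by~\eqref{def:V} the two sums $V_\gamma(y,x)$ and $V_{\gamma-\beta}(y,x)$ differ exactly by the terms with $\gamma-\beta-2\le|\tau|<\gamma-2$, so subtracting and rearranging gives the claim immediately. For the multiplication identity \eqref{eq:relV2}, I would expand $\Theta_y(\one)^2 - V_\gamma^2(y,x)$ using~\eqref{def:V2}, write $\Theta_y(\one)^2 = \Theta_y(\one)\big(\Theta_y(\one) - V_\gamma(y,x)\big) + \Theta_y(\one)V_\gamma(y,x)$, and then insert $V_\gamma(y,x) = \sum_{|\tau|<\gamma-2}\Theta_x(\tau)\path_{y,x}\Ii(\tau)$. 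Grouping the terms by the index $\tau$ appearing in this first expansion and recognising the complementary sum over $\tau_2$ (with the constraint $|\tau_1|+|\tau_2|<\gamma-4$) as precisely $\Theta_y(\one) - V_{\gamma-|\tau|-2}(y,x)$ produces the stated formula; one has to keep track that $\Theta_x(\tau_1)\Theta_x(\tau_2)\path_{y,x}\Ii(\tau_1)\path_{y,x}\Ii(\tau_2)$ is symmetric in $(\tau_1,\tau_2)$ but this is harmless. For the derivative identity \eqref{eq:relV'}, I would start from Lemma~\ref{lemma 3pt}, which already gives the three-point expression with a sum over $\tau\in\Nn\setminus\{\one\}$, and split this index set as $\mNn$ together with $\{\X_1,\dots,\X_d\}$. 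Using~\eqref{def:V'bis}, the $\X_i$-contribution is $-\sum_i U^{\X_i}_{\gamma-2}(y,x)\,\path_{z,y}\Ii(\X_i) = -\sum_i\big(\Theta_y(\X_i) - V^{(i)}_{\gamma-1}(y,x)\big)(z-y)_i$, and moving this term to the left-hand side (flipping the sign to $(y-z)_i$) yields exactly~\eqref{eq:relV'} with the remaining sum running over $\tau\in\mNn$.

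The only point that requires slight care — and the closest thing to an obstacle — is the constraint-tracking in~\eqref{eq:relV2}: one must verify that the inner sum over $\tau_2$ with $|\tau_2| < \gamma - 4 - |\tau_1|$, i.e. $|\tau_2| < (\gamma-|\tau_1|-2) - 2$, genuinely matches the definition of $V_{\gamma-|\tau_1|-2}(y,x)$, and that the range $-2 \le |\tau_1| < \gamma-2$ of the outer sum in the claimed identity is the correct one given that for $|\tau_1| < -2$ the tree $\tau_1$ would lie in $\Ww$ rather than $\Nn$ (so such terms do not occur) and that $0<\gamma<1$ forces $\gamma-|\tau_1|-2 < 1$, keeping $V_{\gamma-|\tau_1|-2}$ well-defined. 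Once these index-set identifications are checked, all three identities follow by inspection.
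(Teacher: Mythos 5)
Your proposal is correct and takes essentially the same route as the paper's own one-line proof: the first two identities are read off directly from the definitions \eqref{def:V}, \eqref{def:V2}, and the third follows from Lemma~\ref{lemma 3pt} after splitting the index set $\Nn\setminus\{\one\}$ into $\mNn$ and $\{\X_1,\dots,\X_d\}$ and substituting \eqref{def:V'bis} for the $\X_i$-terms. (Note that both your derivation and the paper's proof line quietly conflate $(z-y)_i$ with $(y-z)_i$ when handling $\path_{z,y}\Ii(\X_i)$, but since \eqref{eq:relV'} is only ever invoked under absolute values this sign discrepancy is immaterial; your remark about ``symmetry in $(\tau_1,\tau_2)$'' in the second identity is likewise superfluous, as the sum in $V^2_\gamma$ is already over ordered pairs and matches the double sum exactly.)
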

This last identity \eqref{eq:relV'} will be combined with Lemma~\ref{corschauder} to give a bound on $V^{(i)}$ below.
\begin{proof}
The first two identities are immediate. In the third one, we  use the identity \eqref{def:V'bis} and rewrite the term corresponding to $\tau=\x_i$, for which $\path_{z,y}\x_i=(y-z)_i$. 
\end{proof}
We introduce in the following definition a coefficient map $\Upsilon$ depending on some real valued functions $v_\one$ and $ v_{\X_i}$, $i=1...d$, on $\R \times \R^{d}$.
\begin{definition}\label{def-Upsilon-rec} Given real parameters $v_\one,\ v_{\X_i},\ i=1...d$ and $\tau \in \Nn \sqcup \Ww$ we set
\begin{equation}\label{eq:recursive_ups}
\Upsilon(\tau)[v_\one,v_\x]
:=\begin{cases}
v_\one, & \tau=\one, \\
v_{\X_{i}}, & \tau=\X_{i},\\
1, & \tau=\Xi,\\
-\prod_{i=1}^3\Upsilon(\tau_i)[v_\one,v_\x], & \tau=\Ii(\tau_1)\Ii(\tau_2)\Ii(\tau_3),
\end{cases}
\end{equation}
where we adopt, above and in what follows, the notation convention $v_{\x} = (v_{\x_{i}})_{i=1}^{d}$. 
We may omit the parameters $[v_\one,v_\x]$ from the notation when there is no possible confusion,
We usually work in the case where $v_{\one}$ and $v_{\X}$ are functions of spacetime $\R \times \R^{d}$ in which case we use the shorthand: $\Upsilon(\tau)[v_\one(z),v_\x(z)]=:\Upsilon_z(\tau)$.

We extend $\Upsilon$ to planted trees, in particular we set  $\Upsilon(\Ii(\tau)) := \Upsilon(\tau)$ 
 for $\tau \in \Ww\cup\Nn$ and $\Upsilon(\Ii^+_i(\tau))= \Upsilon(\tau)$ for $\tau \in \tNn \cup \{\X_{i}\}_{i=1}^{d}$. 
We also extend $\Upsilon$ to forests of planted trees by multiplicativity.  
\end{definition}
It is straightforward to derive an explicit formula from the earlier recursive formula $\Upsilon$, we state this as a lemma. 
\begin{lemma}\label{Upsilon-Lemma}
\[
\Upsilon(\tau)[v_\one,v_\x]=(-1)^{\frac{\num(\tau)-1}{2}} v_\one^{\numone(\tau)} \prod_{i=1}^{d}v_{\x_{i}}^{\numpolyi(\tau)}.
\]
\end{lemma}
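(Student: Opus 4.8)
The plan is to prove the identity by structural induction on the tree, following the recursive definition \eqref{eq:recursive_ups}. First I would dispose of the three base cases $\tau \in \{\one, \X_{i}, \Xi\}$ by direct inspection: for $\tau = \one$ one has $\num(\one) = \numone(\one) = 1$ and $\numpolyi(\one) = 0$, so the right-hand side is $(-1)^{0} v_\one^{1} = v_\one = \Upsilon(\one)$; for $\tau = \X_{i}$ one has $\num(\X_{i}) = \numpolyi(\X_{i}) = 1$ with the other counts zero, so the right-hand side is $v_{\X_{i}} = \Upsilon(\X_{i})$; and for $\tau = \Xi$ one has $\num(\Xi) = 1$, $\numone(\Xi) = \numpoly(\Xi) = 0$, so the right-hand side is $(-1)^{0} = 1 = \Upsilon(\Xi)$ (note that $\numnoise$ never enters the formula, consistent with $\Upsilon(\Xi)$ carrying no factor of the noise).

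For the inductive step, let $\tau = \Ii(\tau_1)\Ii(\tau_2)\Ii(\tau_3) \in \mNn \cup \mWw$. I would first observe that each $\tau_{j}$ again lies in $\Nn \cup \Ww$, so that $\Upsilon(\tau_{j})$ is legitimately defined: indeed, using the order formula \eqref{formula_hom} together with the universal lower bound $|\sigma| \ge -3+\delta$, one gets $|\tau_{k}| = |\tau| - 6 - \sum_{j \ne k}|\tau_{j}| \le |\tau| - 2\delta < 0$ for each $k$, exactly as in the proof of Lemma~\ref{lem:w-trees_all_w}. Next I would record that the leaf-counting functions are additive under the three-fold tree product, i.e. $\num(\tau) = \sum_{j=1}^{3}\num(\tau_{j})$, $\numone(\tau) = \sum_{j=1}^{3}\numone(\tau_{j})$, and $\numpolyi(\tau) = \sum_{j=1}^{3}\numpolyi(\tau_{j})$ for each $i$, since the new root created by the tree product is an internal node and contributes no leaf. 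Substituting the induction hypothesis into $\Upsilon(\tau) = -\prod_{j=1}^{3}\Upsilon(\tau_{j})$, the powers of $v_\one$ and of each $v_{\X_{i}}$ immediately collate to $v_\one^{\numone(\tau)}$ and $v_{\X_{i}}^{\numpolyi(\tau)}$, while the sign becomes $(-1)^{1 + \sum_{j}(\num(\tau_{j})-1)/2} = (-1)^{(\num(\tau)+2-3)/2} = (-1)^{(\num(\tau)-1)/2}$, which is the claimed coefficient.

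The one point I would make explicit before doing this arithmetic is that each exponent $(\num(\sigma)-1)/2$ is a bona fide non-negative integer. Every element of $\wTr$ is a full ternary tree — each internal node has exactly three children — so a tree with $k$ internal nodes has $2k+1$ leaves, whence $\num(\sigma) = 2k+1$ and $(\num(\sigma)-1)/2 = k \in \Z_{\ge 0}$. In particular $\num(\tau_1) + \num(\tau_2) + \num(\tau_3)$ is odd, so the manipulation of signs above really takes place in $\Z$ and no parity issue arises. There is no genuine obstacle in this proof; it is a direct structural induction, and the only care needed is in the bookkeeping of the sign and in the parity observation just made. The extension to planted trees and to forests is then immediate from the conventions $\Upsilon(\Ii(\tau)) = \Upsilon(\tau)$, $\Upsilon(\Ii^{+}_{i}(\tau)) = \Upsilon(\tau)$, and multiplicativity over forests.
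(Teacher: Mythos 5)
Your proof is correct, and it is exactly the structural induction that the paper implicitly has in mind — the paper states this lemma with no proof, remarking only that it is "straightforward to derive," so your write-up simply supplies the details. The base cases, the additivity of the leaf-counting functions over the three-fold tree product, the well-definedness check that each $\tau_k \in \Nn \cup \Ww$, and the sign arithmetic $(-1)^{1+\sum_j(\num(\tau_j)-1)/2}=(-1)^{(\num(\tau)-1)/2}$ are all handled correctly, and the parity observation that every tree in $\wTr$ is a full ternary tree (hence has an odd number of leaves) is a worthwhile thing to record explicitly.
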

Note that since we only consider trees of negative order, we always have $\numone(\tau)+2\numpoly(\tau)<3$. 
In particular only $\pm v_{\one}$, $\pm v_\one^2$, $\pm v_{\x_i}$ or $\pm 1$ can appear and these possibilities correspond, respectively, to $\numone(\tau) = 1$, $\numone(\tau) = 2$, $\numpolyi(\tau) = 1$, and $\numone(\tau) = \numpoly(\tau) = 0$.
\begin{assumption}\label{rem:assumecoherence} 
For the remainder of the paper, we will always assume that the  coefficient map  $\Theta \in C^{\infty}(\R \times \R^{d}; \Vec(\Nn) )$  is of the form
\begin{equation}\label{eq:coeff_given_by_ups}
\Theta_{z}(\bullet) = \Upsilon(\bullet)[v_{\one}(z),v_{\X}(z)] = \Upsilon_{z}(\bullet)
\textnormal{ for some }v_\one,\ v_{\X}\;.
\end{equation}
We enforce the relation \eqref{eq:coeff_given_by_ups} for the rest of the article, in particular this is implicit in any  use of the notation $U^\tau$.
\end{assumption}
The following identities are the main motivation behind our definition of $\Upsilon$.
\begin{lemma}\label{lemma: self-similarity of coherence}
One has
\begin{equation}
\label{eq: mul-coherence}
-
\Big(
\sum_{\tau \in \Nn \cup \Ww}
\Upsilon(\tau) \Ii(\tau) \Big)^{3}
=
\sum_{\tau \in \mNn \cup \mWw}
\Upsilon(\tau)
\tau\;.
\end{equation}
We also have for any $\tau\in\Nn, \ \bar{\tau}\in\mNn$ such that $C_+(\tau,\bar{\tau})\neq 0$, 
\begin{equation}\label{eq: coherence relation for upsilon +}
\Upsilon(\bar{\tau})=(-1)^\frac{\num(\tau)-1}2\Upsilon(C_+(\tau,\bar{\tau})),
\end{equation}
where $\Upsilon$ acts on forests multiplicatively.
\end{lemma}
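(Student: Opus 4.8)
The plan is to prove the two identities of Lemma~\ref{lemma: self-similarity of coherence} separately, using the explicit formula for $\Upsilon$ from Lemma~\ref{Upsilon-Lemma} together with the algebraic structure already established. For \eqref{eq: mul-coherence}, I would first expand the cube on the left hand side by linearity, so that each term is of the form $\Upsilon(\tau_1)\Upsilon(\tau_2)\Upsilon(\tau_3)\,\Ii(\tau_1)\Ii(\tau_2)\Ii(\tau_3)$ for $\tau_1,\tau_2,\tau_3\in\Nn\cup\Ww$. By the truncation convention of Section~\ref{ss:order}, the term vanishes unless $|\Ii(\tau_1)|+|\Ii(\tau_2)|+|\Ii(\tau_3)|\le0$, i.e. unless $\Ii(\tau_1)\Ii(\tau_2)\Ii(\tau_3)$ lies in $\mNn\cup\mWw$ (cf. the identity \eqref{eq:cube}). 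On those surviving terms, the recursive definition \eqref{eq:recursive_ups} gives precisely $\Upsilon(\tau_1)\Upsilon(\tau_2)\Upsilon(\tau_3)=-\Upsilon(\Ii(\tau_1)\Ii(\tau_2)\Ii(\tau_3))$, so summing over all such terms reproduces $\sum_{\tau\in\mNn\cup\mWw}\Upsilon(\tau)\tau$ but with the sign flipped; hence the minus sign on the left hand side. This is essentially a bookkeeping argument and should be routine, the only thing to be careful about being that the map $(\tau_1,\tau_2,\tau_3)\mapsto\Ii(\tau_1)\Ii(\tau_2)\Ii(\tau_3)$ is a bijection from triples onto $\mNn\cup\mWw$ when restricted appropriately (here the non-commutativity of the tree product is exactly what makes this a clean bijection rather than a sum with multiplicities).

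For the second identity \eqref{eq: coherence relation for upsilon +}, I would argue by induction on the size of $\tau$ (say on $\numedge(\tau)+\numpoly(\tau)$, the same quantity used to define $C_+$), following the recursive structure of $C_+$ given in Table~\ref{table:co-prod1}. The base cases are the entries where $\tau\in\{\one,\X_i,\Xi\}$: one checks directly that $C_+(\one,\one)=\Ii(\one)$, so $\Upsilon(C_+(\one,\one))=\Upsilon(\one)=v_\one$, while $(-1)^{\frac{\num(\one)-1}{2}}=(-1)^0=1$ and $\bar\tau$ must equal $\one$ (by \eqref{prop_C}, since $\bar\tau\le\one$ forces $\bar\tau=\one$, but then $\bar\tau\notin\mNn$ — so actually the base case is vacuous here and the genuine content starts with $\tau=\Ii(\tau_1)\Ii(\tau_2)\Ii(\tau_3)$). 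For the inductive step with $\bar\tau=\Ii(\bar\tau_1)\Ii(\bar\tau_2)\Ii(\bar\tau_3)$ and $\tau=\Ii(\tau_1)\Ii(\tau_2)\Ii(\tau_3)$, the table gives $C_+(\bar\tau,\tau)=C_+(\bar\tau_1,\tau_1)\cdot C_+(\bar\tau_2,\tau_2)\cdot C_+(\bar\tau_3,\tau_3)$, and since $\Upsilon$ is multiplicative over forests, $\Upsilon(C_+(\bar\tau,\tau))=\prod_{k}\Upsilon(C_+(\bar\tau_k,\tau_k))$. Applying the induction hypothesis to each factor (for those $k$ with $\tau_k\in\mNn$; the cases $\tau_k\in\{\one,\X_i,\Xi\}$ or $\tau_k\in\Ww$ must be handled by the first three columns of the table directly) yields $\prod_k(-1)^{\frac{\num(\tau_k)-1}{2}}\Upsilon(\bar\tau_k)$. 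One then uses $\num(\tau)=\num(\tau_1)+\num(\tau_2)+\num(\tau_3)$ and the recursion $\Upsilon(\bar\tau)=-\prod_k\Upsilon(\bar\tau_k)$ to match the exponents: the point is that $\sum_k\frac{\num(\tau_k)-1}{2}=\frac{\num(\tau)-3}{2}$ and $\frac{\num(\tau)-1}{2}=\frac{\num(\tau)-3}{2}+1$, and the extra factor of $-1$ coming from the $(-1)^1$ discrepancy is exactly absorbed by the sign in the recursion for $\Upsilon(\bar\tau)$ versus $\prod_k\Upsilon(\bar\tau_k)$. One also needs the parity fact from Lemma~\ref{lem:lemC}, equation \eqref{prop_C.5}, that $\numnoise(\tau)=\numnoise(\bar\tau)+\numnoise(C_+(\bar\tau,\tau))$ and hence $\num$ splits correctly, to be sure the exponents are integers and the arithmetic is legitimate.

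The step I expect to be the main obstacle is the careful bookkeeping in the inductive step of \eqref{eq: coherence relation for upsilon +}: one must simultaneously track the sign $(-1)^{\frac{\num(\cdot)-1}{2}}$, the multiplicative behaviour of $\Upsilon$ over the three sub-forests produced by $C_+$, the fact that some factors $\tau_k$ are leaves or wild trees (for which the relevant table entries are $C_+(\bar\tau,\X_i)=\Ii^+_i(\X_i)$ or $\Ii^+_i(\tau)$, $C_+(\Xi,\Xi)=1$, and $C_+(\bar\tau,\tau)=\delta_{\{\bar\tau=\tau\}}$ for $\tau\in\Ww$), and the constraint $\numone(\tau)+2\numpoly(\tau)<3$ which limits which combinations actually occur. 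In particular one should double-check that $\Upsilon(\Ii^+_i(\X_i))=\Upsilon(\X_i)=v_{\X_i}$ is consistent — but note that in \eqref{eq: coherence relation for upsilon +} we only need the case $\bar\tau\in\mNn$, which by the second line of \eqref{prop_C} and the structure of $\mNn$ excludes the planted-leaf complications somewhat, so the truly delicate case is the fully recursive one. Once the exponent arithmetic is set up correctly via $\num$ being additive over the three branches, the identity falls out; I would present this as a short induction referencing Table~\ref{table:co-prod1}, Lemma~\ref{Upsilon-Lemma}, and \eqref{prop_C.5}, without grinding through every table entry explicitly.
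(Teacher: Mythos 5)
Your argument for \eqref{eq: mul-coherence} is essentially identical to the paper's: expand the cube, invoke the truncation convention as packaged in \eqref{eq:cube}, and match coefficients using the last line of the recursive definition \eqref{eq:recursive_ups}. This part is correct and follows the same route.

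For \eqref{eq: coherence relation for upsilon +} you take a genuinely different path. The paper's proof is a direct, non-inductive calculation: apply the closed-form formula of Lemma~\ref{Upsilon-Lemma} to both sides, so that the identity reduces to matching the exponents of $v_{\one}$, $v_{\X_i}$ and $-1$, which is exactly what the counting identities of Lemma~\ref{lem:lemC} deliver (in particular $\numone(\bar\tau)=\numone(C_+(\tau,\bar\tau))$, $\numpolyi(\bar\tau)=\numpolyi(C_+(\tau,\bar\tau))$, and a short computation using $\sharp C_+(\tau,\bar\tau)=\numone(\tau)+\numpoly(\tau)$ together with $\numnoise(\bar\tau)=\numnoise(\tau)+\numnoise(C_+(\tau,\bar\tau))$ shows the total leaf count of the forest is $\num(\bar\tau)-\numnoise(\tau)$, whence the sign $(-1)^{\frac{\num(\bar\tau)-1}{2}}$ pops out). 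You instead unfold the recursion of $C_+$ and $\Upsilon$ simultaneously and prove the identity by induction on the tree size. Both approaches work, but the paper's is leaner: because $\Upsilon$ depends only on leaf counts, once Lemma~\ref{Upsilon-Lemma} is in hand, the identity is a one-line arithmetic check, whereas your induction must separately verify the various leaf-type and wild-tree base cases that you flag as the ``main obstacle''. In effect your induction re-derives the content of Lemma~\ref{Upsilon-Lemma} in-place rather than leveraging it. One small slip worth flagging: in the base case $\tau=\one$ you write that $\bar\tau\leq\one$ forces $\bar\tau=\one$, making the case vacuous; but the constraint from $C_+(\tau,\bar\tau)\neq 0$ is $\tau\leq\bar\tau$, i.e. $\one\leq\bar\tau$, which is satisfied for every $\bar\tau\in\mNn$. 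The case is therefore not vacuous --- it is, however, immediate, since $C_+(\one,\bar\tau)=p_+\Ii(\bar\tau)=\Ii(\bar\tau)$ for $\bar\tau\in\mNn$ and $(-1)^{\frac{\num(\one)-1}{2}}=1$, so both sides equal $\Upsilon(\bar\tau)$. A similar trivial check handles $\tau=\X_i$ via $C_+(\X_i,\bar\tau)=\Ii^+_i(\bar\tau)$. With that correction, your inductive argument goes through.
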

\begin{proof}
For the equality \eqref{eq: mul-coherence} we first note that we have this equality if we dropped all the $\Upsilon$'s and dropped the minus sign on the left hand side, this is just \eqref{eq:cube}. 
What is left is to make sure that by inserting the minus sign and $\Upsilon$'s in \eqref{eq: mul-coherence}, the unplanted trees on either side of the equation have the same coefficient, but this is an immediate consequence of the last line of \eqref{eq:recursive_ups}.

The identity \eqref{eq: coherence relation for upsilon +} follows from Lemma~\ref{Upsilon-Lemma} and Lemma~\ref{lem:lemC}.
\end{proof}
The key result of this section is the observation that under the structure assumption described in Remark~\ref{rem:assumecoherence}, all continuity conditions are controlled by the condition on $\one$ and $\x_i$.  This follows from the bounds established in Lemma~\ref{lem:relations V V2} and the following theorem which uses the structure of $\Upsilon$.
\begin{theorem}\label{theorem one zero} 
For $\tau\in\Tt$, the quantity $U_{\gamma}^\tau(y,x)$ takes the following form
\begin{equation}\label{eq:expansion U tau}
U^\tau_\gamma(y,x)=\begin{cases}
(-1)^\frac{\num(\tau)-1}2(v_\one(y)-V_{\gamma-|\tau|}(y,x)) & \textnormal{if }\numone(\tau) = 1,\\
(-1)^\frac{\num(\tau)-1}2(v_\one(y)^2-V^2_{\gamma-|\tau|}(y,x)) & \textnormal{if }\numone(\tau) = 2,\\
(-1)^\frac{\num(\tau)-1}2(v_{\X_i}(y)-V^{(i)}_{\gamma-|\tau|}(y,x)) & \textnormal{if }\numpolyi(\tau)=1,\\
0 & \textnormal{if }\numone(\tau), \numpoly(\tau) = 0.
\end{cases}
\end{equation}
\end{theorem}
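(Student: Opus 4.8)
The plan is to insert the explicit form $\Theta=\Upsilon$ (Assumption~\ref{rem:assumecoherence}) into the definition~\eqref{def:U} of $U_\gamma^\tau$, to pull the universal prefactor $(-1)^{(\num(\tau)-1)/2}$ out of the sum using the coherence relation~\eqref{eq: coherence relation for upsilon +}, and then to re-index the remaining sum over $\bar\tau$ by the individual planted trees constituting the forest $C_+(\tau,\bar\tau)$. The combinatorial identities of Lemma~\ref{lem:lemC}, in particular~\eqref{prop_C.5}, then determine how many such planted trees there are and of which type, and this is exactly what produces the four cases of~\eqref{eq:expansion U tau}.

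It suffices to treat $\tau\in\Tr$: for a planted tree $\tau=\Ii(\tilde\tau)$ the leaf counts are unchanged, $\Upsilon(\Ii(\tilde\tau))=\Upsilon(\tilde\tau)$, and by~\eqref{eq:eqeq} the coproduct of $\Ii(\tilde\tau)$ is governed by the same maps $C_+(\cdot,\tilde\tau)$, so this case follows from the one for $\tilde\tau$; and if $\tau\in\Ww$ then $\numone(\tau)=\numpoly(\tau)=0$, $\Delta\tau=\tau\otimes 1$, and $\Upsilon(\tau)$ is a constant, so $U^\tau_\gamma=0$ trivially. So fix $\tau\in\mNn$. For every $\bar\tau$ occurring in~\eqref{def:U} one has $C_+(\tau,\bar\tau)\neq 0$, hence $\tau\leq\bar\tau$ by~\eqref{prop_C}, and~\eqref{eq: coherence relation for upsilon +} gives $\Upsilon_x(\bar\tau)=(-1)^{(\num(\tau)-1)/2}\,\Upsilon_x(C_+(\tau,\bar\tau))$ (with $\Upsilon$ extended multiplicatively to forests). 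Since $\Upsilon_x$ and $\path_{y,x}$ are both multiplicative over forests, and $\Upsilon(\Ii(\sigma))=\Upsilon(\Ii^+_i(\sigma))=\Upsilon(\sigma)$, factoring out the common sign reduces the theorem to the identity
\[
\sum_{\bar\tau}\Upsilon_x\big(C_+(\tau,\bar\tau)\big)\,\path_{y,x}\big(C_+(\tau,\bar\tau)\big)
\;=\; 1,\quad V_{\gamma-|\tau|}(y,x),\quad V^2_{\gamma-|\tau|}(y,x),\quad\text{or}\quad V^{(i)}_{\gamma-|\tau|}(y,x)
\]
in the four respective cases, because by Lemma~\ref{Upsilon-Lemma} the quantity $(-1)^{(\num(\tau)-1)/2}\Theta_y(\tau)$ equals $1$, $v_\one(y)$, $v_\one(y)^2$, or $v_{\X_i}(y)$ respectively.

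The combinatorial input is then the following. By~\eqref{prop_C.5} the forest $C_+(\tau,\bar\tau)$ consists of exactly $\numone(\tau)+\numpoly(\tau)$ planted trees; since $\tau$ has negative order, $\numone(\tau)+2\numpoly(\tau)\leq 2$, so this number is $0$, $1$ or $2$, which are the four regimes. Inspecting Table~\ref{table:co-prod1}, and using that a tree in $\Tr$ carries no $\Ii^{+}$-edge, each planted tree of the forest is generated at a leaf of $\tau$: a $\one$-leaf produces a factor $\Ii(\sigma)$ as $\sigma$ ranges over the trees of $\Nn$ that can be grafted there (Assumption~\ref{assump: orders away from integers} ensures $p_+\Ii(\sigma)=\Ii(\sigma)\neq 0$ for $\sigma\in\mNn$), while an $\X_i$-leaf produces a factor $\Ii^{+}_{i}(\sigma)$ as $\sigma$ ranges over $\tNn\cup\{\X_i\}$. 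The assignment $\bar\tau\mapsto(\sigma_1,\ldots)$ is a bijection onto the admissible tuples, and since grafting onto a $\one$-leaf raises the order by $|\sigma|+2$ and grafting onto an $\X_i$-leaf by $|\sigma|+1$, the single truncation $|\bar\tau|<\gamma$ in~\eqref{def:U} translates exactly into the truncations in~\eqref{def:V},~\eqref{def:V2},~\eqref{def:V'}. Running through the regimes: if $\numone(\tau)=\numpoly(\tau)=0$ the forest is empty, the only admissible $\bar\tau$ is $\tau$ with $C_+(\tau,\tau)=1$, and the left side is $1$; if $\numone(\tau)=1$ the left side is $\sum_\sigma\Upsilon_x(\sigma)\path_{y,x}\Ii(\sigma)=V_{\gamma-|\tau|}(y,x)$; if $\numone(\tau)=2$ it is the double sum $\sum_{\sigma_1,\sigma_2}\Upsilon_x(\sigma_1)\Upsilon_x(\sigma_2)\path_{y,x}\Ii(\sigma_1)\path_{y,x}\Ii(\sigma_2)$ under the \emph{joint} constraint $|\sigma_1|+|\sigma_2|<\gamma-|\tau|-4$, i.e.\ the truncated square $V^2_{\gamma-|\tau|}(y,x)$; and if $\numpolyi(\tau)=1$ it is $\sum_\sigma\Upsilon_x(\sigma)\path_{y,x}\Ii^{+}_{i}(\sigma)=V^{(i)}_{\gamma-|\tau|}(y,x)$ (using $\path_{y,x}\Ii^{+}_{i}(\X_i)=1$, cf.~\eqref{def:V'bis}). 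This establishes~\eqref{eq:expansion U tau}.

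I expect the only real obstacle to be the bookkeeping of the previous paragraph: one must check that $\bar\tau\mapsto(\sigma_1,\ldots)$ really is onto the admissible tuples and, above all, that the single order constraint $|\bar\tau|<\gamma$ reproduces the thresholds built into $V_\bullet$, $V^2_\bullet$ and $V^{(i)}_\bullet$ on the nose — in particular that the $\numone(\tau)=2$ case genuinely yields the jointly truncated $V^2$ rather than $(V_\bullet)^2$. This rests on~\eqref{prop_C.5} together with the homogeneity formula~\eqref{formula_hom}; everything else is an unwinding of definitions.
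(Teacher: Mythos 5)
Your proof is correct and follows the same route as the paper's: apply Assumption~\ref{rem:assumecoherence}, pull the sign $(-1)^{(\num(\tau)-1)/2}$ out of both $\Upsilon_y(\tau)$ (via Lemma~\ref{Upsilon-Lemma}) and the sum (via the coherence relation \eqref{eq: coherence relation for upsilon +}), then re-index the sum by the planted-tree factors of $C_+(\tau,\bar\tau)$ and use \eqref{prop_C.5} together with the additivity $|\tau|+|C_+(\tau,\bar\tau)|=|\bar\tau|$ to identify the result with the appropriately truncated $V$, $V^2$ or $V^{(i)}$. One small quibble with your preliminary reduction: for $\tau\in\Ww$ the sum in \eqref{def:U} is literally over $\bar\tau\in\Nn$, which excludes $\bar\tau=\tau$, so the definition as written would give $U^\tau_\gamma=\Upsilon_y(\tau)=\pm 1$ rather than $0$; but since Definition~\ref{def:Uandseminorm} is only stated for $\tau\in\Nn$ and the paper's own proof never addresses $\tau\in\Ww$ or planted $\tau$ either, this is an imprecision in the theorem's hypotheses rather than a flaw in your core argument.
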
 
\begin{proof}
From Lemma~\ref{lemma: self-similarity of coherence}, we have
\[\Upsilon(\bar{\tau}) C_+(\tau, \bar{\tau})=(-1)^\frac{\num(\tau)-1}2\Upsilon(C_+(\tau,\bar{\tau}))C_+(\tau, \bar{\tau}).\]
This allows to write:
\[
U^\tau_\gamma(y,x)=\Upsilon_y(\tau)-(-1)^{\frac{m(\tau)-1}{2}}\sum_{|\bar{\tau}|<\gamma}\Upsilon_x(C_+(\tau,\bar{\tau}))\path_{y,x}C_+(\tau,\bar{\tau}).
\]
We can use Lemma~\ref{lem:lemC} to study the different cases:
\begin{enumerate}
\item[$\bullet$]If $\numone(\tau) = 1$ and $C_+(\tau,\bar{\tau})\neq 0$ then there exists a unique $\tilde{\tau}\in\Nn$ such that $C_+(\tau,\bar{\tau})=\Ii(\tilde{\tau})$. Conversely, for each $\tilde{\tau}\in \Nn$ with $|\tilde{\tau}|<\gamma-|\tau|-2$, there exists a unique $\bar{\tau}\in \Nn$ with $|\bar{\tau}|<\gamma-2$ such that $C_+(\tau,\bar{\tau})=\Ii(\tilde{\tau})$.   Indexing the sum over this $\tilde{\tau}$ gives the expression of $V_{\gamma-|\tau|}$.
\item[$\bullet$]If $\numpolyi(\tau) = 1$ and $C_+(\tau,\bar{\tau})\neq 0$ then there exists a unique $\tilde{\tau}\in\Nn$ such that $C_+(\tau,\bar{\tau})=\Ii^+_i(\tilde{\tau})$. Indexing the sum over this $\tilde{\tau}$ gives the expression of $V^{(i)}_{\gamma-|\tau|}$.
\item[$\bullet$]If $\numone(\tau) = 2$ and $C_+(\tau,\bar{\tau})\neq 0$ then there exists a unique non-commutative couple $(\tilde{\tau}_1,\tilde{\tau}_2)\in\Nn^2$ such that $C_+(\tau,\bar{\tau})=\Ii(\tilde{\tau}_1)\cdot\Ii(\tilde{\tau}_2)$. Indexing the sum over these $\tilde{\tau}_1,\tilde{\tau}_2$ gives the expression of $V^{2}_{\gamma-|\tau|}$, using also the multiplicative action of $\path$ on forests of planted trees.
\end{enumerate}
We finally see that we get the correct order using the fact that $|\tau|+|C(\tau,\bar{\tau})| = |\bar{\tau}|$.
\end{proof}
Lemma~\ref{lemma: derivative of modeled distribution} above showed that the continuity condition on a modelled distribution enforces the relation \eqref{eq: formula for derivative} between the coefficients $\Theta_{z}(\X_{i})$ and the other coefficients. 
Since we are now imposing the structural condition \eqref{eq:coeff_given_by_ups}, we see that all the left hand side of \eqref{eq: formula for derivative} is given by $v_{\X_i}(x)$ and the right hand side of \eqref{eq: formula for derivative} has no dependence on $v_{\X}(x)$ - therefore the continuity condition combined with \eqref{eq:coeff_given_by_ups} determines $v_{\X}(x) = (v_{\X_i}(x))_{i=1}^{d}$ as a function of $v_{\one}$ and the local product $\locprod$ along with associated derivatives.  
For future use we encode this as a map  $(v_{\one},\locprod) \mapsto \mathcal{D}^{\locprod}_iv_{\one} = v_{\X_{i}}$.  
\begin{definition}\label{def: vX map}
Given a local product $\locprod$ and a smooth function $v:\R \times \R_x^{d} \rightarrow \R$ we define $\mathcal{D}^{\locprod}v = (\mathcal{D}^{\locprod}_{i}v)_{i=1}^{d}$, $\mathcal{D}^{\locprod}_{i}v:\R \times \R^{d} \rightarrow \R$ by setting, for $1 \le i \le d$, 
\[
(\mathcal{D}^{\locprod}_{i}v)(x)
=
\sum_{\bar{\tau} \in \mNn
\atop
|\bar{\tau}| < -1}
\Upsilon_{x}(\bar{\tau})
\big(
\partial_{i} \path_{y,x}C_{+}(\one,\bar{\tau})
\big)
\big|_{y=x}
-
\partial_{i}v(y)\big|_{y=x}
\;,
\]
where the partial derivatives above acts in the dummy variable $y$ and the $\Upsilon_{x}(\cdot)$ coefficients above are defined using the parameter $v(x)=v_{\one}(x)$. 
Note that we do not need to specify a parameter $v_{\X}(x)$ for the $\Upsilon_{x}$ map above since every $\bar{\tau}$ appearing in this sum satisfies $\numpoly(\bar{\tau}) = 0$. 
\end{definition}

\section{Renormalised products of tree expansions}\label{sec: actual products}
In this section we  define renormalised ``point-wise'' products, taking as input a local product and tree expansions.
Fix some smooth noise $\xi$ and let $\locprod$ be a lift of $\xi$. 
Upon fixing the choice of $\locprod$ we will arrive at analog of the \eqref{phi4} equation which we now try to identify. 
The solution to this yet to be identified equation will be written in the form 
\begin{align}
\notag
\phi(z) &= v(z) + \sum_{\tau \in \Ww} (-1)^\frac{\num(\tau)-1}{2} \locprod_{z}\Ii(\tau) \\
\notag
&= \sum_{\tau \in \Ww \cup \{ \one\} } \Upsilon_z(\tau)  \path_{z,z}\Ii(\tau) \\
&= \sum_{\tau \in \Ww \cup \Nn } \Upsilon_z(\tau)  \path_{z,z}\Ii(\tau)\;.
\label{sec9.4-1}
\end{align}
Above, in passing from the first to second line, we have used the definition of $\Upsilon$ on $\Ww$ in Lemma~\ref{Upsilon-Lemma}  as well as the fact that 
$\path_{z,z} \Ii(\tau) = \locprod_z \Ii(\tau)$ for $\tau \in \Ww$.
In passing to the last line we used the simple observation that for $\tau \in \Nn \setminus \{ \one \}$, 
$\path_{z,z} \Ii(\tau) = 0$.
This trivially gives the identity
\begin{align}
\phi^3(z) 
\notag
&= \sum_{\tau_1, \tau_2, \tau_3  \in \Ww \cup \Nn } \Upsilon_z(\tau_1) \Upsilon_z(\tau_2)\Upsilon_z(\tau_3)  \path_{z,z}\Ii(\tau_1)  \path_{z,z}\Ii(\tau_2) \path_{z,z}\Ii(\tau_3) \\
\label{sec9.4-2}
&=    - \sum_{\tau_1, \tau_2, \tau_3  \in \Ww \cup \Nn } \Upsilon_z(\Ii(\tau_1) \Ii (\tau_2)\Ii (\tau_3) )  \path_{z,z}\Ii(\tau_1)  \path_{z,z}\Ii(\tau_2) \path_{z,z}\Ii(\tau_3) \;.
\end{align}
The renormalization now consists of replacing each of the point-wise products $ \path_{z,z}\Ii(\tau_1)  \path_{z,z}\Ii(\tau_2) \path_{z,z}\Ii(\tau_3) $ which in general we do not control 
by the terms   $ \path_{z,z}\Ii(\tau_1)  \Ii(\tau_2)\Ii(\tau_3) $ which we control by assumption.
The following definition extends this idea to more general expansions.
\begin{definition}\label{def: renormalised product}
Fix a local product $\locprod$. 
Suppose we are given, for $1 \le i \le 3$, smooth functions $\theta^{(i)}(z) :\R \times \R^{d} \rightarrow \R$ and  
$\Theta^{(i)}:\R \times \R^{d} \rightarrow \Vec(\Tl)$ with 
\begin{equation}\label{eq: tree expansion for renormalised product}
\theta^{(i)}(z)
=
\path_{z,z}\Theta^{(i)}(z)
=
\sum_{\tau \in \Nn \cup \Ww}
\Theta^{(i)}_{z}(\tau)
\path_{z,z}\Ii(\tau)\;.
\end{equation}
Then, we define
\begin{equation}	\label{equ: def of renormalised product}
\begin{split}
(\theta^{(1)} \circ_{\locprod} \theta^{(2)} 
\circ_{\locprod} \theta^{(3)})(z)
=&\\
\sum_{
\tau = \Ii(\tau_{1})\Ii(\tau_{2})\Ii(\tau_{3}) \in \mNn \cup \mWw}
\Theta^{(1)}_{z}(\tau_{1})
\Theta^{(2)}_{z}(\tau_{2})
\Theta^{(3)}_{z}(\tau_{3})
\path_{z,z} \tau \;.
\end{split}
\end{equation}
\end{definition}
In the case where there is a single function $\theta^{(i)} = \theta$ and a single corresponding tree expansion $\Theta^{(i)} = \Theta$ then we just write $\theta^{\circ_{\locprod}3}$ for the left hand side of \eqref{equ: def of renormalised product}.

We adopt the convention that when $\theta^{(i)}(z) = \locprod_{z,z} \Ii(\tau) = \locprod_{z} \Ii(\tau)$, where $\tau \in \Ww$, appears as a factor in a local product, we will implicitly take $\Theta^{(i)}(z) = \Ii(\tau)$. 
We remark that, with the generality we allow in the definition of local products, there is no reason to expect that  $(\theta^{(1)} \circ_{\locprod} \theta^{(2)} 
\circ_{\locprod} \theta^{(3)})(z)$ is given by some polynomial in the functions $\theta^{(i)}(z)$ and their spatial derivatives. 
However, a trivial case where there is such a correspondence is in the case of a multiplicative local product in which case one clearly has $(\theta^{(1)} \circ_{\locprod} \theta^{(2)} 
\circ_{\locprod} \theta^{(3)})(z) = \theta^{(1)}(z)\theta^{(2)}(z)\theta^{(3)}(z)$. %

\begin{remark}
An important observation about the importance of these tree expansions is the following. 
In \eqref{eq: tree expansion for renormalised product}, the contribution on the two right hand sides from $\tau \in \Nn \setminus \{\one\}$ vanishes due to the order bound for any local product. 

Similarly, if the local product $\locprod$ is multiplicative then none of the terms involving either $\tau_{1}$ or $\tau_{2}$ or $\tau_{3} \in \Nn$ contribute to the value of the renormalised product. 

However, if the $\locprod$ is not multiplicative then it can certainly be the case that these terms from $\tau \in \Nn \setminus \{\one\}$ contribute to the value of the renormalised product even though they do not contribute to the value of the $\theta^{(i)}(z)$. 
At the same time, in the end we only need to keep products of trees $\Ii(\tau_{1})\Ii(\tau_2)\Ii(\tau_{3})$ with $|\Ii(\tau_{1})\Ii(\tau_2)\Ii(\tau_{3})| < 0$ in our analysis of renormalised products.  
This motivates our truncation convention for tree products described at the end of Section~\ref{ss:order}. 
\end{remark}
We can now specify the equations we obtain a priori bounds for.
\begin{definition}\label{def: solution}
Fix a local product $\locprod$. 
Then we say the solution of the $\Phi^4$ equation driven by $\locprod$ is a smooth function $\phi:\R \times \R^{d} \rightarrow \R$ solving
\begin{equation}\label{eq: renormalised product phi equation}
\heat \phi = 
\phi^{\circ_{\locprod} 3} + \xi\;.
\end{equation} 
where we write $\xi = \locprod_{z}\Xi$ and the tree expansion $\Phi$ for $\phi$ used to define $\phi^{\circ_{\locprod} 3}$ is defined by
\[
\Phi(z)
=
\sum_{\tau \in \Nn \cup \Ww} \Upsilon_{z}(\tau) \Ii(\tau)\;,
\]
where $\Upsilon$ is defined as in Definition~\ref{def-Upsilon-rec} and the parameter $v_{\one}(z)$ is given by
\begin{equation}\label{eq: v definition}
v_{\one}(z)=
\phi(z) -
\sum_{\tau \in \Ww} \Upsilon_{z}(\tau) \locprod_{z}\Ii(\tau)\;,
\end{equation}
while we set parameter $v_{\X} = \mathcal{D}^{\locprod}v_{\one}$ as given in Definition~\ref{def: vX map}.  
\end{definition}
\begin{definition}\label{def: remainder solution}
Fix a local product $\locprod$. 
Then we say the solution of the $\Phi^4$-remainder equation driven by $\locprod$ is a smooth function $v:\R \times \R^{d} \rightarrow \R$ satisfies the equation 
\begin{equation}\label{eq: remainder equation}
\begin{split} 
\heat v
=& 
-
\Big( 
v^{3}
+
3
\sum_{\tau \in \Ww} (-1)^{\frac{\num(\tau)-1}{2}}
v \circ_{\locprod} v \circ_{\locprod} \locprod_{\bullet}\Ii(\tau)\\
{}&
\quad
+3
\sum_{\tau_{1},\tau_{2} \in \Ww}
 (-1)^{\frac{\num(\tau_1) + \num(\tau_2)-2}{2}}
v \circ_{\locprod}  \locprod_{\bullet}\Ii(\tau_1)
\circ_{\locprod} \locprod_{\bullet}\Ii(\tau_2)\\
{}&
\quad
-
\sum_{
\tau_{1},\tau_{2},\tau_{3} \in \Ww
\atop
|\tau_{1}| + |\tau_{2}| + |\tau_{3}| > -8}
(-1)^{\frac{\num(\tau_1) + \num(\tau_2) + \num(\tau_3)-3}{2}}
\locprod_{\bullet}\Ii(\tau_{1})\Ii(\tau_{2})\Ii(\tau_{3})
\Big).
\end{split}
\end{equation} 
Here, the tree expansion that governs $v$ is given by
\[
V(z)
=
\sum_{\tau \in \Nn } \Upsilon_{z}(\tau) \Ii(\tau)\;
\]
where $\Upsilon_{z}$ is defined as in Definition~\ref{def-Upsilon-rec} using the function $v(z)$ and the parameters $v_{\one} = v$ and $v_{\X} = \mathcal{D}^{\locprod}v$.  	
\end{definition}
The following statement is then straightforward.
\begin{lemma}
There is a one to one correspondence between solutions in the sense of Definition~\ref{def: solution} to those of Definition~\ref{def: remainder solution}, the correspondence is given by taking $\phi$ which is a solution in the sense  Definition~\ref{def: solution} and mapping it to $v = v_{\one}$ which will be a solution in the sense of \eqref{eq: v definition}. 
\end{lemma}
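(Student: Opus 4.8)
The plan is to verify that the two notions of solution are related by the substitution $v = v_{\one} = \phi - \sum_{\tau \in \Ww} \Upsilon_{\bullet}(\tau) \locprod_{\bullet}\Ii(\tau)$, and that under this substitution equation \eqref{eq: renormalised product phi equation} becomes exactly equation \eqref{eq: remainder equation}. First I would fix a local product $\locprod$ and a smooth $\phi$ solving \eqref{eq: renormalised product phi equation}, set $v := v_{\one}$ as in \eqref{eq: v definition}, and set $v_{\X} = \mathcal{D}^{\locprod}v$; conversely, given $v$ solving \eqref{eq: remainder equation} one recovers $\phi$ by adding back $\sum_{\tau \in \Ww} \Upsilon_{\bullet}(\tau)\locprod_{\bullet}\Ii(\tau)$. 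The point is that these two maps are mutually inverse at the level of functions (this is essentially tautological once the coefficient conventions in Definitions~\ref{def: solution} and \ref{def: remainder solution} are seen to agree), so the content is the equivalence of the two PDEs.

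The key computation is to expand $\heat\phi = \heat v + \sum_{\tau \in \Ww}(-1)^{\frac{\num(\tau)-1}{2}}\heat \locprod_{\bullet}\Ii(\tau)$. For $\tau \in \mWw$ one has, by the extension rule \eqref{eq: admissibility extension} together with the decomposition \eqref{eq:w-tree_all_w}, that $\heat \locprod_{\bullet}\Ii(\tau) = \rho\,\locprod_{\bullet}\tau$ on a neighbourhood of $D$; for $\tau = \Xi \in \Ww$ one has $\heat \locprod_{\bullet}\Ii(\Xi) = \rho\,\xi$. So on $D$ (where $\rho \equiv 1$) the correction terms contribute $\sum_{\tau \in \Ww}(-1)^{\frac{\num(\tau)-1}{2}}\locprod_{\bullet}\tau$, of which the $\Xi$ piece is exactly the $\xi$ on the right-hand side of \eqref{eq: renormalised product phi equation}. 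It then remains to match $\phi^{\circ_\locprod 3}$ against $v^3$ plus the explicit sums in \eqref{eq: remainder equation} plus $\sum_{\tau \in \mWw}(-1)^{\frac{\num(\tau)-1}{2}}\locprod_{\bullet}\tau$.

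For this last matching I would invoke Lemma~\ref{lemma: self-similarity of coherence}, specifically \eqref{eq: mul-coherence} combined with the truncated cube identity \eqref{eq:cube}, to write $\phi^{\circ_\locprod 3}$ as the sum over $\tau = \Ii(\tau_1)\Ii(\tau_2)\Ii(\tau_3) \in \mNn \cup \mWw$ of $\Upsilon_z(\tau_1)\Upsilon_z(\tau_2)\Upsilon_z(\tau_3)\locprod_{z,z}\tau$, where I use that $\path_{z,z}\Ii(\tau) = \locprod_z\Ii(\tau)$ for $\tau \in \Ww$ and $\path_{z,z}\Ii(\tau) = 0$ for $\tau \in \Nn \setminus \{\one\}$ (so that the only surviving coefficients $\Upsilon_z(\tau_j)$ are $1$, $v(z)$, or $v^2(z)$, or come with an $\locprod_z\Ii(\tau_j)$ factor for $\tau_j \in \Ww$). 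Splitting the index set according to how many of $\tau_1,\tau_2,\tau_3$ lie in $\Ww$ versus equal $\one$ reproduces exactly the four groups of terms in \eqref{eq: remainder equation}: three of $\one$ gives $-v^3$; two of $\one$ and one in $\Ww$ gives the $v \circ_\locprod v \circ_\locprod \locprod_\bullet\Ii(\tau)$ term (the combinatorial $3$ coming from the three placements, matching the explicit $3$ in \eqref{eq: remainder equation}); one of $\one$ and two in $\Ww$ gives the $v \circ_\locprod \locprod_\bullet\Ii(\tau_1) \circ_\locprod \locprod_\bullet\Ii(\tau_2)$ term; and all three in $\Ww$, split by whether $\Ii(\tau_1)\Ii(\tau_2)\Ii(\tau_3) \in \mWw$ or $\in \mNn$, gives respectively the cancelling $\sum_{\mWw}$ piece and the last explicit sum in \eqref{eq: remainder equation}. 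The sign bookkeeping uses Lemma~\ref{Upsilon-Lemma} to track the powers $(-1)^{(\num(\cdot)-1)/2}$.

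The main obstacle I expect is purely bookkeeping: keeping the combinatorial prefactors and signs consistent between the non-commutative tree-product conventions (where the $3$'s of the classical expansion are absorbed into permutations, cf. the discussion after \eqref{eq:cube}) and the explicit factors of $3$ written in \eqref{eq: remainder equation}, and making sure the truncation convention $|\sigma_1| + |\sigma_2| + |\sigma_3| > 0 \Rightarrow \sigma_1\sigma_2\sigma_3 = 0$ is applied consistently on both sides (this is why the last sum in \eqref{eq: remainder equation} is restricted to $|\tau_1| + |\tau_2| + |\tau_3| > -8$, i.e.\ $|\Ii(\tau_1)\Ii(\tau_2)\Ii(\tau_3)| > -2$, so that the complementary part lands in $\mWw$). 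There is no analytic difficulty — everything is smooth by the standing qualitative assumption — so the proof is a finite algebraic verification, and it is appropriate to state it as "straightforward" as the paper does, perhaps spelling out only the identification of the four term-groups above.
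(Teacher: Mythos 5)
The paper gives no proof here (only the remark that the statement is ``straightforward''), so your write-up is supplying the verification that was omitted. Your plan is the natural unwinding of the definitions and identifies the right cancellations: apply $\heat$ to $v = \phi - \sum_{w\in\Ww}\Upsilon(w)\locprod_\bullet\Ii(w)$, use $\heat\locprod_\bullet\Ii(w) = \rho\,\locprod_\bullet w$ (so that on $D$ the $\Xi$-term cancels $\xi$ and the $\mWw$-part of $\phi^{\circ_{\locprod}3}$ cancels the remaining $\heat\locprod_\bullet\Ii(w)$, $w\in\mWw$), and then split the residual $\mNn$-sum according to how many of $\tau_1,\tau_2,\tau_3$ lie in $\Ww$ to reproduce the four term groups in \eqref{eq: remainder equation}. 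That is the correct and essentially unique way to prove this.

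A few imprecisions are worth flagging, none fatal. First, the rewriting of $\phi^{\circ_{\locprod}3}(z)$ as $\sum_{\Ii(\tau_1)\Ii(\tau_2)\Ii(\tau_3)\in\mNn\cup\mWw}\Upsilon_z(\tau_1)\Upsilon_z(\tau_2)\Upsilon_z(\tau_3)\path_{z,z}(\Ii(\tau_1)\Ii(\tau_2)\Ii(\tau_3))$ is literally Definition~\ref{def: renormalised product} with $\Theta^{(i)}=\Phi$, not an application of Lemma~\ref{lemma: self-similarity of coherence} or \eqref{eq:cube}; what you actually use from \eqref{eq: mul-coherence} is the single sign identity $\Upsilon(\tau_1)\Upsilon(\tau_2)\Upsilon(\tau_3)=-\Upsilon(\Ii(\tau_1)\Ii(\tau_2)\Ii(\tau_3))$ in the final matching. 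Second, the parenthetical ``the only surviving coefficients $\Upsilon_z(\tau_j)$ are $1$, $v(z)$, or $v^2(z)$'' is misleading: inside the renormalized products the $\tau_j$ range over all of $\Nn$, so $v_{\X_i}(z)$ also occurs. These are all swallowed into $v\circ_{\locprod} v\circ_{\locprod}\locprod_\bullet\Ii(w)$ and $v\circ_{\locprod}\locprod_\bullet\Ii(w_1)\circ_{\locprod}\locprod_\bullet\Ii(w_2)$, so the final matching is unaffected, but as written the remark is not correct. Third, the identity $\heat\locprod_\bullet\Ii(w)=\rho\,\locprod_\bullet w$ for $w\in\mWw$ follows directly from \eqref{eq: cut-off heat equation} and \eqref{eq: admissibility extension}; \eqref{eq:w-tree_all_w} plays no role in that step.

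One caution when you carry out the ``sign bookkeeping'' you describe as the main obstacle: the all-$\Ww$, $\mNn$ group of the cube produces $-\sum(-1)^{(\num(\tau_1)+\num(\tau_2)+\num(\tau_3)-3)/2}\locprod_\bullet\Ii(\tau_1)\Ii(\tau_2)\Ii(\tau_3)$, which matches \eqref{outl-6} and the restatement at the start of Section~\ref{ss:MP} (where it appears as $+\sum_{\tau\in\partial\Ww}\Upsilon(\tau)\path_\bullet\tau$ with $\Upsilon(\tau)=-\prod_j\Upsilon(\tau_j)$). The last line of \eqref{eq: remainder equation} as printed carries the opposite sign; this is an internal inconsistency in the paper, not a flaw in your argument, but you should verify against \eqref{outl-6} rather than \eqref{eq: remainder equation}.
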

%
\section{A useful class of local products}
\label{sec:useful class of local products}
This section is somewhat orthogonal to the main result of this paper. 
Here we present a particular subset of local products which are defined in terms of recursive procedure that guarantees that the renormalised product appearing in \eqref{eq: renormalised product phi equation} is a local polynomial in $\phi$ and its derivatives. 
This class of local products also includes those that satisfy the necessary uniform stochastic estimates in order to go to the rough setting, namely the BPHZ renormalisation of \cite{MR3935036,2016arXiv161208138C}.

\subsection{Another derivative edge}\label{ss:ADE}
Our class of local products will, for $\delta$ sufficiently small, allow the renormalised product $\phi^{\circ_{\locprod} 3}$ to involve spatial derivatives $\partial_{i}\phi$ for $1 \le i \le d$. 

To describe the generation of these derivatives in terms of operations on trees we will introduce yet another set of edges $\{\Ii^{-}_{i}\}_{i=1}^{d}$ and another set of planted trees 
\[
\Tlm 
=
\Tl \cup 
\big\{ \Ii^{-}_{i}(\tau): \tau \in \Tr, 1 \le i \le d\} \; \cup \{
\Ii^{+}_{i}(\X_{i}): 1 \le i \le d\}\;.
\]
We also adopt the notational convention that 
\[
\Ii^{-}_{i}(\X_{i}) = \Ii^{+}_{i}(\X_{i}),\;\Ii^{-}_{i}(\one) = 0, 
\textnormal{ and }\Ii^{-}_{i}(\X_{j}) = 0
\textnormal{ if }
i \not = j\;.
\] 
Both the new set of edges $\{\Ii^{-}_{i}\}_{i=1}^{d}$ and the set of edges $\{\Ii^{+}_{i}\}_{i=1}^{d}$ introduced in Section~\ref{sec:coproduct} should be thought of as representing a spatial derivative of a solution to a heat equation. 
However these two sets of edges play different roles in our argument:
a symbol $\Ii^{+}_{i}(\tau)$ for $\tau \in \tNn$ is used to describe centering terms while the terms $\Ii^{-}_{i}(\tilde{\tau})$ for $\tilde{\tau} \in \Tr$ are only used so we can write tree expansions for derivatives generated by our renormalised products.

In particular, since the renormalised product associated to a local product $\locprod$ is defined in terms of the path built from it, it will be useful to extend this path to act on such $\Ii_{i}^{-}(\tilde{\tau})$ trees and the natural action to choose here will be different than the action of the path on $\Ii_{i}^{+}(\tau)$ trees.  
 
Another difference between these two sets of derivative edges is that while we adopted the convention that, for any $\tau \in \Tr \setminus \tNn$, one has $\Ii^{+}_{i}(\tau) = 0$. 
We do not adopt the same convention for $\Ii^{-}_{i}(\tau)$.

For convenience we will treat the symbols $\Ii_{i}^{-}(\X_{i})$ and $\Ii^{+}_{i}(\X_{i})$ as the same and also adopt the convention that $\Ii_{i}^{-}(\one) = 0$.
We also extend our notion of order to $\Tlm$ by setting, for $\tau \in \Tr$, $|\Ii_{i}^{-}(\tau)|=|\tau|+1$. 
 
We extend any local product $\locprod$ to the new trees we have added in $\Tlm$ by setting, for $1 \le i \le d$, 
\begin{equation}\label{eq: extension of local product 2}
\locprod_{z} \Ii_{i}^{-}(\tau)=
\begin{cases}	
\partial_{i}
(\mathcal{L}^{-1} \locprod_{\bullet}\tau)(z)
\; & \textnormal{ if }\tau \in \Tr\;,\\
1 & \textnormal{ if }\tau = \X_{i}\;.
\end{cases}
\end{equation}
\subsection{Operations on $\Ii_{i}^{-}$ trees}
Given a local product $\locprod$, we extend the corresponding path to $\Ii_{i}^{-}$ trees by setting, for any $\tau \in \Tr$ and $1 \le i \le d$, 
\begin{equation}\label{eqs: path definition addendum}
\path_{z,w} \Ii^{-}_{i}(\tau)
=
\big( \locprod_{z} \otimes \locprod^{\rec}_{w}
\big)
\Delta \Ii^{-}_{i}(\tau)\;. 
\end{equation}
where we extend the formulae of \eqref{eqs: recursive deltaplus} by setting, for $1 \le i \le d$, 
\begin{equation*}\label{eqs: recursive deltaplus-addendum}
\begin{split}	
\Delta 
\Ii^{-}_{i}(\tau)
&=
(\Ii^{-}_{i} \otimes \id)
\Delta \tau
+
\Ii^{+}_{i}(\X_{i}) \otimes \Ii^{+}_{i}(\tau),\ 
\tau \in \Tr\;.
\end{split}
\end{equation*}
\begin{remark}
We remark that our convention that $\Ii_{i}^{-}(\X_{i}) = \Ii^{+}_{i}(\X_{i})$ also seems natural since this guarantees \eqref{eq:coassoc} holds for $\sigma$ of the form $\Ii_{i}^{-}(\tau)$, but this observation will not play any role in our argument. 
\end{remark}
We then have the following easy lemma.
\begin{lemma}\label{lemma: derivatives do right thing}
Let $\locprod$ be a local product, then for any $1 \le i \le d$ and $\tau \in Ww \sqcup \Nn$, one has
\begin{equation}\label{eq: derivative of path}
\partial_{i}
\path_{z,w}\Ii(\tau)
=
\path_{z,w}\Ii^{-}_{i}(\tau)
\textnormal{ for any } \tau \in \Ww \cup \Nn\;,
\end{equation}
where the derivative $\partial_{i}$ above acts in the variable $z$.
In particular, one has 
\begin{equation}\label{eq:diagonal_deriv_vanish}
\path_{z,z}\Ii^{-}_{i}(\tau) = 0 \textnormal{ if }|\Ii^{-}_{i}(\tau)| > 0
\end{equation}
. 
 
Moreover, if $\Theta: \Nn \rightarrow C^{\infty}$ has the property that, for some $1 < \gamma <2$, we have $[U^{\one}]_{\gamma} < \infty$ (where $U^{\one}_{\gamma-2}$ is defined as in \eqref{def:U}) then we have that
\begin{equation}\label{eq: derivative of reconstruction}
\partial_{i} \Theta_{z}(\one)
=
\sum_{\tau \in \Nn
\atop
|\tau| \le -1}
\Theta_{z}(\tau)
\locprod_{z,z}\Ii^{-}_{i}(\tau)\;.
\end{equation}
\end{lemma}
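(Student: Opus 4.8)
The plan is to prove the three assertions of Lemma~\ref{lemma: derivatives do right thing} in sequence, each one feeding into the next. For \eqref{eq: derivative of path}, I would argue by induction on the number of edges of $\tau$, exactly paralleling the inductive construction of $\path_{\bullet,\bullet}$. The base cases $\tau \in \Ww$ and $\tau \in \Pp$ are immediate: for $\tau \in \Ww$ we have $\path_{z,w}\Ii(\tau) = \locprod_z\Ii(\tau) = (\mathcal{L}^{-1}\locprod_\bullet\tau)(z)$, whose $\partial_i$-derivative is $\locprod_z\Ii_i^-(\tau)$ by \eqref{eq: extension of local product 2}, and this equals $\path_{z,w}\Ii_i^-(\tau)$ since $\Delta\Ii_i^-(\tau) = \Ii_i^-(\tau)\otimes 1 + \Ii_i^+(\X_i)\otimes\Ii_i^+(\tau)$ and $\Ii_i^+(\tau) = 0$ for $\tau \in \Ww \setminus \tNn$ (and indeed all of $\Ww$ has $|\tau| < -2$ so certainly $\tau \notin \tNn$); the forest term drops. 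The case $\tau = \one$ gives $0 = 0$, and $\tau = \X_j$ gives $\partial_i(z_j - w_j) = \delta_{ij} = \path_{z,w}\Ii_i^-(\X_j)$ using $\Ii_i^-(\X_j) = \Ii_i^+(\X_j)$ and $\locprod_{z,w}\Ii_i^+(\X_i) = 1$. For the inductive step with $\tau \in \mNn$, I would differentiate the formula $\path_{z,w}\Ii(\tau) = (\locprod_z\otimes\locprod_w^{\rec})\Delta\Ii(\tau)$ from Lemma~\ref{lem:pathCopro}, noting that $\partial_i$ acts only on the left (the $z$-dependent) factor, and match the result termwise against $\Delta\Ii_i^-(\tau) = (\Ii_i^-\otimes\id)\Delta\tau + \Ii_i^+(\X_i)\otimes\Ii_i^+(\tau)$, using the inductive hypothesis for the sub-trees appearing in $\Delta\tau$ together with the explicit form \eqref{eqs: recursive deltaplus} of $\Delta\Ii(\tau)$; the term $\Ii(\X_i)\otimes\Ii_i^+(\tau)$ in $\Delta\Ii(\tau)$, differentiated, produces exactly the extra $\Ii_i^+(\X_i)\otimes\Ii_i^+(\tau)$ forest term, and the term $\Ii(\one)\otimes\Ii(\tau)$ differentiates to zero.

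Once \eqref{eq: derivative of path} is established, \eqref{eq:diagonal_deriv_vanish} is a one-line consequence: the order bound of Lemma~\ref{lem:OB} (together with Definition~\ref{def:seminorm}) gives $|\path_{z,w}\Ii_i^-(\tau)| \lesssim d(z,w)^{|\Ii_i^-(\tau)|}$, which forces $\path_{z,z}\Ii_i^-(\tau) = 0$ whenever $|\Ii_i^-(\tau)| > 0$. Alternatively, and perhaps more cleanly, one may differentiate the already-known identity $\path_{z,z}\Ii(\tau) = 0$ for $\tau \in \Nn \setminus\{\one\}$ along the diagonal: writing $g(z) := \path_{z,z}\Ii(\tau)$, one has $0 = \partial_i g(z) = (\partial_z)_i\path_{z,w}\Ii(\tau)|_{w=z} + (\partial_w)_i\path_{z,w}\Ii(\tau)|_{w=z}$, and a short computation using Chen's relation (Proposition~\ref{prop:Chen}) identifies the $w$-derivative term, ultimately yielding $\path_{z,z}\Ii_i^-(\tau) = 0$ when the order of $\Ii_i^-(\tau)$ is positive. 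I would present whichever of these is shorter; the order-bound route is cleanest.

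For the last assertion \eqref{eq: derivative of reconstruction}, I would start from the definition $U^{\one}_{\gamma-2}(y,x) = \Theta_y(\one) - \sum_{\bar\tau \in \Nn,\,|\bar\tau|<\gamma-2}\Theta_x(\bar\tau)\path_{y,x}\Ii(\bar\tau) = \Theta_y(\one) - V_\gamma(y,x)$ (using $C_+(\one,\bar\tau) = \Ii(\bar\tau)$). Since $[U^{\one}]_\gamma < \infty$ with $\gamma > 1$, we have $|\Theta_y(\one) - V_\gamma(y,x)| \lesssim d(y,x)^\gamma$, so the function $y \mapsto \Theta_y(\one) - V_\gamma(y,x)$ vanishes to order $\gamma > 1$ at $y = x$; hence its $\partial_i$-derivative at $y = x$ is zero, i.e. $\partial_i\Theta_y(\one)|_{y=x} = \partial_i V_\gamma(y,x)|_{y=x}$. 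Now $\partial_i V_\gamma(y,x)|_{y=x} = \sum_{\tau\in\Nn,\,|\tau|<\gamma-2}\Theta_x(\tau)\,\partial_i\path_{y,x}\Ii(\tau)|_{y=x}$, and by \eqref{eq: derivative of path} this equals $\sum_{\tau}\Theta_x(\tau)\path_{x,x}\Ii_i^-(\tau)$. Finally I would invoke \eqref{eq:diagonal_deriv_vanish} to note that $\path_{x,x}\Ii_i^-(\tau) = 0$ unless $|\Ii_i^-(\tau)| = |\tau| + 1 \le 0$, i.e. unless $|\tau| \le -1$; in particular the terms with $|\tau| = 0$ (the symbols $\X_j$, for which $\Ii_i^-(\X_j) = \Ii_i^+(\X_j)$ has order $1 > 0$ — note $\path_{x,x}\Ii_i^+(\X_j) = \locprod_{x,x}\Ii_i^+(\X_j)$; one checks this vanishes appropriately, or simply that $|\Ii_i^-(\X_j)| = 1 > 0$) drop out, and the constraint $|\tau| < \gamma - 2 < 0$ is automatically implied by $|\tau| \le -1$ together with $\gamma < 2$... — here I should be slightly careful: the sum defining $V_\gamma$ runs over $|\tau| < \gamma - 2$, while the claimed sum runs over $|\tau| \le -1$; since $1 < \gamma < 2$ we have $\gamma - 2 \in (-1,0)$, so $\{|\tau| < \gamma-2\} = \{|\tau| \le -1\}$ under Assumption~\ref{assump: orders away from integers} that orders avoid integers except... in fact $|\tau|\le -1$ with $|\tau|$ non-integer-or-equal-to-$-1$; the point is that no $\tau \in \Nn$ has order in the open interval $(\gamma-2, -1]$ except possibly $|\tau|=-1$ itself, which is excluded by the assumption, so the two index sets coincide. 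I would spell this matching out explicitly.

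The main obstacle, and the only place demanding genuine care, is the inductive matching in \eqref{eq: derivative of path} for $\tau \in \mNn$: one must verify that differentiating the coproduct expansion of $\path_{z,w}\Ii(\tau)$ term-by-term reproduces precisely the coproduct expansion of $\path_{z,w}\Ii_i^-(\tau)$, including the correct handling of the summed spatial index $i$ in the Einstein convention and the fact that the ``new'' forest term $\Ii_i^+(\X_i)\otimes\Ii_i^+(\tau)$ in $\Delta\Ii_i^-(\tau)$ is generated exactly by $\partial_i$ applied to the $\Ii(\X_i)\otimes\Ii_i^+(\tau)$ summand of $\Delta\Ii(\tau)$ (using $\partial_i\path_{z,w}\Ii(\X_i) = \partial_i(z_i - w_i) = 1 = \locprod_z\Ii_i^+(\X_i)$, with no sum on $i$ in that particular identity). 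Everything else is bookkeeping with the definitions and the previously-established Chen relation and order bounds.
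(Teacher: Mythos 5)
Your overall strategy is sound and essentially parallels the paper's (which proves \eqref{eq: derivative of path} by direct computation from the coproduct/path definitions, deduces \eqref{eq:diagonal_deriv_vanish} from the order bound, and obtains \eqref{eq: derivative of reconstruction} by combining \eqref{eq: derivative of path} with Lemma~\ref{lemma: derivative of modeled distribution}). Two remarks on the structure and one genuine error.

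On structure: for \eqref{eq: derivative of path} you set up an induction, but no inductive hypothesis is actually consumed. Differentiating $\path_{z,w}\Ii(\tau) = (\locprod_{z}\otimes\locprod_{w}^{\rec})\Delta\Ii(\tau)$ in $z$ only hits the left tensor factors $\locprod_{z}\Ii(\one)$, $\locprod_{z}\Ii(\X_j)$, $\locprod_{z}\Ii(\bar\tau)$, and their derivatives are $0$, $\delta_{ij}$, and $\locprod_{z}\Ii_i^{-}(\bar\tau)$ \emph{directly by the definition} \eqref{eq: extension of local product 2} — the inner trees $\bar\tau$ are unplanted, so the identity you are proving is never invoked on them. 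Matching against $\Delta\Ii_i^{-}(\tau)=(\Ii_i^{-}\otimes\id)\Delta\tau + \Ii_i^{+}(\X_i)\otimes\Ii_i^{+}(\tau)$ is then a one-pass bookkeeping check, as the paper indicates. Similarly, for \eqref{eq: derivative of reconstruction} you re-derive the content of Lemma~\ref{lemma: derivative of modeled distribution} from the condition $[U^{\one}]_\gamma<\infty$; that is fine but redundant, since the paper already supplies that lemma precisely for this step.

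The genuine error is in your parenthetical handling of the index sets in part 3. You write that the ``terms with $|\tau|=0$ (the symbols $\X_j$, for which $\Ii_i^{-}(\X_j)=\Ii_i^{+}(\X_j)$ has order $1>0$...) drop out.'' This is wrong on two counts and would, if followed literally, produce a right-hand side missing the crucial $\Theta_z(\X_i)$ contribution. By the paper's conventions (Section~\ref{ss:order}), $|\X_j|=-1$, not $0$, and hence $|\Ii_i^{-}(\X_j)|=|\X_j|+1=0$, not $1$. In particular $\Ii_i^{-}(\X_i)=\Ii_i^{+}(\X_i)$ has order $0$, which is \emph{not} strictly positive, so \eqref{eq:diagonal_deriv_vanish} does not kill it; indeed $\locprod_{z,z}\Ii_i^{+}(\X_i)=1$ and the term $\tau=\X_i$ contributes $\Theta_z(\X_i)$, which is exactly the term $\Theta_x(\X_i)$ supplied by Lemma~\ref{lemma: derivative of modeled distribution}. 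The terms that actually drop out under \eqref{eq:diagonal_deriv_vanish} are the $\tau\in\tNn$ with $-1<|\tau|<\gamma-2$; together with $\Ii_i^{-}(\one)=0$ and $\Ii_i^{-}(\X_j)=0$ for $j\ne i$, this is what reduces the sum over $\{|\tau|<\gamma-2\}$ to the sum over $\{|\tau|\le -1\}$. If you fix this bookkeeping the rest of your chain of reasoning closes correctly.
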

\begin{proof}
The first statement \eqref{eq: derivative of path} is a straightforward computation using \eqref{eq: derivative of path} and \eqref{eq: extension of local product 2} and \eqref{eqs: path definition addendum}.
The second statement \eqref{eq:diagonal_deriv_vanish} then follows from the first one and the order bound $[\locprod: \Ii(\tau)]$ for such $\tau$. 
 
Finally, the third statement \eqref{eq: derivative of reconstruction} follows immediately from combining \eqref{eq: derivative of path} and Lemma~\ref{lemma: derivative of modeled distribution}. 
\end{proof}
\subsection{A recipe for local products}
With this notation in hand, our recipe for building a local product $\locprod$ will be to first specify the smooth function $\locprod_{z}\Xi$ and then inductively define, for $\tau \in \prodtree$ (recall that $\prodtree$ was defined in Definition~\ref{def: small class of trees}), 
\begin{equation}\label{eq: inductive def of renormalised model}
\locprod_{z} \tau  = \locprod_{z} R \tau 
\end{equation}
where $R: \prodtree \rightarrow \Alg(\Tlm)$, and on the right hand side, we apply $\locprod_{z}$ multiplicatively over the planted trees appearing in the forests of $\Alg(\Tlm)$ and use the conventions of Section~\ref{subsec: ext of loc prods} and \eqref{eqs: recursive deltaplus-addendum} to reduce the right-hand side to evaluating $\locprod_{z}$ on $\Nn \cup \Ww$.
For this to be a well-defined way to construct local products the map $R$ must satisfy the following two criteria
\begin{itemize}

\item For the induction \eqref{eq: inductive def of renormalised model} to be closed, it is natural to enforce that $R$ should have a triangular structure in that, for any $\tau \in \prodtree$, any planted tree appearing in a forest appearing in $R\tau$ should be of the form $\Ii(\tilde{\tau})$ or $\Ii^{-}_{i}(\tilde{\tau})$ with $\tilde{\tau}$ strictly fewer edges than  $\tau$. 

\item In order for $\locprod_{z}$ to be invariant under permutations of non-commutative tree products it also natural to enforce that $R$ act covariantly with respect to such permutations.
 
\end{itemize}
If we have a map $R$ as above and use it to build a local product $\locprod$ using \eqref{eq: inductive def of renormalised model} then we say $\locprod$ is built from $R$. 

For what follows it is useful to define the map $q_{F}$ which takes tree products to forest products, namely $q_{F}$ maps $\mNn \cup \mWw \ni \Ii(\tau_{1}) \Ii(\tau_{2})  \Ii(\tau_{3}) \mapsto \Ii(\tau_{1}) \cdot \Ii(\tau_{2}) \cdot \Ii(\tau_{3}) \in \Alg(\Tl)$.
\begin{remark}\label{rem: multiplicative local product}
One possible choice for a renormalisation operator $R_{\tiny \mathrm{mult}}$ is setting, for each $\tau \in \prodtree$, $R_{\tiny \mathrm{mult}}\tau = q_{F} \tau$.
If one uses $R_{\tiny \mathrm{mult}}$ to build a local product $\locprod$ then it follows that $\locprod$ is a multiplicative local product. 
\end{remark}
However, in order to allow more flexibility than a multiplicative local product but still make it easy to show that that the product $\phi^{\circ_{\locprod} 3}$ in \eqref{eq: renormalised product phi equation} admits a nice formula, 
we impose a structural assumption
on the operator $R$. 

This assumption can be expressed in terms of a slightly modified version of our earlier defined coproduct.

\subsection{A modified coproduct and local renormalisation operators}
The modified coproduct is defined with a map $C_-$, modification of $C_+$.
$C_-:(\bar{\tau},\tau)\in \Tt\times \Tt\rightarrow \Ff$ is given by the table below.
\begin{table}[h!]
\centering
\begin{tabular}{|c|c c c c|}
\hline 
$\tau\setminus\bar{\tau}$ & $\one$ & $\x_i$ & $\Xi$ & $\Ii(\bar{\tau}_1)\Ii(\bar{\tau}_2)\Ii(\bar{\tau}_3)$ \\ 
\hline 
$\one$ & $\Ii(\one)$ & $0$ & $0$ & $0$ \\ 
$\x_j$ & $\Ii(\x_j)$ & $\Ii^-_i(\x_j)$ & $0$ & $0$ \\ 
$\Xi$ & $\Ii(\Xi)$ & $\Ii^-_i(\Xi)$ & $1$ & $0$ \\ 
$\Ii(\tau_1)\Ii(\tau_2)\Ii(\tau_3)$ & $\Ii(\tau)$ & $\Ii^-_i(\tau)$ & 0 & $C_-(\bar{\tau}_1,\tau_1)C_-(\bar{\tau}_2,\tau_2)C_-(\bar{\tau}_3,\tau_3)$ \\ 
\hline 
\end{tabular} 
\vskip1ex
\caption{
This table gives a recursive definition of $C_-(\bar\tau, \tau)$. Possible values of $\tau$ are displayed in the first column, while possible values of $\bar{\tau}$ are shown in 
the first row. The corresponding values of $C_-(\bar\tau, \tau)$ are shown in the remaining fields. 
 }
\label{table:co-prod2} 
\end{table}

The difference between $C_+$ and $C_-$ is the removal of the projection on positive planted trees. Similarly, we never assume that $\Ii^-_i(\tau)=0$ if $\tau\not\in \tilde{\Nn}$.	$C_-$ also satisfies Lemma \ref{lem:lemC}, with the first implication in \eqref{prop_C} being an equivalence in this case.

The following lemma, which follows in an immediate way from the  definition of our sets of trees $\Nn$ and $\Ww$, 
will be useful when we try to drive an explicit formula for $ \phi^{\circ_{\locprod} 3}$.  
\begin{lemma}\label{lemma: tree self-sim 1}
For any fixed $\bar{\tau} \in \mNn \cup \mWw $,
\begin{align*}
&
p_{\leqslant 0}\sum_{\tau \in \mNn \cup \mWw} 
C_{-}(\bar{\tau},\tau)
\\
&=\begin{cases} 
\big(
\sum_{\tau \in  \Ww}
\Ii(\tau) 
+
\Ii(\one)
\big)&  
\textnormal{ if }\numone(\bar{\tau}) = 1\;,\\
\big(
\sum_{\tau \in  \Ww}
\Ii(\tau) 
+
\Ii(\one)
\big) \cdot 
\big(
\sum_{\tau \in  \Ww}
\Ii(\tau) 
+
\Ii(\one)
\big)
&
\textnormal{ if }\numone(\bar{\tau}) = 2\;,\\
\sum_{\tau \in \Nn \cup \Ww 
\atop
|\tau| \le -1}
\Ii_{i}^{-}(\tau)
& 
\textnormal{if }\numpolyi(\bar{\tau}) = 1\;.
\end{cases}
\end{align*}
Above, $p_{\leqslant}: \Alg(\Tlm) \rightarrow \Alg(\Tlm)$ is the projection that annihilates any forest of planted trees that contains a planted tree of strictly positive degree.
\end{lemma}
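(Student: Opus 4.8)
The plan is to unfold the recursive definition of $C_{-}$ from Table~\ref{table:co-prod2}, exploiting the very rigid leaf structure of the trees $\bar{\tau}\in\mNn\cup\mWw$. First recall that $C_{-}$ satisfies the analogue of Lemma~\ref{lem:lemC}, with $C_{-}(\bar{\tau},\tau)\neq 0$ if and only if $\bar{\tau}\leq\tau$ in the sense of Section~\ref{ss:relations}. Hence in $\sum_{\tau\in\mNn\cup\mWw}C_{-}(\bar{\tau},\tau)$ only trees $\tau$ obtained from $\bar{\tau}$ by replacing some of its $\one$-leaves with trees in $\Tr$ and some of its $\X_{i}$-leaves with trees in $\Tr\setminus\{\one,\X_{j}:j\neq i\}$ contribute, and recording which leaves are replaced and by what puts these $\tau$ in bijection with $\{\tau:\tau\geq\bar{\tau}\}$. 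By Lemma~\ref{lemma: set of trees is finite}, the hypothesis $\bar{\tau}\in\mNn\cup\mWw$ pins down the leaf profile: in the case $\numone(\bar{\tau})=1$ (resp.\ $\numone(\bar{\tau})=2$, resp.\ $\numpolyi(\bar{\tau})=1$) the leaves of $\bar{\tau}$ other than its single $\one$-leaf (resp.\ its two $\one$-leaves, resp.\ its single $\X_{i}$-leaf) are all copies of $\Xi$.

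The key reduction is the observation that $C_{-}(\sigma,\sigma)=1$, the empty forest, whenever every leaf of $\sigma$ is a copy of $\Xi$; this follows by an immediate induction on the number of edges of $\sigma$, with base case $C_{-}(\Xi,\Xi)=1$ and inductive step given by the multiplicativity in the last row of Table~\ref{table:co-prod2}. Unfolding $C_{-}(\bar{\tau},\tau)$ along the recursion, every branch of $\bar{\tau}$ that does not contain a replaced leaf is such an all-$\Xi$ subtree and contributes a factor $1$; therefore $C_{-}(\bar{\tau},\tau)$ equals the forest product, taken in the order in which the non-noise leaves of $\bar{\tau}$ are visited, of the factors $C_{-}(\ell,\sigma_{\ell})$ over those leaves $\ell$ and their replacements $\sigma_{\ell}$, where by Table~\ref{table:co-prod2} one has $C_{-}(\one,\sigma)=\Ii(\sigma)$ and $C_{-}(\X_{i},\sigma)=\Ii^{-}_{i}(\sigma)$ for every $\sigma$ (the ``trivial replacements'' $\sigma=\one$ and $\sigma=\X_{i}$ corresponding to keeping the leaf unchanged).

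It then remains to sum over replacements and apply $p_{\leq 0}$. When $\numone(\bar{\tau})=1$ we obtain $\sum_{\sigma}\Ii(\sigma)$ with $\sigma$ ranging over $\{\one\}\cup\Tr$; since $|\Ii(\sigma)|=|\sigma|+2$, the projection $p_{\leq 0}$ keeps exactly the terms with $|\sigma|\leq-2$, which by \eqref{formula_hom} together with Assumption~\ref{assump: orders away from integers} (which excludes the value $-2$) are $\sigma=\one$ and $\sigma\in\Ww$; for these $\sigma$ one has $|\tau|=|\bar{\tau}|+|\sigma|+2\leq|\bar{\tau}|\leq 0$, so the corresponding $\tau$ automatically lies in $\mNn\cup\mWw$ and restricting the sum before projecting changes nothing, yielding the first line. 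When $\numpolyi(\bar{\tau})=1$ the same computation with $C_{-}(\X_{i},\sigma)=\Ii^{-}_{i}(\sigma)$ and $|\Ii^{-}_{i}(\sigma)|=|\sigma|+1$ retains the replacements $\sigma\in\Tr$ with $|\sigma|\leq-1$ together with the kept leaf $\sigma=\X_{i}$ (the $\tau=\X_{i}$ term); using the conventions $\Ii^{-}_{i}(\one)=0$ and $\Ii^{-}_{i}(\X_{j})=0$ for $j\neq i$, these recombine into $\sum_{\sigma\in\Nn\cup\Ww,\ |\sigma|\leq-1}\Ii^{-}_{i}(\sigma)$, the third line. Finally, when $\numone(\bar{\tau})=2$ the two $\one$-leaves sit at fixed positions, so the recursion produces the ordered forest product $\Ii(\sigma_{1})\cdot\Ii(\sigma_{2})$, and summing over $(\sigma_{1},\sigma_{2})$ followed by $p_{\leq 0}$ factorises as $\bigl(\Ii(\one)+\sum_{\tau\in\Ww}\Ii(\tau)\bigr)\cdot\bigl(\Ii(\one)+\sum_{\tau\in\Ww}\Ii(\tau)\bigr)$, the second line.

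I do not expect any genuinely hard step, consistent with the authors' remark that the statement follows immediately from the definitions of $\Nn$ and $\Ww$; the only part requiring care is the final bookkeeping: matching the $p_{\leq 0}$-survivors against the precise index sets $\Ww$ and $\{\tau\in\Nn\cup\Ww:|\tau|\leq-1\}$ (using Assumption~\ref{assump: orders away from integers} to rule out order exactly $-2$, and remembering that $\X_{i}\in\Nn$), and correctly tracking the non-commutative forest product in the case $\numone(\bar{\tau})=2$.
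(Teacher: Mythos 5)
Your proof is correct and takes the paper's intended approach: the authors give no argument at all (asserting the lemma follows ``in an immediate way'' from the definitions of $\Nn$ and $\Ww$), and your unfolding of the $C_-$ recursion via the observation $C_-(\sigma,\sigma)=1$ for all-$\Xi$ subtrees, followed by the $p_{\leqslant 0}$ bookkeeping through Assumption~\ref{assump: orders away from integers}, supplies exactly what was left implicit. One minor slip worth noting: in the $\numpolyi(\bar{\tau})=1$ case you gloss the kept-leaf contribution as ``the $\tau=\X_i$ term'', but this contribution corresponds to $\tau=\bar{\tau}$ (namely to the choice $\sigma_\ell=\X_i$), not to $\tau=\X_i\in\Pp$, which does not even lie in the index set $\mNn\cup\mWw$.
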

\begin{definition}\label{def: local renormalisation operator}
Given a map $r: \prodtree \rightarrow \R$ such that $r$ is invariant under permutations we define a corresponding map $R:\prodtree \rightarrow \Alg(\Tlm)$ by defining 
\begin{equation}\label{eq: acceptable renorm}
R(\tau)= q_{F} \tau +\sum_{\tau'\in\ \prodtree }r(\tau')C_-(\tau',\tau)\;.
\end{equation}

Note that counterterm maps $r$ and local renormalisation operators $R$ determine each other uniquely. 
\end{definition}
We then immediately have the following lemma.

\begin{lemma}\label{lemma: slight extension of inductive def of renormalised model}
Let $R$ be a local renormalisation operator. . 

Given a local product $\locprod$ built from $R$, the formula \eqref{eq: inductive def of renormalised model} defining $\locprod$ for $\tau \in \prodtree$ actually also holds as an identity for $\tau \in \mNn \setminus \prodtree$ where $R$ is itself is extended to $\mNn \cup \mWw$ by applying the formula \eqref{eq: acceptable renorm}. 
\end{lemma}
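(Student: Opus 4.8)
The plan is to prove Lemma~\ref{lemma: slight extension of inductive def of renormalised model} by induction on the number of edges $\numedge(\tau)$, exploiting the triangular structure of $R$ and the extension conventions of Section~\ref{subsec: ext of loc prods}. Recall that $\mNn \setminus \prodtree$ consists precisely of trees $\tau = \Ii(\tau_1)\Ii(\tau_2)\Ii(\tau_3)$ for which either (a) exactly one of the $\tau_j$ is some $\X_i$, or (b) two of the $\tau_j$ equal $\one$. In either case the extension of $\locprod$ to $\tau$ was \emph{defined} (in Section~\ref{subsec: ext of loc prods}) to reduce $\locprod_z \tau$ to the value of $\locprod_z$ on a tree with strictly fewer edges (multiplied by $z_i$ in case (a), or via $\mathcal{L}^{-1}$ in case (b)). So the left-hand side of \eqref{eq: inductive def of renormalised model}, namely $\locprod_z \tau$, is already pinned down by these conventions; what must be checked is that the right-hand side $\locprod_z R\tau$, with $R$ extended by the formula \eqref{eq: acceptable renorm}, produces the same value.

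First I would treat case (a), say $\tau_1 = \X_i$, so that by the extension convention $\locprod_z \tau = z_i \,\locprod_z(\Ii(\one)\Ii(\tau_2)\Ii(\tau_3))$. On the right-hand side I would expand $R\tau = q_F\tau + \sum_{\tau' \in \prodtree} r(\tau') C_-(\tau',\tau)$. Using the recursive definition of $C_-$ from Table~\ref{table:co-prod2} --- in particular that the entry for $\bar\tau \le \tau$ with $\tau = \Ii(\tau_1)\Ii(\tau_2)\Ii(\tau_3)$ factors as $C_-(\bar\tau_1,\tau_1)C_-(\bar\tau_2,\tau_2)C_-(\bar\tau_3,\tau_3)$, and that for $\tau_1 = \X_i$ the only nonzero contributions from the first factor are $C_-(\one,\X_i) = \Ii(\X_i)$ and $C_-(\X_i,\X_i) = \Ii^-_i(\X_i) = \Ii^+_i(\X_i)$ --- one sees that every forest in $R\tau$ contains exactly one planted factor of the form $\Ii(\X_i)$ or $\Ii^+_i(\X_i)$, on which $\locprod_z$ evaluates to $z_i$ or $1$ respectively. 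Here the key point is that $\prodtree$ does not contain any tree with a factor $\X_i$ in its product decomposition (first bullet of Definition~\ref{def: small class of trees}), so the sum over $\tau' \in \prodtree$ contributing to $C_-(\tau',\tau)$ actually ranges over $\tau'$ whose $\X_i$-factor forces $C_-(\tau'_1,\X_i)$ with $\tau'_1 = \X_i$, i.e. $\Ii^+_i(\X_i)$; I would extract the common factor of $z_i$ (or $1$) and identify the remainder with $\locprod_z R(\Ii(\one)\Ii(\tau_2)\Ii(\tau_3))$, which by the induction hypothesis (applicable since $\Ii(\one)\Ii(\tau_2)\Ii(\tau_3)$ has strictly fewer edges, or is itself in $\prodtree$, or is again in case (b)) equals $\locprod_z(\Ii(\one)\Ii(\tau_2)\Ii(\tau_3))$.

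Next I would treat case (b), $\tau_1 = \tau_2 = \one$, so by the extension convention $\locprod_z \tau = (\mathcal{L}^{-1}\locprod_\bullet \tau_3)(z)$. Expanding $R\tau$ as before and using that $C_-(\one,\one) = \Ii(\one)$ is the only nonzero entry in the $\one$-column for $\bar\tau = \one$, while for $\bar\tau$ with $\numone(\bar\tau) < \numone(\tau)$ the corresponding entries vanish or reduce, every forest appearing in $R\tau$ has exactly two factors $\Ii(\one)$ (on which $\locprod_z$ evaluates to $1$) together with a factor of the form $\Ii(\tilde\tau_3)$ or $\Ii^-_i(\tilde\tau_3)$ with $\tilde\tau_3$ arising from $C_-(\bar\tau_3,\tau_3)$. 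Again using that $\prodtree$ contains no tree with an $\one$-factor repeated twice (second bullet of Definition~\ref{def: small class of trees}), one checks the sum collapses so that $\locprod_z R\tau$ reduces to evaluating $\locprod_z$ on the forest obtained by applying $\mathcal{L}^{-1}$ to $\locprod_\bullet R(\tau_3)$ --- but by the induction hypothesis $\locprod_z R(\tau_3) = \locprod_z \tau_3$ when $\tau_3 \in \prodtree$ or is more elementary, and in any case the admissibility extension \eqref{eq: admissibility extension} identifies $\locprod_z \Ii(\tau_3) = (\mathcal{L}^{-1}\locprod_\bullet \tau_3)(z)$; matching these gives the claim.

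The main obstacle I anticipate is purely bookkeeping: carefully verifying that, after expanding $R\tau$ via \eqref{eq: acceptable renorm} and $C_-$ via Table~\ref{table:co-prod2}, the terms indexed by $\tau' \in \prodtree$ reassemble exactly into $\locprod_z$ applied to $\sum_{\tau'} r(\tau') C_-(\tau', \tau_3)$ (resp. the analogous reduced tree), with no spurious or missing terms coming from the interaction between the triangular structure of $R$ and the conventions $\Ii^-_i(\one) = 0$, $\Ii^+_i(\X_j) = 0$ for $i \ne j$, etc. The cleanest way to organize this is to observe that the extension conventions of Section~\ref{subsec: ext of loc prods} are themselves ``$R$-covariant'' --- that is, the reduction $\tau \mapsto (\text{factor}) \cdot (\text{smaller tree})$ commutes with the reduction $C_-(\bar\tau, \tau) \mapsto (\text{factor}) \cdot C_-(\bar\tau', \text{smaller tree})$ dictated by Table~\ref{table:co-prod2} --- so that the whole identity \eqref{eq: inductive def of renormalised model} for $\tau \in \mNn \setminus \prodtree$ is a formal consequence of the identity for the corresponding smaller tree together with the multiplicativity of $\locprod_z$ and $R$ over forest products. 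Once this commutation is stated precisely the induction closes with essentially no computation.
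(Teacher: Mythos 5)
Your overall strategy --- splitting $\mNn\setminus\prodtree$ into the two mutually exclusive cases and matching the expansion of $R\tau$ via $C_-$ against the extension conventions of Section~\ref{subsec: ext of loc prods} --- is exactly the intended one (the paper simply asserts the lemma with no written proof), and your final paragraph correctly names the mechanism. However, there is a concrete error in case~(a) that would prevent the argument from closing as written. For $\tau = \Ii(\X_i)\Ii(\tau_2)\Ii(\tau_3)$ and $\tau' = \Ii(\tau'_1)\Ii(\tau'_2)\Ii(\tau'_3)\in\prodtree$ with $C_-(\tau',\tau)\neq 0$, the factor $C_-(\tau'_1,\X_i)$ is nonzero only for $\tau'_1\in\{\one,\X_i\}$; since the first bullet of Definition~\ref{def: small class of trees} \emph{excludes} $\tau'_1 = \X_i$, the only surviving choice is $\tau'_1=\one$, giving $C_-(\one,\X_i)=\Ii(\X_i)$ --- not $\Ii^+_i(\X_i)$ as you wrote. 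This is what actually makes the step work: every forest in $R\tau$, including $q_F\tau$, then carries $\Ii(\X_i)$ as one planted factor, so $z_i$ is a genuine common factor of $\locprod_z R\tau$, and what remains is exactly $\locprod_z R(\Ii(\one)\Ii(\tau_2)\Ii(\tau_3))$. Moreover $\Ii(\one)\Ii(\tau_2)\Ii(\tau_3)$ is automatically in $\prodtree$ (no $\X_j$ factor, at most one $\one$, and its order is $|\tau|-1<0$), so the identity for it holds by the very definition of a local product built from $R$; no induction is actually needed. Your version, which produces factors "$z_i$ or $1$," has no common factor to extract and does not close.

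Case~(b) is also simpler than you describe: if $\tau_1=\tau_2=\one$, any $\tau'\in\prodtree$ with $C_-(\tau',\tau)\neq 0$ would need $\tau'_1=\tau'_2=\one$ (the only tree $\le \one$ is $\one$ itself), contradicting the second bullet of Definition~\ref{def: small class of trees}. The counterterm sum is therefore empty, so $R\tau = q_F\tau$ and $\locprod_z R\tau = \locprod_z\Ii(\one)\,\locprod_z\Ii(\one)\,\locprod_z\Ii(\tau_3) = \locprod_z\Ii(\tau_3) = (\mathcal{L}^{-1}\locprod_\bullet\tau_3)(z)=\locprod_z\tau$ directly from \eqref{eq: admissibility extension}. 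There is no need to bring in $R(\tau_3)$ or an induction hypothesis at all.
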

Suppose we are given a $\locprod$ built from $R$. 
Then the formula \eqref{eq: inductive def of renormalised model} allows us to compute the action of $\locprod_{\bullet}$ on any element $\tau \in \Nn \cup \Ww$ in terms of its actions on simpler trees. 
However, the starting formula for $\phi^{\circ_{\locprod} 3}$ involves the action of the corresponding path $\locprod_{z,z}$. 
Therefore in order to work out an explicit formula for $\phi^{\circ_{\locprod} 3}$ it would be good to have an analog of \eqref{eq: inductive def of renormalised model} for paths $\locprod_{\bullet,\bullet}$ instead of just the underlying local product $\locprod_{\bullet}$. 
Heuristically the idea for getting such a formula is showing that the action of a local renormalisation operator will ``commute'' with our centering operations. 
To this end we have the following lemma.

\begin{lemma}\label{lemma: renorm commute with structure group}
Let $R$ be a local renormalisation operator. 
Then one has, for any $\tau \in \mNn \cup \mWw$, the identity
\begin{equation}\label{eq: renorm commute with structure group}
\Delta R \tau = (R \otimes \id) \Delta \tau\;.
\end{equation}
\end{lemma}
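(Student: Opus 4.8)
The plan is to argue by induction on the number of edges of $\tau \in \mNn \cup \mWw$, exploiting the fact that both sides of \eqref{eq: renorm commute with structure group} are built recursively in a compatible way. Writing $\tau = \Ii(\tau_1)\Ii(\tau_2)\Ii(\tau_3)$, the starting observation is that $R\tau = q_F\tau + \sum_{\tau'\in\prodtree} r(\tau')\, C_-(\tau',\tau)$, where by the triangular structure of $C_-$ (the last line of Table~\ref{table:co-prod2}) every $C_-(\tau',\tau)$ with $\tau' \subsetneq \tau$ is a forest product of $C_-$'s applied to the subtrees $\tau_i$, together with the pure ``$q_F\tau$'' piece which is the forest $\Ii(\tau_1)\cdot\Ii(\tau_2)\cdot\Ii(\tau_3)$. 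So $R\tau$ is itself a forest of planted trees, and I can compute $\Delta R\tau$ using the multiplicativity of $\Delta$ over forest products established after Lemma~\ref{lem:coproduct}. The key auxiliary identity I would isolate and prove first is the ``one-edge'' version: for each planted tree $\Ii(\sigma)$ (and each $\Ii^-_i(\sigma)$) appearing, $\Delta$ applied to it decomposes along $C_-$ just as $\Delta\Ii(\tau)$ decomposes along $C_+$ in \eqref{eq:eqeq}, but now \emph{without} the positive-order projection — which is exactly the difference between $C_+$ and $C_-$ recorded in the remark after Table~\ref{table:co-prod2}. Concretely I expect the clean statement to be $\Delta\, C_-(\bar\tau,\tau) = \sum_{\bar{\bar\tau}} C_-(\bar{\bar\tau},\tau)\otimes C_+(\bar\tau,\bar{\bar\tau})$ or its mild variant, i.e. a co-associativity-type compatibility between $C_-$ and $C_+$ parallel to Lemma~\ref{lem:coassoc}; this is what lets the ``$\otimes\id$'' in $(R\otimes\id)\Delta\tau$ match the left factor produced by $\Delta R\tau$.

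Granting that compatibility, the inductive step is bookkeeping. On the left, $\Delta R\tau = \Delta(q_F\tau) + \sum_{\tau'} r(\tau')\,\Delta C_-(\tau',\tau)$; expanding each $\Delta C_-(\tau',\tau)$ via the $C_-$–$C_+$ compatibility and using the induction hypothesis on the strictly smaller subtrees $\tau_i$ (via Lemma~\ref{lemma: slight extension of inductive def of renormalised model}, which extends the recipe to all of $\mNn$) produces a sum over pairs $(\bar\sigma,\,F)$ with $\bar\sigma \le \tau$, $F$ a forest in $\Alg(\Trec)$. On the right, $(R\otimes\id)\Delta\tau = (R\otimes\id)\sum_{\bar\sigma}\bar\sigma\otimes C_+(\bar\sigma,\tau) = \sum_{\bar\sigma} R\bar\sigma \otimes C_+(\bar\sigma,\tau)$ by \eqref{eq:explicit_coproduct_of_product}, and now I apply the \emph{same} recipe \eqref{eq: acceptable renorm} to $R\bar\sigma$. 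Matching the two expansions term by term is then a matter of re-indexing: the forest in the right factor on the left side, coming from $\Delta$ of the $C_-$-counterterms of $\tau$, must equal $C_+(\bar\sigma,\tau)$, and the $r$-weighted trees on the left must reassemble into $R\bar\sigma$. The base cases — $\tau$ with three edges, or $\tau \in \mWw$ — are trivial since then $\Delta\tau = \tau\otimes 1$ and $R\tau$ is supported on forests with $\Delta(\cdot) = (\cdot)\otimes 1$ by the last line of \eqref{eq: bascases deltaplus}, so \eqref{eq: renorm commute with structure group} reads $(R\tau)\otimes 1 = (R\tau)\otimes 1$.

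I would expect the main obstacle to be exactly the compatibility identity between $C_-$ and $C_+$ and the verification that the projections behave correctly: the definition of $R$ uses $C_-$ (no positivity cut), whereas $\Delta$ on planted trees is phrased via $C_+$ (with the positivity cut $p_+$), so one has to check carefully that when $\Delta$ hits a low-order planted tree $\Ii^-_i(\sigma)$ or $\Ii(\sigma)$ sitting inside $R\tau$, the terms that would be killed by $p_+$ on the $\Delta\Ii$ side are precisely those that do not arise, or cancel, on the $(R\otimes\id)\Delta$ side. The derivative edges $\Ii^-_i$ add a second, parallel layer to this check, since $\Delta\Ii^-_i(\sigma) = (\Ii^-_i\otimes\id)\Delta\sigma + \Ii^+_i(\X_i)\otimes\Ii^+_i(\sigma)$ by \eqref{eqs: recursive deltaplus-addendum}, and one must confirm this extra $\Ii^+_i(\X_i)\otimes\Ii^+_i(\sigma)$ term is consistent with the structure of $C_-$ in its second row. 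Once the combinatorial dictionary $\Delta \leftrightarrow (C_-, C_+)$ is pinned down, everything else is routine and inductive.
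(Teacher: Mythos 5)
Your structural diagnosis is right --- the lemma rests on a $C_-$--$C_+$ compatibility of co-associativity type, and with it in hand everything else is re-indexing, which is exactly how the paper's proof is organised. But the identity you wrote down, $\Delta C_-(\bar\tau,\tau)=\sum_{\bar{\bar\tau}}C_-(\bar{\bar\tau},\tau)\otimes C_+(\bar\tau,\bar{\bar\tau})$, has its slots misassigned. Test it on $\tau=\Ii(\Xi)^3$ and $\bar\tau=\Ii(\one)\Ii(\Xi)^2$: then $C_-(\bar\tau,\tau)=\Ii(\Xi)$ and the left-hand side is $\Delta\Ii(\Xi)=\Ii(\Xi)\otimes 1$, but your right-hand side yields $\Ii(\Xi)\otimes\Ii(\one)$ from the intermediate tree $\bar{\bar\tau}=\bar\tau$ and $1\otimes p_+\Ii(\Xi)=0$ from $\bar{\bar\tau}=\tau$, which does not match. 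The relation you need --- which is precisely what the paper's case analysis establishes, albeit without stating it as a standalone identity --- is
\[
\Delta C_-(\bar{\bar\tau},\tau)=\sum_{\bar{\bar\tau}\,\le\,\bar\tau\,\le\,\tau}C_-(\bar{\bar\tau},\bar\tau)\otimes C_+(\bar\tau,\tau)\;,
\]
with the $C_+$ factor anchored at the outer tree $\tau$ and the $C_-$ factor recording the remaining cut down to $\bar{\bar\tau}$. Once you have this, no induction on $\tau$ is needed: one expands $\Delta R\tau = \Delta q_F\tau + \sum_{\bar{\bar\tau}}r(\bar{\bar\tau})\Delta C_-(\bar{\bar\tau},\tau)$, applies the compatibility to the second sum and multiplicativity of $\Delta$ together with Lemma~\ref{lem:coproduct} to $\Delta q_F\tau$, and re-indexes directly to $\sum_{\bar\tau}R\bar\tau\otimes C_+(\bar\tau,\tau)$. (Your base-case claim is also off: for $\tau=\Ii(\one)\Ii(\Xi)^2$ one computes $\Delta\tau=\tau\otimes\Ii(\one)$, not $\tau\otimes 1$, so the three-edge case is not trivial in the way you describe, though the identity does still hold.)

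The genuine gap is that the compatibility identity is never actually proved. It is the entire content of the lemma, and the $p_+$ projection and the $\Ii^-_i$ derivative edges you rightly flag are exactly where the subtlety lives. The paper resolves them by a case analysis on $\numone(\bar{\bar\tau})$ and $\numpoly(\bar{\bar\tau})$: one writes $C_-(\bar{\bar\tau},\tau)$ explicitly as a forest of planted trees $\Ii(\sigma_i)$ or $\Ii^-_i(\sigma_i)$ cut from $\tau$, applies \eqref{eq:eqeq} to each factor, and then observes that for each resulting term $\Ii(\tilde\tau_i)\otimes C_+(\tilde\tau_i,\sigma_i)$ the tree $\bar\tau$ obtained by substituting $\tilde\tau_i$ for $\sigma_i$ inside $\tau$ satisfies $C_-(\bar{\bar\tau},\bar\tau)=\Ii(\tilde\tau_i)$ (or $\Ii^-_i(\tilde\tau_i)$) and $C_+(\bar\tau,\tau)=C_+(\tilde\tau_i,\sigma_i)$. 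Constructing this bijection between intermediate subtrees $\tilde\tau_i$ and intermediate trees $\bar\tau$ is the hard combinatorial step; articulating a plan to look for it is not the same as supplying it.
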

\begin{proof} 
\[
\Delta R(\tau)=
\Delta q_{F}\tau + \sum_{\bar{\bar{\tau}}\in\Tt}r(\bar{\bar{\tau}})\Delta C_-(\bar{\bar{\tau}},\tau).
\]
We split the sum above depending on $\numone(\bar{\bar{\tau}})$ and $\numpoly(\bar{\bar{\tau}})$, for $\bar{\bar{\tau}}\leqslant\tau$. If $\numone(\bar{\bar{\tau}})=\numpoly(\bar{\bar{\tau}})=0$, then $C_-(\bar{\bar{\tau}},\tau)=\delta_{\{\tau=\bar{\bar{\tau}}\}}.$ 

If $\numone(\bar{\bar{\tau}})=1$, then we have $C_-(\bar{\bar{\tau}},\tau)=\Ii(\sigma)$ where $\sigma\subset\tau$ and 
\[
\Delta \Ii(\sigma)=\sum_{\tilde{\tau}\in\mNn\cup\Ww}\Ii(\tilde{\tau})\otimes C_+(\tilde{\tau},\tilde{\sigma}).
\]
For each element $\tilde{\tau}$ in this sum, we define a corresponding $\bar{\tau}$ by replacing the occurrence of $\sigma$ in $\tau$ identified above by $\tilde{\tau}$. We have $\bar{\bar{\tau}}\leqslant\bar{\tau}\leqslant\tau$ and by the inductive formulas,
\[
\Ii(\tilde{\tau})=C_-(\bar{\bar{\tau}},\bar{\tau})
\]
and
\[
C_+(\tilde{\tau},\tilde{\sigma})=C_+(\bar{\tau},\tau).
\]
The following picture is a representation of $\tau$ and the relation between its different subtrees, for one choice of $\tilde{\tau}$, to give an intuition of the proof. In blue is $C_+(\tilde{\tau},\tilde{\sigma})=C_+(\bar{\tau},\tau)$, which in this example is a product of two planted trees.
\begin{center}
\begin{tikzpicture}
\path (0,0) node[circle, inner sep=3pt,draw](x) {$\bar{\bar{\tau}}$};
\path (0,1) node[circle, inner sep=3pt,draw](y) {$\tilde{\tau}$};
\path (-.4,1.6) node[circle, inner sep=3pt,draw=blue, minimum size=9pt](z1) {};
\path (.4,1.6) node[circle, inner sep=3pt,draw=blue, minimum size=9pt,](z2) {};
\draw[line width=.2mm,blue] (y) -- (z1);
\draw[line width=.2mm,blue] (y) -- (z2);
\draw[line width=.2mm] (y) -- (x);
\draw (0,0.5) circle [x radius=0.47, y radius=0.87];
\draw (0,1.34) circle (0.666);
\path (0.63,0.45) node[circle, inner sep=3pt](h) {$\bar{\tau}$};
\path (0.2,2.13) node[circle, inner sep=3pt](k) {$\sigma$};
\end{tikzpicture}
\end{center}

If $\numpoly(\bar{\bar{\tau}})=1$, the same holds by replacing $\Ii$ by $\Ii^-_i$ in the argument.

If $\numone(\bar{\bar{\tau}})=2$, then we write $C_-(\bar{\bar{\tau}},\tau)=\Ii(\sigma_1)\cdot\Ii(\sigma_2)$ where $\sigma_i\subset\tau$ and for $i=1,2$,
\[
\Delta \Ii(\sigma_i)=\sum_{\tilde{\tau}_i\in\mNn\cup\Ww}\Ii(\tilde{\tau}_i)\otimes C_+(\tilde{\tau}_i,\tilde{\sigma}_i).
\]
For each element $\tilde{\tau}_1$ and $\tilde{\tau}_2$, we define $\bar{\tau}$ by replacing $\sigma_i$ by $\tilde{\tau}_i$ in $\tau$. We have $\bar{\bar{\tau}}\leqslant\bar{\tau}\leqslant\tau$ and by the inductive formulas,
\[
\Ii(\tilde{\tau}_1)\cdot\Ii(\tilde{\tau}_2)=C_-(\bar{\bar{\tau}},\bar{\tau})
\]
and
\[
C_+(\tilde{\tau}_1,\tilde{\sigma}_1)\cdot C_+(\tilde{\tau}_2,\tilde{\sigma}_2)=C_+(\bar{\tau},\tau).
\]
In all the cases discussed, we can index the sum induced by the coproduct in terms of $\bar{\tau}$ instead of $\tilde{\tau}$ or $\tilde{\tau}_1,\tilde{\tau}_2$. Permutation of that sum with the sum over $\bar{\bar{\tau}}$ then gives:
\begin{align*}
\Delta R(\tau)
=&\Delta q_{F} \tau +\sum_{\bar{\tau}\in\mNn\cup\Ww }(\sum_{\bar{\bar{\tau}}\in\Tt }r(\bar{\bar{\tau}}) C_-(\bar{\bar{\tau}},\bar{\tau}))\otimes C_+(\bar{\tau},\tau)\\
=&\sum_{\bar{\tau}\in\mNn\cup\mWw }( q_{F} \bar{\tau}+\sum_{\bar{\bar{\tau}}\in\Tt}r(\bar{\bar{\tau}}) C_-(\bar{\bar{\tau}},\bar{\tau}))\otimes C_+(\bar{\tau},\tau)\\
=& \sum_{\bar{\tau}\in\mNn\cup\mWw }R(\bar{\tau})\otimes C_+(\bar{\tau},\tau)\\
=&(R\otimes \id)\Delta \tau\;,
\end{align*}	 
where above we adopt the convention that $r(\tau') = 0$ if $\tau' \not \in \prodtree$.
\end{proof}
With this identity we can now give an analog of \eqref{eq: inductive def of renormalised model} for our paths. 
\begin{lemma}\label{lemma: renormalised formula for local path}
Suppose that the local product $\locprod$ was built from an local renormalisation map $R$. 
Then, for any $x,y \in \R^{d}$, and tree $\tau \in \mNn \cup \mWw$ one has
\begin{equation}\label{eq: recursive formula for renormalised rough path}
\path_{x,y} \tau
=
\path_{x,y} R \tau
\end{equation} 
where on the right hand side we extend $\path_{x,y}$ to forests of planted trees multiplicatively. 
\end{lemma}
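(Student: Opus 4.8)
The plan is to combine the recursive definition of the path in terms of the coproduct, \eqref{eq: path -  coproduct formula for product trees}, with the commutation identity just established in Lemma~\ref{lemma: renorm commute with structure group}. First I would observe that for $\tau \in \mWw$ the statement is trivial: by definition $R\tau = q_{F}\tau$ (since no tree $\tau' \in \prodtree$ satisfies $C_-(\tau',\tau) \neq 0$ with $\tau' \neq \tau$ producing a nonzero contribution below order, or rather the counterterms contribute only forests of non-positive trees which combine to reproduce $\tau$ after applying $\locprod$), and more directly, for $w \in \mWw$ we have $\path_{x,y} w = \locprod_{x} w$ by \eqref{eq: path on wild trees}, while the right-hand side $\path_{x,y} R w = \locprod_{x} R w = \locprod_{x} w$ by \eqref{eq: inductive def of renormalised model} extended via Lemma~\ref{lemma: slight extension of inductive def of renormalised model}. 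So the real content is the case $\tau \in \mNn$.

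For $\tau \in \mNn$ I would compute, using \eqref{eq: path -  coproduct formula for product trees} followed by Lemma~\ref{lemma: renorm commute with structure group}:
\[
\path_{x,y} R\tau
= (\locprod_{x} \otimes \locprod^{\rec}_{y}) \Delta R\tau
= (\locprod_{x} \otimes \locprod^{\rec}_{y}) (R \otimes \id) \Delta \tau
= (\locprod_{x} R \otimes \locprod^{\rec}_{y}) \Delta \tau.
\]
Now by \eqref{eq: inductive def of renormalised model} (in the form extended by Lemma~\ref{lemma: slight extension of inductive def of renormalised model} to $\mNn \cup \mWw$, and noting $\Delta \tau$ only involves trees $\bar\tau \in \mNn \cup \Ww$ with $\bar\tau \le \tau$ in its left factor via \eqref{eq:explicit_coproduct_of_product}) we have $\locprod_{x} R\bar\tau = \locprod_{x} \bar\tau$ for each such $\bar\tau$. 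Hence $(\locprod_{x} R \otimes \locprod^{\rec}_{y}) \Delta \tau = (\locprod_{x} \otimes \locprod^{\rec}_{y}) \Delta \tau = \path_{x,y}\tau$, using \eqref{eq: path -  coproduct formula for product trees} once more. This closes the argument. One should be slightly careful that the path $\path_{x,y}$ appearing on the right of \eqref{eq: recursive formula for renormalised rough path} acts multiplicatively on the forests in $R\tau$; this is consistent because $\path_{x,y} \Ii^{-}_{i}(\sigma) = (\locprod_{x} \otimes \locprod^{\rec}_{y})\Delta \Ii^{-}_i(\sigma)$ by \eqref{eqs: path definition addendum}, so the identity $\Delta R\tau = (R\otimes\id)\Delta\tau$ together with multiplicativity of $\Delta$ over forests and of $\locprod_x$, $\locprod^{\rec}_y$ over forests gives exactly $\path_{x,y} R\tau = (\locprod_x R \otimes \locprod^{\rec}_y)\Delta\tau$ with the forest interpretation built in.

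The main obstacle I anticipate is purely bookkeeping: making sure that the extension of $R$ to $\mNn \setminus \prodtree$ via Lemma~\ref{lemma: slight extension of inductive def of renormalised model} is exactly the one for which Lemma~\ref{lemma: renorm commute with structure group} was proved, and that the truncation convention (tree products of total order $>0$ set to zero) does not interfere — i.e. that every forest appearing in $\Delta R\tau$ on trees of the relevant orders is accounted for. Since Lemma~\ref{lemma: renorm commute with structure group} is stated for all $\tau \in \mNn \cup \mWw$ and Lemma~\ref{lemma: slight extension of inductive def of renormalised model} gives the needed compatibility of $\locprod_x R = \locprod_x$ on this larger set, this should go through without further difficulty, and the proof is essentially the three-line computation above.
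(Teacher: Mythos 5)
Your proof is correct and is essentially the paper's argument: the paper's proof is exactly the same three-step chain, applying \eqref{eq: path -  coproduct formula for product trees}, Lemma~\ref{lemma: slight extension of inductive def of renormalised model}, and Lemma~\ref{lemma: renorm commute with structure group}, read in the opposite direction (starting from $\path_{x,y}\tau$ rather than $\path_{x,y}R\tau$); the paper phrases it as an induction on tree size, but the inductive hypothesis is never actually invoked, so your direct derivation is equivalent. One small caveat worth flagging: your opening claim that $R\tau = q_F\tau$ for $\tau\in\mWw$ is false in general (for instance $C_-(\Ii(\one)\Ii(\Xi)\Ii(\Xi),\Ii(\Xi)\Ii(\Xi)\Ii(\Xi))\neq 0$ contributes a genuine counterterm when that tree lies in $\prodtree$), but your ``more directly'' argument via \eqref{eq: path on wild trees} and Lemma~\ref{lemma: slight extension of inductive def of renormalised model} is correct, and in any case your main chain already covers $\tau\in\mWw$ uniformly, so the isolated treatment of that case is superfluous rather than harmful.
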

\begin{proof}
Our proof is by induction in the size of $\tau$. 
The bases cases where $\num_{\tau} = 3$ are straightforward to check by hand. 
For the inductive step, we note that one has
\begin{equation*}
\path_{x,y} \tau =
(\locprod_{x} 
\otimes 
\locprod^{\rec}_y)
\Delta \tau
=
(\locprod_{x}R 
\otimes 
\locprod^{\rec}_y)
\Delta \tau\\
=
(\locprod_{x} 
\otimes 
\locprod^{\rec}_y)
\Delta R\tau
=
\path_{x,y} R\tau\;,
\end{equation*}
where in the second equality we used  Lemma~\ref{lemma: slight extension of inductive def of renormalised model} and in the third equality we used Lemma~\ref{lemma: renorm commute with structure group}. 
\end{proof}
\begin{remark}\label{rem: phi43 renorm}
We describe how the renormalisation of $\Phi^4_3$ (which in our setting corresponds to fixing $\delta = 1/2-$ with Gaussian noise) used in previous works such as \cite{Pedestrians} corresponds to a choice of a local renormalisation operator $R$.  

We define $\prodtree_{\tiny{\textnormal{wick}}}$ to be the three different elements of $\mNn$ obtained by permuting the tree product in $\Ii(\one)\Ii(\Xi)^{2}$, that is 
\[
\prodtree_{\tiny{\textnormal{wick}}} = \{\Ii(\one)\Ii(\Xi)^{2},\Ii(\Xi)\Ii(\one)\Ii(\Xi),\Ii(\Xi)^{2}\Ii(\one)\}
\]
 and similarly define $\prodtree_{\tiny{\textnormal{sunset}}}$ to be the collection of the $9$ different elements of $\mNn$ which are obtained by permutations of the tree product in 
 \[
 \Ii(\Xi)\Ii\big(\Ii(\Xi)\Ii(\one)\Ii(\Xi)\big)\Ii(\Xi) =
 \<2K*2OAS_black>\;.
 \] 
There are nine elements because there are three different orders for each of the two tree products appearing in this tree, for instance one also has 
\[
\Ii\big(\Ii(\one)\Ii(\Xi)^{2}\big)\Ii(\Xi)^{2} \in \prodtree_{\tiny{\textnormal{sunset}}}\;.
\]
The corresponding counterterm map $r$ is given by 
\[
r(\tau)
=
\begin{cases}
- C_{\tiny{\textnormal{wick}}} 
&
\textnormal{ if }\tau \in \prodtree_{\tiny{\textnormal{wick}}},\\
- C_{\tiny{\textnormal{sunset}}}
&
\textnormal{ if } \tau \in 
\prodtree_{\tiny{\textnormal{sunset}}},\\
0 & \textnormal{ otherwise.}
\end{cases}
\]
where one has
\begin{equation*}
\begin{split}
C_{\tiny{\textnormal{wick}}}
=& \mathbb{E}[(\mathcal{L}^{-1}\xi)(0)^2].\\
C_{\tiny{\textnormal{sunset}}}
=&\mathbb{E}[\theta(0)\mathcal{L}^{-1}\theta(0)],
\end{split}
\end{equation*}
where $\xi$ is our (regularised) noise,   $P$ is the space-time Green's function for the heat kernel and $\theta$ is defined by
\[
\theta(z)=(\mathcal{L}^{-1}\xi)(z)^2-\mathbb{E}[(\mathcal{L}^{-1}\xi)(0)^2]=(\mathcal{L}^{-1}\xi)(z)^2-C_{\tiny{\textnormal{wick}}}.
\]

The promised local renormalisation operator is then given by building $R$ from $r$ as in \eqref{eq: acceptable renorm}.

As an example, we compute
\begin{equation*}
\begin{split}	
R\ \<3_black>
=&
\<1_black> \<1_black> \<1_black>
- 
3
C_{\tiny{\textnormal{wick}}}
\<1_black> 
\quad
\textnormal{and}
\quad
R\ \<2K*2OAS_black>
=
\<1_black> 
\<K*3one_black>
\<1_black>
-
C_{\tiny{\textnormal{wick}}}
\<K*3one_black>
-
C_{\tiny{\textnormal{sunset}}}
\<K*one_black>\;.
\end{split}
\end{equation*} 
\end{remark}
%
\subsection{Formula for the renormalised cube}
\label{ss:RE}
%
The next proposition gives the explicit formulae for our renormalised product that promised at the beginning of this section. 
\begin{proposition}\label{prop: local renorm eq}
Let $\locprod$ be built from a local renormalisation operator $R$.

Fix smooth functions $v_{\one}, v_{\X_{1}},\dots,v_{\X_{d}}: \R \times \R^{d} \rightarrow \R$ and let
\[
\Phi(z)
=
\sum_{\tau \in \Nn \cup \Ww}
\Upsilon_{z}(\tau) \Ii(\tau)
\] 
where $\Upsilon$ is defined in terms of the parameters $v_{\one}$ and $v_{\X}$, and 
\begin{equation*}
\phi(z)
= \locprod_{z,z}\Phi(z)
=
\Upsilon_{z}(\one) + 
\sum_{\tau \in \Ww}
\Upsilon(\tau) \locprod_{z}\Ii(\tau)\;.
\end{equation*}
Moreover, suppose that, for some $1 < \gamma < 2$, if $U^{\one}_{\gamma-2}$ as in \eqref{def:U} with $\Theta_{\bullet}(\cdot) = \Upsilon_{\bullet}(\cdot)|_{\Nn}$, we have that $[U^{\one}]_{\gamma} < \infty$. 

Then, if we define $\phi^{\circ_{\locprod} 3}$ as in Definition~\ref{def: renormalised product} using $\Theta$ as our tree expansion for $\phi$, we then have
\begin{equation}\label{Mphi4_abstract}
\phi^{ \circ_{\locprod} 3}(z)  = \phi^3(z) - r_1
- r_\Phi \phi (z) - r_{\Phi^2}\phi^2(z)
- \sum_{i=1}^{d} r_{\partial_{i}\Phi} \partial_{i}\phi(z),
\end{equation}
where
\begin{equation}\label{eq: renorm constants} 
\begin{split}
r_1=
\sum_{\bar{\tau}\in\mNn\cup\mWw,
\atop
\numone(\bar{\tau}) + \numpoly(\bar{\tau})=0}(-1)^\frac{\num(\bar{\tau})-1}2r(\bar{\tau}),\quad
r_\Phi=\sum_{\underset{\numone(\bar{\tau})=1}{\bar{\tau}\in\mNn}}(-1)^\frac{\num(\bar{\tau})-1}2r(\bar{\tau}), \\
r_{\Phi^2}=\sum_{\underset{\numone(\bar{\tau})=2}{\bar{\tau}\in\mNn}}(-1)^\frac{\num(\bar{\tau})-1}2r(\bar{\tau}) ,\quad
r_{\partial_{i}\Phi}=\sum_{\numpolyi(\bar{\tau})=1 
\atop
\bar{\tau}\in\mNn}(-1)^\frac{\num(\bar{\tau})-1}2r(\bar{\tau}).
\end{split}
\end{equation}
Here $r: \prodtree \rightarrow \R$ is the map from which $R$ is built. 
\end{proposition}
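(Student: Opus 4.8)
The plan is to start from the definition of the renormalised product in \eqref{equ: def of renormalised product} applied with the single tree expansion $\Theta = \Upsilon|_{\Nn}$ for $\phi$, so that
\[
\phi^{\circ_{\locprod} 3}(z)
=
\sum_{\tau = \Ii(\tau_1)\Ii(\tau_2)\Ii(\tau_3) \in \mNn \cup \mWw}
\Upsilon_z(\tau_1)\Upsilon_z(\tau_2)\Upsilon_z(\tau_3)\,
\path_{z,z}\tau
=
-\sum_{\tau \in \mNn \cup \mWw}
\Upsilon_z(\tau)\,\path_{z,z}\tau,
\]
where the last equality is the last line of \eqref{eq:recursive_ups} (the recursive formula for $\Upsilon$). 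Then I would invoke Lemma~\ref{lemma: renormalised formula for local path}, which gives $\path_{z,z}\tau = \path_{z,z} R\tau$ for $\tau \in \mNn \cup \mWw$, and expand $R\tau$ using the extended formula \eqref{eq: acceptable renorm}, i.e. $R\tau = q_F\tau + \sum_{\tau' \in \prodtree} r(\tau') C_-(\tau',\tau)$. The $q_F\tau$ piece, summed against $-\Upsilon_z(\tau)\path_{z,z}$ over all $\tau \in \mNn \cup \mWw$, reproduces exactly $\phi^3(z)$ by the same computation as in \eqref{sec9.4-1}--\eqref{sec9.4-2} together with $\path_{z,z}$ acting multiplicatively on the forest $q_F\tau$ and the order-bound fact that $\path_{z,z}\Ii(\tau) = 0$ for $\tau \in \Nn \setminus \{\one\}$.

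The main work is then to identify the counterterm contributions. After swapping the order of summation, the remaining term is
\[
-\sum_{\tau' \in \prodtree} r(\tau')
\sum_{\tau \in \mNn \cup \mWw}
\Upsilon_z(\tau)\,\path_{z,z}\big(C_-(\tau',\tau)\big).
\]
Here I would use the coherence relation \eqref{eq: coherence relation for upsilon +}, or rather its $C_-$ analogue, to pull out $\Upsilon_z(\tau) = (-1)^{(\num(\tau')-1)/2}\Upsilon_z(C_-(\tau',\tau))$ when $C_-(\tau',\tau) \neq 0$ — this works because $C_-$ satisfies Lemma~\ref{lem:lemC} (as remarked right after Table~\ref{table:co-prod2}), and the coherence identity only used Lemma~\ref{Upsilon-Lemma} and Lemma~\ref{lem:lemC}. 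This converts the inner sum into $(-1)^{(\num(\tau')-1)/2}\sum_{\tau}\Upsilon_z(C_-(\tau',\tau))\path_{z,z}(C_-(\tau',\tau))$. Now I would split according to the leaf-type profile of the fixed $\bar{\tau} = \tau'$ (using the classification $(\numone,\numpoly) \in \{(0,0),(1,0),(2,0),(0,1)\}$ for trees of negative order) and apply Lemma~\ref{lemma: tree self-sim 1}, which computes $p_{\le 0}\sum_\tau C_-(\bar{\tau},\tau)$ in each of the four cases. Since $\path_{z,z}$ kills all planted trees of strictly positive order, only the $p_{\le 0}$ part survives, and Lemma~\ref{lemma: tree self-sim 1} tells us this part is $\sum_{\tau \in \Ww}\Ii(\tau) + \Ii(\one)$ (or its square, or $\sum_{|\tau|\le -1}\Ii_i^-(\tau)$) depending on the leaf profile of $\bar\tau$.

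Finally I would match each case to a term on the right of \eqref{Mphi4_abstract}. For $\numone(\bar\tau) = \numpoly(\bar\tau) = 0$ we get $\Upsilon_z(\emptyset) = 1$ times $\path_{z,z}(\sum_{\Ww}\Ii(\tau)+\Ii(\one)) = 1$, contributing the constant $r_1$. For $\numone(\bar\tau) = 1$ the coefficient $\Upsilon_z$ on the surviving forest is $v_\one(z) = \phi^{\mathrm{approx}}$... more precisely, $\path_{z,z}(\sum_{\Ww}\Ii(\tau) + \Ii(\one))$ with the corresponding $\Upsilon$-weights reconstructs exactly $\phi(z)$ via \eqref{sec9.4-1}, giving $r_\Phi\,\phi(z)$; for $\numone(\bar\tau)=2$ the square reconstructs $\phi^2(z)$, giving $r_{\Phi^2}\phi^2(z)$; and for $\numpolyi(\bar\tau)=1$, the forest $\sum_{|\tau|\le -1}\Ii_i^-(\tau)$ paired with the right $\Upsilon$-weights and $\path_{z,z}$ gives, by \eqref{eq: derivative of reconstruction} of Lemma~\ref{lemma: derivatives do right thing} (which needs the hypothesis $[U^\one]_\gamma < \infty$), exactly $\partial_i\phi(z)$, contributing $r_{\partial_i\Phi}\partial_i\phi(z)$. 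Collecting signs, the $(-1)^{(\num(\bar\tau)-1)/2}$ factors land precisely in the definitions \eqref{eq: renorm constants}, and the overall minus sign in front distributes to produce the stated formula. I expect the main obstacle to be the bookkeeping in the last step: verifying that $\path_{z,z}$ applied to the reconstructed forests $(\sum_{\Ww}\Ii(\tau)+\Ii(\one))^{k}$ and $\sum_{|\tau|\le -1}\Ii_i^-(\tau)$, together with the multiplicative extension of $\Upsilon_z$, genuinely equals $\phi^k(z)$ and $\partial_i\phi(z)$ respectively — this requires care that no stray $\tau \in \Nn\setminus\{\one\}$ terms sneak in and that the derivative identity from Lemma~\ref{lemma: derivatives do right thing} is applied with the correct index set, which is exactly where the regularity hypothesis on $U^\one$ is consumed.
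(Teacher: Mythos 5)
Your proposal follows essentially the same route as the paper's proof: rewrite the defining sum as $-\sum_{\tau\in\mNn\cup\mWw}\Upsilon_z(\tau)\path_{z,z}\tau$ via the recursion for $\Upsilon$, insert $\path_{z,z}\tau=\path_{z,z}R\tau$ from Lemma~\ref{lemma: renormalised formula for local path}, expand $R\tau$ via \eqref{eq: acceptable renorm}, identify the $q_F$ piece with $\phi^3$, swap the two sums in the counterterm piece, use the $C_-$ analogue of \eqref{eq: coherence relation for upsilon +} (valid since $C_-$ also satisfies Lemma~\ref{lem:lemC}) to factor out $\Upsilon_z(C_-(\bar\tau,\tau))$, and finally apply Lemma~\ref{lemma: tree self-sim 1} together with the identity $\path_{z,z}p_{\leqslant 0}=\path_{z,z}$ on $\Alg(\Tlm)$ and, for the derivative case, \eqref{eq: derivative of reconstruction}. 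This is exactly the paper's argument, and you correctly flag where the hypothesis $[U^{\one}]_{\gamma}<\infty$ is consumed.

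The one slip is in your $\numone(\bar\tau)=\numpoly(\bar\tau)=0$ case: Lemma~\ref{lemma: tree self-sim 1} does not cover that leaf profile, and the expression $\path_{z,z}\big(\sum_{\tau\in\Ww}\Ii(\tau)+\Ii(\one)\big)$ is \emph{not} equal to $1$ (it is $1+\sum_{\tau\in\Ww}\locprod_z\Ii(\tau)$). Rather, for such $\bar\tau$ one has $C_-(\bar\tau,\tau)=\delta_{\{\tau=\bar\tau\}}\cdot 1$ (the unit of $\Alg(\Tlm)$): the inner sum over $\tau$ collapses to the single term $\tau=\bar\tau$, and $\Upsilon_z(\bar\tau)\,\path_{z,z}(1)=(-1)^{(\num(\bar\tau)-1)/2}$, which yields $r_1$ exactly as you conclude. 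The rest of your case-matching, including the reconstruction of $\phi$, $\phi^2$, and $\partial_i\phi$ with the correct $\Upsilon$-weights, is the same bookkeeping the paper performs.
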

\begin{proof}
We have
\begin{equation*}
\begin{split}	
\phi^{\circ_{\locprod}3}(z)
=&
\sum_{\tau_{1}, \tau_{2},\tau_{3} \in \Nn \cup \mWw}
\Upsilon_{z}(\tau_{1})
\Upsilon_{z}(\tau_{2})
\Upsilon_{z}(\tau_{3})
\path_{z,z}
\Ii(\tau_{1})
\Ii(\tau_{2})
\Ii(\tau_{3})\\
&
=
\sum_{\tau \in \mNn \cup \mWw}
\Upsilon_{z}(\tau) \path_{z,z}\tau\\
=& 
\sum_{\tau \in \mNn \cup \mWw}
\Upsilon_{z}(\tau) \path_{z,z}R\tau\\
=&
\sum_{\tau\in \mNn\cup \mWw}\Upsilon_{z}(\tau)( \path_{z,z}q_{F}\tau+\sum_{\bar{\tau}\in\Tt}r(\bar{\tau}) \path_{z,z}C_-(\bar{\tau},\tau))\;,
\end{split}
\end{equation*}
where the first equality follows from the definition of renormalised local products, the second equality comes from Lemma~\ref{lemma: self-similarity of coherence}, and the third equality comes from Lemma~\ref{lemma: renormalised formula for local path}.
 
By appealing to Lemma~\ref{lemma: self-similarity of coherence} once more, we can rewrite the first term of the last line above as
\begin{equation*}
\begin{split}	
-
\path_{z,z}q_{F}
\sum_{\tau\in \mNn \cup \mWw}\Upsilon_{z}(\tau)\tau
=&
\path_{z,z}q_{F}
\big(
\sum_{\tau \in \Nn\cup \Ww}\Upsilon_{z}(\tau)\Ii(\tau)
\big)^{3}\\
=&
\big(
\sum_{\tau \in \Nn\cup \Ww}\Upsilon_{z}(\tau)\path_{z,z}\Ii(\tau)
\big)^{3} = \phi^{3}(z)\;.
\end{split}
\end{equation*}
By using Lemma~\ref{lemma: tree self-sim 1} (note that $\path_{z,z}p_{\le 0} = \path_{z,z}$ on $\Alg(\Tlm)$) followed by equation \eqref{eq: coherence relation for upsilon +}, and using the fact that $\path_{z,z}\sigma=0$ if $|\sigma|>0$,  we have  
\begin{align}
\sum_{\tau\in \mNn \cup \mWw}&\Upsilon_z(\tau)\sum_{\bar{\tau}\in\mNn \cup \mWw}r(\bar{\tau})\path_{z,z}C_-(\bar{\tau},\tau)\\
&=\sum_{\bar{\tau}\in\mNn\cup\mWw,
\atop
\numone(\bar{\tau}) + \numpoly(\bar{\tau}) =0}(-1)^\frac{\num(\bar{\tau})-1}2r(\bar{\tau}) 1 \notag\\
&+\sum_{\underset{\numone(\bar{\tau})=1}{\bar{\tau}\in\mNn}}(-1)^\frac{\num(\bar{\tau})-1}2r(\bar{\tau}) \sum_{\tau \in \{\one\} \cup \Ww} \Upsilon_{z}(\tau)\path_{z,z}\Ii(\tau)\notag\\
&+\sum_{\underset{\numone(\bar{\tau})=2}{\bar{\tau}\in\mNn}}(-1)^\frac{\num(\bar{\tau})-1}2r(\bar{\tau}) \big( \sum_{\tau \in \{\one\} \cup \Ww} \Upsilon_{z}(\tau)\path_{z,z}\Ii(\tau) \big)^{ 2} \notag\\
&+
\sum_{i=1}^{d}
\sum_{\underset{\numpolyi(\bar{\tau})=1}{\bar{\tau}\in\mNn}}(-1)^\frac{\num(\bar{\tau})-1}2r(\bar{\tau}) 
\sum_{\tau \in \Nn \cup \Ww 
\atop
|\tau| \le -1}
\Upsilon_{z}(\tau)\path_{z,z}\Ii^{-}_{i}(\tau) \notag.
\end{align}
We then obtain the desired result by observing that, for the second and third terms on the right hand side above, 
\[
\sum_{\tau \in \{\one\} \cup \Ww} \Upsilon_{z}(\tau)\path_{z,z}\Ii(\tau)
=
\Upsilon_{z}(\one)
+
\sum_{\tau \in \Ww} \Upsilon_{z} \path_{z} \Ii(\tau)
=
\phi(z)
\] 
and, for the third term on the right hand side above, we have, for $1 \le i \le d$, 
\begin{equation*}
\begin{split}
\sum_{\tau \in \Nn \cup \Ww 
\atop
|\tau| \le -1}
\Upsilon_{z}(\tau)\path_{z,z}\Ii^{-}_{i}(\tau)
=&
\sum_{\tau \in \Ww}
\Upsilon(\tau)\locprod_{z}\Ii_{i}^{-}(\tau)
+
\sum_{\tau \in \Nn
\atop
|\tau| \le -1}
\Upsilon_{z}(\tau)\path_{z,z}\Ii^{-}_{i}(\tau)\\
=&
\sum_{\tau \in \Ww}
\Upsilon_{z}(\tau) \partial_{i}\locprod_{z}\Ii(\tau)
+
\partial_{i}\Upsilon_{z}(\one) 
=
\partial_{i}\phi(z)\;.
\end{split}
\end{equation*}
For the first equality of the second line above we used Lemma~\ref{lemma: derivatives do right thing} - in particular, \eqref{eq: derivative of reconstruction} - with $\Theta_{\bullet}(\cdot) = \Upsilon_{\bullet}(\cdot)|_{\Nn}$. 
\end{proof}
\begin{remark}
Under the assumptions of Proposition~\ref{prop: local renorm eq} one can also show that each one of the renormalised products in \eqref{eq: remainder equation} can also be expressed in terms of local polynomials of $v(z)$, $\{\partial_{i}v(z)\}_{i=1}^{d}$, and $\{\locprod_{z}\Ii(\tau): \tau \in \Ww\}$.

However, we refrain from doing this because the index sets for the summations that define the analogs of the constants \eqref{eq: renorm constants} become quite complicated.  
\end{remark}

\begin{remark}\label{rem: phi43 renorm equation}
Returning to the example of $\Phi^4_3$ described in Remark~\ref{rem: phi43 renorm}, one then sees that $r_{\partial_{i}\Phi} = r_{\Phi^{2}} = r_{1} = 0$ for all $1 \le i \le d$ and 
\[
r_{\Phi}
=
\sum_{\tau \in \prodtree_{\tiny{\textnormal{wick}}}}
(-1)^{\frac{3-1}{2}}
(-C_{\tiny{\textnormal{wick}}})
+
\sum_{\tau \in \prodtree_{\tiny{\textnormal{sunset}}}}
(-1)^{\frac{5-1}{2}}
(-C_{\tiny{\textnormal{sunset}}})
=
3C_{\tiny{\textnormal{wick}}}
-
9C_{\tiny{\textnormal{sunset}}}\;.
\]
\end{remark}

\section{Main result}
\label{s:MR}

\subsection{Statement of main theorem}
\label{ss:MT}

We recall the definition of the parabolic cylinders $D$ and $D_R$:
\[
D=(0,1)\times \{|x|<1\},\quad D_R=(R^2,1) \times \{|x|<1-R\}
\]
We introduce the parabolic ball of center $z=(t,x)$ and radius $R$ in this metric $d$, looking only into the past: 
\begin{equation}\label{paraball}
B(z,R)=\{\bar{z}=(\bar{t},\bar{x})\in\R\times\R^d, d(z,\bar{z})<R,\bar{t}<t\}.
\end{equation}

Note that for $R'<R\leq 1$ we have $D_{R'}+B(0,R'-R)\subset D_R$. 
 
The notation $\|\cdot\|$ denotes the $L^\infty$ norm on the whole space $\R \times \R^d$ and for any open set $\|\cdot\|_B$ the norm of the restriction of the function to $B$.

\begin{theorem}\label{th:main theorem}
There exists a constant $C$ such that if $v$ is a pointwise solution on $D$ to the remainder equation driven by a local product $\path$, according to Definition~\ref{def: remainder solution} then 
\begin{equation}\label{eq:mainth}
\|v\|_{D_R}\leq C\max\Big\{\frac1R,[\path;\tau]^\frac1{\delta \numnoise(\tau)}, |\tau|<0, \numnoise(\tau)\neq 0\Big\}.
\end{equation}
\end{theorem}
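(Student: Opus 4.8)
The plan is to combine the small-scale control coming from the regularity-structures machinery developed in the previous sections with a large-scale maximum-principle argument, following the philosophy of \cite{2018arXiv181105764M}. Fix $\gamma \in (2-2\delta, 2)$ (a level of the local expansion just high enough to classically make sense of the product $v \locprod_\bullet \<2_black>$), and let $M := \max\{[\path;\tau]^{1/(\delta \numnoise(\tau))} : |\tau| < 0, \numnoise(\tau) \neq 0\}$. The key objects are the coefficients $v = v_{\one}$, $v_{\X} = \mathcal{D}^{\locprod} v$ and the associated modelled distribution $V(z) = \sum_{\tau \in \Nn} \Upsilon_z(\tau) \Ii(\tau)$; by Theorem~\ref{theorem one zero} every continuity seminorm $[U^\tau]_{\gamma-|\tau|-2}$ is controlled by the single seminorm $[U^{\one}]_{\gamma}$ (together with $M$, which controls the paths $\path_{\bullet,\bullet}$ via Lemma~\ref{lem:OB}). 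So the whole analysis reduces to controlling two quantities on suitable sub-cylinders: the sup-norm $\|v\|$ and the seminorm $[U^{\one}]_\gamma$ (equivalently, a Hölder-type seminorm of $v$).

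First I would set up the iteration: on a parabolic ball $B(z_0, R)$, run the ``iterated freezing of coefficients'' of Section~\ref{ss:2-4} to rewrite the remainder equation \eqref{eq: remainder equation}, after moving all the positively-renormalised centering corrections to the left-hand side, as $\heat(v + \text{corrections}) = -v^3 + (\text{terms that are classically controllable once } [U^{\one}]_\gamma < \infty)$. The right-hand side, by the Reconstruction Lemma (Lemma~\ref{Reconstruction}) applied to the renormalised products of Definition~\ref{def: renormalised product} and the bounds of Lemma~\ref{lem:OB}, is bounded (in a negative Hölder norm, testing against $\Psi_L$) by a polynomial expression in $\|v\|_B$, $[U^{\one}]_{\gamma; B}$ and $M$, with the crucial feature that the $v$-dependent factors all carry strictly \emph{fewer} than three powers of $v$ and come multiplied by powers of $M$ and by small scale factors — i.e. they are genuinely of \emph{lower order} than $-v^3$. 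Then I would invoke the kernel-free Schauder estimates of Appendix~\ref{ss:LSL} (in the spirit of \cite{Otto2018,krylov1996lectures}) to bound $[U^{\one}]_{\gamma; B(z_0,R/2)}$ — the output of the Schauder Lemma is precisely this seminorm — in terms of $\|v\|_{B(z_0,R)}$, $M$, and a lower-order contribution that can be absorbed. This closes the ``small-scale'' half: on any ball of radius $\sim 1$ inside $D$, $[U^{\one}]_\gamma \lesssim \max\{1, \|v\|, M\}^{\text{(some power)}}$, and the corrections subtracted/added are all $\lesssim M^{\text{(power)}}$.

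The large-scale half is the maximum-principle argument. Having the local expansion under control, one knows that at each point $v$ satisfies, up to a controlled error, the inequality $\heat v \le -v^3 + (\text{lower-order in } v)$, where the lower-order terms involve at most $v^2$ times explicit factors bounded by powers of $M$ and by $[U^{\one}]_\gamma$. By the lower-order structure, there is a constant $K$ depending only on $d, \delta$ such that wherever $|v| \ge K \max\{1, M^{\text{power}}, [U^{\one}]_\gamma^{\text{power}}\}$ one has $\heat v \cdot \sign(v) < 0$; combined with the comparison function $w(t,x) = (\text{const})/\dist(\cdot, \partial D)$ familiar from \cite{2018arXiv181105764M} (which itself super-solves the model ODE $\dot w = -w^3$ near a boundary), a barrier/comparison argument gives $\|v\|_{D_R} \lesssim \max\{1/R, M^{\text{power}}, [U^{\one}]_\gamma^{\text{power}}\}$. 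Finally I would run a bootstrap: feed this $\|v\|_{D_R}$ bound back into the small-scale estimate for $[U^{\one}]_\gamma$ on a slightly smaller cylinder, and iterate finitely many times (the powers being fixed by $d,\delta$) until the dependence on $\|v\|$ on the right closes, yielding the stated bound $\|v\|_{D_R} \le C \max\{1/R, M\}$ after simplifying the exponents (the $\delta \numnoise(\tau)$ in the exponent of \eqref{eq:mainth} is exactly the degree of the polynomial $\locprod_\bullet \tau$, so $[\path;\tau]^{1/(\delta\numnoise(\tau))}$ is the ``per-power'' size of the noise data).

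The main obstacle I anticipate is \emph{keeping track of the exponents and the lower-order structure uniformly in $\delta$}: as $\delta \downarrow 0$ the number of trees in $\Ww \cup \mNn$ diverges, the local expansion must be pushed to a $\delta$-dependent number of levels, and one must verify — using the additivity of order under tree products and the fact (from the remainder equation \eqref{eq: remainder equation} and Lemma~\ref{Upsilon-Lemma}) that every correction term carries at most $v^2$ and at least one ``wild'' factor of strictly negative order — that every single one of these many terms is quantitatively of lower order relative to $\heat v$ and $v^3$, with a gain that does not degenerate. Concretely, the delicate point is the absorption step: showing that the contributions of the lower-order terms to the Schauder estimate for $[U^{\one}]_\gamma$ and to the maximum-principle threshold can be dominated by a \emph{small} multiple of the leading terms (so that they can be moved to the left), uniformly over the local product $\path$ and over $\delta$ in a fixed range. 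This is precisely where the interplay of the order bounds (Lemma~\ref{lem:OB}), the coherence identities (Lemma~\ref{lemma: self-similarity of coherence}), and the self-similar reduction of Theorem~\ref{theorem one zero} must all be used together, and where the bulk of the technical work in Section~\ref{s:PMT} will lie.
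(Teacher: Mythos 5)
Your proposal captures the paper's overall strategy correctly: reduce all continuity seminorms to $[U^{\one}]_{\gamma}$ via Theorem~\ref{theorem one zero}, control $[U^{\one}]_{\gamma}$ and $v_{\X}$ on small scales through the Reconstruction Lemma, Lemma~\ref{lem:OB} and the Schauder estimates, feed this into a maximum-principle argument on large scales, then iterate. The paper even uses the same two ingredients you anticipate from \cite{2018arXiv181105764M}: the packaged comparison result (Lemma~\ref{lem_max}) and the normalisation of all order bounds in powers of $\|v\|_D$ (formalised as Assumption~\ref{assumption}, which encodes your $M$ as a small multiple $c\|v\|_D^{\delta\numnoise(\tau)}$).

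However, there is one substantive gap in the large-scale step of your sketch. You write that ``at each point $v$ satisfies, up to a controlled error, the inequality $\heat v\le -v^{3}+(\textnormal{lower-order in }v)$,'' and then appeal to a barrier comparison. As stated this is not correct: the lower-order terms on the right-hand side of \eqref{eq: remainder equation} (the renormalised products $v\circ_{\path}v\circ_{\path}\path\Ii(w)$, $v\circ_{\path}\path\Ii(w_1)\circ_{\path}\path\Ii(w_2)$, and the pure trees $\path\tau$ with $\tau\in\partial\Ww$) are genuine distributions of regularity down to $-2+\kappa$, not pointwise-bounded functions, so they cannot be fed into Lemma~\ref{lem_max} directly. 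The missing device is the convolution step: one convolves the whole remainder equation with $\Psi_L$ to obtain a pointwise equation \eqref{eq:convolved} for $v_L$, namely $\heat v_L = -v_L^3 + (v_L^3-(v^3)_L) + (\textnormal{convolved products})$; the commutator $(v^3)_L - (v_L)^3$ and $v - v_L$ are then controlled by the H\"older regularity of $v$ delivered by the Schauder step, while the convolved products are genuinely bounded (at scale $L$) by the Reconstruction estimates of Lemmas~\ref{linear bound}--\ref{quadratic bound}. The maximum principle is then applied to $v_L$, with a carefully chosen scale $L = d_0/k$, $d_0 = \|v\|_D^{-1}$, $k$ a large constant and $c$ a small one so the error terms can be absorbed. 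Relatedly, your ``bootstrap'' is vaguer than the paper's iteration, which is a concrete telescoping argument: Lemma~\ref{lemtheorem} shows $\|v\|_{D_{\lambda\|v\|_D^{-1}}}\le \frac12\|v\|_D$, one then defines a finite shrinking sequence $R_{n+1}-R_n = \lambda\|v\|_{D_{R_n}}^{-1}$ with the norm halving at each step, and sums the geometric series to obtain the $1/R$ rate. These are gaps in the proposal, not a wrong approach, but both (convolution plus commutator control, and the explicit shrinking iteration) are needed for the argument to actually close.
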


This theorem generalises to an arbitrary domain $\tilde{D}$ in the following way: the local path is defined in a similar way, only replacing the cut-off function $\rho$ by a cut-off function that has value $1$ on a $1$-enlargement of $\tilde{D}$, and vanishes on a $2$ enlargement of the set. Then for every point in $\tilde{D}$, one can obtain a bound depending only on the path by applying a translated version of the theorem, for $R=1$. 

The following corollary is a reformulation of this theorem following from Definition~\ref{def: solution}.

\begin{corollary}\label{th:main theorem 2}
There exists a constant $C$ such that if $\phi$ is a pointwise solution to equation~\eqref{eq: renormalised product phi equation} driven by a local product $\path$ then for $v=\phi-\sum_{w\in\Ww}\Upsilon(w)\path\Ii(w)$, the bound \eqref{eq:mainth} holds.
\end{corollary}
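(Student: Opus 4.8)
The plan is to read the corollary off from Theorem~\ref{th:main theorem} via the one-to-one solution correspondence established at the end of Section~\ref{sec: actual products}, together with the bookkeeping relating the two notations for the remainder.

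\emph{First} I would reconcile the notation. In the statement of the corollary, $\path\Ii(w)$ denotes the diagonal evaluation $\path_{z,z}\Ii(w)$, and for $w \in \Ww$ one has by \eqref{eq: path on wild trees} that $\path_{z,z}\Ii(w) = \locprod_z\Ii(w)$; moreover $\Upsilon(w)$ is basepoint-independent since $\numone(w) = \numpoly(w) = 0$ (Lemma~\ref{Upsilon-Lemma}), so $\Upsilon(w) = \Upsilon_z(w)$. Hence
\[
v(z) = \phi(z) - \sum_{w\in\Ww}\Upsilon(w)\,\path_{z,z}\Ii(w) = \phi(z) - \sum_{w\in\Ww}\Upsilon_z(w)\,\locprod_z\Ii(w),
\]
which is exactly the parameter $v_{\one}$ defined in \eqref{eq: v definition} and attached to the solution $\phi$ in Definition~\ref{def: solution}. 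Thus $v = v_{\one}$.

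\emph{Next} I would invoke the Lemma at the end of Section~\ref{sec: actual products}: since $\phi$ is a pointwise solution of \eqref{eq: renormalised product phi equation} on $D$ in the sense of Definition~\ref{def: solution}, the function $v = v_{\one}$ is a pointwise solution on $D$ of the $\Phi^4$-remainder equation \eqref{eq: remainder equation} in the sense of Definition~\ref{def: remainder solution}, driven by the same local product $\path$ — with parameters $v_{\one} = v$ and $v_{\X} = \mathcal{D}^{\path}v$, and governing tree expansion $V(z) = \sum_{\tau\in\Nn}\Upsilon_z(\tau)\Ii(\tau)$. \emph{Finally}, Theorem~\ref{th:main theorem} applied to this $v$ yields precisely the estimate \eqref{eq:mainth}, which is the assertion of the corollary. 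There is essentially no obstacle here: the substantive content is entirely contained in Theorem~\ref{th:main theorem} and in the cited solution correspondence, and the only thing to verify is the elementary notational identification carried out in the first step.
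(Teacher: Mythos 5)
Your proposal is correct and follows exactly the route the paper intends: the corollary is stated there as a direct reformulation of Theorem~\ref{th:main theorem} via Definition~\ref{def: solution} and the one-to-one correspondence between solutions of \eqref{eq: renormalised product phi equation} and of the remainder equation \eqref{eq: remainder equation}. Your explicit identification of $v$ with the parameter $v_{\one}$ of \eqref{eq: v definition} is precisely the notational bookkeeping the paper leaves implicit.
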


The following result is a particular case of the local product introduced in Section \ref{sec:useful class of local products}. It follows from Proposition~\ref{prop: local renorm eq}. 

\begin{corollary}\label{th:main theorem 3}
Let $R$ be a local renormalisation operator, and $\path$ be the local product built from $R$. There exists a constant $C$ such that if $\phi$ is a pointwise solution to 
\begin{equation}
\heat\phi=-\phi^3(z) + r_1
+ r_\Phi \phi (z) + r_{\Phi^2}\phi^2(z)
+ \sum_{i=1}^{d} r_{\partial_{i}\Phi} \partial_{i}\phi(z),
\end{equation}
where the coefficients $r_1,\  r_\Phi,\ r_{\Phi^2}$ and $r_{\partial_{i}}$ are given by \eqref{eq: renorm constants}, then for $v=\phi-\sum_{w\in\Ww}\Upsilon(w)\path\Ii(w)$, the bound \eqref{eq:mainth} holds.
\end{corollary}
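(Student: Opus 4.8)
The plan is to reduce Corollary~\ref{th:main theorem 3} to Theorem~\ref{th:main theorem} by identifying the equation appearing in the statement as precisely the $\Phi^4$ equation driven by the local product $\path$ built from $R$. First I would invoke Proposition~\ref{prop: local renorm eq}: applied to the solution $\phi$ and the parameters $v_{\one} = v := \phi - \sum_{w\in\Ww}\Upsilon(w)\path_{\bullet}\Ii(w)$ and $v_{\X} = \mathcal{D}^{\locprod}v$, it gives the identity
\[
\phi^{\circ_{\locprod}3}(z) = \phi^3(z) - r_1 - r_{\Phi}\phi(z) - r_{\Phi^2}\phi^2(z) - \sum_{i=1}^d r_{\partial_i \Phi}\,\partial_i\phi(z),
\]
with the constants as in \eqref{eq: renorm constants}. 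Hence the PDE in Corollary~\ref{th:main theorem 3},
\[
\heat\phi = -\phi^3(z) + r_1 + r_{\Phi}\phi(z) + r_{\Phi^2}\phi^2(z) + \sum_{i=1}^d r_{\partial_i \Phi}\,\partial_i\phi(z),
\]
is exactly $\heat\phi = \phi^{\circ_{\locprod}3}(z) + \xi$ once we set $\xi = \locprod_z\Xi$ and recall that $\path$ is a lift of $\xi$; that is, $\phi$ solves \eqref{eq: renormalised product phi equation} in the sense of Definition~\ref{def: solution}. There is a small bookkeeping point: Proposition~\ref{prop: local renorm eq} requires the hypothesis $[U^{\one}]_{\gamma} < \infty$ for some $1<\gamma<2$; since we are in the smooth (qualitative) setting this is automatic, but I would remark that the proof of Theorem~\ref{th:main theorem} in Section~\ref{s:PMT} in any case establishes such an a priori bound on $[U^{\one}]_{\gamma}$ as part of the bootstrap, so there is no circularity.

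Next I would apply the correspondence lemma between Definition~\ref{def: solution} and Definition~\ref{def: remainder solution}: given the solution $\phi$ in the sense of Definition~\ref{def: solution}, the function $v = v_{\one}$ defined by \eqref{eq: v definition} is a solution of the $\Phi^4$-remainder equation \eqref{eq: remainder equation} driven by $\path$, in the sense of Definition~\ref{def: remainder solution}. This is precisely the $v$ appearing in the statement of Corollary~\ref{th:main theorem 3}, namely $v = \phi - \sum_{w\in\Ww}\Upsilon(w)\path_{\bullet}\Ii(w)$. Now Theorem~\ref{th:main theorem} applies verbatim to this $v$ and yields
\[
\|v\|_{D_R} \leq C \max\Big\{\tfrac1R,\ [\path;\tau]^{\frac1{\delta\,\numnoise(\tau)}},\ |\tau|<0,\ \numnoise(\tau)\neq 0\Big\},
\]
which is \eqref{eq:mainth}, with the same constant $C$.

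The only genuine content beyond citing earlier results is making sure the hypotheses of Proposition~\ref{prop: local renorm eq} and of the correspondence lemma are met for the local product built from $R$, and this is guaranteed by the construction in Section~\ref{sec:useful class of local products}: local renormalisation operators are defined (Definition~\ref{def: local renormalisation operator}) precisely so that Lemma~\ref{lemma: renormalised formula for local path} holds, which is the key input to Proposition~\ref{prop: local renorm eq}. I do not anticipate a serious obstacle here; the corollary is essentially an unpacking of definitions combined with the already-proved Theorem~\ref{th:main theorem}. If anything, the one place that warrants a careful sentence is the compatibility of the coefficient map $\Upsilon$ used to define $\phi^{\circ_{\locprod}3}$ (via the tree expansion $\Phi$ with parameter $v_{\one}$ from \eqref{eq: v definition} and $v_{\X} = \mathcal{D}^{\locprod}v_{\one}$) with the one used in Definition~\ref{def: remainder solution} for the remainder $v$ — but these agree by the construction of $\mathcal{D}^{\locprod}$ in Definition~\ref{def: vX map}, so the two notions of solution match on the nose.
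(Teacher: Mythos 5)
Your proposal is correct and matches the paper's approach exactly: the paper itself proves Corollary~\ref{th:main theorem 3} with the single sentence ``It follows from Proposition~\ref{prop: local renorm eq},'' and you have fleshed out precisely the chain of reductions (Proposition~\ref{prop: local renorm eq} to identify the equation with \eqref{eq: renormalised product phi equation}, the correspondence lemma to pass to the remainder $v$, then Theorem~\ref{th:main theorem}) that the paper leaves implicit. Your remark that the qualitative hypothesis $[U^{\one}]_{\gamma}<\infty$ is automatic in the smooth setting is also correct and addresses a genuine point the paper glosses over.
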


The typical application of these results concerns paths $\path$ which are constructed from a Gaussian noise $\xi$. In this case, one typically has  that for a given tree $\tau$ the quantity $[\locprod;\tau]$ is in the (inhomogeneous)
 Wiener chaos of order $\numnoise(\tau)$. In particular, in this case one gets for some $\lambda >0$,
\[
\mathbb{E}[\exp(\lambda[\locprod;\tau]^\frac2{\numnoise(\tau)})]<\infty.
\]
This implies the following corollary:
\begin{corollary}
\label{cor:9-6}
If $\xi$ is a Gaussian noise and the path $\path$ is built from the local renormalisation operator, 
if $v$ is a pointwise solution to the remainder equation driven by a local product $\path$ on $D$, according to Definition~\ref{def: remainder solution} then there exists a constant $\bar{\lambda}$ such that
 \[
\mathbb{E}[\exp(\bar{\lambda}\|v\|_{D_\frac12}^{2\delta})]<\infty.
 \]
\end{corollary}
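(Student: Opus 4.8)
The plan is to deduce Corollary~\ref{cor:9-6} directly from Theorem~\ref{th:main theorem} (in the form of Corollary~\ref{th:main theorem 3}) by combining the deterministic a priori bound with standard Gaussian hypercontractivity estimates. First I would fix $R = \frac12$ in \eqref{eq:mainth}, so that
\[
\|v\|_{D_{1/2}} \leq C \max\Big\{ 2, [\path;\tau]^{\frac{1}{\delta \numnoise(\tau)}} : |\tau| < 0,\ \numnoise(\tau) \neq 0 \Big\}.
\]
Since the set $\{\tau : |\tau| < 0,\ \numnoise(\tau) \neq 0\} \subset \Ww \cup \mNn$ is finite by Lemma~\ref{lemma: set of trees is finite}, the maximum runs over finitely many random variables, and it suffices to bound the exponential moment of each individual term $[\path;\tau]^{1/(\delta \numnoise(\tau))}$ raised to the power $2\delta$, i.e.\ of $[\path;\tau]^{2/\numnoise(\tau)}$, together with the trivial constant term.

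The key probabilistic input, which I would invoke as a known fact about the local product built from the BPHZ-type local renormalisation operator $R$, is that for each tree $\tau$ the quantity $[\path;\tau]$ lives in the inhomogeneous Wiener chaos of order $\numnoise(\tau)$ with finite moments of all orders; this is exactly the content of the probabilistic/stochastic estimates referenced in Remark~\ref{rem: order bound automatic on wild trees} and in the discussion preceding Corollary~\ref{cor:9-6}. By equivalence of $L^p$ norms on a fixed Wiener chaos (Nelson's hypercontractivity / Gaussian equivalence of moments), one has $\mathbb{E}[\,[\path;\tau]^{p}\,]^{1/p} \lesssim p^{\numnoise(\tau)/2} \mathbb{E}[\,[\path;\tau]^{2}\,]^{1/2}$ for all $p \geq 2$, which after raising to the power $2/\numnoise(\tau)$ and summing the resulting series gives a finite $\lambda_\tau$ with $\mathbb{E}[\exp(\lambda_\tau [\path;\tau]^{2/\numnoise(\tau)})] < \infty$. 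Taking $\bar\lambda$ small enough that $C^{2\delta} \bar\lambda \leq \min_\tau \lambda_\tau$ and using $\max_j a_j^{2\delta} \leq \sum_j a_j^{2\delta}$ together with $\exp(\bar\lambda \sum_j x_j) = \prod_j \exp(\bar\lambda x_j)$ and Hölder's inequality (or a union-type bound) over the finitely many trees then yields $\mathbb{E}[\exp(\bar\lambda \|v\|_{D_{1/2}}^{2\delta})] < \infty$, absorbing the deterministic $2$ contribution into the constant.

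The main obstacle is not analytic but rather a matter of correctly quoting and assembling the ingredients: one must be careful that $v$ in the statement is genuinely the remainder of a pointwise solution on $D$ to which Theorem~\ref{th:main theorem} applies (so that the deterministic bound is available path-by-path), and that the local product in question is precisely one for which the Wiener-chaos bounds on $[\path;\tau]$ hold — i.e.\ the BPHZ renormalisation discussed in Section~\ref{sec:useful class of local products}, not an arbitrary local product. Granting those two points, the exponent $2\delta$ in the conclusion is exactly engineered to match the exponent $\frac{1}{\delta\numnoise(\tau)}$ in \eqref{eq:mainth} so that the random quantities reduce to $[\path;\tau]^{2/\numnoise(\tau)}$, which is the natural scale on which a Gaussian chaos of order $\numnoise(\tau)$ has a Gaussian-type (stretched exponential with exponent $2/\numnoise$) tail, hence an exponential moment of $[\path;\tau]^{2/\numnoise(\tau)}$ itself. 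I would present the argument in this order: (i) specialise \eqref{eq:mainth} to $R=\tfrac12$; (ii) reduce to finitely many single-tree estimates; (iii) invoke the Wiener-chaos membership and hypercontractivity to get the single-tree exponential moment; (iv) combine via Hölder to conclude.
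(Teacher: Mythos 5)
Your proposal matches the paper's (very terse) argument: the paper simply notes before the corollary that $[\path;\tau]$ lies in Wiener chaos of order $\numnoise(\tau)$, hence has exponential moments of $[\path;\tau]^{2/\numnoise(\tau)}$, and declares that this together with the deterministic bound from Theorem~\ref{th:main theorem} implies the corollary. You have filled in exactly the implicit steps (specialising to $R=\tfrac12$, reducing to the finitely many trees of Lemma~\ref{lemma: set of trees is finite}, invoking hypercontractivity, and combining via H\"older), so this is the same route in expanded form.
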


\begin{remark}
The results presented here also imply a bound for the corresponding elliptic equation in dimension $6-$, i.e.
\[
\Delta \phi=\phi^3-\xi   \qquad\qquad   x \in \R^6,
\]
where $\xi$ is a $6$-dimensional white noise which is slightly regularized (e.g. by applying $(1-\Delta)^{-\bar{\delta}}$ for an arbitrary $\bar{\delta}>0$).
The four and five dimensional versions of this equation where recently studied in \cite{GH,albeverio2019elliptic}.
Our Corollary~\ref{cor:9-6} can be applied directly, if $\phi$ is viewed as a stationary solution of the parabolic equation (i.e. with $\partial_t \phi=0$). However, 
to treat the elliptic case it would be more natural to define the action of $\locprod$ on $\Ii$ slightly differently in terms of the inverse Laplace, rather than 
the inverse heat operator. Such a change could  be implemented easily.

\end{remark}

\subsection{Proof of Theorem \ref{th:main theorem} }
\label{ss:MP}
We start the proof by specifying further the norms that we are using.
We often work with norms which only depend on the behaviour of functions / distributions on a fixed subset of time-space: if $B\subset\mathbb{R}\times\mathbb{R}^d$ is a bounded set, then the addition of a subscript $B$ such as $[U^\tau]_{\alpha,B}$ means that the correspnding supremum is restricted to variables in $ B$. The use of a third index $r$ as in $[U^\tau]_{\alpha,B,r}$ indicates that the supremum is restricted to $z$ and $\bar{z}$ at distance at most $r$ from each other.
Similarly for a function of two variable, $\|\cdot\|_{B,r}$ is the norm restricted to $z,\bar{z}\in B$ with $d(z,\bar{z})\leq r$.

We remind the reader that the notation $\tau$  always refers to an unplanted tree.
In particular, sums indexed by $|\tau|\in J$ for $J$ an interval only refer to unplanted trees of that order. Planted trees will be explicitly denoted $\Ii(\tau)$.

 We recall the remainder equation:
\begin{align*}
\heat v=-v^3-3\sum_{w\in\Ww}\Upsilon(w)v\circ_\path v\circ_\path \path_\bullet\Ii(w)\\
-3\sum_{w_1,w_2\in\Ww}\Upsilon(w_1)\Upsilon(w_2)v\circ_\path\path_\bullet\Ii(w_1)\circ_\path\path_\bullet\Ii(w_2)+\sum_{\tau\in \partial\Ww}\Upsilon(\tau)\path_\bullet\tau.
\end{align*}
Here we have introduced $\partial\Ww=\{\tau\in\Nn,\tau=\Ii(w_1)\Ii(w_2)\Ii(w_3),w_i\in\Ww\}$. Note that the product $v^3$ does not need to be expressed using the renormalised product $\circ_\path$ since $v$ is of positive regularity. All the factors $\Upsilon$ in there are just combinatorial factors $\pm1$ which is why we omitted the subscript variable $y$.

The first thing we do is to convolve this equation with the kernel $\Psi_L$ introduced in Section~\ref{ss:conventions} (see also Appendix~\ref{ss:RL}), 
and we obtain:

\begin{align}\label{eq:convolved}
&\heat v_L=-v_L^3+(v_L^3-(v^3)_L)-3\sum_{w\in\Ww}\Upsilon(w)(v\circ_\path v\circ_\path \path_\bullet\Ii(w))_L\\
&-3\sum_{w_1,w_2\in\Ww}\Upsilon(w_1)\Upsilon(w_2)(v\circ_\path\path_\bullet\Ii(w_1)\circ_\path\path_\bullet\Ii(w_2))_L-\sum_{\tau\in \partial\Ww}\Upsilon(\tau)(\path_\bullet\tau)_L.\notag
\end{align}

We are going to use the following maximum principle on this equation.
\begin{lemma}\label{lem_max}
Let $u$ be a continuous function defined on $\bar{D}$, for which the following holds point-wise in $(0,1]\times (-1,1)^d$:
\begin{equation}\label{eq:max rd 1}
(\partial_t-\Delta)u=-u^3+g,
\end{equation}
where $g$ is a bounded function.
We have the following point-wise bound on $u$, for all $(t,x)\in(0,1]\times (-1,1)^d$:
\begin{equation}\label{eq:max theorem 1}
| u(x,t) | \leqslant C\max\Big\{\frac1{\min\{\sqrt{t},(1-x_i),(1+x_i),i=1,2,3\}},\|g\|^\frac13\Big\},
\end{equation}
for some independent constant $C$.
\end{lemma}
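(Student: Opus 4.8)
\textbf{Proof strategy for Lemma~\ref{lem_max}.}

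The plan is to obtain the bound \eqref{eq:max theorem 1} by constructing an explicit supersolution (and, by symmetry, a subsolution) to the reaction--diffusion equation \eqref{eq:max rd 1} on the parabolic cylinder, and then invoke the comparison principle for the operator $\partial_t - \Delta + (\text{monotone reaction})$. The point is that the cubic damping term $-u^3$ is so strong that a solution cannot be large near an interior point $(t,x)$ regardless of its boundary data on the parabolic boundary; this is exactly the mechanism that makes the a priori bound independent of boundary values. First I would reduce to proving the one-sided bound $u(t,x) \le C \max\{1/\mathfrak d(t,x), \|g\|^{1/3}\}$, where $\mathfrak d(t,x) := \min\{\sqrt t, 1-x_i, 1+x_i\}$ is the parabolic distance to the parabolic boundary of $(0,1]\times(-1,1)^d$; the lower bound on $u$ follows by applying the same argument to $-u$, which solves the same type of equation with $g$ replaced by $-g$ (using that $x \mapsto -x^3$ is odd).

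The core construction is as follows. Fix the target point $z_0=(t_0,x_0)$ and set $\rho := \mathfrak d(z_0)$. I would look for a comparison function of the form $w(t,x) = A\,(\text{something that blows up on } \partial)$, e.g.\ a function depending only on $\mathfrak d$, or more concretely a product/sum of one-dimensional profiles $\psi(t) + \sum_i \chi(1-x_i) + \sum_i \chi(1+x_i)$ where $\psi$ and $\chi$ are chosen so that $w \to +\infty$ on the parabolic boundary, $w \ge \|g\|^{1/3}$ everywhere, and $(\partial_t - \Delta) w \ge -w^3 + \|g\|$ pointwise in the interior. The scaling $-u^3$ versus the second-order operator forces the natural barrier to decay like (parabolic distance)$^{-1}$: if $w \sim c\,\delta^{-1}$ near a piece of the boundary at distance $\delta$, then $\Delta w \sim c\,\delta^{-3}$ while $w^3 \sim c^3 \delta^{-3}$, so choosing $c$ large enough (depending only on $d$) makes the cubic term dominate and gives a genuine supersolution. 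The additive constant $\|g\|^{1/3}$ handles the inhomogeneity since then $w^3 \ge \|g\|$. Once such a $w$ is built with $w \ge u$ on the parabolic boundary (automatic, as $w=+\infty$ there while $u$ is continuous hence bounded on $\bar D$) and $(\partial_t-\Delta+ \text{cubic})$ ordered correctly, the comparison principle yields $u \le w$ throughout, and evaluating at $z_0$ gives $u(z_0) \le w(z_0) \lesssim \max\{\rho^{-1}, \|g\|^{1/3}\}$, which is \eqref{eq:max theorem 1}.

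For the comparison principle itself I would argue directly: if $u - w$ attained a positive interior maximum at some point, then at that point $\partial_t(u-w) \ge 0$, $\Delta(u-w)\le 0$, and $u > w$, so
\[
0 \le (\partial_t - \Delta)(u-w) = -u^3 + g - (\partial_t-\Delta)w \le -u^3 + w^3 + \big(g + w^3 - (\partial_t-\Delta)w\big) \le -u^3 + w^3 < 0,
\]
using the supersolution inequality and monotonicity of $x\mapsto x^3$, a contradiction; a standard regularization (subtracting $\varepsilon/(T-t)$ or $\varepsilon/t$, or working on slightly smaller cylinders and letting the enlargement shrink) takes care of the boundary of the time interval and the possibility that the supremum is only approached. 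The main obstacle — really the only nontrivial point — is pinning down the exact form of the one-dimensional barrier profiles $\psi,\chi$ and verifying the differential inequality $(\partial_t-\Delta)w \ge -w^3+\|g\|$ with a constant depending only on the dimension: one has to make sure the sum of several boundary profiles still satisfies the inequality (the cubic of a sum dominates the sum of cubes for nonnegative terms, which helps), and that the time-direction profile, which only has a first-order $\partial_t$ rather than a second derivative, is handled by the $\psi\sim \sqrt t^{-1}$ scaling so that $\partial_t\psi \sim -t^{-3/2}$ is controlled by $\psi^3 \sim t^{-3/2}$ up to constants. This is a routine but slightly delicate ODE/inequality check; everything else is the standard maximum principle.
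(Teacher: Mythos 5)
The paper does not reprove this lemma: it simply cites \cite[Lemma~2.7]{2018arXiv181105764M}, where the result is established by exactly the kind of barrier/comparison argument you describe. Your proposal is therefore essentially the same proof. The construction goes through cleanly with the explicit profiles you indicate: taking $w(t,x)=\|g\|^{1/3}+c\,t^{-1/2}+c\sum_{i}(1-x_{i})^{-1}+c\sum_{i}(1+x_{i})^{-1}$ with $c=\sqrt{2}$, one checks $(\partial_t-\Delta)w+w^{3}\geq\|g\|$ termwise since $(\sum a_j)^{3}\geq\sum a_j^{3}$ for nonnegative $a_j$, and the one-dimensional inequalities reduce to $c^{3}\geq 2c$ (spatial walls) and $c^{3}\geq c/2$ (time direction), which hold; since $w=+\infty$ on the parabolic boundary while $u$ is bounded on $\bar D$, the parabolic maximum principle applied to $u-w$ gives $u\leq w$, and $w(t_0,x_0)\lesssim\max\{\mathfrak d(t_0,x_0)^{-1},\|g\|^{1/3}\}$; the lower bound on $u$ follows from applying the same argument to $-u$. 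One small slip: in your displayed chain of inequalities for $(\partial_t-\Delta)(u-w)$ the parenthetical should read $g-w^{3}-(\partial_t-\Delta)w$ rather than $g+w^{3}-(\partial_t-\Delta)w$ so that the supersolution inequality $(\partial_t-\Delta)w\geq -w^{3}+\|g\|$ makes it $\leq 0$; this is clearly a typo, as the surrounding text shows you have the right bound in mind.
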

This lemma is taken directly from \cite[Lemma 2.7]{2018arXiv181105764M}, 
and the proof can be found in there.

To apply this lemma, we will therefore need bounds on the commutator, which is easy enough:
\[|(v^3)_L-(v_L)^3|\leqslant \|v\|^2[v]_\alpha L^\alpha,\]
as well as quantities of the type:
\[
(\Y\tau)_L(x)\text{ where }\tau\in\partial\Ww,
\]
which is bounded by $[\tau]_{|\tau|}L^{|\tau|}\leq c\|v\|_{D_d}^{\delta \numnoise(\tau)}L^{-3+\delta \numnoise(\tau)}$, in view of Assumption~\ref{assumption} and \eqref{formula_hom}. The following two products will require more work:
\[
(v\circ_\path v\circ_\path \path_\bullet\Ii(w))_L\text{ with }w\in\Ww\]
and
\[(v\circ_\path\path_\bullet\Ii(w_1)\circ_\path\path_\bullet\Ii(w_2))_L\text{ with }w_1,w_2\in\Ww\]
Since the path $\Y$ is built such that two symmetric trees have the same image, studying those cases will be enough to be exhaustive.

We formulate the following assumption, for some $1>c>0$ to be fixed later in \eqref{schauder final} and \eqref{eq:apply_max2}.
\begin{assumption}\label{assumption}
For all $\tau\in\Tt$ such that $\numnoise(\tau)\neq 0$, 
\begin{equation}
[\path;\tau]\leq c\|v\|_D^{\delta \numnoise(\tau)}.
\end{equation}
\end{assumption}

\begin{remark}
Note that it is not necessary to do the proof of the main theorem by contradiction with this Assumption, but it simplifies greatly the computations and allows to write everything in powers of $\|v\|$. Alternatively a proof can be made by keeping all norms of trees in the computation, but that becomes very messy fast.
\end{remark}
Under Assumption~\ref{assumption}, we have by Lemma~\ref{lem:OB} the following lemma:
\begin{lemma}
For all $\tau\in\Tt$ such that $\numnoise(\tau)\neq 0$,  
\begin{equation}
[\path;\Ii(\tau)]\lesssim c\|v\|_D^{\delta \numnoise(\tau)},
\end{equation}
\end{lemma}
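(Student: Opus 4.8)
The statement to prove is immediate given what has been set up. The plan is to simply feed Assumption~\ref{assumption} into the recursive bounds of Lemma~\ref{lem:OB}, and argue by induction on the size (number of edges, or equivalently number of leaves) of the tree $\tau$.

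\textbf{Approach.} Fix $\tau \in \Tt$ with $\numnoise(\tau) \neq 0$. By Lemma~\ref{lem:OB} we have, up to a multiplicative constant depending only on the combinatorics,
\[
[\path;\Ii(\tau)]
\lesssim
[\path;\tau]
+
\max_{\bar{\tau} < \tau}
[\path;\bar{\tau}]\,
[\path;C_+(\bar{\tau},\tau)],
\]
(the first term alone for $\tau \in \Ww$). The first summand is controlled directly by Assumption~\ref{assumption}: $[\path;\tau]\leq c\|v\|_D^{\delta\numnoise(\tau)}$. For the second summand, fix $\bar{\tau} < \tau$ with $C_+(\bar{\tau},\tau) \neq 0$ and write $C_+(\bar{\tau},\tau) = \sigma_1 \cdots \sigma_n$ as a forest of planted trees, so that $[\path;C_+(\bar{\tau},\tau)] = \prod_{j=1}^n [\path;\sigma_j]$. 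Each $\sigma_j$ is a planted tree $\Ii(\tau_j)$ or $\Ii^+_i(\tau_j)$ with $\tau_j \subsetneq \tau$ by \eqref{prop_C}; in particular each $\tau_j$ has strictly fewer edges than $\tau$, so the inductive hypothesis applies to it, giving $[\path;\Ii(\tau_j)] \lesssim c\|v\|_D^{\delta\numnoise(\tau_j)}$ when $\numnoise(\tau_j)\neq 0$ (and $[\path;\sigma_j] \lesssim 1$ uniformly when $\numnoise(\tau_j)=0$, by Remark~\ref{rem: order bound automatic on wild trees}, since then $\sigma_j \in \Ii(\Pp)\cup\{\Ii^+_i(\x_i)\}$ as a tree with no noise leaves and only polynomial/unit leaves that is planted of non-negative order). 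Similarly $[\path;\bar{\tau}] \leq c\|v\|_D^{\delta\numnoise(\bar{\tau})}$ if $\numnoise(\bar{\tau})\neq 0$, or $\lesssim 1$ if $\numnoise(\bar{\tau})=0$.

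\textbf{Matching the exponent.} It remains to check that all these powers of $\|v\|_D^\delta$ combine to exactly $\numnoise(\tau)$. This is precisely the first identity in \eqref{prop_C.5}: $\numnoise(\tau) = \numnoise(\bar{\tau}) + \numnoise(C_+(\bar{\tau},\tau))$, where $\numnoise$ of a forest is the sum over its planted trees. Hence
\[
[\path;\bar{\tau}]\,[\path;C_+(\bar{\tau},\tau)]
\lesssim
c\,\|v\|_D^{\delta\numnoise(\bar{\tau})}\,\prod_{j=1}^n \|v\|_D^{\delta\numnoise(\tau_j)}
=
c\,\|v\|_D^{\delta\numnoise(\tau)},
\]
where I have used $c \le 1$ so that a product of such factors is still bounded by a single factor of $c$ (strictly, the constant absorbs any bookkeeping about how many factors carry a $c$; since $c<1$ the bound $c^k \le c$ holds for $k\ge 1$, and in the worst case $\numnoise(\tau)=0$ for every factor except one it is still $\lesssim c$). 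The same identity \eqref{prop_C.5} handles the analogous estimate \eqref{eq: iplus order bound} for $\Ii^+_i(\tau)$ trees when $\tau\in\tNn$. Since $\tau$ has at least one noise leaf, at least one factor on the right carries a genuine power $c\|v\|_D^{\delta\numnoise(\cdot)}$ with positive exponent, and the induction closes.

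\textbf{Main obstacle.} There is no real obstacle here — the statement is essentially a corollary of Lemma~\ref{lem:OB} and \eqref{prop_C.5}. The only point requiring a little care is the bookkeeping of the constant $c$ versus the implicit constant in $\lesssim$: one must be careful that the forest $C_+(\bar{\tau},\tau)$ may contain several factors, each contributing its own $c$, but since $c<1$ and the number of factors in any such forest is bounded (the sets of trees are finite by Lemma~\ref{lemma: set of trees is finite}), the product of constants is harmless and can be absorbed, and one still ends up with a single overall factor of $c$ times a constant. The base case of the induction is a tree with three edges, for which $\tau$ is one of the permutations of $\Ii(\Xi)^2\Ii(\one)$, $\Ii(\Xi)^3$, etc., and the claim follows directly from Assumption~\ref{assumption} together with the first line of \eqref{eq: integration order bound}.
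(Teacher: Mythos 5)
Your proof is correct and follows essentially the same approach as the paper's: feed Assumption~\ref{assumption} into the recursive bound of Lemma~\ref{lem:OB}, observe that the first identity in \eqref{prop_C.5} makes the exponents add up, and close by induction (the paper inducts on $\numnoise(\tau)$ rather than on the number of edges, but the mechanism is identical). Your write-up is more explicit about the edge cases (factors with no noise leaves, the $\Ii_i^+$ trees, the bookkeeping of $c$ vs.\ the implicit constants), which the paper's two-line proof glosses over.
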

\begin{proof}
For $\tau\in \Ww$, it is immediate. \\
For $\tau\in\mNn$, one simply has to notice that for $\bar{\tau}\leq\tau$, $\numnoise(\tau)=\numnoise(\bar{\tau})+\numnoise(C_+(\bar{\tau},\tau))$ and then use induction on $\numnoise(\tau)$ to bound $[\path;C_+(\bar{\tau},\tau)]$.
\end{proof}

With this set-up, we manage to prove the following lemmas, which hold for any domain $D$, uniformly over $x\in D$.

\begin{lemma}\label{linear bound}
For $w_1,w_2\in\Ww$, there exists an $\epsilon>0$ such that for $J=[-2,-6-|w_1|-|w_2|+\epsilon)$, 
 under the Assumption~\ref{assumption},  
\begin{align}\label{e-linbound1}
\Big|(v\circ_\path&\path_\bullet\Ii(w_1)\circ_\path\path_\bullet\Ii(w_2))_L(x)-
\sum_{|\tau|\in J}\Big(\Upsilon_x(\tau)\path_{\bullet,x}(\Ii(\tau)\Ii(w_1)\Ii(w_2))\Big)_L(x)\Big|\notag\\
&\les c\sum_{|\tau|\in J}\|v\|_{D}^{\delta \numnoise(\tau+w_1+w_2)}[U^\tau]_{-6-|w_1|-|w_2|-|\tau|+\epsilon}L^\epsilon.
\end{align}
We also have, for $|\tau|\in J$,
\begin{align}
\label{e-linbound2}
\Big|\Upsilon_x(\tau)\Big(&\path_{\bullet,x}(\Ii(\tau)\Ii(w_1)\Ii(w_2))\Big)_L(x)\Big|\\
&\leq c\|v\|_{D}^{\numone(\tau)+\delta \numnoise(\tau+w_1+w_2)}\|v_\x\|_{D_d}^{\numpoly(\tau)}L^{6+|\tau|+|w_1|+|w_2|}\notag.
\end{align}
\end{lemma}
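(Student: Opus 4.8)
The plan is to recognise the left--hand side of \eqref{e-linbound1} as a reconstruction error and to invoke the Reconstruction Lemma~\ref{Reconstruction}, while \eqref{e-linbound2} will be an immediate consequence of the order bound. First I would unfold Definition~\ref{def: renormalised product} with the tree expansions $V(z)=\sum_{\tau\in\Nn}\Upsilon_z(\tau)\Ii(\tau)$, $\Theta^{(2)}(z)=\Ii(w_1)$, $\Theta^{(3)}(z)=\Ii(w_2)$, which gives
\[
(v\circ_\path\path_\bullet\Ii(w_1)\circ_\path\path_\bullet\Ii(w_2))(z)=\sum_{\tau}\Upsilon_z(\tau)\,\path_{z,z}\big(\Ii(\tau)\Ii(w_1)\Ii(w_2)\big),
\]
the sum running over $\tau\in\Nn$ with $\Ii(\tau)\Ii(w_1)\Ii(w_2)\in\mNn\cup\mWw$. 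Since $|\Ii(\tau)\Ii(w_1)\Ii(w_2)|=6+|\tau|+|w_1|+|w_2|$, this condition reads $-2\leq|\tau|\leq -6-|w_1|-|w_2|$; because the set of tree orders is finite (Lemma~\ref{lemma: set of trees is finite}) I would fix $\epsilon>0$ so small that no tree order lies in $(-6-|w_1|-|w_2|,-6-|w_1|-|w_2|+\epsilon)$ or in $(0,\epsilon)$, so that this index set is exactly $\{|\tau|\in J\}$ with $J=[-2,-6-|w_1|-|w_2|+\epsilon)$ and, setting $\gamma:=-4-|w_1|-|w_2|+\epsilon$, one has $\{\bar\tau\in\Nn:|\bar\tau|<\gamma-2\}=\{\bar\tau\in\Nn:|\bar\tau|\in J\}$.

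In the smooth setting this function is the diagonal of, hence the natural reconstruction of, the germ $z\mapsto F_z:=\sum_{|\tau|\in J}\Upsilon_z(\tau)\,\path_{\bullet,z}(\Ii(\tau)\Ii(w_1)\Ii(w_2))$, and the subtracted term $\sum_{|\tau|\in J}\Upsilon_x(\tau)\big(\path_{\bullet,x}(\Ii(\tau)\Ii(w_1)\Ii(w_2))\big)_L(x)$ is exactly the convolution against $\Psi_L$ of the jet $F$ frozen at the base-point $x$; thus the left--hand side of \eqref{e-linbound1} is precisely the reconstruction error of $F$ at scale $L$ and base-point $x$. To apply Lemma~\ref{Reconstruction} I would check its hypotheses. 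The order bound on the model is $[\path;\Ii(\tau)\Ii(w_1)\Ii(w_2)]$ (Definition~\ref{def:seminorm}, as $\Ii(\tau)\Ii(w_1)\Ii(w_2)\in\Tr$), which under Assumption~\ref{assumption} is $\leq c\|v\|_D^{\delta\numnoise(\tau+w_1+w_2)}$, since $\numnoise(\Ii(\tau)\Ii(w_1)\Ii(w_2))=\numnoise(\tau)+\numnoise(w_1)+\numnoise(w_2)\geq 2$ (every $w\in\Ww$ contains a noise leaf) and $c<1$. For the coherence (three--point) estimate I would expand, for base-points $y$ and $x$, the quantity $\path_{\bullet,x}(\Ii(\bar\tau)\Ii(w_1)\Ii(w_2))$ by Chen's relation (Proposition~\ref{prop:Chen}), using that $\Delta\Ii(w_i)=\Ii(w_i)\otimes 1$ by \eqref{eq:explicit_formula_for_I_tree} (so the factors $\Ii(w_1),\Ii(w_2)$ are inert) together with $\Delta\Ii(\bar\tau)=\sum_\tau\Ii(\tau)\otimes C_+(\tau,\bar\tau)$ (Lemma~\ref{lem:coproduct}); collecting the coefficient of each level $\Ii(\tau)\Ii(w_1)\Ii(w_2)$ one obtains precisely $U^\tau_{\gamma-2}(y,x)$ of Definition~\ref{def:Uandseminorm} (this is the product analogue of Lemma~\ref{lemma 3pt}). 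Lemma~\ref{Reconstruction}, applied with reconstruction regularity $\max_{|\tau|\in J}|\Ii(\tau)\Ii(w_1)\Ii(w_2)|+\epsilon=\epsilon$, then bounds the error by $\les\sum_{|\tau|\in J}[\path;\Ii(\tau)\Ii(w_1)\Ii(w_2)]\,[U^\tau]_{-6-|w_1|-|w_2|-|\tau|+\epsilon}\,L^\epsilon$, and inserting the order bound above gives \eqref{e-linbound1}.

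For \eqref{e-linbound2} I would argue directly: by Lemma~\ref{Upsilon-Lemma}, $|\Upsilon_x(\tau)|=|v(x)|^{\numone(\tau)}\prod_i|v_{\x_i}(x)|^{\numpolyi(\tau)}\leq\|v\|_D^{\numone(\tau)}\|v_\x\|_{D_d}^{\numpoly(\tau)}$, while Definition~\ref{def:seminorm} gives $|(\path_{\bullet,x}(\Ii(\tau)\Ii(w_1)\Ii(w_2)))_L(x)|\leq[\path;\Ii(\tau)\Ii(w_1)\Ii(w_2)]\,L^{6+|\tau|+|w_1|+|w_2|}$, which by Assumption~\ref{assumption} is $\leq c\|v\|_D^{\delta\numnoise(\tau+w_1+w_2)}L^{6+|\tau|+|w_1|+|w_2|}$; multiplying the two estimates yields \eqref{e-linbound2}.

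The main obstacle is the coherence step: one has to carry out the Chen's--relation bookkeeping carefully enough to identify the defect of the product germ $F$ exactly with the $U^\tau$ seminorms with the indices appearing in \eqref{e-linbound1}, keeping track of the fact that the inert planted trees $\Ii(w_1),\Ii(w_2)$ merely shift all homogeneities by the fixed amount $|w_1|+|w_2|+4<0$; checking that the reconstruction regularity equals $\epsilon$ and that $\epsilon$ can be chosen to avoid the finitely many relevant tree orders are the remaining technical points.
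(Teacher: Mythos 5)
Your proposal is correct and follows essentially the same route as the paper's proof: you identify the left-hand side of \eqref{e-linbound1} as a reconstruction error for the germ $F(y,x)=\sum_{|\tau|\in J}\Upsilon_x(\tau)\path_{y,x}(\Ii(\tau)\Ii(w_1)\Ii(w_2))$, use multiplicativity of $\Delta$ (with $\Ii(w_i)\otimes 1$ inert) together with Chen's relation to express $F(y,x_1)-F(y,x_2)$ in terms of $U^\tau$ and then invoke Lemma~\ref{Reconstruction}, while \eqref{e-linbound2} is the direct order-bound estimate. The choice of $\epsilon$ via Assumption~\ref{assump: orders away from integers} / Remark~\ref{rem:noninteger}, the identification of the reconstruction level $\gamma_\beta=\epsilon$, and the use of Assumption~\ref{assumption} to produce the powers of $\|v\|_D$ all match the paper's argument.
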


In this lemma we extend the notation $\numnoise$ to sums of trees linearly. The functions $\numone, \numpoly$ and $\num$ will also be extended similarly later.

\begin{lemma}\label{quadratic bound}
For $w\in\Ww$, there exists $\epsilon>0$ such that for $\tilde{J}=\{(a,b)\in [-2,-1]^2, a+b<-6-|w|+\epsilon\}$, under the Assumption~\ref{assumption}, 
\begin{align}\label{eq:rev-v2}
\Big|(v&\circ_\path v\circ_\path \path_\bullet\Ii(w))_L(x\notag)\\
&-\sum_{(|\tau_1|,|\tau_2|)\in \tilde{J}}\Upsilon_x(\tau_1)\Upsilon_x(\tau_2)\Big(\path_{\bullet,x}(\Ii(\tau_1)\Ii(\tau_2)\Ii(w))\Big)_L(x)\Big|\\
&\les c\sum_{(|\tau_1|,|\tau_2|)\in \tilde{J}}\|v\|_D^{\delta \numnoise(\tau_1+\tau_2+w)}[U^{\Ii(\tau_1)\Ii(\tau_2)\Ii(\Xi)}]_{-6-|w|-|\tau_1|-|\tau_2|+\epsilon}L^\epsilon.\notag
\end{align}
We also have, for $(|\tau_1|,|\tau_2|)\in \tilde{J}$ 
\begin{align}\label{eq:rev-v2.2}
\Big|\Upsilon_x(\tau_1)\Upsilon_x(\tau_2)&\Big(\path_{\bullet,x}(\Ii(\tau_1)\Ii(\tau_2)\Ii(w))\Big)_L(x)\Big|\\
&\leqslant c\|v\|_{D}^{\numone(\tau_1+\tau_2)+\delta \numnoise(\tau_1+\tau_2+w)}L^{6+|\tau_1|+|\tau_2|+|w|}.\notag
\end{align}
\end{lemma}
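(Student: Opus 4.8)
The plan is to prove Lemma~\ref{quadratic bound} by unfolding the definition of the renormalised product $v \circ_\path v \circ_\path \path_\bullet \Ii(w)$ from Definition~\ref{def: renormalised product}, re-centering around the base-point $x$, and then applying the Reconstruction Theorem (Lemma~\ref{Reconstruction}) together with the order bounds and the continuity estimates on the coefficients $U^\tau$ encoded in Theorem~\ref{theorem one zero}. The key structural input is that $\Upsilon_x$ is \emph{multiplicative} over the tree product up to signs (Lemma~\ref{lemma: self-similarity of coherence}), so that the double contraction $v \circ_\path v \circ_\path \path_\bullet \Ii(w)$ at a point $z$ equals $\sum_{\tau_1,\tau_2 \in \Nn} \Upsilon_z(\tau_1)\Upsilon_z(\tau_2)\path_{z,z}(\Ii(\tau_1)\Ii(\tau_2)\Ii(w))$; the terms with $|\tau_i| > -1$ or with $\tau_i \in \Pp \setminus \{\one\}$ either vanish by the order bound evaluated on the diagonal or are excluded by the truncation convention on tree products at the end of Section~\ref{ss:order}. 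Thus only finitely many $(\tau_1,\tau_2)$ with $(|\tau_1|,|\tau_2|) \in [-2,-1]^2$ contribute, and one wishes to show that after subtracting the explicit $x$-centered ``jet'' $\sum_{(|\tau_1|,|\tau_2|)\in\tilde J}\Upsilon_x(\tau_1)\Upsilon_x(\tau_2)\path_{\bullet,x}(\Ii(\tau_1)\Ii(\tau_2)\Ii(w))$ the remainder is of positive order $\epsilon$.

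First I would fix a base-point $x$ and write, for $z$ near $x$,
\[
(v \circ_\path v \circ_\path \path_\bullet \Ii(w))(z)
=
\sum_{\tau_1,\tau_2 \in \Nn} \Upsilon_z(\tau_1)\Upsilon_z(\tau_2)\,\path_{z,z}\big(\Ii(\tau_1)\Ii(\tau_2)\Ii(w)\big).
\]
The heart of the matter is to compare the coefficient $\Upsilon_z(\tau_1)\Upsilon_z(\tau_2)$ in front of each product tree $\sigma = \Ii(\tau_1)\Ii(\tau_2)\Ii(w)$ with its $x$-centered replacement obtained via $\path_{z,x}\sigma = (\path_{z,y}\otimes\path_{y,x})\Delta\sigma$ and the expansion of $\Delta$ over $C_+$. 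Using Theorem~\ref{theorem one zero}, which identifies $U^\tau_\gamma(z,x)$ (for $\tau$ with $\numone(\tau)=2$) with $(-1)^{\frac{\num(\tau)-1}{2}}(v_\one(z)^2 - V^2_{\gamma-|\tau|}(z,x))$, together with the multiplication identity \eqref{eq:relV2} relating $V^2$ to $V$, the difference between the true product and the centered jet is organized into a sum of terms of the shape $U^{\Ii(\tau_1)\Ii(\tau_2)\Ii(\Xi)}_{\gamma-2}(z,x)\,\path_{z,x}(\text{a product tree of matching degree})$. This is exactly the structure handled by Reconstruction: applying $(\cdot)_L$ and evaluating at $x$, the order bound \eqref{equ: order of order seminorm} gives $|(\path_{\bullet,x}\sigma)_L(x)| \lesssim [\path;\sigma] L^{|\sigma|}$ with $|\sigma| = 6 + |\tau_1| + |\tau_2| + |w|$, and the coefficient seminorm $[U^{\Ii(\tau_1)\Ii(\tau_2)\Ii(\Xi)}]$ contributes the remaining regularity so that the total exponent of $L$ is $\ge \epsilon$ for some $\epsilon > 0$. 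The factor $[\path;\sigma]$ is then bounded via Lemma~\ref{lem:OB} and Assumption~\ref{assumption} by $c\|v\|_D^{\delta \numnoise(\sigma)}$, and $\numnoise(\sigma) = \numnoise(\tau_1) + \numnoise(\tau_2) + \numnoise(w)$ by additivity, giving the claimed power $\|v\|_D^{\delta\numnoise(\tau_1+\tau_2+w)}$ in \eqref{eq:rev-v2}.

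For the second estimate \eqref{eq:rev-v2.2}, I would argue more directly: for each $(\tau_1,\tau_2)$ with $(|\tau_1|,|\tau_2|) \in \tilde J$, bound $|\Upsilon_x(\tau_1)\Upsilon_x(\tau_2)|$ using Lemma~\ref{Upsilon-Lemma}, which expresses it as $\pm v_\one(x)^{\numone(\tau_1)+\numone(\tau_2)}\prod_i v_{\X_i}(x)^{\numpolyi(\tau_1+\tau_2)}$, hence by $\|v\|_D^{\numone(\tau_1+\tau_2)}$ times the appropriate power of $\|v_\X\|$; then bound $|(\path_{\bullet,x}(\Ii(\tau_1)\Ii(\tau_2)\Ii(w)))_L(x)|$ by $[\path;\Ii(\tau_1)\Ii(\tau_2)\Ii(w)] L^{6+|\tau_1|+|\tau_2|+|w|}$ using the order-bound seminorm, and finally control that seminorm by $c\|v\|_D^{\delta\numnoise(\tau_1+\tau_2+w)}$ via Lemma~\ref{lem:OB} (or its corollary for products of trees) and Assumption~\ref{assumption}. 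Since $\numpoly(w) = 0$ for $w \in \Ww$, the power of $\|v_\X\|$ only involves $\numpoly(\tau_1+\tau_2)$, matching the statement.

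The main obstacle I anticipate is the bookkeeping in the first step: one must verify carefully that when $\path_{z,z}\sigma$ is re-centered at $x$ via the coproduct, the terms produced by $\Delta$ acting on the three planted factors of $\sigma$ recombine, after using the coherence relation \eqref{eq: coherence relation for upsilon +} and the multiplication/truncation identity \eqref{eq:relV2}, precisely into a jet plus a Reconstruction-type remainder indexed by $U^{\Ii(\tau_1)\Ii(\tau_2)\Ii(\Xi)}$ rather than a more complicated family of coefficient seminorms — this is the place where the ``single continuity condition'' philosophy of Section~\ref{ss:2-8b} and Theorem~\ref{theorem one zero} does the real work. A secondary technical point is ensuring the choice of $\epsilon > 0$ can be made uniform over the finitely many pairs $(\tau_1,\tau_2) \in \tilde J$: since $\tilde J$ is defined by the strict inequality $|\tau_1| + |\tau_2| < -6 - |w| + \epsilon$, one should first fix $\epsilon$ small enough (using Assumption~\ref{assump: orders away from integers} to keep orders away from the boundary) so that $\tilde J$ does not change, and then all the gained regularities $-6 - |w| - |\tau_1| - |\tau_2| + \epsilon$ appearing as exponents of $L$ are strictly positive.
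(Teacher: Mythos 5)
Your overall strategy is the right one: unfold the renormalised product via Definition~\ref{def: renormalised product}, define $F(y,x) = \sum_{(|\tau_1|,|\tau_2|)\in\tilde J}\Upsilon_x(\tau_1)\Upsilon_x(\tau_2)\path_{y,x}(\Ii(\tau_1)\Ii(\tau_2)\Ii(w))$, apply Chen's relation and the Reconstruction Lemma~\ref{Reconstruction}, and finish with the order bound under Assumption~\ref{assumption}. For the second estimate \eqref{eq:rev-v2.2} your direct bound via Lemma~\ref{Upsilon-Lemma} and the order bound is also correct.

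However, the central algebraic step is misattributed. You invoke Theorem~\ref{theorem one zero} and the multiplication identity \eqref{eq:relV2} to ``organize'' the coefficient discrepancy into $U^{\Ii(\tau_1)\Ii(\tau_2)\Ii(\Xi)}$. Neither is what actually does the work here, and indeed Theorem~\ref{theorem one zero} goes in the opposite direction: it re-expresses a \emph{given} $U^\tau$ in terms of $V,V^2,V^{(i)}$, which is used later (in Lemma~\ref{lem:hol-relation} and the Schauder step) to bound the seminorm $[U^{\Ii(\tau_1)\Ii(\tau_2)\Ii(\Xi)}]$, not to \emph{produce} the identity needed for Reconstruction. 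The step that is actually needed is the following direct computation: after applying the multiplicativity of $\Delta$ over the three planted factors (using $\Delta\Ii(w)=\Ii(w)\otimes 1$ for $w\in\Ww$) and Chen's relation, the difference $F(y,x_1)-F(y,x_2)$ becomes $\sum_{(|\tau_1|,|\tau_2|)\in\tilde J}\path_{y,x_2}(\Ii(\tau_1)\Ii(\tau_2)\Ii(w))$ times the quantity
\begin{align*}
\sum_{(|\bar\tau_1|,|\bar\tau_2|)\in\tilde J}\Upsilon_{x_1}(\bar\tau_1)\Upsilon_{x_1}(\bar\tau_2)\path_{x_2,x_1}\big(C_+(\tau_1,\bar\tau_1)C_+(\tau_2,\bar\tau_2)\big)-\Upsilon_{x_2}(\tau_1)\Upsilon_{x_2}(\tau_2),
\end{align*}
and one identifies this directly with $\pm U^{\Ii(\tau_1)\Ii(\tau_2)\Ii(\Xi)}_{\gamma-2}(x_2,x_1)$ from Definition~\ref{def:Uandseminorm} by inserting the extra factor $\Upsilon(\Xi)=-1$ and $C_+(\Xi,\Xi)=1$ (so that the product of two $\Upsilon$'s becomes $\Upsilon_{x_2}(\Ii(\tau_1)\Ii(\tau_2)\Ii(\Xi))$ via the recursive definition \eqref{eq:recursive_ups}, and the forest $C_+(\tau_1,\bar\tau_1)\cdot C_+(\tau_2,\bar\tau_2)$ becomes $C_+(\Ii(\tau_1)\Ii(\tau_2)\Ii(\Xi),\Ii(\bar\tau_1)\Ii(\bar\tau_2)\Ii(\Xi))$). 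This is a purely algebraic manipulation using the coherence structure of $\Upsilon$, not a consequence of Theorem~\ref{theorem one zero}. A minor point for \eqref{eq:rev-v2.2}: you say the power of $\|v_\X\|$ ``matches the statement,'' but \eqref{eq:rev-v2.2} contains no $\|v_\X\|$ factor; you should observe that the constraint $(|\tau_1|,|\tau_2|)\in[-2,-1]^2$ combined with $|w|>-3+\delta$ forces $\numpoly(\tau_1)=\numpoly(\tau_2)=0$, so no $\|v_\X\|$ appears.
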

In both of these lemmas, the existence of the $\epsilon$ follows from the following remark.
\begin{remark}\label{rem:noninteger}
Our choice of $\delta$ is such that $\Ii(\one)\Ii(\one)\Ii(\one)$ is the only tree of order $0$. Therefore for any non-trivial product, the sum can be indexed over trees $\tau$ of order $|\tau|<\epsilon$, for some $\epsilon>0$. The renormalised product is therefore described up to positive order $\epsilon$.
\end{remark}

Applying the Schauder Lemma yields the following lemma:
\begin{lemma}\label{lemma: apply Schauder}
Under the Assumption~\ref{assumption}, for any $1>d_0>0$ 
we have
\begin{equation}\label{schauder final vx}
\sup_{d\leqslant d_0}d\|v_\x\|_{D_d}\lesssim  \|v\|_D,
\end{equation}
and
\begin{equation}\label{schauder final general}
\sup_{d\leq d_0}d^{\gamma-\beta-|\tau|+\numpoly(\tau)}[U^\tau]_{\gamma-\beta-|\tau|,D_d,d}\lesssim \|v\|_D^{\numone(\tau)+\numpoly(\tau)}.
\end{equation}
\end{lemma}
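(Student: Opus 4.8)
\textbf{Proof strategy for Lemma~\ref{lemma: apply Schauder}.}

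The plan is to close a coupled system of Schauder estimates for the remainder $v$ and the associated modelled-distribution data $U^\tau$, bootstrapping in the order parameter $\gamma$ (equivalently, in the order of the trees appearing in the local expansion). The essential input is that, by Theorem~\ref{theorem one zero}, \emph{every} coefficient semi-norm $[U^\tau]$ is expressed through $v$, $v^2$, $v_{\X}$ and the various truncations $V_\gamma$, $V_\gamma^2$, $V_\gamma^{(i)}$; by Lemma~\ref{lem:relations V V2} these in turn reduce to the single semi-norm $[U^{\one}]_\gamma$ (for $v$) and $[U^{\X_i}]_{\gamma-1}$ (for the derivatives), plus products thereof controlled by Assumption~\ref{assumption} and Lemma~\ref{lem:OB}. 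So it suffices to control $[U^{\one}]_{\gamma-\beta,D_d,d}$ and $\|v_{\X}\|_{D_d}$, and then propagate.

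First I would set up the right variant of the kernel-free Schauder estimate from Appendix~\ref{ss:LSL}. The remainder equation, after isolating $-v^3$, has right-hand side $\heat v = -v^3 + F$, where $F$ is the sum of renormalised products appearing in \eqref{eq: remainder equation}. Using Lemmas~\ref{linear bound} and \ref{quadratic bound} I would bound the ``reconstruction error'' of each renormalised product $\big(v\circ_\path\cdots\big)_L(x)$ against an explicit local polynomial in $\path_{\bullet,x}\tau$ with error $\lesssim c\,\|v\|_D^{\#}[U^\tau]\,L^{\epsilon}$; here the strictly positive gain $L^\epsilon$ comes from Remark~\ref{rem:noninteger} (no integer orders) and is what makes the Reconstruction Lemma (Lemma~\ref{Reconstruction}) applicable. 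Feeding this into the kernel-free Schauder lemma applied to $\heat$ on the parabolic balls $B(x,d)\subset D$, and using the order bounds $[\path;\Ii(\tau)]\lesssim c\|v\|_D^{\delta\numnoise(\tau)}$ together with $\path_{y,x}\Ii(\tau)\lesssim d(x,y)^{|\tau|+2}$, I obtain an estimate of the schematic form
\[
d^{\gamma-\beta-|\tau|+\numpoly(\tau)}[U^\tau]_{\gamma-\beta-|\tau|,D_d,d}
\lesssim
\|v\|_D^{\numone(\tau)+\numpoly(\tau)}
+
c\,\big(\text{higher-order }[U^{\bar\tau}]\text{ terms}\big)+\big(\text{lower-order already-controlled terms}\big).
\]
Choosing $c$ small (as promised in \eqref{schauder final} and \eqref{eq:apply_max2}) absorbs the $c$-multiplied terms on the right into the left, and one closes the induction on $\gamma$ up to $\gamma>2-2\delta$, which is exactly the threshold needed to give a classical meaning to the product $v\,\path_\bullet\<2_black>$. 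The derivative bound \eqref{schauder final vx} is the special case $\tau=\X_i$: by \eqref{def:V'bis} and Lemma~\ref{lemma: derivative of modeled distribution} (together with Definition~\ref{def: vX map}), $v_{\X_i}$ is read off from $\partial_i$ of the reconstruction of $v$, so its sup-norm on $D_d$ is controlled by $d^{-1}\|v\|_D$ via the same Schauder input applied at regularity between $1$ and $2$; this is exactly \eqref{schauder final vx}.

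The main obstacle is the bootstrapping bookkeeping: the system is genuinely coupled because $[U^{\X_i}]$ enters the bounds for $[U^{\one}]$ at higher order and vice versa, and because the number of trees diverges as $\delta\downarrow0$, one must verify that the induction on $\gamma$ (or on $\numnoise(\tau)$) terminates after finitely many steps with a \emph{uniform} constant. The key structural facts that make this work are (i) finiteness of $\Ww\cup\Nn$ (Lemma~\ref{lemma: set of trees is finite}), (ii) the additivity of order under the tree product, which guarantees each renormalised product only sees trees of strictly larger (less negative) order than the product itself, so the recursion is well-founded, and (iii) the coherence identity \eqref{eq: coherence relation for upsilon +}, which keeps all coefficients in the list $\{1,v,v^2,v_{\X}\}$ so that no new unknowns are generated. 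Once these are in place the argument is a finite, if tedious, descending induction; the remaining care is purely in tracking the exponents $\numone(\tau)+\numpoly(\tau)$ and the powers of $\|v\|_D$ so that every term on the right-hand side of \eqref{schauder final general} genuinely carries the claimed homogeneity, and in choosing the single small constant $c$ at the end to absorb all the cross terms simultaneously.
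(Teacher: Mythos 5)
Your overall strategy is aligned with the paper's proof: reduce all the $[U^\tau]$ semi-norms to $[V]_\gamma$, $[V^2]_\gamma$, $[V^{(i)}]_\gamma$ via Theorem~\ref{theorem one zero}, use Lemmas~\ref{linear bound} and \ref{quadratic bound} for the reconstruction errors, apply the localized Schauder lemma to the remainder equation, treat the derivative coefficients $v_{\X}$ via Lemma~\ref{corschauder}, and close the resulting coupled system by taking $c$ small. So the blueprint is correct.

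However, you gloss over the single mechanism that actually makes the absorption work, and as written your proposal would fail at the closing step. After feeding \eqref{eq:corshauder-applied} into \eqref{eq:shauderapplied} the right-hand side contains terms of the form
\[
c\,d^{a+\delta \numnoise(\tau)}\,\|v\|_{D}^{a+\delta \numnoise(\tau)}\,\big([V]_{\gamma,D_d}d^{\gamma}+\cdots\big)
\]
with strictly positive exponent $a+\delta\numnoise(\tau)>0$. To absorb the $[V]_{\gamma,D_d}$ occurrences into the left-hand side $d^\gamma [V]_{\gamma,D_d}$ you need the prefactor $d^{a+\delta \numnoise(\tau)}\|v\|_D^{a+\delta\numnoise(\tau)}$ to be $O(1)$, and the same issue reappears when bounding the residual sums (e.g. $d^{-1+\numone(\bar\tau)+\delta\numnoise(\bar\tau)}\|v\|_D^{\numone(\bar\tau)+\delta\numnoise(\bar\tau)}$). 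This forces the crucial choice $d_0 = \|v\|_D^{-1}$ (equation~\eqref{fixd0}); absorption is not possible for arbitrary $d_0\in(0,1)$ and the phrase "for any $1>d_0>0$" in the lemma statement is only usable after this identification. Your proposal never mentions this, and "choose $c$ small" alone does not defeat the unbounded $\|v\|_D$-powers.

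Secondly, the paper does not run an induction in $\gamma$ (or in $\numnoise$): it fixes $\gamma\in(2-2\delta,2)$ once and for all and closes a two-unknown linear system in $[V]_{\gamma,D_d}$ and $d\|v_{\X}\|_{D_d}$ (equations~\eqref{schauderapplied3} and \eqref{eq:bound_vX}); Lemma~\ref{lem:hol-relation} collapses all the intermediate $[U^\tau]_{\gamma-\beta-|\tau|}$ quantities directly to these two in one step. Your descending-induction framing is not wrong in principle, but it is a different and more cumbersome bookkeeping; if you insist on it, you would still have to make the uniformity of the constants explicit and would still need $d_0\le\|v\|_D^{-1}$ at every stage. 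As written, the proposal is a correct outline with a genuine gap at the absorption step.
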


A few more computations allow to close this argument, with a specification of $d_0$ in \eqref{fixd0}.

\begin{lemma}\label{lemtheorem}
Under the Assumption~\ref{assumption}, there exist $\lambda>0$ such that 
\begin{equation}\label{conclusion}
\|v\|_{D_{\lambda \|v\|_D^{-1}}}\leq \frac{\|v\|_D}2.
\end{equation}

\end{lemma}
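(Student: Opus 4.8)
The plan is to combine the convolved remainder equation \eqref{eq:convolved} with the maximum principle of Lemma~\ref{lem_max}, after controlling every term on the right-hand side of \eqref{eq:convolved} at a scale $L$ chosen comparable to the distance $d_0$ at which we evaluate. Concretely, I would fix a base-point $x \in D_{d_0}$ (with $d_0$ to be tuned) and write $g_L := v_L^3 - \heat v_L$ using \eqref{eq:convolved}, so that $\heat v_L = -v_L^3 + g_L$ and Lemma~\ref{lem_max} applies on a suitably translated parabolic cylinder of size $\sim d_0$. The point is to bound $\|g_L\|$ on $D_{d_0/2}$ (say) by quantities of the form $c\,\|v\|_D^{a} d_0^{b}$ with $b>0$, so that for $d_0$ small enough and under Assumption~\ref{assumption} the ``error'' $\|g_L\|^{1/3}$ is dominated by $\tfrac12\|v\|_{D_{d_0}}$.

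First I would collect the term-by-term estimates. The commutator $(v^3)_L - (v_L)^3$ is bounded by $\|v\|^2 [v]_\alpha L^\alpha$; writing $[v]_\alpha$ via the Schauder output \eqref{schauder final general} (with $\tau = \one$, so $[U^{\one}]_{\gamma} \lesssim \|v\|_D$) this becomes $\lesssim \|v\|_{D_{d_0}}^{2} \cdot (\|v\|_{D}/d_0^{\gamma}) \cdot d_0^{\alpha}$, which carries a positive power of $d_0$ once $\alpha$ is chosen appropriately relative to $\gamma$. The pure stochastic terms $\sum_{\tau\in\partial\Ww}\Upsilon(\tau)(\path_\bullet\tau)_L(x)$ are bounded by $\sum [\path;\tau] L^{|\tau|} \le c \sum \|v\|_D^{\delta \numnoise(\tau)} d_0^{-3+\delta\numnoise(\tau)}$ using Assumption~\ref{assumption} and \eqref{formula_hom}; since for $\tau\in\partial\Ww$ one has $\numnoise(\tau)\ge 3$, the exponent $-3+\delta\numnoise(\tau)$ can be made positive only if $\delta$ is not too small — so here I would instead absorb the negative power of $d_0$ against the $\|v\|_D^{\delta\numnoise(\tau)}$ factor by the very choice $d_0 \sim \lambda\|v\|_D^{-1}$, which is exactly the scaling appearing in the conclusion \eqref{conclusion}: then $d_0^{-3}\|v\|_D^{\delta\numnoise} = \lambda^{-3}\|v\|_D^{3+\delta\numnoise}$, and after taking cube roots and comparing with $\|v\|_{D_{d_0}}$ the factor $\lambda^{-1}$ can be made small. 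Finally the two genuinely hard terms, $(v\circ_\path v\circ_\path\path_\bullet\Ii(w))_L$ and $(v\circ_\path\path_\bullet\Ii(w_1)\circ_\path\path_\bullet\Ii(w_2))_L$, are controlled by Lemmas~\ref{quadratic bound} and \ref{linear bound} respectively: the main part $\sum \Upsilon_x(\tau)(\path_{\bullet,x}(\dots))_L(x)$ is bounded via \eqref{e-linbound2}/\eqref{eq:rev-v2.2} by $c\|v\|_D^{\#}\|v_\x\|_{D_{d_0}}^{\numpoly}L^{6+|\tau|+|w_1|+|w_2|}$ — and since $w_i\in\Ww$ forces $|w_i|<-2$, the combined exponent $6+|\tau|+|w_1|+|w_2|$ can drop below zero, but then (as in the previous case) the negative power of $L=d_0$ is compensated by the $\|v\|_D$ powers through $d_0\sim\|v\|_D^{-1}$, using also \eqref{schauder final vx} to replace $\|v_\x\|_{D_{d_0}}$ by $d_0^{-1}\|v\|_D$; the remainder part is bounded by the $[U^\tau]$-terms in \eqref{e-linbound1}/\eqref{eq:rev-v2}, which are handled by the Schauder estimate \eqref{schauder final general} and carry the spare factor $L^{\epsilon}=d_0^\epsilon$.

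Putting these together, on $D_{d_0/2}$ one obtains $\|g_L\| \le c\, C'\, \|v\|_D^{3}\lambda^{-s}$ for some fixed exponent $s>0$ and a constant $C'$ independent of $\lambda, c$, so that $\|g_L\|^{1/3} \le (c C')^{1/3}\lambda^{-s/3}\|v\|_D$. Plugging into \eqref{eq:max theorem 1} with the distance from $x$ to the parabolic boundary being $\gtrsim d_0 = \lambda\|v\|_D^{-1}$, we get $|v_L(x)| \le C\max\{\lambda^{-1}\|v\|_D,\ (cC')^{1/3}\lambda^{-s/3}\|v\|_D\}$; choosing $\lambda$ large enough (and then $c$ accordingly in Assumption~\ref{assumption}, see \eqref{fixd0}) makes the right-hand side at most $\tfrac14\|v\|_D$. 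Since $v$ is smooth, $v_L \to v$ as $L\to 0$ — but here $L$ is fixed $\sim d_0$, so strictly I would instead run the estimate at a slightly smaller scale or note $\|v - v_L\|_{D_{d_0/2}} \lesssim [v]_\alpha L^\alpha \lesssim d_0^{\alpha-1}\|v\|_D \cdot (\text{small})$, giving $\|v\|_{D_{d_0}}\le \tfrac14\|v\|_D + \tfrac14\|v\|_D = \tfrac12\|v\|_D$, which is \eqref{conclusion} with $\lambda$ as chosen. The main obstacle I anticipate is the careful bookkeeping of the exponents $6+|\tau|+|w_1|+|w_2|$, $-6-|w_1|-|w_2|-|\tau|+\epsilon$ etc.\ across the (finitely many, by Lemma~\ref{lemma: set of trees is finite}) trees, verifying that in each term either the power of $L=d_0$ is positive outright or the deficit is matched by enough factors of $\|v\|_D$ to be absorbed by the gauge $d_0 = \lambda\|v\|_D^{-1}$; this is where the structural input — the order bound's additivity under multiplication and the coherence of the $\Upsilon$-coefficients via Theorem~\ref{theorem one zero} — is doing the real work.
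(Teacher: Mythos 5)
Your overall strategy is the paper's: apply the maximum principle Lemma~\ref{lem_max} to the convolved remainder equation \eqref{eq:convolved}, bound the terms using Lemmas~\ref{linear bound}, \ref{quadratic bound} and the Schauder output Lemma~\ref{lemma: apply Schauder}, and feed in the gauge scale $\sim \|v\|_D^{-1}$. However, there is a genuine gap in your choice of convolution scale: you take $L$ comparable to the boundary margin $d_0 = \lambda\|v\|_D^{-1}$ and hope to control everything by sending $\lambda\to\infty$. This does work for the stochastic terms and the renormalised products, where the relevant power of $L$ is negative so they acquire $\lambda^{\text{neg}}\to 0$, but it fails for the commutator terms $(v^3)_L - v_L^3$ and $v - v_L$. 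These are pure $v$-terms, carry no prefactor $c$ from Assumption~\ref{assumption}, and the Schauder Lemma~\ref{lemma: apply Schauder} only bounds Hölder norms on $D_d$ for $d \leq \|v\|_D^{-1}$, so the best available estimate (see \eqref{commu}) is $\|v-v_L\| \lesssim [v]_\alpha L^\alpha \lesssim \|v\|_D^{1+\alpha}L^\alpha$. With $L \sim \lambda\|v\|_D^{-1}$ this gives $\lambda^\alpha\|v\|_D$ — \emph{increasing} in $\lambda$, not decreasing — and likewise $\|(v^3)_L - v_L^3\|$ scales as $\lambda^\alpha\|v\|_D^3$. Your claimed bound $\|g_L\| \le c\,C'\|v\|_D^3 \lambda^{-s}$ with $s>0$ therefore cannot hold, and shrinking $c$ does not save it because the commutators contain no factor of $c$.

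The paper resolves this tension by introducing a second, independent parameter: it fixes the Schauder scale $d=\|v\|_D^{-1}$, takes the convolution scale $L = d/k$ with $k \geq 2$, and keeps the extra boundary margin $R = (\lambda-1)\|v\|_D^{-1}$ separate. Then the commutators become $\lesssim k^{-\alpha}\|v\|_D$, small for $k$ large; the renormalised-product terms may pick up positive powers of $k$ (this is the content of the functions $K$, $K'$ in \eqref{eq:final bound linear}--\eqref{eq:final bound quad}), but they always carry the prefactor $c$, so $c$ is chosen small \emph{after} $k$ is fixed; and $\lambda$ controls only the $R^{-1}$ term from Lemma~\ref{lem_max}. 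Three parameters, chosen in the order $k$, then $c$, then $\lambda$, are required — your two-parameter scheme cannot close. Your step ``$[v]_\alpha L^\alpha \lesssim d_0^{\alpha-1}\|v\|_D\cdot(\text{small})$'' is precisely where the error enters: Lemma~\ref{lemma: apply Schauder} pins $[v]_\alpha$ at scale $\|v\|_D^{-1}$, giving $[v]_\alpha \sim \|v\|_D^{1+\alpha}$, and does not improve for $d > \|v\|_D^{-1}$, so the factor $d_0^{-1}$ you write is not available.
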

The final proof of the main theorem relies on a iteration of this result. We define a finite sequence $0=R_0<...<R_N=1$ by setting 
\[
R_{n+1}-R_n=\lambda\|v\|_{D_{R_n}}^{-1},
\]
as long as the $R_{n+1}$ defined that way stay less than $1$. We terminate this algorithm once it would produce $R_{n+1}\geqslant 1$ in which case we set $R_N=R_{n+1}=1$, or once Assumption~\ref{assumption} does not hold for $D'=D_{R_n}$. Note that $\|v\|_{D_{R_n}}^{-1}$ is strictly increasing so the sequence necessarily terminates after finitely many steps. Rewriting Lemma~\ref{lemtheorem} replacing $D$ by $D_{R_n}$ then gives the bounds for smaller and smaller boxes.
\[
\|v\|_{D_{R_{n+1}}}\leqslant\frac{\|v\|_{D_{R_n}}}2
\]
We now prove that Theorem~\ref{th:main theorem} holds for all $d=R_n, n=0...N$. If Assumption~\ref{assumption} does not hold for $N$ then it is immediate, else for $k<n$, $\|v\|_{D_{R_n}}\leqslant\|v\|_{D_{R_k}}2^{k-n}$ hence
\[
R_n=\sum_{k=0}^{n-1}R_{k+1}-R_k=\sum_{k_0}^{n-1}\lambda\|v\|_{D_{R_k}}^{-1}\leqslant\lambda\|v\|_{D_{R_{n-1}}}^{-1}\sum_{k_0}^{n-1}2^{k-n+1}\lesssim\|v\|_{D_{R_{n-1}}}^{-1}.
\]
This implies that for any $R\in (R_{n-1},R_n)$, $\|v\|_{D_R}\leqslant\|v\|_{D_{R_{n-1}}}\lesssim R_n^{-1}\leqslant R^{-1}$, which proves the theorem in that case.

If the end-point is $R_N=1$, we either have $R_{N-1}>\frac12$ or $R_N-R_{N-1}>\frac12$. In both cases $\|v\|_{D_{R_{N-1}}}\lesssim R_{N-1}^{-1}\lesssim 1$.


\section{Proof of the intermediate results}
\label{s:PMT}

\subsection{A technical lemma}
We first quantify the expansions given in equations \eqref{def:V},\eqref{def:V2} and \eqref{def:V'}, used now with $\Theta=\Upsilon$.

\begin{align*}
[V]_{\alpha}&=\sup_{x,y}\frac{|v_\one(x)-V_\alpha(y,x)|}{d(x,y)^\alpha},\\
[V^2]_{\alpha}&=\sup_{x,y}\frac{|v_\one(x)^2-V^2_\alpha(y,x)|}{d(x,y)^\alpha},\\
[V^{(i)}]_{\alpha}&=\sup_{x,y}\frac{|v_{\x_i}(x)-V^{(i)}_\alpha(y,x)|}{d(x,y)^\alpha}.
\end{align*}
For any domain $D$, we denote the restriction of this norm to $x,y\in D$ by adding the subscript $D$. A second subscript $d$ may be added when we  restrict to $x,y$ satisfying $d(x,y)<d$.

Using Theorem~\ref{theorem one zero} we have the identities.
\begin{equation}\label{eq:cases U}
[U^\tau]_{\gamma-|\tau|}=\begin{cases}
[V]_{\gamma-|\tau|}& \text{if }\numone(\tau)=1,\\
[V^2]_{\gamma-|\tau|} & \text{if }\numone(\tau)=2,\\
[V^{(i)}]_{\gamma-|\tau|} & \text{if }\numpolyi(\tau)=1,\\
0 & \text{if }\numone(\tau),\numpoly(\tau) = 0\;.
\end{cases}
\end{equation}

Using  Lemma~\ref{lem:relations V V2} and the Assumption~\ref{assumption} to replace all order bounds on trees in this lemma by powers of $\|v\|_{D}$, we get the following general bound for the norm of $U$. The bound in the case of $\numpoly(\tau)=1$ is a straightforward application of Lemma~\ref{corschauder}.

\begin{lemma}\label{lem:hol-relation}

Under the Assumption~\ref{assumption}, for any $\tau \in \Nn$, and $0<\beta<\gamma<2$, 
\begin{align}\label{eq:corshauder-applied}
&\sup_{d\leq d_0}d^{\gamma-\beta+2-\delta \numnoise(\tau)}[U^\tau]_{\gamma-\beta-|\tau|,D_d,d}\\
&\qquad \les \sup_{d\leq d_0}\Big[[V]_{\gamma,D_d,d}d^{\gamma}+{\mathbbm{1}}_{\{\gamma-\beta+2-\delta \numnoise(\tau)<1\}}d\|v_\x\|_{D_d}\notag\\
& \qquad +c\sum_{\gamma-\beta-\delta \numnoise(\tau)\leq|\bar{\tau}|<\gamma-2}d^{|\bar{\tau}|+2}\|v\|_{D}^{\numone(\bar{\tau})+\numnoise(\bar{\tau})\delta}\|v_\x\|_{D_d}^{\numpoly(\bar{\tau})}\Big].\notag
\end{align}
\end{lemma}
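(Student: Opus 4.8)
The plan is to start from the case distinction \eqref{eq:cases U} (equivalently, from Theorem~\ref{theorem one zero}) and handle the three non-trivial cases $\numone(\tau) = 1$, $\numone(\tau) = 2$, $\numpolyi(\tau) = 1$ separately, using the truncation and multiplication identities of Lemma~\ref{lem:relations V V2}. In each case the strategy is the same: express the relevant truncated object $U^\tau_{\gamma-\beta-2}(y,x)$ in terms of the ``full order'' version (i.e. $v_\one(y) - V_\gamma(y,x)$, its square, or a derivative analog) plus a correction consisting of the intermediate-order trees $\gamma - \beta - \delta\numnoise(\tau) \le |\bar\tau| < \gamma - 2$ that were dropped in the truncation. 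The first term is bounded directly by $[V]_{\gamma, D_d, d}\, d^\gamma$ after observing that $\gamma - |\tau| - 2 = \gamma - (|\tau|+2)$ and $|\tau| + 2 = \delta\numnoise(\tau) + \numone(\tau) + 2\numpoly(\tau) - 1$ via \eqref{formula_hom}, which accounts for the exponent $\gamma - \beta + 2 - \delta\numnoise(\tau)$ appearing on the left-hand side. The correction terms are bounded tree-by-tree by combining the order bound $|\path_{y,x}\Ii(\bar\tau)| \lesssim d(y,x)^{|\bar\tau|+2}$ (Definition~\ref{def:seminorm}) with the coefficient bounds: $|\Upsilon_x(\bar\tau)| \lesssim \|v\|_D^{\numone(\bar\tau)} \|v_\x\|_{D_d}^{\numpoly(\bar\tau)}$ by Lemma~\ref{Upsilon-Lemma}, and $[\path;\Ii(\bar\tau)] \lesssim c\,\|v\|_D^{\delta\numnoise(\bar\tau)}$ under Assumption~\ref{assumption}; this produces precisely the last sum on the right-hand side of \eqref{eq:corshauder-applied}, with the factor $c$ coming from the tree norm.

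For the cases $\numone(\tau) = 1$ and $\numone(\tau) = 2$ this is essentially bookkeeping: apply \eqref{eq:relV} (resp.\ \eqref{eq:relV2}), distribute $d(x,y)^{-(\gamma-\beta-|\tau|)}$ over the terms, and use $[V]$, $\|v\|_D$, $\|v_\x\|_{D_d}$ bounds on each piece; the $V^2$ case additionally uses $|v_\one(y)| \le \|v\|_D$ to turn the mixed terms $\Theta_y(\one)(\Theta_y(\one) - V(y,x))$ and $\Theta_x(\tau)\path_{y,x}\Ii(\tau)(\Theta_y(\one) - V_{\gamma-|\tau|-2}(y,x))$ into $[V]$-controlled quantities. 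The indicator ${\mathbbm 1}_{\{\gamma-\beta+2-\delta\numnoise(\tau)<1\}}$ is needed only when the target order of $U^\tau$ is below $1$, which by \eqref{formula_hom} is equivalent to $|\tau|$ being such that the generalized derivative $v_\x$ enters, i.e.\ the regime where a first-order Taylor remainder genuinely appears.

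The genuinely substantive case is $\numpolyi(\tau) = 1$, where $U^\tau$ equals (up to sign) $v_{\x_i}(y) - V^{(i)}_{\gamma-|\tau|}(y,x)$. Here one cannot simply truncate, because $V^{(i)}$ is built from the $\Ii^+_i$-trees and its control comes not from Chen's relation alone but from a Schauder-type argument: the reorganized three-point identity \eqref{eq:relV'} expresses the $V^{(i)}$-increment combined with the $V$-increments against $-\sum_{\tau\in\mNn} U^\tau_{\gamma-2}(y,x)\,\path_{z,y}\Ii(\tau)$, and feeding this into the kernel-free Schauder lemma (Lemma~\ref{corschauder}, the $\Ii^+_i$-version already invoked in the proof of the order bound, Lemma~\ref{lem:OB}) yields a bound on $[V^{(i)}]$ in terms of $[V]_\gamma$, $\|v_\x\|_{D_d}$, and the lower-order tree data. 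This is the step I expect to be the main obstacle: one must carefully match the hypotheses of the Schauder lemma — in particular the three-point continuity exponent and the localization radii $D_d$, $d$ — against the quantities appearing in \eqref{eq:relV'}, and track how the restriction to $d \le d_0$ interacts with the supremum defining $[U^\tau]_{\cdot, D_d, d}$. Once that is done, collecting the three cases and taking the supremum over $d \le d_0$ gives \eqref{eq:corshauder-applied}.
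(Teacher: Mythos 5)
Your proposal follows essentially the same route as the paper: start from the case distinction \eqref{eq:cases U} coming from Theorem~\ref{theorem one zero}, control the $\numone(\tau)\in\{1,2\}$ cases via the truncation and multiplication identities \eqref{eq:relV}, \eqref{eq:relV2} of Lemma~\ref{lem:relations V V2} together with the order bounds on the dropped intermediate trees and Assumption~\ref{assumption}, and treat the $\numpolyi(\tau)=1$ case via the three-point identity \eqref{eq:relV'} fed into Lemma~\ref{corschauder}. That is exactly the paper's one-sentence proof, so no genuine gap. Two small quibbles: (i) the Schauder lemma invoked in the proof of Lemma~\ref{lem:OB} is actually the whole-space version Lemma~\ref{corschauderFS}, not the localized Lemma~\ref{corschauder} — though the localized version is indeed the right one to use here because the norms in the statement are restricted to $D_d$ with pairs at distance $\leq d$; and (ii) the exponent bookkeeping you sketch refers to $\gamma-|\tau|-2$ where the statement actually has $\gamma-\beta-|\tau|$ (i.e.\ the truncation level is $\gamma-\beta$, not $\gamma$), which also affects precisely when $\X_i$ falls into the dropped range and hence when the indicator ${\mathbbm{1}}_{\{\gamma-\beta+2-\delta\numnoise(\tau)<1\}}$ is nonzero — but these are arithmetic details, not gaps in the argument.
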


\subsection{Proof of Lemma \ref{linear bound}}

 Take $w_1,w_2\in\Ww$. From Definition \ref{def: renormalised product}, 
 there exists $\epsilon>0$ such that for $J=[-2,-6-|w_1|-|w_2|+\epsilon)$, 
\begin{align}\label{eq:expansion_v_tau}
&(v\circ_\path\path_\bullet\Ii(w_1)\circ_\path\path_\bullet\Ii(w_2))_L(x)=\\
& \qquad \sum_{|\tau|\in J}\Big(\Upsilon_\bullet(\tau)\path_{\bullet,\bullet}(\Ii(\tau)\Ii(w_1)\Ii(w_2))\Big)_L(x).\notag
\end{align}
We know $J$ is the right interval even though we have a longer expansion of $v$ because the unplanted  trees of positive homogeneity vanish in our formalism. This corresponds to $|\tau|+|w_1|+|w_2|+6<0$, and  Remark \ref{rem:noninteger} tells us that this expansion is the same to positive level $\epsilon$, for $\epsilon>0$ small enough.

We prove estimate ~\eqref{e-linbound1} by using the reconstruction Lemma~\ref{Reconstruction}. Define $F(y,x)=\sum_{|\tau|\in J}\Upsilon_{x}(\tau)\path_{y,x}(\Ii(\tau)\Ii(w_1)\Ii(w_2))$, and we aim to bound a suitable regularisation of $F(y,x) - F(x,x)$. Lemma \ref{Reconstruction} and Assumption~\ref{assumption} imply the desired estimate ~\eqref{e-linbound1} 
as soon as the following identity is established:
\begin{align}\label{eq:proof-lin-rec}
&\Big| \int\Psi_l(x_2-y)(F(y,x_1)-F(y,x_2))dy \Big|\leq\\
&\notag \qquad \sum_{|\tau|\in J}[\path;\Ii(\tau)\Ii(w_1)\Ii(w_2)][U^\tau]_{-6-|w_1|-|w_2|-|\tau|+\epsilon}\\
&\notag  \qquad \times l^{6+|w_1|+|w_2|+|\tau|}d(x_1,x_2)^{-6-|w_1|-|w_2|-|\tau|+\epsilon}\;.
\end{align}
 By multiplicativity of the coproduct, and since $w_1,w_2\in\Ww$, we first note that for $\bar{\tau} \in \Nn$
\begin{align*}
\Delta(\Ii(\bar{\tau})\Ii(w_1)\Ii(w_2))&=\Delta\Ii(\bar{\tau})\Delta\Ii(w_1)\Delta\Ii(w_2)\\
&=\sum_{-2\leqslant|\tau|\leqslant|\bar{\tau}|}\Ii(\tau)\Ii(w_1)\Ii(w_2)\otimes C_+(\tau,\bar{\tau}).
\end{align*}
Using Chen's relation, we have:
\begin{align*}
F(y,x_1) &=\sum_{|\bar{\tau}|\in J}\Upsilon_{x_1}(\bar{\tau})\path_{y,x_1}(\Ii(\bar{\tau})\Ii(w_1)\Ii(w_2))\\
&=\sum_{|\bar{\tau}|\in J}\Upsilon_{x_1}(\bar{\tau})\sum_{|\tau|\in J}\path_{y,x_2}(\Ii(\tau)\Ii(w_1)\Ii(w_2))\path_{x_2,x_1}C_+(\tau,\bar{\tau})\\
&=\sum_{|\tau|\in J}\path_{y,x_2}(\Ii(\tau)\Ii(w_1)\Ii(w_2))\sum_{|\bar{\tau}|\in J}\Upsilon_{x_1}(\bar{\tau}) \path_{x_2,x_1}C_+(\tau,\bar{\tau}).
\end{align*}
Therefore, 
\begin{align*}
&F(y,x_1)-F(y,x_2)\\
&=\sum_{|\tau|\in J}\path_{y,x_2}(\Ii(\tau)\Ii(w_1)\Ii(w_2))\Big(\sum_{|\bar{\tau}|\in J}\Upsilon_{x_1}(\bar{\tau})\path_{x_2,x_1}C_+(\tau,\bar{\tau})-\Upsilon_{x_2}(\tau)\Big)\\
&=-\sum_{|\tau|\in J}\path_{y,x_2}(\Ii(\tau)\Ii(w_1)\Ii(w_2))U_{-6-|w_1|-|w_2|-|\tau|+\epsilon}^\tau(x_2,x_1)\\
\end{align*}
which proves \eqref{eq:proof-lin-rec} and thus \eqref{e-linbound1}.

The bound \eqref{e-linbound2} is simply the order bound on the trees, which for $x\in D_d$ and for $L<d$, can be expressed as:
\begin{align}\label{eq:bound-lin-trees}
|\Upsilon_x(\tau)&\Big(\path_{\bullet,x}(\Ii(\tau)\Ii(w_1)\Ii(w_2))\Big)_L(x)|\\
&\notag \leqslant \|v\|_{D}^{\numone(\tau)}\|v_\x\|_{D_d}^{\numpoly(\tau)}[\path;\Ii(\tau)\Ii(w_1)\Ii(w_2)]L^{6+|\tau|+|w_1|+|w_2|}\\
&\notag \overset{\text{Ass. } \ref{assumption}}{\leqslant}c\|v\|_{D}^{\numone(\tau)+\delta \numnoise(\tau+w_1+w_2)}\|v_\x\|_{D_d}^{\numpoly(\tau)}L^{6+|\tau|+|w_1|+|w_2|}.
\end{align}

\subsection{Proof of Lemma \ref{quadratic bound}}
 Take $w\in \Ww$. From Definition \ref{def: renormalised product} and  Remark \ref{rem:noninteger}, there exists $\epsilon>0$ such that for $\tilde{J}=\{(a,b)\in [-2,-1]^2, a+b<-6-|w|+\epsilon\}$,
\begin{align}
\notag
&(v\circ_\path v\circ_\path \path_\bullet\Ii(w))_L(x) \\
\label{eq:expansion_vv_tau}
&=\sum_{(|\tau_1|,|\tau_2|)\in \tilde{J}}\Big(\Upsilon(\tau_1)\Upsilon(\tau_2)\path_{\bullet,\bullet}(\Ii(\tau_1)\Ii(\tau_2)\Ii(w))\Big)_L(x).
\end{align} 
We know $J$ is the right domain even though we have a longer expansion for $v$ because unplanted trees of positive order vanish in our setting.

We prove the bound \eqref{eq:rev-v2}.
 Define $F(y,x)=\sum_{(|\tau_1|,|\tau_2|)\in\tilde{J}}\Upsilon_{x}(\tau_1)\Upsilon_{x}(\tau_2)\path_{y,x}(\Ii(\tau_1)\Ii(\tau_2)\Ii(w))$, and we aim to bound a suitable regularisation of $F(y,x)-F(x,x)$.
Lemma~\ref{Reconstruction} and Assumption~\ref{assumption} implies the desired bound as soon as the following bound is established:
\begin{align}
\notag
&\Big| \int\Psi_l(x_2-y)(F(y,x_1)-F(y,x_2))dy \Big|\leq\\
&\notag \qquad \sum_{(|\tau_1|,|\tau_2|)\in \tilde{J}}[\path;\Ii(\tau_1)\Ii(\tau_2)\Ii(w)][U^{\Ii(\tau_1)\Ii(\tau_2)\Ii(\Xi)}]_{-6-|w|-|\tau_1|-|\tau_2|+\epsilon}\\
\label{eq:proof-quad-rec}
&\qquad  l^{6+|\tau_1|+|\tau_2|+|w|}d(x_1,x_2)^{-6-|w|-|\tau_1|-|\tau_2|+\epsilon}
\end{align}
 By multiplicativity of the coproduct, and since $w\in\Ww$, we first note that for $(|\bar{\tau}_1|,|\bar{\tau}_2|)\in\tilde{J}$, 
\begin{align*}
\Delta(\Ii(\bar{\tau}_1)\Ii(\bar{\tau}_2)\Ii(w))&=\Delta\Ii(\bar{\tau}_1)\Delta\Ii(\bar{\tau}_2)\Delta\Ii(w)\\
&=\sum_{\underset{-2\leqslant|\tau_1|\leqslant|\bar{\tau}_1|}{-2\leqslant|\tau_2|\leqslant|\bar{\tau}_2|}}\Ii(\tau_1)\Ii(\tau_2)\Ii(w)\otimes C_+(\tau_1,\bar{\tau}_1) C_+(\tau_2,\bar{\tau}_2).
\end{align*}
Using Chen's relation, we have
\begin{align*}
&F(y,x_1)\\
&=\sum_{(|\bar{\tau}_1|,|\bar{\tau}_2|)\in\tilde{J}}
	\Upsilon_{x_1}(\bar{\tau}_1) \Upsilon_{x_1}(\bar{\tau}_2) \path_{y,x_1}(\Ii(\bar{\tau}_1)\Ii(\bar{\tau}_2)\Ii(w))\\
&=\sum_{(|\bar{\tau}_1|,|\bar{\tau}_2|)\in\tilde{J}}\Upsilon_{x_1}(\bar{\tau_1})
	\Upsilon_{x_1}(\bar{\tau_2})\\
& \qquad \times\sum_{\underset{-2\leqslant|\tau_1|\leqslant|\bar{\tau}_1|}{-2\leqslant|\tau_2|\leqslant|\bar{\tau}_2|}}
	\path_{y,x_2}(\Ii(\tau_1)\Ii(\tau_2)\Ii(w))\path_{x_2,x_1}( C_+(\tau_1,\bar{\tau}_1) C_+(\tau_2,\bar{\tau}_2))\\
&=\sum_{(|\tau_1|,|\tau_2|)\in\tilde{J}}
	\path_{y,x_2}(\Ii(\tau_1)\Ii(\tau_2)\Ii(w))\\
& \qquad \times\sum_{(|\bar{\tau}_1|,|\bar{\tau}_2|)\in\tilde{J}}
	\Upsilon_{x_1}(\bar{\tau}_1)\Upsilon_{x_1}(\bar{\tau}_2)\path_{x_2,x_1}( C_+(\tau_1,\bar{\tau}_1) C_+(\tau_2,\bar{\tau}_2)).
\end{align*}
In the following computation, we introduce a mock $\Upsilon(\Xi)$, which is just a factor $-1$, and $C_+(\Xi,\Xi)=1$ to make explicit that the structure of the terms appearing here is that of $U^{\tilde{\tau}}_\beta$ for some $\tilde{\tau}$ and $\beta$.
\begin{align*}
&F(y,x_1)-F(y,x_2)
=\sum_{(|\tau_1|,|\tau_2|)\in\tilde{J}}\path_{y,x_2}(\Ii(\tau_1)\Ii(\tau_2)\Ii(w))\\
&\times\Big(\sum_{(|\bar{\tau}_1|,|\bar{\tau}_2|)\in\tilde{J}}\Upsilon_{x_1}(\bar{\tau}_1)\Upsilon_{x_1}(\bar{\tau}_2)\path_{x_2,x_1}( C_+(\tau_1,\bar{\tau}_1) C_+(\tau_2,\bar{\tau}_2))-\Upsilon_{x_2}(\tau_1)\Upsilon_{x_2}(\tau_2)\Big)\\
&=\sum_{(|\tau_1|,|\tau_2|)\in\tilde{J}}\path_{y,x_2}(\Ii(\tau_1)\Ii(\tau_2)\Ii(w))\times\Big(\Upsilon_{x_2}(\tau_1)\Upsilon_{x_2}(\tau_2)\Upsilon_{x_2}(\Xi)\\
&-\sum_{(|\bar{\tau}_1|,|\bar{\tau}_2|)\in\tilde{J}}\Upsilon_{x_1}(\bar{\tau}_1)\Upsilon_{x_1}(\bar{\tau}_2)\Upsilon_{x_1}(\Xi)\path_{x_2,x_1}( C_+(\tau_1,\bar{\tau}_1) C_+(\tau_2,\bar{\tau}_2)C_+(\Xi,\Xi))\Big)\\
&=\sum_{(|\tau_1|,|\tau_2|)\in\tilde{J}}\path_{y,x_2}(\Ii(\tau_1)\Ii(\tau_2)\Ii(w))\times\Big(\Upsilon_{x_2}(\Ii(\tau_1)\Ii(\tau_2)\Ii(\Xi))\\
&-\sum_{(|\bar{\tau}_1|,|\bar{\tau}_2|)\in\tilde{J}}\Upsilon_{x_1}(\Ii(\tau_1)\Ii(\tau_2)\Ii(\Xi))\path_{x_2,x_1}( C_+(\Ii(\tau_1)\Ii(\tau_2)\Ii(\Xi),\Ii(\bar{\tau}_1)\Ii(\bar{\tau}_2)\Ii(\Xi))\Big)\\
&=\sum_{(|\tau_1|,|\tau_2|)\in\tilde{J}}\path_{y,x_2}(\Ii(\tau_1)\Ii(\tau_2)\Ii(w))U^{\Ii(\tau_1)\Ii(\tau_2)\Ii(\Xi)}_{-3+\delta-|w|+\epsilon},
\end{align*}
which proves \eqref{eq:proof-quad-rec} and thus \eqref{eq:rev-v2}.

The bound \eqref{eq:rev-v2.2} is directly the order bound on the trees, which for $x\in D_d$ and for $L<d$, can be expressed as:
\begin{align}\label{eq:bound-quad-trees}
|\Upsilon_x(\tau_1)\Upsilon_x(\tau_2)&\Big(\path_{\bullet,x}(\Ii(\tau_1)\Ii(\tau_2)\Ii(w))\Big)_L(x)|\\
&\notag \leqslant \|v\|_{D}^{\numone(\tau_1+\tau_2)}[\path;\Ii(\tau_1)\Ii(\tau_2)\Ii(w)]L^{6+|\tau_1|+|\tau_2|+|w|}\\
&\notag \overset{\text{Ass. }\ref{assumption}}{\leqslant}c\|v\|_{D}^{\numone(\tau_1+\tau_2)+\delta \numnoise(\tau_1+\tau_2+w)}L^{6+|\tau_1|+|\tau_2|+|w|}.
\end{align}
Note that here the term $\|v_\x\| $ does not appear since $\tilde{J}$ does not contain any homogeneities higher than $1$.

\subsection{Proof of Lemma \ref{lemma: apply Schauder}}
For $\gamma\in (2-2\delta,2)$ we have
\begin{align*}
\heat_y V_\gamma(y,x)=\sum_{-2<|\tau|<\gamma-2}\Upsilon_{x}(\tau)\path_{yx}(\tau).
\end{align*}
We write trees in this sum as $\tau=\Ii(\tau_1)\Ii(\tau_2)\Ii(\tau_3)$. The first remark we make is that if $|\tau_i|\geqslant -2$ for $i=1,2,3$, then $|\tau|\geqslant 0>\gamma-2$. We also remark that for $w\in\Ww$, $\Upsilon_x(w)$ is independent of $x$ and for $\tau\in\partial\Ww$, $\path_{yx}\tau$ is also independent of $x$. Therefore, accounting for symmetries with the factor $3$, we get:
\begin{align*}
(\partial_t-&\Delta)_y V_\gamma(y,x)=\\
-3&\sum_{w\in\Ww}\Upsilon_{y}(w)\sum_{|\tau_1|+|\tau_2|<\gamma-8-|w|}\Upsilon_x(\tau_1)\Upsilon_x(\tau_2)\path_{y,x}(\Ii(\tau_1)\Ii(\tau_2)\Ii(w))\\
-3&\sum_{w_1,w_2\in\Ww}\Upsilon_{y}(w_1)\Upsilon_{y}(w_2)\sum_{|\tau|<\gamma-8-|w_1|-|w_2|}\Upsilon_x(\tau)\path_{y,x}(\Ii(\tau)\Ii(w_1)\Ii(w_2))\\
+&\sum_{\tau\in\partial\Ww}\Upsilon_y(\tau)\path_{y,y}\tau.
\end{align*}
Using the remainder equation, we have:
\begin{align*}
\heat (v-&V_\gamma(\cdot,x))(y)=-v^3(y)\\
-3\sum_{w\in\Ww}&\Upsilon_y(w)\Big((v\circ_\path v\circ_\path \path_y\Ii(w))(y)\\
&-\sum_{|\tau_1|+|\tau_2|<\gamma-8-|w|}\Upsilon_x(\tau_1)\Upsilon_x(\tau_2)\path_{y,x}(\Ii(\tau_1)\Ii(\tau_2)\Ii(w))\Big)\\
-3\sum_{w_1,w_2\in\Ww}&\Upsilon_y(w_1)\Upsilon_y(w_2)\Big((v\circ_\path\path_y\Ii(w_1)\circ_\path\path_y\Ii(w_2))(y)\\
&-\sum_{|\tau|<\gamma-8-|w_1|-|w_2|}\Upsilon_x(\tau)\path_{y,x}(\Ii(\tau)\Ii(w_1)\Ii(w_2))\Big).
\end{align*}
We need to bound this after integration against $\Psi_{L_1}(z-y)dy$ for $z\in B(x,L_2)$, for $x\in D_{2d}$, for $L_1<\frac{d}2$ and $L_2<\frac{d}4$ to apply the Schauder Lemma~\ref{lemschauder}. We first have:
\[
|(v^3)_{L_1}(z)|\leqslant\|v\|^3_{D}.
\]
If $F(y,x)= \sum_{|\tau_1|+|\tau_2|<\gamma-8-|w|}\Upsilon_x(\tau_1)\Upsilon_x(\tau_2)\path_{y,x}(\Ii(\tau_1)\Ii(\tau_2)\Ii(w))$, then the Lemma~\ref{quadratic bound} gives a bound on $(v\circ_\path v\circ_\path \path_\bullet\Ii(w)-F(\bullet,z))_{L_1}(z)$, and from equation~\eqref{eq:proof-quad-rec} we have a bound on $(F(\bullet,x)-F(\bullet,z))_{L_1}(z)$. 
Together with Assumption~\ref{assumption}, they give
\begin{align}\label{prepshquad}
\Big|\Big((v\circ_\path v\circ_\path \path_\bullet\Ii(w))&\\
-\sum_{|\tau_1|+|\tau_2|<\gamma-8-|w|}\Upsilon_x(\tau_1)&\Upsilon_x(\tau_2)\path_{\cdot,x}(\Ii(\tau_1)\Ii(\tau_2)\Ii(w))\Big)_{L_1}(z)\Big|\notag\\
\les
c\sum_{|\tau_1|+|\tau_2|<\gamma-8-|w|}&[U^{\Ii(\tau_1)\Ii(\tau_2)\Ii(\Xi)}]_{-6-|w|-|\tau_1|-|\tau_2|+\epsilon,D_d,d}\notag\\
\times\|v\|_D^{\delta \numnoise(\tau_1+\tau_2+w)}&(L_1^\epsilon+ L_1^{6+|\tau_1|+|\tau_2|+|w|}d(x,z)^{-6-|w|-|\tau_1|-|\tau_2|+\epsilon})\notag.
\end{align}
Similarly with $F(y,x)=\sum_{|\tau|<\gamma-8-|w_1|-|w_2|}\Upsilon_x(\tau)\path_{y,x}(\Ii(\tau)\Ii(w_1)\Ii(w_2))$,  Lemma~\ref{linear bound} gives a bound on $(v\circ_\path\path_\bullet\Ii(w_1)\circ_\path\path_\bullet\Ii(w_2))-F(\bullet,z))_{L_1}(z))$ and from Equation~\eqref{eq:proof-lin-rec}, we have a bound on $(F(\bullet,x)-F(\bullet,z))_{L_1}(z)$.
Together with Assumption~\ref{assumption}, they give
\begin{align}\label{prepshlin}
\Big|\Big(v\circ_\path\path_\bullet\Ii(w_1)\circ_\path\path_\bullet\Ii(w_2)&\\
-\sum_{|\tau|<\gamma-8-|w_1|-|w_2|}&\Upsilon_x(\tau)\path_{\bullet,x}(\Ii(\tau)\Ii(w_1)\Ii(w_2))\Big)_L(z)\Big|\notag\\
\les c\sum_{|\tau|<\gamma-8-|w_1|-|w_2|}&[U^\tau]_{-6-|w_1|-|w_2|-|\tau|+\epsilon,D_d,d}\notag\\
\times\|v\|_{D}^{\delta \numnoise(\tau+w_1+w_2)}&(L^\epsilon+L^{6+|w_1|+|w_2|+|\tau|}d(x,z)^{-6-|w_1|-|w_2|-|\tau|+\epsilon}).\notag
\end{align}
We also need the three-point continuity. It is a consequence of Lemma~\ref{lemma 3pt}, and can be quantified, for $x\in D_d$, for $y\in B(x,\frac{d}4)$, for $z\in B(y,\frac{d}4)$, as:
 \begin{align}\label{prepsh3}
|V_\gamma(z,x) &  - V_\gamma(z,y)+ V_\gamma(y,y)-V_\gamma(y,x)+V^i_\gamma(y,x)(z_i-y_i)|\\
\leq& \sum_{\underset{\tau\neq\x}{-2<|\tau|<\gamma-2}}
[ U^{\tau}]_{\gamma-2-|\tau|,D_\frac{d}2,\frac{d}2} d(y,x)^{\gamma-2-|\tau|} \,    [\path; \Ii(\tau)]d(z,y)^{|\tau|+2}\notag\\
\overset{\text{Ass. }\ref{assumption}}{\leq}& c\sum_{\underset{\tau\neq\x}{-2<|\tau|<\gamma-2}}
\|v\|_{D}^{\delta \numnoise(\tau)}[ U^{\tau}]_{\gamma-2-|\tau|,D_\frac{d}2,\frac{d}2} d(y,x)^{\gamma-2-|\tau|} \,  d(z,y)^{|\tau|+2}.\notag
\end{align}
We now notice that all the term appearing in \eqref{prepshquad},\eqref{prepshlin} and \eqref{prepsh3} have a common structure. By replacing the homogeneities by their expressions in terms of $\delta \numnoise$ for the trees in $\Ww$, and relabelling $\tau=\Ii(\tau_1)\Ii(\tau_2)\Ii(\Xi)$ in \eqref{prepshquad}, we get after application of Lemma~\ref{lemschauder}:
\begin{align}\label{eq:shauderapplied}
\sup_{d\leqslant d_0}d^\gamma [V]_{\gamma,D_d}\les&\sup_{d\leq d_0}d^2\|v\|_{D}^3\\
&+c\sup_{a,b,\tau}\sup_{d\leq d_0}\Big(d^{b}\|v\|_{D_d}^{a+\delta \numnoise(\tau)}[U^{\tau}]_{b-2-a-|\tau|,D_d,d}\Big)\notag,
\end{align}
where $ [V]_{\gamma,D_d}$ is the $\gamma$-H\"older norm of $V_\gamma$, restricted to the domain $D_d$, and 
where the supremum is taken over a finite subset of
\[
\{(a,b,\tau)\in \R_+^2\times \Tr,a\geq 0\; , b\geq \gamma\; , b-2-a-|\tau|< \gamma\}.
\]

We apply the Lemma \ref{lem:hol-relation} to the second part.

\begin{align*}
\sup_{d\leqslant d_0}d^\gamma [V]_{\gamma,D_d}&\les\sup_{d\leq d_0}d^2\|v\|_{D}^3\\
+c\sup_{a,b,\tau}\sup_{d\leq d_0}&d^{a+\delta \numnoise(\tau)}\|v\|_{D_d}^{a+\delta \numnoise(\tau)}\Big([V]_{\gamma,D_d}d^{\gamma}+{\mathbbm{1}}_{\{b-a-\delta \numnoise(\tau)<1\}}d\|v_\X\|_{D_d}\\
&+c\sum_{b-a-2-\delta \numnoise(\tau)\leqslant |\bar{\tau}|<\gamma-2}d^{|\bar{\tau}|+2}\|v\|_D^{\numone(\bar{\tau})+\delta \numnoise(\bar{\tau})}\|v_\X\|_{D_d}^{\numpoly(\bar{\tau})}\Big).
\end{align*}
We see now that if we take 
\begin{equation}\label{fixd0}
d_0=\|v\|_D^{-1}
\end{equation}
, then there exist a value of  $c_0<1$ such that for any $0<c<c_0$, the occurrences of $[V]_{\gamma,D_d}$ can be absorbed into the left-hand side, and the other terms also simplify: in the last sum, if $\numpoly(\bar{\tau})=1$, we bound 
\[d^{|\bar{\tau}|+2}\|v\|_D^{\numone(\bar{\tau})+\delta \numnoise(\bar{\tau})}\|v_\x\|_{D_d}^{\numpoly(\bar{\tau})}\leqslant d^{-1+2\numpoly(\bar{\tau})}\|v_\x\|_{D_d}^{\numpoly(\bar{\tau})}=d\|v_\X\|_{D_d},\]
 and if $\numpoly(\bar{\tau})=0$, 
 \[d^{|\bar{\bar{\tau}}|+2}\|v\|_D^{\numone(\bar{\tau})+\delta \numnoise(\bar{\tau})}=d^{-1+\numone(\bar{\tau})+\delta \numnoise(\bar{\tau})}\|v\|_D^{\numone(\bar{\tau})+\delta \numnoise(\bar{\tau})}\leq\|v\|_D.\]
  In conclusion,
\begin{align}\label{schauderapplied3}
\sup_{d\leqslant d_0}d^\gamma [V]_{\gamma,D_d}\les\|v\|_{D}
+c\sup_{d\leq d_0}d\|v_\X\|_{D_d}.
\end{align}

We now prove the bound on $\|v_\x\|$. For that we take $\epsilon$ 
small enough such that there is no tree of regularity between $1$ and $1+\epsilon$. Then we can apply Corollary~\ref{corschauder} with $\kappa=1+\epsilon$ but with $U(x,y)=\sum_{-2\leqslant|\tau|<-1}\Upsilon_x(\tau)\Y_{yx}\Ii(\tau)$.
We get
\[
\|v_\x\|_{D_d}\les [V]_{1+\epsilon,D_d,d}d^\epsilon+\|U\|_{D_d,d}d^{-1}.
\] 
We have by Assumption~\ref{assumption}
\[
\|U\|_{D_d,d}d^{-1}\les\sum_{n}d^{n-2}\|v\|_{D}^{n},
\]
and from \eqref{eq:corshauder-applied}
\[
[V]_{1+\epsilon,D_d,d}d^\epsilon\les [V]_{\gamma,D_d,d} d^{\gamma-1}+c\sum_{n,m}d^{n+2m-2}\|v\|_{D_d}^{n}\|v_\x\|_{D_d}^m,
\]
where the sum ranges over a finite set of indices $n\geq 0$ and $m\in\{0,1\}$. We have, assuming $d_0\leqslant \|v\|_D^{-1}$,

\[
\sup_{d\leq d_0}d\|v_\x\|_{D_d}\les \sup_{d\leq d_0}\Big( [V]_{\gamma,D_d,d} d^{\gamma}+c(\|v\|_{D}+d\|v_\x\|_{D_d})\Big).
\]
 If  we take 
  $c$ small enough, depending on the constant implicit in $\les$,  for some constant $C>0$ we have,
\begin{equation}\label{eq:bound_vX}
\sup_{d\leq d_0}d\|v_\x\|_{D_d}\leqslant C\sup_{d\leq d_0}\Big( [V]_{\gamma,D_d,d} d^{\gamma}+\|v\|_{D}\Big).
\end{equation}
Together with \eqref{schauderapplied3}, this gives, for a constant $c$ small enough,
\begin{equation}\label{schauder final}
\sup_{d\leqslant d_0}d^\gamma [V]_{\gamma,D_d,d}\lesssim  \|v\|_D.
\end{equation}
\subsection{Proof of Lemma~\ref{lemtheorem}}
We apply the Lemma~\ref{lem_max} to the convolved equation \eqref{eq:convolved}, on the domain $D$.
\begin{align}\label{eq:apply_max1}
\|v\|_{D_{d+d'}}\lesssim \max \Big\{&{d'}^{-1},\quad \|(v^3)_L-v_L^3\|_{D_{d}}^{\frac13},\quad \|(\path\tau)_L\|_{D_d}^{\frac13} ,\quad \tau\in\partial\Ww,\notag\\
&\|(v\circ_\path\path_\cdot\Ii(w_1)\circ_\path\path_\cdot\Ii(w_2))_L\|_{D_d}^{\frac13},\; w_i\in\Ww,\\
&\|(v\circ_\path v\circ_\path \path_\bullet\Ii(w))_L\|_{D_d}^{\frac13}, \; w\in\Ww,\quad\|v-v_L\|_{D_{d+d'}}\Big\}\notag
\end{align} 
We have for $d>L$
\begin{align}
\notag
\|(v^3)_L-v_L^3\|_{D_{d}}&\lesssim \|v\|^2_{D_{d-L}}[v]_{\alpha,{D_{d-L}},L}L^\alpha  \qquad \text{ and } \\
\label{commu}
\|v-v_L\|_{D_{d+d'}} &\leqslant [v]_{\alpha,{D_{d-L}},L}L^\alpha
\end{align}
and for $\alpha$ small enough, we have by Lemma~\ref{lemma: apply Schauder}, $[v]_{\alpha,{D_{d-L}},L}\lesssim {(d-L)}^{-\alpha}\|v\|_D$.

From Lemma~\ref{linear bound} we get for $w_1,w_2\in\Ww$,
\begin{align*}
& \|(v\circ_\path\path_\bullet\Ii(w_1)\circ_\path\path_\bullet\Ii(w_2))_L\|_{D_d}\\
&\qquad \lesssim  c\sum_{|\tau|\in J}\|v\|_{D}^{\delta \numnoise(\tau+w_1+w_2)}\Big([U^\tau]_{-6-|w_1|-|w_2|-|\tau|+\epsilon,D_{d-L},d}L^\epsilon\\
&\qquad+\|v\|_{D}^{\numone(\tau)}\|v_\x\|_{D_{d-L}}^{\numpoly(\tau)}L^{6+|\tau|+|w_1|+|w_2|}\Big)\notag.
\end{align*}
From Lemma~\ref{quadratic bound}, we get for $w\in\Ww$,
\begin{align*}
&\|(v\circ_\path v\circ_\path \path_\bullet\Ii(w))_L\|_{D_d}\\
& \les c\sum_{(|\tau_1|,|\tau_2|)\in \tilde{J}}\|v\|^{\delta \numnoise(\tau_1+\tau_2+w)}\Big([U^{\Ii(\tau_1)\Ii(\tau_2)\Ii(\Xi)}]_{-6-|w|-|\tau_1|-|\tau_2|+\epsilon,D_{d-L},d}L^\epsilon\\
 & \qquad  +\|v\|_{D}^{\numone(\tau_1+\tau_2)}L^{6+|\tau_1|+|\tau_2|+|w|}\Big).
\end{align*}
Using Lemma~\ref{lemma: apply Schauder} and setting $d=\|v\|_D^{-1}$ and $L=\frac{d}{k}$ for some $k\geqslant2$ gives  for $w_1,w_2\in \partial \Ww$,
\begin{align}\label{eq:final bound linear}
\|(v\circ_\path\path_\bullet\Ii(w_1)\circ_\path\path_\bullet\Ii(w_2))_L\|_{D_d}\lesssim c\|v\|_D^3K(w_1,w_2,k) \;,
\end{align}
where  
\[
K(w_1,w_2,k)=\sum_{|\tau|\in J}\Big(k^{-\epsilon}+k^{3-\delta \numnoise(\tau+w_1+w_2)-\numone(\tau)-2\numpoly(\tau)}\Big),
\]
and for $w\in\Ww$,
\begin{align}\label{eq:final bound quad}
\|(v\circ_\path v\circ_\path \path_\bullet\Ii(w))_L\|_{D_d}\lesssim c\|v\|_D^3K'(w,k)
\end{align}
where
\[
K'(w,k)=\sum_{(|\tau_1|,|\tau_2|)\in\tilde{J}}\Big(k^{-\epsilon}+k^{3-\delta \numnoise(\tau_1+\tau_2+w)-\numone(\tau_1+\tau_2)}
\Big).
\]
Finally, for $\tau\in\partial\Ww$, we get from  Assumption~\ref{assumption}
\begin{equation}\label{eq:final bound constant}
\|(\path\tau)_L\|_{D_d}\lesssim c L^{-3+\delta \numnoise(\tau)}\|v\|_D^{\delta \numnoise(\tau)}=c\|v\|_D^3k^{3-\delta \numnoise(\tau)}.
\end{equation}

With \eqref{commu}, \eqref{eq:final bound linear}, \eqref{eq:final bound quad} and \eqref{eq:final bound constant}, the bound \eqref{eq:apply_max1} becomes
\begin{align}\label{eq:apply_max2}
\|v\|_{D_{\|v\|_D^{-1}+R}}\lesssim \max \Big\{&R^{-1},\quad k^{-\frac\alpha3}\|v\|_D,\quad c^\frac13\|v\|_Dk^{1-\frac{\delta \numnoise(\tau)}3} ,\; \tau\in\partial\Ww,\notag\\
&c^\frac13\|v\|_DK(w_1,w_2,k)^\frac13,\; w_1,w_2\in\Ww, \\
& c^\frac13\|v\|_DK'(w,k)^\frac13,\; w\in\Ww,\quad k^{-\alpha}\|v\|_D,\Big\}.\notag
\end{align} 
We see that we can now choose $k>2$ large enough and then $c<c_0$, 
as well as $R=(\lambda-1) \|v\|_D^{-1}$ for $\lambda$ large enough such that \eqref{eq:apply_max2} becomes:
\begin{equation}\label{eq:apply_max3}
\|v\|_{D_{\lambda \|v\|_D^{-1}}}\leq \frac{\|v\|_D	}2.
\end{equation}
\appendix
%
%
\section{Reconstruction lemma}
\label{ss:RL}
In this section, we present the result that is essential for the proof of Lemmas~\ref{linear bound} and \ref{quadratic bound}, allowing to define a function given its local description. It is inspired from \cite[Proposition 3.28]{hairer2014theory}. We show here a localised result that was introduced in \cite{2018arXiv181105764M}. We also reproduce the short proof here. It depends strongly on a specific choice of kernel to measure the regularity, which we construct hereafter.

 We fix a non-negative smooth function $\Phi$ with support in $B(0,1)$, symmetric in space, with $\Phi(x)\in [0,1]$ for all $x\in \R\times\R^d$ and with integral $1$.
Setting $\Phi_L(t,x)=L^{-5}\Phi(\frac{t}{L^2},\frac{x}L)$,  
we now define $\Psi_{L,n}= \Phi_{L2^{-1}}\ast\Phi_{L2^{-2}}\ast...\ast\Phi_{L2^{-n}}$ and $\Psi_L=\lim_{n\rightarrow\infty}\Psi_{L,n}$ so that $\Psi_L=\Phi_\frac{L}2\ast\Psi_\frac{L}2$. $\Psi_L$ and $\Psi_{L,n}$ are non-negative and smooth, symmetric in space and with support $B(0,1)$ and $B(0,1-2^n)$. We define the operator $(\cdot)_L$ by convolution with $\Psi_L$, and $(\cdot)_{L,n}$ by convolution with $\Psi_{L,n}$ for $n\geqslant 1$. $(\cdot)_{L,0}$ is the identity. Since $\Psi_{L,n+m}=\Psi_{L,n}\ast\Psi_{L2^{-n},m}$, we have
\begin{equation}\label{semigroup 2}
(\cdot)_{L,n+m}=((\cdot)_{L2^{-n},m})_{L,n}.
\end{equation}
Taking $m$ to infinity in this, or equivalently noticing that $\Psi_L=\Psi_{L,n}\ast\Psi_{L2^{-n}}$, we have the desired relation between dyadic scales
\begin{equation}\label{semigroup 1}
(\cdot)_L=((\cdot)_{L2^{-n}})_{L,n}.
\end{equation}
\begin{lemma}[Reconstruction]\label{Reconstruction}
Let $\gamma>0$ and $A$ be a finite subset of $(-\infty,\gamma]$. Let $L\in (0,1)$ and $x\in \R\times \R^d$. 
For a function $F \colon B(x,L)^2 \to \R$ assume that for all $\beta\in A$ there exist constants $C_\beta>0$ and $\gamma_\beta\geqslant\gamma$ such that for all $l\in(0,L)$, for all $x_1,x_2\in B(x,L-l)$
\begin{equation}\label{eq:reconstruction_hypothesis}
\Big| \int\Psi_l(x_2-y)(F(y,x_1)-F(y,x_2))dy \Big|\leq \sum_{\beta\in A}C_\beta d(x_1,x_2)^{\gamma_\beta-\beta}l^\beta.
\end{equation}
Then $f:y\mapsto F(y,y)$ satisfies
\begin{equation}\label{eq:reconstruction_conclusion}
\Big| \int\Psi_L(x-y)(F(y,x)-f(y))dy \Big|\les\sum_{\beta\in A}C_\beta L^{\gamma_\beta},
\end{equation}
where ``$\les$'' represents a bound up to a multiplicative constant depending only on $\gamma$ and $A$.
\end{lemma}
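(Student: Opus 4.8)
The plan is to reconstruct $f$ by a dyadic multi-scale telescoping argument built on the semigroup identities \eqref{semigroup 1} and \eqref{semigroup 2}. For $n\geq 0$ set
\[
I_{n}:=\int \Psi_{L,n}(x-z)\,\big(F(\cdot,z)\big)_{L2^{-n}}(z)\,dz .
\]
Since $(\cdot)_{L,0}$ is the identity, $I_{0}=(F(\cdot,x))_{L}(x)$, which is precisely the quantity to be compared with $(f)_{L}(x)$ in \eqref{eq:reconstruction_conclusion}. First I would verify that $I_{n}\to (f)_{L}(x)$: in the qualitative smooth setting $\big(F(\cdot,z)\big)_{L2^{-n}}(z)\to F(z,z)=f(z)$ locally uniformly, while $\Psi_{L,n}\to\Psi_{L}$ in $L^{1}$; since all the $\Psi_{L,n}$ are supported in the fixed ball $B(0,L)$, this gives $I_{n}\to(f)_{L}(x)$ by dominated convergence. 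Hence the left-hand side of \eqref{eq:reconstruction_conclusion} equals the telescoping sum $\sum_{n\geq 0}(I_{n}-I_{n+1})$, and it suffices to estimate each increment.

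Second, I would compute $I_{n}-I_{n+1}$ using $\Psi_{L,n+1}=\Psi_{L,n}\ast\Phi_{L2^{-n-1}}$ together with $\Psi_{L2^{-n}}=\Phi_{L2^{-n-1}}\ast\Psi_{L2^{-n-1}}$ (both instances of $\Psi_{l}=\Phi_{l/2}\ast\Psi_{l/2}$). Expanding both integrals, the two contributions agree except for the base-point argument, leading to
\[
I_{n}-I_{n+1}
=
\int \Psi_{L,n}(x-z)\!\int \Phi_{L2^{-n-1}}(z-w)\,
\Big[\big(F(\cdot,z)\big)_{L2^{-n-1}}(w)-\big(F(\cdot,w)\big)_{L2^{-n-1}}(w)\Big]\,dw\,dz .
\]
The inner bracket is exactly $\int \Psi_{L2^{-n-1}}(w-y)\,(F(y,z)-F(y,w))\,dy$, i.e.\ the quantity controlled by hypothesis \eqref{eq:reconstruction_hypothesis} with $l=L2^{-n-1}$, $x_{1}=z$, $x_{2}=w$. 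Here I would carefully check the two side conditions in \eqref{eq:reconstruction_hypothesis}: for $z$ in the support of $\Psi_{L,n}(x-\cdot)$ and $w$ in the support of $\Phi_{L2^{-n-1}}(z-\cdot)$ one has $d(z,w)\leq L2^{-n-1}=l$ and $d(x,z),d(x,w)\leq L(1-2^{-n-1})=L-l$, so indeed $z,w\in B(x,L-l)$.

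Third, I would insert the estimate. Since $\beta\leq\gamma\leq\gamma_{\beta}$ for every $\beta\in A$, the exponent $\gamma_{\beta}-\beta$ is non-negative, so $d(z,w)^{\gamma_{\beta}-\beta}\leq l^{\gamma_{\beta}-\beta}$ and the bracket is bounded by $\sum_{\beta\in A}C_{\beta}\,l^{\gamma_{\beta}}=\sum_{\beta\in A}C_{\beta}(L2^{-n-1})^{\gamma_{\beta}}$. Integrating against the two probability kernels $\Psi_{L,n}$ and $\Phi_{L2^{-n-1}}$ gives $|I_{n}-I_{n+1}|\leq\sum_{\beta\in A}C_{\beta}(L2^{-n-1})^{\gamma_{\beta}}$. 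Summing over $n\geq 0$ and using $\gamma_{\beta}\geq\gamma>0$ to sum the resulting geometric series yields
\[
\Big|\int\Psi_{L}(x-y)\big(F(y,x)-f(y)\big)\,dy\Big|
\leq\sum_{\beta\in A}C_{\beta}L^{\gamma_{\beta}}\sum_{n\geq 0}2^{-(n+1)\gamma_{\beta}}
\lesssim\sum_{\beta\in A}C_{\beta}L^{\gamma_{\beta}},
\]
with implicit constant depending only on $\gamma$ and the finite set $A$, as claimed. The only genuinely delicate points are the support bookkeeping that makes \eqref{eq:reconstruction_hypothesis} legitimately applicable at every dyadic scale (and in particular the $B(x,L-l)$ constraint) and the convergence $I_{n}\to(f)_{L}(x)$; the rest is routine.
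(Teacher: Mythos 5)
Your proof is correct and takes essentially the same route as the paper's: you exploit the semigroup structure $\Psi_L=\Phi_{L/2}*\Psi_{L/2}$ to set up a dyadic telescoping of $(F(\cdot,x))_L(x)-(f)_L(x)$, and at each scale the increment $I_n-I_{n+1}$ is precisely the quantity controlled by the hypothesis with $l=L2^{-n-1}$; your $I_n=\big([F,(\cdot)_{L2^{-n}}]\big)_{L,n}(x)$ is exactly the paper's telescoping summand after one application of \eqref{semigroup 2}. The only (welcome) difference is presentational: you spell out the support bookkeeping that shows $z,w\in B(x,L-l)$ at every scale, and the convergence $I_n\to(f)_L(x)$, both of which the paper leaves implicit.
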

In the proof we will use the following notations for $f$ a function of one variable and  $F$ a function of two variables:
\begin{align}\label{notations}
[F,(\cdot)_L](x)&=\int\Psi_L(x-y)F(x,y)dy
\end{align}
\begin{proof}
This is the only place where our particular choice of convolution kernel is crucial. It allows to use the following factorisation:
\begin{align*}
\Big|[F,(\cdot&)_{L2^{-n}}](x_1)-\Big([F,(\cdot)_{L2^{-n-1}}]\Big)_{L2^{-n},1}(x_1)\Big|\\
=&\Big|\int\int\Psi_{L2^{-n-1}}(x_2-y)\Phi_{L2^{-n-1}}(x_1-x_2)(F(x_1,y)-F(x_2,y))dydx_2\Big|\\
\leq&\sum_{\beta\in A}C_\beta\int\Phi_{L2^{-n-1}}(x_1-x_2)d(x_1,x_2)^{\gamma_\beta-\beta}(L2^{-n-1})^\beta dx_2\\
\leq&\sum_{\beta\in A}C_\beta(L2^{-n-1})^{\gamma_\beta}.
\end{align*}
This proves the convergence of $[F,(\cdot)_{L2^{-n}}]$ to $f:y\mapsto F(y,y)$ and justifies the bound following telescopic sum, obtained once more thanks to the semi-group property of our kernel:
\begin{align*}
\Big|[F,(\cdot)_L]-\big([F,(\cdot)_{L2^{-N}}]\big)_{L,N-1}\Big|
=&\Big|\sum_{n=0}^N\Big([F,(\cdot)_{L2^{-n}}]-\big([F,(\cdot)_{L2^{-n-1}}]\big)_{L2^{-n},1}\Big)_{L,n}\Big|\\
\leq&\sum_{n=0}^N\sum_{\beta\in A}C_\beta(L2^{-n-1})^{\gamma_\beta}\les \sum_{\beta\in A}C_\beta L^{\gamma_\beta},
\end{align*}
where the constant in ''$\les$'' depends only on $\gamma$ (in particular not on $N$), thus proving the lemma.
\end{proof}
In the definition of local product for a planted tree, we solve the heat equation with a cut-off function. The following lemma justifies that for a smooth cut-off, this does not change the order bound.  
\begin{lemma}\label{lemma: multiplication by smooth function}
Let $g$ be a $C^2$ function and let $h$ be a distribution of regularity $\gamma>-2$. Then $gh\in C^\gamma$ with $[gh]_\gamma\les [h]_\gamma(1+\|g''\|)$
\end{lemma}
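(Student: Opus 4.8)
\textbf{Proof plan for Lemma~\ref{lemma: multiplication by smooth function}.}

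The plan is to unfold the definition of the negative-regularity seminorm from \eqref{shauder ou1} and control $(gh)_L(x) = \int \Psi_L(x-y) g(y) h(y)\, dy$ directly, for $L \le 1$ and $x \in \R \times \R^d$. The idea is to split $g$ near the base-point $x$ into its second-order Taylor polynomial plus a remainder: write $g(y) = g(x) + \nabla g(x)\cdot(y-x) + r(y)$ where, by Taylor's theorem applied to the $C^2$ function $g$, one has $|r(y)| \lesssim \|g''\| \, d(y,x)^2$ on the support of $\Psi_L(x-\cdot)$ (which is contained in the ball $B(x,L)$, so $d(y,x) \le L \le 1$ there). Note that since $h$ has parabolic regularity $\gamma > -2$, testing $h$ against $y \mapsto \Psi_L(x-y)\, p(y-x)$ for a polynomial $p$ of parabolic degree $k$ produces a rescaled test function of the same type at scale $L$ with an extra factor $L^{k}$; more precisely $\Psi_L(x-y)(y-x)_i$ and $\Psi_L(x-y)$ times a quadratic in $(y-x)$ are, after rescaling, smooth compactly supported kernels of the required shape, so that testing $h$ against them contributes $L^{\gamma}$, $L^{\gamma+1}$, and $L^{\gamma+2}$ respectively. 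This last point is where the hypothesis $\gamma > -2$ is used: it guarantees $\gamma + 2 > 0$, so the quadratic remainder term is genuinely lower order (a positive power of $L$ better than the leading term).

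Concretely, I would estimate the three pieces as follows. The constant piece $g(x)(h)_L(x)$ is bounded by $\|g\| \, [h]_\gamma L^\gamma$. The linear piece $\nabla g(x) \cdot \int \Psi_L(x-y)(y-x) h(y)\, dy$ is bounded by $\|\nabla g\| \, [h]_\gamma L^{\gamma+1} \lesssim \|\nabla g\|\, [h]_\gamma L^{\gamma}$ since $L \le 1$. The remainder piece $\int \Psi_L(x-y) r(y) h(y)\, dy$ is handled by writing $r(y) = \Psi_L(x-y)$-weighted against $h$: since $|r(y)| \lesssim \|g''\| d(y,x)^2$ on $B(x,L)$, the product $\Psi_L(x-y) r(y)$ is, up to the constant $\|g''\|$, $L^{2}$ times a rescaled admissible kernel at scale $L$, giving a bound $\lesssim \|g''\|\, [h]_\gamma L^{\gamma+2} \lesssim \|g''\|\, [h]_\gamma L^{\gamma}$. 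One must also control $\|\nabla g\|$ and $\|g\|$ in terms of the allowed quantities; since the statement is a \emph{local} regularity statement and $g$ in our application is a fixed smooth cutoff, it suffices to absorb $\|g\| + \|\nabla g\|$ into the implicit constant, or, to match the stated form $[gh]_\gamma \lesssim [h]_\gamma(1+\|g''\|)$, to note that on the relevant bounded region $\|g\|, \|\nabla g\| \lesssim 1 + \|g''\|$ (interpolating, or simply using that $g$ is supported in a fixed bounded set so its $C^0$ and $C^1$ norms are controlled by $1 + \|g''\|$ up to a universal constant). Summing the three contributions and taking the supremum over $x$ and over $L \in (0,1]$ with the weight $L^{-\gamma}$ yields $[gh]_\gamma \lesssim [h]_\gamma(1+\|g''\|)$.

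The main obstacle — really the only subtle point — is the bookkeeping that shows testing $h$ against $\Psi_L(x-\cdot)$ multiplied by a monomial in $(\cdot - x)$ of parabolic degree $k$ is controlled by $[h]_\gamma L^{\gamma + k}$. This requires being slightly careful about the parabolic scaling: a spatial monomial $(y-x)_i$ carries parabolic degree $1$, while a time monomial $(t - x_0)$ carries parabolic degree $2$, so the Taylor expansion of $g$ must be organized by parabolic degree rather than Euclidean degree (the constant term, the $d$ spatial-linear terms, and then the first time-derivative term together with the spatial-quadratic terms all sit at parabolic degree $\le 2$). Once one checks that $g \in C^2$ in the ordinary sense controls the parabolic-degree-$\le 2$ Taylor remainder by $\|g''\| d(y,x)^2$ on $B(x,L)$ — which follows because $d(y,x)^2 = \max\{|t-x_0|, |x-y|^2\}$ dominates both $|t - x_0|$ and $|y - x|^2$ — the rest is the routine rescaling argument sketched above. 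Everything else is standard and does not require the finer machinery of the paper.
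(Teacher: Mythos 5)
Your strategy—expand $g$ to second order around the base point and estimate the three pieces separately—is the right shape, and it captures the parabolic-scaling bookkeeping correctly. But the step you dismiss as a ``routine rescaling argument'' is exactly where the substance of the proof lies, and as written it is a gap.

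The seminorm $[h]_\gamma$ from \eqref{shauder ou1} is defined by testing against a \emph{single} fixed family of kernels $\Psi_L$, constructed in Appendix~\ref{ss:RL} as an infinite iterated convolution $\Psi_L = \Phi_{L/2}\ast\Phi_{L/4}\ast\cdots$. The function $y \mapsto \Psi_L(x-y)\, r(y)$, or $\Psi_L(x-y)(y-x)_i$, is \emph{not} a rescaled copy of $\Psi_L$; it is a different test function. The pointwise bound $|r(y)| \lesssim \|g''\| d(y,x)^2$ tells you nothing, by itself, about $\int \Psi_L(x-y)r(y)h(y)\,dy$, because for $\gamma<0$ there is no ``integrate the pointwise bound'' move—$h$ is a genuine distribution and $|h|$ is not available. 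Passing from the seminorm with the fixed kernel to a bound on pairing with a merely comparable test function is a change-of-test-function statement which requires a proof. This is precisely what the Reconstruction Lemma (Lemma~\ref{Reconstruction}) supplies, via the semigroup factorisation $\Psi_L=\Phi_{L/2}\ast\Psi_{L/2}$ and a telescoping over dyadic scales. The paper's proof applies it with $F(y,x)=(g(x)+g'(y)(y-x))h(y)$, which is engineered so that $F(y,y)=g(y)h(y)$ and the three-point increment $F(y,x_1)-F(y,x_2)$ is linear in the data $(h)_l$; the reconstruction then converts the three-point hypothesis into exactly the output you want, $|(gh)_L - (\text{Taylor polynomial at }x) \cdot h \text{ tested by }\Psi_L| \lesssim \|g''\|[h]_\gamma L^{\gamma+2}$. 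Your argument never invokes this mechanism, so the leap from ``$|r|\lesssim\|g''\|L^2$ on $B(x,L)$'' to ``contributes $\|g''\|[h]_\gamma L^{\gamma+2}$'' is unsupported.

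A secondary issue: the inequality $\|g\|, \|\nabla g\| \lesssim 1 + \|g''\|$ is false for a general $C^2$ function (take $g$ constant, or linear). In the only place the lemma is used ($g=\rho$, a fixed cutoff), this is harmless because $\rho$ is a single concrete function and one can read $\|g''\|$ as a stand-in for the full $C^2$ data; but the justification you wrote would not survive scrutiny as stated.
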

\begin{proof}
We do the proof in one dimension for simplicity. It is identical in higher dimension. Once again, we do not concern ourselves with existence so we may assume that $g$ and $h$ are actually smooth functions, but we only allow a control with their given regularity. 

Take the Taylor expansion of $g$: $g(y)=g(x)+g'(y)(y-x)+\mathrm{Err}(x,y)$ where 
 $|\mathrm{Err}(x,y)|\leqslant \|g''\||x-y|^2$. Then define $F(y,x)=(g(x)+g'(y)(y-x))h(y)$. We have by triangle inequalities
\begin{align*}
 |F(y,x_1)-F(y,x_2)| &= |h(y)(g(x_1)-g(x_2)+g'(y)(x_2-x_1))|\\
& \leqslant |h(y)|\|g''\|(|x_1-x_2|+|x_2-y|)^2
\end{align*}
Therefore
\[\Big| \int\Psi_l(x_2-y)(F(y,x_1)-F(y,x_2))dy \Big|\leq 4\|g''\|[h]_\gamma (|x_1-x_2|^2l^\gamma+l^{\gamma+2},\]
and the reconstruction lemma applies, giving
\[
\Big| \int\Psi_L(x-y)(g(x)-g(y)+g'(y)(y-x))h(y)dy \Big|\les\|g''\|[h]_\gamma L^{\gamma+2},
\]
which in particular guarantees that $gh\in C^\gamma$ with $[gh]_\gamma\les [h]_\gamma(1+\|g''\|)$.
\end{proof}

\section{Schauder lemmas}
\label{ss:LSL}
We start by introducing a few norms.
For $\alpha \in (0,1)$, we define the \hol semi-norm $[.]_\alpha$ 
\begin{equation}\label{e:def-hol}
[f]_\alpha:=\sup_ {z\neq \bar{z}\in \R\times\R^d}\frac{|f(z)-f(\bar{z})|}{d(z,\bar{z})^\alpha}.
\end{equation}
For $\alpha \in (1,2)$, we define the \hol semi-norm $[.]_\alpha$  
\begin{equation}\label{e:def-hol2}
[f]_\alpha:=\sup_{\substack{z \neq \bar{z}  \in\R\times\R^d\\  z= (t,x); \; \bar{z} = (\bar{t}, \bar{x})}}\frac{|f(z)-f(\bar{z})-\nabla f(z).(x-\bar{x})|}{d(z,\bar{z})^\alpha},
\end{equation}
where $\nabla$ refers to the spatial gradient.

We will often deal with functions $F(z,\bar{z})$ of two variables 
generalising the increments of $f(z) - f(\bar{z})$ in \eqref{e:def-hol2} above. In this case we define for $\alpha \in (1,2)$

\begin{equation}\label{e:def-hol2var}
[F]_\alpha:=\sup_{\substack{z\in \R\times\R^d\\ z =(t,x)}}\inf_{\nu(z)\in \R^d}\sup_{\substack{\bar{z}\in \R\times\R^d\setminus\{z\} \\ \bar{z} = (\bar{t}, \bar{x}) }}\frac{|F(\bar{z},z)-\nu(z).(x-\bar{x})|}{d(z,\bar{z})^\alpha}.
\end{equation}
The infimum over functions $\nu$ is attained when  $\nu(z)$ is the spatial gradient in the second coordinate of $F$ at point $(z,z)$.

We use the same conventions as in Section~\ref{ss:MP} for localised versions of such norms.

The first lemmas we introduce are used to get the order bounds on planted trees.
We have two lemmas, for regularity negative and for regularity between $0$ and $2$. In all this section, "$\les$" denotes a bound that holds up to a multiplicative constant that only depends on $\kappa$ and $A$ when relevant. 
\begin{lemma}\label{lemschauder-Neg}
Let $\kappa<0$ and 
$U$ be a distribution such that $\heat U$ is compactly supported and 
\begin{equation}\label{lemschauder1-Neg}
\|\heat U_{L}\|\leqslant ML^{\kappa-2} .
\end{equation}

Then
\begin{equation}\label{lemschauderC-Neg}
 [U]_{\kappa}\les M.
\end{equation}

\end{lemma}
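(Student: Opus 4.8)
\textbf{Proof plan for Lemma~\ref{lemschauder-Neg}.}
The goal is a classical Schauder-type estimate: from a bound on $\heat U$ at every dyadic scale (after convolution with $\Psi_L$) we want to recover the negative H\"older regularity $[U]_\kappa \lesssim M$. Recalling \eqref{shauder ou1}, this means we must show $\|U_L\| \lesssim M L^{\kappa}$ for all $L \in (0,1]$. The plan is to work directly with the PDE, in the ``kernel-free'' spirit of Safonov's method alluded to in the introduction, rather than invoking the explicit heat kernel representation of $U$.

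\textbf{Step 1: reduce to a fixed base-point and a fixed scale.} Fix $L \in (0,1]$ and a space-time point $x$; we estimate $U_L(x) = \int \Psi_L(x-y) U(y)\, dy$. Since $\Psi_L$ has integral $1$ and is supported in the parabolic ball $B(0,L)$, it is natural to compare $U$ with its value ``frozen'' at $x$; but $U$ is only a distribution, so instead one compares scales. The semigroup relation \eqref{semigroup 1}, $(\cdot)_L = ((\cdot)_{L2^{-n}})_{L,n}$, together with \eqref{semigroup 2}, lets us telescope between dyadic scales: writing
\[
U_L - U_{2L}
= \big((U_{L2^{-n}})_{L,n}\big) - \big((U_{(2L)2^{-n}})_{2L,n}\big)
\]
for suitable $n$, or more directly expressing $U_L - U_{L/2}$ via convolution with $\Phi_{L/2}$ and using that $\heat$ commutes with these convolutions, one reduces the control of $U_L$ to a sum over scales of quantities of the form $(\heat U)_\ell$, which are exactly what hypothesis \eqref{lemschauder1-Neg} controls. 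The key point is that $\Psi_L - \Psi_{L/2}$ (or rather the difference of the associated averaging operators) can be written as $\heat$ applied to a smooth, compactly supported kernel at scale $L$, with the correct scaling weight $L^{2}$; this is the standard ``commutator with the heat operator'' identity adapted to our specific kernel $\Psi$.

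\textbf{Step 2: sum the geometric series.} Once $U_L - U_{L/2}$ is bounded by a constant times $\|(\heat U)_{cL}\| L^{2} \le M L^{\kappa}$ (using \eqref{lemschauder1-Neg} at scale $cL$ and $\kappa - 2 + 2 = \kappa$), one sums over dyadic scales $L, L/2, L/4, \dots$ down to $0$. Because $\kappa < 0$, the series $\sum_{k\ge 0} M (L2^{-k})^{\kappa}$ does \emph{not} converge going down; instead one sums \emph{upward} from scale $L$ to scale $1$: $U_L = U_1 - \sum_{k} (U_{\cdot} - U_{\cdot/2})$ along the dyadic chain from $1$ down to $L$, and $\sum_{k\ge 0} M (2^{-k})^{\kappa}$ with $\kappa<0$ \emph{is} geometric and convergent (the largest term is at the smallest scale, $\approx L$, giving $M L^{\kappa}$), while the boundary term $\|U_1\|$ is harmless since $\heat U$ is compactly supported, so $U_1$ is controlled by $M$ as well via the same scale-$1$ estimate (plus the compact-support hypothesis to handle the inverse heat operator; here one uses that $(\heat U)$ compactly supported forces $U$ to be the unique bounded solution, and Lemma~\ref{lemma: multiplication by smooth function} or a direct argument gives $\|U_1\| \lesssim M$). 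Collecting the bounds gives $\|U_L\| \lesssim M L^{\kappa}$ uniformly in $x$, i.e. $[U]_\kappa \lesssim M$.

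\textbf{Main obstacle.} The delicate point is Step 1: making precise the claim that the difference of averaging operators at consecutive dyadic scales equals $\heat$ applied to an admissible kernel at the right scale, \emph{using the specific multiplicative structure of $\Psi_L = \Phi_{L/2} \ast \Psi_{L/2}$} from Appendix~\ref{ss:RL}. One has to invert $\heat$ against a compactly supported smooth kernel and check that the resulting kernel is again smooth, compactly supported (up to the mild enlargement of support that the cutoff $\rho$ in $\mathcal{L}^{-1}$ introduces, which is why compact support of $\heat U$ is assumed), and scales correctly; this is where the bookkeeping lives. Everything else is a geometric summation that works precisely because $\kappa<0$.
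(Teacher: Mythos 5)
Your Step 2 (telescoping upward from scale $L$ to scale $1$ and summing the geometric series, which is summable precisely because $\kappa<0$, together with the harmless boundary term $\|U_1\|$) is correct and is indeed where the hypothesis $\kappa<0$ and the compact support of $\heat U$ enter. The problem is Step 1, which is the real content of the lemma and on which everything else hinges.

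The claim that $\Psi_L-\Psi_{L/2}$ can be written as $\heat K_L$ for a smooth, \emph{compactly supported} kernel $K_L$ at scale $L$ is false. For any compactly supported $f$ the unique solution of $\heat K = f$ that vanishes for $t$ very negative is the causal Duhamel solution $K(t,x)=\int_{-\infty}^{t}\!\int P(t-s,x-y)f(s,y)\,dy\,ds$; this vanishes \emph{before} the time-support of $f$, but \emph{after} the support of $f$ it solves $\heat K=0$ and is the free heat evolution of the data $K$ at the end of the source, which is a nontrivial profile with Gaussian tails (here $f = \Psi_L - \Psi_{L/2}$ has vanishing mass, but this only gives decay as $t\to+\infty$, not vanishing at any finite time). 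Since the only compactly supported caloric function is identically zero (backward uniqueness for the heat equation), the causal solution is the only possible candidate, and it is not compactly supported: no such $K_L$ exists. Dropping compact support does not save the argument either: to convert a bound on $K_L * g$ into the given data $\|g_\ell\|\le M\ell^{\kappa-2}$ you would need $K_L$ to factor through some $\Psi_\ell$ with a uniformly $L^1$-bounded cofactor, and the generic non-negative compactly supported $\Phi$ used to build $\Psi$ provides no such Fourier-side control (its transform may well vanish). The inequality $\|U_L - U_{L/2}\|\lesssim M L^{\kappa}$ that Step 2 consumes therefore remains unproved, and the phrase ``standard commutator with the heat operator identity'' is not pointing to an actual identity. (The parenthetical remark about the cutoff $\rho$ in $\mathcal{L}^{-1}$ is also a red herring: $\mathcal{L}^{-1}$ belongs to Section~\ref{subsec: ext of loc prods} and plays no role in this lemma; the compact-support hypothesis on $\heat U$ is there so that the causal Duhamel representation of $U$ makes sense.)

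A workable route must instead attack $U_L$ itself through the Duhamel formula $U_L(t,x)=\int_{-\infty}^{t}\!\int P(t-s,x-y)\,(\heat U)_L(s,y)\,dy\,ds$, splitting the time integral into a near region $t-s\lesssim L^2$ (where the hypothesis at scale $L$ gives $L^2\cdot ML^{\kappa-2}=ML^{\kappa}$ directly) and a far region $t-s\gg L^2$ where one must exploit the smoothing of $P(t-s,\cdot)$ at scale $\sqrt{t-s}$ together with the hypothesis at the coarser scales $\ell\sim\sqrt{t-s}$; the integrability of $r\mapsto r^{(\kappa-2)/2}$ near $r=L^2$ then reproduces $L^{\kappa}$, again because $\kappa<0$. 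This is structurally different from a mollifier-difference telescope, and the heart of the matter is the comparison between the heat kernel at scale $\sqrt{t-s}$ and the mollifier $\Psi_{\sqrt{t-s}}$, which your sketch does not address.
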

The second lemma is slightly non-standard due to the presence of a second argument

\begin{lemma}\label{lemschauder-Pos}
Let $0<\kappa<2$ and $A\subset(-\infty,\kappa]$ be finite. 
Let $U$ be a function of two variables such that $U(x,x)=0$ for all $x$ and $\heat U(\cdot,x)$ is compactly supported for all $x$. 
Assume that there exists a constant $M^{(1)}$ such that for all base-points $x$ and length scales $L_2\leqslant L_1\leq 1$, it holds that
\begin{equation}\label{lemschauder1-Pos}
L_2^2\|\heat U_{L_2}(\cdot,x)\|_{B(x,L_1)}\leqslant M^{(1)}\sum_{\beta\in A}L_2^\beta {L_1}^{\kappa-\beta}.
\end{equation}
Assume furthermore that there exists a constant $M^{(2)}$ such that,
for any $x,y \in \R \times \R^{d}$, there exists $\lambda(y,x) = (\lambda^{(i)}(y,x))_{i=1}^{d} \in \R^d$ such that, for any $z \in \R \times \R^{d}$, the following "three-point continuity" holds:
\begin{equation}\label{lemschauder2-Pos}
|U(z,x)-U(y,x)-U(z,y)-(z_{i}-y_{i})\lambda^{(i)}(y,x)|\leqslant M^{(2)}\sum_{\beta\in A}d(y,x)^\beta d(z, y)^{\kappa-\beta}.
\end{equation}
Note that if $\kappa\leq 1$ then $\lambda$ doesn't matter so we can take $\lambda=0$. 
Then
\begin{equation}\label{lemschauderC-Pos}
 [U]_{\kappa}\les M^{(1)}+M^{(2)}.
\end{equation}
\end{lemma}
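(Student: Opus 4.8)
\textbf{Proof strategy for Lemma~\ref{lemschauder-Pos}.}
The plan is to run the classical Safonov-type kernel-free Schauder argument at the level of dyadic convolution scales, but carefully tracking the extra basepoint argument $x$. Write $L_k = L 2^{-k}$ and work with the regularised functions $U_{L_k}(\cdot,x)$. The key quantity to control is the ``one-scale increment'' $U_{L_{k+1}}(z,x) - U_{L_k}(z,x)$, or rather its behaviour near $z = x$; as in the reconstruction argument (Lemma~\ref{Reconstruction}), the semigroup property \eqref{semigroup 1} of the kernel $\Psi$ lets us write such increments as an integral against $\Psi$ against a single-scale difference of $U$ evaluated at nearby points, where hypothesis \eqref{lemschauder2-Pos} applies. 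First I would use \eqref{lemschauder1-Pos}: the bound on $\heat U_{L_2}(\cdot,x)$ together with the standard one-scale Schauder estimate for the heat operator (as in Lemma~\ref{lemschauder-Neg}, applied at a fixed scale rather than summing) converts the PDE information into a quantitative bound on $U_{L}(\cdot,x)$ restricted near $x$ in terms of $M^{(1)} \sum_{\beta\in A} L^{\kappa}$. Summing the resulting geometric series over dyadic scales down to $d(z,x)$, and using $U(x,x)=0$ to kill the lowest order term, produces a bound of the form $|U_L(z,x)| \lesssim (M^{(1)} + M^{(2)}) \, d(z,x)^{\kappa}$ for $L \lesssim d(z,x)$, which, when $\kappa > 1$, must be interpreted modulo the linear term $(z_i - x_i)\lambda^{(i)}$ furnished by the three-point condition.

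The main technical point is to pass from this scale-$d(z,x)$ estimate to the genuine \hol-type norm \eqref{e:def-hol2var}, i.e.\ to control $U(z,x)$ itself (not just a regularisation) by $d(z,x)^{\kappa}$. Here I would again telescope over scales $L_k \downarrow 0$: the increments $U_{L_{k+1}}(z,x) - U_{L_k}(z,x)$ are summable because, via the semigroup identity and \eqref{lemschauder2-Pos} combined with the single-scale bound coming from \eqref{lemschauder1-Pos}, each such increment is $\lesssim (M^{(1)}+M^{(2)}) \sum_{\beta\in A} L_k^{\kappa}$ (uniformly near $x$), and $\kappa > 0$. This identifies $\lim_k U_{L_k}(z,x)$ with $U(z,x)$ by continuity and gives the pointwise bound, where the optimal choice of $\nu(z)$ in \eqref{e:def-hol2var} is exactly $\lambda^{(\cdot)}(\cdot, z)$ read off from \eqref{lemschauder2-Pos} (for $\kappa \le 1$ one simply takes $\nu = 0$, as noted in the statement). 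The roles of $M^{(1)}$ and $M^{(2)}$ are complementary: the first controls the ``diagonal regularity'' of $U(\cdot,x)$ coming from the PDE, the second controls how the basepoint can be changed, and both are needed to close the telescoping sum.

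The step I expect to be the main obstacle is the bookkeeping of the two length scales $L_1 \ge L_2$ in \eqref{lemschauder1-Pos} together with the localisation to balls $B(x,L_1)$: one must be careful that when summing over dyadic scales the ``outer'' scale $L_1$ is always comparable to $d(z,x)$ while the ``inner'' scale $L_2$ runs down to zero, and that the weights $\sum_{\beta\in A} L_2^{\beta} L_1^{\kappa-\beta}$ sum correctly — the constraint $A \subset (-\infty,\kappa]$ is exactly what makes the geometric series in $L_2$ converge (the terms with $\beta < \kappa$ decay, the borderline $\beta = \kappa$ contributes a bounded number of comparable terms over the relevant range of scales). Matching this against the three-point bound \eqref{lemschauder2-Pos}, which has the same $\sum_{\beta} d(y,x)^{\beta} d(z,y)^{\kappa-\beta}$ structure, is what makes the two hypotheses combine cleanly. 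I would organise the argument so that this scale-matching lemma is isolated once and then invoked; the rest is the standard Safonov iteration. Since Lemma~\ref{lemschauder-Neg} is the $\kappa<0$ analogue handled by the same mechanism without the basepoint, I would either cite it or note that its proof is a strict simplification of the present one.
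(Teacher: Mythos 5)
The paper does not prove Lemma~\ref{lemschauder-Pos}: Appendix~\ref{ss:LSL} is explicitly billed as ``collecting'' known variants of Schauder estimates, and this one is essentially taken (up to the localisation variables) from the three-dimensional companion work \cite{2018arXiv181105764M}, which develops the Safonov kernel-free method used throughout. So there is no paper proof to compare against line by line. That said, your strategy — dyadic telescoping via the semigroup property of $\Psi_L$, with the heat-operator bound \eqref{lemschauder1-Pos} feeding a single-scale estimate and the three-point bound \eqref{lemschauder2-Pos} handling changes of base point and supplying the linear corrector $\lambda$ — is exactly the Safonov scheme, and you correctly isolate the roles of the two hypotheses and of the boundary condition $U(x,x)=0$, as well as why $A\subset(-\infty,\kappa]$ makes the two-index sum telescope.

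Two points in the sketch need repair before it would count as a proof. First, you write that hypothesis \eqref{lemschauder1-Pos} ``together with the standard one-scale Schauder estimate \emph{(as in Lemma~\ref{lemschauder-Neg}, applied at a fixed scale)}'' converts the PDE bound into a bound on $U_L(\cdot,x)$ near $x$. Lemma~\ref{lemschauder-Neg} is the $\kappa<0$ version, where the norm \emph{is} a supremum of the regularisation and no recentering term appears; it does not produce a one-scale estimate for positive $\kappa$, because $\heat$ has a nontrivial kernel and a bound on $\heat U_{L_2}(\cdot,x)$ alone says nothing about $U_{L_2}(\cdot,x)$ without either an a priori bound on $U$ or the recentering data. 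In the Safonov argument, hypothesis \eqref{lemschauder2-Pos} must already enter \emph{inside} this step (it is precisely what controls the caloric ambiguity), so presenting it as ``first use \eqref{lemschauder1-Pos}, then later correct with \eqref{lemschauder2-Pos}'' mislocates where the main work happens. Second, the claim that each increment $U_{L_{k+1}}(z,x)-U_{L_k}(z,x)$ is ``$\lesssim (M^{(1)}+M^{(2)})\sum_\beta L_k^{\kappa}$, uniformly near $x$'' can only be true for $z$ within $O(L_k)$ of $x$ — that is what ``near $x$'' must mean — and the telescoping must therefore be carried out only down to scale $d(z,x)$, not to zero, with the remaining piece $U_{d(z,x)}(z,x)$ estimated separately via \eqref{lemschauder1-Pos} on a ball of radius $d(z,x)$ together with $U(x,x)=0$. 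You gesture at this (``for $L\lesssim d(z,x)$''), but the way the telescoping down to $L_k\downarrow 0$ is phrased a paragraph later contradicts it. Fixing the bookkeeping so that the radius $L_1$ in \eqref{lemschauder1-Pos} is always pinned to $d(z,x)$ while $L_2$ runs through the dyadic scales below it, and making the use of \eqref{lemschauder2-Pos} explicit at each rung, would close the argument.
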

We now introduce the localised lemma that we use to bound the solutions. The difference between this lemma and the previous one is that instead of a compact support of the right-hand side, we introduce a blow-up at the boundary in the way we measure objects.  
\begin{lemma}\label{lemschauder}
Let $1<\kappa<2$ and $A\subset(-\infty,\kappa]$ be finite. 
Let $U$ be a bounded function of two variables defined on a domain $D\times D$ such that $U(x,x)=0$ for all $x$. 
Let $d_0>0$ and assume that for any $0<d\leq d_0$ and $L_1\leq \frac{d}4$ there exists a constant $M^{(1)}_{D_d,L_1}$ such that for all base-points $x\in D_d$ and length scales $L_2\leqslant L_1$, it holds that
\begin{equation}\label{lemschauder1}
L_2^2\|\heat U_{L_2}(\cdot,x)\|_{B(x,L_1)}\leqslant M^{(1)}_{D_d,{L_1}}\sum_{\beta\in A}L_2^\beta {L_1}^{\kappa-\beta}.
\end{equation}
Assume furthermore, that for $L_1,L_2\leq \frac{d}4$ there exists a constant $M^{(2)}_{D_d,L_1,L_2}$ such that, for any $x\in D_d$ and $y\in B(x,L_1)$, there exists $\lambda(y,x) = (\lambda^{(i)}(y,x))_{i=1}^{d} \in \R^d$ such that, for any $z\in B(y,L_2)$, the following "three-point continuity" holds:
\begin{equation}\label{lemschauder2}
\begin{split}
|U(z,x)-U(y,x)-U(z,y)&
-(z_{i} - y_{i}) \lambda^{(i)}(y,x)|\\
&\leqslant 
M^{(2)}_{D_d,L_1,L_2}\sum_{\beta\in A}d(y,x)^\beta d(z, y)^{\kappa-\beta}.
\end{split}
\end{equation}
Additionally define
\[
M^{(1)}:=\sup_{d\leq d_0}d^\kappa M^{(1)}_{D_d,\frac{d}2},\quad\text{and}\quad M^{(2)}:=\sup_{d\leq d_0}d^\kappa M^{(2)}_{D_d,\frac{d}2,\frac{d}4}.
\]
Then
\begin{equation}\label{lemschauderC}
\sup_{d\leq d_0}d^\kappa [U]_{\kappa,D_d}\les M^{(1)}+M^{(2)} +\sup_{d\leq d_0}\|U\|_{D_d,d}.
\end{equation}
\end{lemma}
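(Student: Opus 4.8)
The statement to prove is Lemma~\ref{lemschauder}, a localised Schauder estimate with a boundary blow-up. The plan is to reduce it to the non-localised version, Lemma~\ref{lemschauder-Pos}, by a standard localisation-by-restriction argument. Fix $d \le d_0$ and a base-point $x \in D_d$. The key idea is that the function $U(\cdot,x)$ is being tested only on the ball $B(x, d/2)$ (indeed $\heat U_{L_2}(\cdot,x)$ is controlled on $B(x,L_1)$ with $L_1 \le d/4$, and the three-point continuity is imposed for $y \in B(x,L_1)$, $z \in B(y,L_2)$ with $L_1,L_2 \le d/4$), so only the behaviour of $U$ on a ball of radius comparable to $d$ matters. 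One then multiplies $U(\cdot,x)$ by a smooth spatial-time cut-off $\chi$ supported in $B(x,d/2)$ and equal to $1$ on $B(x,d/4)$, forming $\tilde U(z,x) := \chi(z) U(z,x)$ (and correspondingly adjust the second-variable behaviour so that $\tilde U(x,x) = 0$ still holds). The cut-off version has $\heat \tilde U(\cdot,x)$ compactly supported, and by Leibniz-type estimates --- using that derivatives of $\chi$ are bounded by powers of $d^{-1}$ --- the hypotheses \eqref{lemschauder1-Pos} and \eqref{lemschauder2-Pos} of Lemma~\ref{lemschauder-Pos} hold for $\tilde U$ with constants controlled by $d^{-\kappa}(M^{(1)} + M^{(2)}) + d^{-\kappa}\|U\|_{D_d, d}$, the last term coming from the terms where derivatives land on the cut-off and hit an undifferentiated copy of $U$. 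Applying Lemma~\ref{lemschauder-Pos} to $\tilde U$ then gives $[\tilde U]_{\kappa} \lesssim d^{-\kappa}(M^{(1)} + M^{(2)} + \|U\|_{D_d,d})$, and since $\tilde U = U$ on a full neighbourhood of $x$ this controls $[U]_{\kappa}$ restricted to increments based at $x$ and of length $\lesssim d$; taking the supremum over $x \in D_d$ and over $d \le d_0$ yields \eqref{lemschauderC}.

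\textbf{Key steps, in order.} First, I would set up the rescaling/cut-off: for each scale $d$, introduce $\chi_{x,d}$ supported in $B(x,d/2)$, equal to $1$ on $B(x,d/4)$, with $\|\partial^k \chi_{x,d}\| \lesssim d^{-|k|}$ (parabolic counting). Second, verify that $\tilde U(\cdot,x) := \chi_{x,d} U(\cdot,x)$ satisfies the small-scale bound \eqref{lemschauder1-Pos}: write $\heat \tilde U = \chi \heat U + (\heat \chi) U - 2 \nabla\chi \cdot \nabla U$; after convolving with $\Psi_{L_2}$ and using \eqref{lemschauder1} together with crude bounds $\|U(\cdot,x)\|_{B(x,d/2)} \le \|U\|_{D_d,d}$ and an interpolation/already-established Hölder bound for $\nabla U$ on the relevant ball (or, more cleanly, bound the commutator terms by putting all derivatives on $\chi$ via integration by parts inside the convolution), conclude a bound of the form $L_2^2 \|\heat \tilde U_{L_2}(\cdot,x)\| \lesssim [d^{-\kappa}(M^{(1)} + \|U\|_{D_d,d})]\sum_{\beta \in A'} L_2^\beta$, absorbing the lower-order scales $L_1$ by taking $L_1 = d/2$ in the hypothesis. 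Third, verify the three-point continuity \eqref{lemschauder2-Pos} for $\tilde U$ with $\tilde\lambda(y,x) := \chi(y)\lambda(y,x)$ (or the gradient-at-diagonal choice); the difference $\tilde U(z,x) - \tilde U(y,x) - \tilde U(z,y) - (z-y)\cdot\tilde\lambda$ expands into the original three-point expression multiplied by cut-off values plus remainder terms involving differences of $\chi$, each of which is estimated using $|\chi(z) - \chi(y)| \lesssim d^{-1}d(z,y)$ and $\|U\| \le \|U\|_{D_d,d}$. Fourth, invoke Lemma~\ref{lemschauder-Pos} and undo the localisation.

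\textbf{Main obstacle.} The bookkeeping in the third step --- the three-point continuity for the cut-off function --- is the delicate part: one must check that all remainder terms produced by the cut-off are genuinely lower order, i.e. that they fit the $\sum_{\beta \in A} d(y,x)^\beta d(z,y)^{\kappa-\beta}$ template (possibly for an enlarged but still finite index set $A' \supset A$ contained in $(-\infty,\kappa]$), and that the constant picked up is exactly $d^{-\kappa}\|U\|_{D_d,d}$ with no worse power of $d^{-1}$. Since $\kappa > 1$, the subtraction of the linear term $(z-y)\cdot\lambda$ interacts with the cut-off, and one has to be careful that $\tilde\lambda$ is still an admissible choice of slope; the clean way is to note that the optimal $\lambda$ in \eqref{e:def-hol2var} is the diagonal spatial gradient, and that multiplying by a smooth cut-off equal to $1$ near $x$ does not change the gradient at $x$, so $\tilde\lambda(x,x) = \lambda(x,x)$ and the only new contributions are supported where $\chi$ is non-constant, i.e. at distance $\gtrsim d/4$ from $x$, which is exactly the regime where $d(z,y)^{\kappa}$ or $d(y,x)^\kappa \sim d^\kappa$ can be used to convert $\|U\|_{D_d,d}$ into the stated form. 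Everything else is routine and parallels the analogous argument in \cite{2018arXiv181105764M}.
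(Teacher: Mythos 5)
The paper does not actually contain a proof of Lemma~\ref{lemschauder}: the Schauder estimates of Appendix~\ref{ss:LSL} are collected without proof, and the introduction states that the authors derive them ``using Safonov's kernel free method'' following \cite{2018arXiv181105764M}, i.e.\ by a direct dyadic iteration that works with the PDE on shrinking balls rather than by reducing to a global statement. Your cut-off reduction to Lemma~\ref{lemschauder-Pos} is therefore a genuinely different route, and in principle a legitimate one, but as written it has two gaps that are not just bookkeeping.

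First, once you set $\tilde U(z,w) := \chi_{x,d}(z)\,U(z,w)$, Lemma~\ref{lemschauder-Pos} requires the three-point continuity \eqref{lemschauder2-Pos} to hold uniformly over \emph{all} base-points $w$, not only $w = x$. Expanding, the cut-off error is
\[
\bigl[\chi(z)-\chi(y)\bigr]\bigl(U(y,w) + (z_i-y_i)\lambda^{(i)}(y,w)\bigr)\;,
\]
and here $y$ must lie in the support of $\chi$ (so $y \in B(x,d/2)$) but $w$ is unrestricted. When $d(y,w)$ is of order one, this error is bounded only by $d^{-1}d(z,y)\,\|U\|_{D_d}$ with the \emph{unrestricted} supremum of $U$, not by the localised norm $\|U\|_{D_d,d}$ that you claim and that appears in the conclusion \eqref{lemschauderC}. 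The hypotheses of the lemma give you no control of $U$ at distant pairs; the assertion that ``everything else is routine'' papers over exactly this. Your remark that the errors vanish when $y,z$ are close to $x$ is true, but the problematic regime is a pair $(y,z)$ inside the annulus $B(x,d/2)\setminus B(x,d/4)$ with $d(z,y)$ small and $d(y,w)$ large, and that regime is not excluded.

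Second, \eqref{lemschauder1-Pos} must hold for all $L_2 \le L_1 \le 1$, whereas the supplied bound \eqref{lemschauder1} is valid only for $L_1 \le d/4$. The commutator term $(\heat\chi)U$ gives $L_2^2\|((\heat\chi)U)_{L_2}\| \lesssim L_2^2 d^{-2}\|U\|$, and matching this to $d^{-\kappa}(\cdots)L_2^\beta L_1^{\kappa-\beta}$ with the best available $\beta$ works only for $L_1 \lesssim d$, since $\kappa < 2$. For $L_1 \gg d$ one must instead exploit the compact support of $\heat\tilde U(\cdot,w)$ and argue that the norm over $B(x,L_1)$ is actually a norm over $B(x,d/2+L_2)$; this is doable but not ``routine,'' and it is precisely the bookkeeping that the Safonov iteration avoids. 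Finally, note that applying Lemma~\ref{lemschauder-Pos} and undoing the localisation only controls increments of length $\lesssim d$ based at $x$; upgrading this to the full $[U]_{\kappa,D_d}$ (which, as stated, has no increment-length restriction) requires a separate bound on $\|\nu\|$ via Corollary~\ref{corschauder}, which your plan does not mention. So the strategy is salvageable but substantially underspecified at the two places most likely to go wrong.
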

The following lemma gives bounds on the derivative. It can be used both for the derivatives of the trees in Section~\ref{sec:coproduct} and for the derivative of the solution in Section~\ref{s:MR}.
\begin{lemma}\label{corschauderFS}
Let $\kappa > 1$ and $U\in C^{\kappa}(\R \times \R^{d})$ then, for the optimal function $\nu$ in \eqref{e:def-hol2var}, for any $r\in (0,\infty)$, 
\begin{equation}\label{corschauder1FS}
\|\nu\|
\les
r^{\kappa-1}[U]_{\kappa}
+
r^{-1}
\|U\|\;.
\end{equation}
Suppose furthermore that there exists a constant $M$ and, for all $x,y \in \R \times \R^{d}$, a vector $\lambda(y,x) = (\lambda^{(i)}(y,x))_{i=1}^{d} \in \R^d$ such that
for any $z \in \R \times \R^{d}$ one has the three-point continuity bound
\begin{equation}\label{corschauder 3ptFS}
|U(z,x)
-U(y,x)
-U(z,y)
-(z_{i} - y_{i})\lambda^{(i)}(y,x)|
\leqslant 
M
\sum_{\beta\in A}
d(y,x)^\beta d(z, y)^{\kappa-\beta}\;.
\end{equation} 
Then, if we write $f(z,w) = (f^{(i)}(z,w))_{i=1}^{d}$ where $f^{(i)}(z,w) = \nu^{(i)}(z) - \nu^{(i)}(w) + \lambda^{(i)}(z,w)$, one has 
\begin{equation}\label{corschauder2FS}
[f]_{\kappa-1}\lesssim  [U]_{\kappa}+M\;.
\end{equation}
\end{lemma}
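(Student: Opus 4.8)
The plan is to prove \eqref{corschauder1FS} as a one-line consequence of the definition of the optimal $\nu$ — it uses neither $M$ nor the three-point bound \eqref{corschauder 3ptFS} — and then to prove \eqref{corschauder2FS} by a direct three-point (``sewing'') computation that combines \eqref{corschauder 3ptFS} with three instances of the first-order expansion encoded in \eqref{e:def-hol2var}. For \eqref{corschauder1FS}: fix a spacetime point $z=(t,x)$ and $r\in(0,\infty)$, and for each $1\le i\le d$ test \eqref{e:def-hol2var} at the point $\bar z=(t,x+re_i)$, so that $d(z,\bar z)=r$. Since $\nu$ is the minimiser in \eqref{e:def-hol2var}, the defining quotient there is $\le[U]_\kappa$, so $|U(\bar z,z)\pm r\,\nu^{(i)}(z)|\le[U]_\kappa r^\kappa$; together with $|U(\bar z,z)|\le\|U\|$ and the triangle inequality this gives $r\,|\nu^{(i)}(z)|\le\|U\|+[U]_\kappa r^\kappa$. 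Dividing by $r$ and taking the supremum over $z$ and over $i=1,\dots,d$ yields \eqref{corschauder1FS}.

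For \eqref{corschauder2FS}: fix $z\ne w$, set $r:=d(z,w)$, and for $1\le i\le d$ let $\zeta$ be the spacetime point having the same time-coordinate as $z$ and spatial part obtained from that of $z$ by adding $re_i$, so that $d(\zeta,z)=r$ and $d(\zeta,w)\le 2r$. Apply the three-point continuity \eqref{corschauder 3ptFS} with running point $\zeta$, middle point $z$, and outer base-point $w$; its left-hand side equals $U(\zeta,w)-U(z,w)-U(\zeta,z)-r\,\lambda^{(i)}(z,w)$ and is $\le M\sum_{\beta\in A}r^\beta r^{\kappa-\beta}\lesssim Mr^\kappa$. Now replace each of the three $U$-terms by its first-order approximation from \eqref{e:def-hol2var}, expanding $U(\zeta,w)$ and $U(z,w)$ about the base-point $w$ and $U(\zeta,z)$ about the base-point $z$, at the cost of an error $\lesssim[U]_\kappa r^\kappa$ in each case since $d(\zeta,w),d(z,w),d(\zeta,z)\lesssim r$. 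Collecting terms, the pieces linear in $\nu$ and in $\lambda$ assemble (up to the sign convention for $\nu$ in \eqref{e:def-hol2var}) into $\pm r\,f^{(i)}(z,w)$ with $f$ as in the statement, while the remainder is a sum of four error terms — three from the expansions and one from \eqref{corschauder 3ptFS} — each $\lesssim([U]_\kappa+M)r^\kappa$. Hence $r\,|f^{(i)}(z,w)|\lesssim([U]_\kappa+M)r^\kappa$; dividing by $r=d(z,w)$ and taking suprema over $i$ and over $z\ne w$ gives $[f]_{\kappa-1}\lesssim[U]_\kappa+M$, which is \eqref{corschauder2FS}.

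The hypothesis $\kappa>1$ enters so that $\kappa-1>0$ (so that $f$ is controlled in a genuine Hölder seminorm) and so that \eqref{e:def-hol2var}, a first-order expansion, is the right object, while $\kappa<2$ guarantees that no second-order Taylor term is needed. The only genuinely delicate point is the middle step: one must keep careful track of which base-point each copy of $U$ is expanded about and of the sign conventions in \eqref{e:def-hol2var} and in the $\lambda$-term of \eqref{corschauder 3ptFS}, in order to see both that the three $U$-terms disappear completely after the substitutions and that the surviving linear form is precisely the combination $\nu(z)-\nu(w)+\lambda(z,w)$ defining $f$, rather than some other sign pattern; everything else reduces to the elementary bounds $d(\zeta,z),d(\zeta,w)\lesssim d(z,w)$ and the finiteness of $A$. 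This computation is the kernel-free analogue of Gubinelli's sewing argument for the $\delta$-operator referred to earlier in the paper.
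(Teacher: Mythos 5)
The paper states this lemma in Appendix~\ref{ss:LSL} without proof (the Schauder estimates are presented as known, following \cite{2018arXiv181105764M}), so there is no in-paper argument to compare against; on the merits, your proof is correct. Part~\eqref{corschauder1FS} is exactly as you say: testing the defining quotient of \eqref{e:def-hol2var} at $\bar z=(t,x+re_i)$ and using the triangle inequality with $|U(\bar z,z)|\le\|U\|$ gives $r|\nu^{(i)}(z)|\le \|U\|+[U]_\kappa r^\kappa$. For \eqref{corschauder2FS}, with $r:=d(z,w)$ and $\zeta$ the spatial $r$-shift of $z$ by $e_i$ (so $d(\zeta,z)=r$, $d(\zeta,w)\le 2r$), applying \eqref{corschauder 3ptFS} with $(\zeta,z,w)$ in the roles of $(z,y,x)$ gives $U(\zeta,w)-U(z,w)-U(\zeta,z)-r\lambda^{(i)}(z,w)=O(Mr^\kappa)$, and replacing $U(\zeta,w)$, $U(z,w)$ by their first-order models at base-point $w$ and $U(\zeta,z)$ by its model at base-point $z$ (errors all $\lesssim[U]_\kappa r^\kappa$) cancels the three $U$-terms and leaves $r\bigl(\nu^{(i)}(w)-\nu^{(i)}(z)-\lambda^{(i)}(z,w)\bigr)=O\bigl(([U]_\kappa+M)r^\kappa\bigr)$, i.e.\ $|f^{(i)}(z,w)|\lesssim([U]_\kappa+M)\,d(z,w)^{\kappa-1}$.

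The sign you flag does need pinning down, and you handle it correctly. With $\nu(z)$ the spatial gradient of $U(\cdot,z)$ at the diagonal --- which is how the lemma is actually invoked in the proof of Lemma~\ref{lem:OB}, where $\nu_\tau$ from \eqref{eq: derivative term in path} is identified as the optimal $\nu$ --- the first-order model reads $U(\bar z,z)\approx\nu(z)\cdot(\bar x-x)$, the cancellation produces $-f^{(i)}(z,w)$, and the signs in the statement come out right. Taking the linear term in \eqref{e:def-hol2var} literally as $\nu(z)\cdot(x-\bar x)$ would instead make the optimal $\nu$ the \emph{negative} of the gradient, and the surviving combination would be $\nu^{(i)}(z)-\nu^{(i)}(w)-\lambda^{(i)}(z,w)$ rather than $f^{(i)}$; this is a sign typo in \eqref{e:def-hol2var} (it should read $\nu(z)\cdot(\bar x-x)$), not a flaw in your argument, and the estimate \eqref{corschauder2FS} is unaffected in either reading. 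Your proof stands.
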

A localised version of this lemma is as follows:
\begin{lemma}\label{corschauder}
 Assume that $D$ satisfies a spatial interior cone condition with parameters $r_0>0$ and $\beta\in(0,1)$, i.e. for all $r\in[0,r_0]$, for all $x\in D$, for any vector $\theta = (\theta^{(i)})_{i=1}^{d}\in \R^{d}$, there exists $y\in D$ such that $d(x,y)=r$ and 
\[
|\theta^{(i)}y_{i}|\geq \beta d(x,y) |\theta| .
\]
Let $\kappa > 1$ and $U\in C^{\kappa}$ then, for the optimal function $\nu$ in \eqref{e:def-hol2var} and for all $r\in[0,r_0]$, we have the bound
\begin{equation}\label{corschauder1}
\beta\|\nu\|_{D}
\leq 
r^{\kappa-1}[U]_{\kappa,D}
+
r^{-1}
\|U\|_{D,r}\;.
\end{equation}
Suppose furthermore that there exists a constant $M$ and, for all $x,y \in D$, a vector $\lambda(y,x) = (\lambda^{(i)}(y,x))_{i=1}^{d} \in \R^d$ such that
for any $z \in D$ one has the the three-point continuity bound
\begin{equation}\label{corschauder 3pt}
|U(z,x)
-U(y,x)
-U(z,y)
-(z_{i} - y_{i})\lambda^{(i)}(y,x)|
\leqslant 
M
\sum_{\beta\in A}
d(y,x)^\beta d(z, y)^{\kappa-\beta}\;.
\end{equation} 
Then, if we write $f(z,w) = (f^{(i)}(z,w))_{i=1}^{d}$ where $f^{(i)}(z,w) = \nu^{(i)}(z) - \nu^{(i)}(w) + \lambda^{(i)}(z,w)$, one has, for every $r \in [0,r_{0}]$, 
\begin{equation}\label{corschauder2}
[f]_{\kappa-1,D}\lesssim  [U]_{\kappa,D}+M+r^{-\kappa}\|U\|_{D,r}\;.
\end{equation}

\end{lemma}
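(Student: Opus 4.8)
The statement to be proved, Lemma~\ref{corschauder}, is the localised counterpart of Lemma~\ref{corschauderFS}; the only genuinely new ingredient is the spatial interior cone condition on $D$, which has to replace the trivial fact, available in $\R\times\R^{d}$, that one can find a test point at any prescribed distance and in any prescribed direction. The plan is to prove \eqref{corschauder1} first and then feed it into the argument for \eqref{corschauder2}, which otherwise follows the scheme behind Lemma~\ref{corschauderFS}. For \eqref{corschauder1}: fix $z\in D$ and, writing $P$ for the projection onto the spatial coordinates, apply the cone condition at $z$ with the vector $\theta=\nu(z)$ and the given radius $r\in[0,r_{0}]$; this produces $y\in D$ with $d(z,y)=r$ and $|\nu(z)\cdot(Py-Pz)|\ge\beta\,d(z,y)\,|\nu(z)|$. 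On the other hand the definition \eqref{e:def-hol2var} of the optimal $\nu$, applied to $U$, gives $|U(y,z)-\nu(z)\cdot(Pz-Py)|\le[U]_{\kappa,D}\,r^{\kappa}$, and since $d(z,y)=r$ we may bound $|U(y,z)|\le\|U\|_{D,r}$. Combining the two yields $\beta r\,|\nu(z)|\le\|U\|_{D,r}+[U]_{\kappa,D}\,r^{\kappa}$; dividing by $r$ and taking the supremum over $z\in D$ gives \eqref{corschauder1}.

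The core of \eqref{corschauder2} is a three-point identity. Given $z,w\in D$, set $\rho:=d(z,w)$ and pick a test point $y\in D$. Apply the three-point continuity \eqref{corschauder 3pt} with running point $y$, inner base point $z$ and outer base point $w$, and substitute into it the expansions $U(y,w)=\nu(w)\cdot(Pw-Py)+O\bigl([U]_{\kappa,D}d(w,y)^{\kappa}\bigr)$, $U(y,z)=\nu(z)\cdot(Pz-Py)+O\bigl([U]_{\kappa,D}d(z,y)^{\kappa}\bigr)$ and $U(z,w)=\nu(w)\cdot(Pw-Pz)+O\bigl([U]_{\kappa,D}\rho^{\kappa}\bigr)$ coming from \eqref{e:def-hol2var}. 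After expanding all spatial increments the terms carrying the factor $\nu(w)\cdot(Pw-Pz)$ cancel, and what remains is, up to matching the sign conventions of \eqref{corschauder 3pt} and of the definition $f^{(i)}(z,w)=\nu^{(i)}(z)-\nu^{(i)}(w)+\lambda^{(i)}(z,w)$, the estimate
\begin{equation*}
\bigl|(Py-Pz)\cdot f(z,w)\bigr|
\;\les\;
[U]_{\kappa,D}\bigl(d(y,z)^{\kappa}+d(y,w)^{\kappa}+\rho^{\kappa}\bigr)
+M\sum_{\beta\in A}\rho^{\beta}\,d(y,z)^{\kappa-\beta}.
\end{equation*}
This is precisely the localisation of the identity behind \eqref{corschauder2FS}, and the crucial point is the cancellation just mentioned, since otherwise a contribution of order $\|\nu\|_{D}\,\rho$ — too large for small $\rho$ — would survive on the right.

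For pairs with $\rho\le r$ one now applies the cone condition a second time, at the point $z$, with $\theta=f(z,w)$ and radius $\rho\le r_{0}$, obtaining $y\in D$ with $d(y,z)=\rho$ and $|(Py-Pz)\cdot f(z,w)|\ge\beta\rho\,|f(z,w)|$; since then $d(y,w)\le2\rho$, the right-hand side above is $\les([U]_{\kappa,D}+M)\rho^{\kappa}$, and dividing by $\beta\rho$ gives $|f(z,w)|\les([U]_{\kappa,D}+M)\rho^{\kappa-1}$. For the remaining pairs, with $\rho>r$, one uses the crude bound $|f(z,w)|\le2\|\nu\|_{D}+|\lambda(z,w)|$ together with \eqref{corschauder1} for $\|\nu\|_{D}$ and with an entirely analogous cone-condition/three-point argument for $|\lambda(z,w)|$ (now testing at distance $r$ from $z$); since $r\le r_{0}\le1$, $\kappa>1$ and $\rho\le\mathrm{diam}(D)\les1$, one has $\rho^{\kappa-1}\ge r^{\kappa-1}$, so dividing by $\rho^{\kappa-1}$ converts the $r^{\kappa-1}$- and $r^{-1}$-weighted quantities produced above into exactly the terms $[U]_{\kappa,D}$, $M$ and $r^{-\kappa}\|U\|_{D,r}$ asserted in \eqref{corschauder2}.

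The main obstacle I anticipate is bookkeeping rather than conceptual: keeping the sign conventions of \eqref{corschauder 3pt}, of \eqref{e:def-hol2var}, and of the definition of $f$ consistent so that the $\nu(w)\cdot(Pw-Pz)$ terms really do cancel in the three-point identity (and, relatedly, sorting out whether $f(z,w)$ or $f(w,z)$ is the combination that appears); and carrying out the delicate auxiliary estimate on $|\lambda(z,w)|$ in the large-distance regime, where one must check that only $\|U\|_{D,r}$, and not the $\|\cdot\|$-norm of $U$ at larger scales, is allowed to enter the bound.
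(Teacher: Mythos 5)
Your argument for \eqref{corschauder1} is correct, and your treatment of \eqref{corschauder2} in the regime $d(z,w)\le r$ is essentially right: testing at radius $\rho=d(z,w)$ with the cone condition, expanding the three ``far'' values of $U$ via the optimal $\nu$, and observing that the $\nu(w)\cdot(Pw-Pz)$ contributions cancel, one obtains $|f(z,w)|\lesssim([U]_{\kappa,D}+M)\rho^{\kappa-1}$. The sign discrepancy you flag is real but is attributable to the paper itself: the sign in \eqref{e:def-hol2var} is inconsistent with the claim, made in the proof of Lemma~\ref{lem:OB}, that $\nu_{\tau}=\nabla\mathcal{L}^{-1}(\locprod_{\bullet,x}\tau)(\cdot)|_{\cdot=x}$ is the optimal $\nu$; in any case it is a sign bookkeeping issue, not a structural one, and the paper contains no proof of this lemma to compare against.

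The genuine gap is in the regime $\rho=d(z,w)>r$. Your proposed auxiliary estimate on $|\lambda(z,w)|$ by testing at distance $r$ does not close: the three-point bound contributes a term $\sum_{\beta'\in A}\rho^{\beta'}r^{\kappa-\beta'}$ and the $\nu$-approximations of $U(\zeta,w)$, $U(z,w)$ contribute $[U]_{\kappa,D}\,\rho^{\kappa}$, so after dividing by the cone-condition factor $r$ and by $\rho^{\kappa-1}$ one is left with a factor of order $\rho/r$, which is unbounded for $\rho\gg r$. Your closing remark that ``dividing by $\rho^{\kappa-1}$ converts the $r^{\kappa-1}$- and $r^{-1}$-weighted quantities into $[U]_{\kappa,D}$, $M$, $r^{-\kappa}\|U\|_{D,r}$'' overlooks these $\rho^{\kappa}$-weighted terms. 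The fix is to test at distance $\rho$ rather than $r$ (this is precisely what you did in the small-distance regime and there is no reason to change the test radius here, as long as $\rho\le r_{0}$); then the $\nu(z)$, $\nu(w)$ terms in the approximations of $U(\zeta,z)$ and $U(\zeta,w)-U(z,w)$ cancel against those in $f(z,w)$, no $\|U\|$ at scales larger than $\rho$ is ever evaluated, and one gets $|f(z,w)|\lesssim([U]_{\kappa,D}+M)\rho^{\kappa-1}$ directly. This still leaves the pairs with $\rho>r_{0}$ unaddressed; but since the paper's sole invocation of \eqref{corschauder2} is for the distance-restricted seminorm $[\,\cdot\,]_{\kappa-1,D_{d},d}$ (see the proof of Lemma~\ref{lem:hol-relation}), and the bound \eqref{corschauder2} is stated for an unrestricted seminorm, the cleanest reading is that the conclusion should be $[f]_{\kappa-1,D,r}$; under that reading only the $\rho\le r$ case is needed and your argument already covers it.
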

\section{Symbolic index}
In this appendix, we collect the most used symbols of the article, together
with their meaning and the page where they were first introduced.

\begin{center}
\renewcommand{\arraystretch}{1.1}
\begin{longtable}{lll}
\toprule
Symbol & Meaning & Page\\
\midrule
\endfirsthead
\toprule
Symbol & Meaning & Page\\
\midrule
\endhead
\bottomrule
\endfoot
\bottomrule
\endlastfoot
 $d(\cdot,\cdot)$ & Parabolic distance between space-time points $z,\bar{z} \in \R \times \R^{d}$ & \pageref{eq:parab_metric}\\
 $\Xi$ & The abstract noise & \pageref{s:PE}\\
 $\Pp$ & The set $\{\one,\X_{1},\dots,\X_{d}\}$ & \pageref{sets_of_trees_introduced}\\
 $\Ww$ & Unplanted trees with $|\tau| < -2$ & \pageref{sets_of_trees_introduced}\\
 $\mWw$ & Set of product trees in $\Ww$, namely $\Ww \setminus \{\Xi\}$ & \pageref{sets_of_trees_introduced}\\
 $\Nn$ & Unplanted trees with $|\tau| \in [-2,0]$, includes $\Pp$ & \pageref{sets_of_trees_introduced}\\
 $\mNn$ & Set of product trees in $\Nn$, namely $\Nn \setminus \Pp$ & \pageref{sets_of_trees_introduced}\\
 $\Tr$ & Unplanted trees on right hand side of $\phi$ equation, $\Tr = \Ww \cup \mNn$& \pageref{other_sets_of_trees} \\
 $\Tl$ & Planted trees in expansion of $\phi$, $\Tl = \Ii(\Tr) \cup \Ii(\Pp)$ & \pageref{other_sets_of_trees} \\
 $\Tt$ & Trees on left or right hand side of $\phi$ equation, $\Tt = \Tl \cup \Tr$ & \pageref{other_sets_of_trees} \\
 $\tNn$ & $\{\tau \in \mNn: |\tau| > 1\}$ & \pageref{more_trees_defined} \\
 $\Tp$ & $\Tt \cup \Trec$ & \pageref{more_trees_defined} \\
 $\Trec$ & Planted trees that only appear for centering & \pageref{eq:trec}\\ 
 $\Ii$ & Edge of a tree corresponding to heat kernel & \pageref{s:PE}\\
 $\Ii_+^{(i)}$ & Edge for derivative of heat kernel for positive renormalisation & \pageref{ss:DE}\\
 $\Ii_-^{(i)}$ & Edge for derivative of heat kernel  for negative renormalisation& \pageref{ss:ADE}\\
 $[\bullet]_{\alpha}$ & \hol seminorm of index $\alpha$ & \pageref{e:def-hol}\\ 
 $[\path;\bullet]$ & Seminorm for the local product $\locprod$ applied to a tree & \pageref{def:seminorm}\\
 $\|\bullet\|$ & $L^\infty$ norm &\pageref{paraball}\\
 $\locprod_\bullet$ & Local product &\pageref{eq: admissibility extension}\\
 $\locprod^{\rec}_\bullet$ & Centering map& \pageref{s:positive_renorm}\\
 $\path_{\bullet,\bullet}$ & Local path &\pageref{s:positive_renorm}\\
 $\rho$ & Cut-off function used to define local product &\pageref{eq: cut-off heat equation}\\
 $\leqslant,\subset$ & Relations on trees &\pageref{ss:relations}\\
 $\num(\tau)$ & Number of leaves in a tree $\tau$&\pageref{formula_hom}\\
 $\numnoise$ & Number of noise leaves in a tree $\tau$ &\pageref{formula_hom}\\
 $\numpoly$ & Number of $\{\X_{i}\}_{i=1}^{d}$ leaves in a tree $\tau$ &\pageref{formula_hom}\\
  $\numone$ & Number of $\one$ leaves in a tree $\tau$ &\pageref{formula_hom}\\
  $|\bullet|$ & Order of a tree &\pageref{formula_hom}\\
  $\delta$ & Noise is regularity $C^{-3-\delta}$, $\delta > 0$ &\pageref{phi4}\\
 $D$ & $(0,1)\times \{|x|<1\}$ &\pageref{parabolic_cylinder}\\
 $D_R$ & $(R^2,1) \times \{|x|<1-R\}$ &\pageref{parabolic_cylinder}\\
$\Delta$ & Coproduct &\pageref{sec:coproduct}\\
$C_+$ & Cut map for coproduct &\pageref{table:co-prod1}\\
$C_-$ & Cut map for modified coproduct &\pageref{table:co-prod2}\\
$\mathcal{R}$ & Renormalisation operator &\pageref{eq: inductive def of renormalised model}\\
$\circ_\path$ & Renormalised product of tree expansion &\pageref{def: renormalised product}\\
$(\bullet)_L$ & Convolution with the kernel $\Psi_L$ & \pageref{shauder ou1}\\
$\Psi_L$ & Smooth compactly supported kernel, rescaled at length $L$ & \pageref{shauder ou1}\\
$\Upsilon$ & Coefficient map for solutions to equation & \pageref{def-Upsilon-rec}\\
$V_\gamma$ & Expansion of the remainder solution to level $\gamma$ & \pageref{def:V}\\
$V^2_\gamma$ & Expansion of square of remainder solution to level $\gamma$ & \pageref{def:V2}\\
$V^{((i)}_\gamma$ & Expansion of derivative of the remainder solution to level $\gamma$ & \pageref{def:V'}\\
$U^{\tau}_\gamma$ & Expansion of the local approximation on level $\tau$ &\pageref{def:U}
\end{longtable}
\end{center}
\bibliographystyle{abbrv}
\bibliography{phip}
\end{document}